\documentclass[reqno,11pt]{amsart}
 
\usepackage{amsmath}
\usepackage{amsthm,amssymb,color}
\usepackage{bbm}
\usepackage{mathrsfs}
\usepackage{hyperref}
\usepackage[TS1,T1]{fontenc}
\usepackage[utf8]{inputenc}
\usepackage{dsfont}
\usepackage{tikz}
\usepackage{pgfplots}
\usepackage{enumitem}
\usepackage{subfigure}
\usepackage{mathtools}
\usepackage{bbold}
\usepackage{esint}
\usepackage{mathabx}
\usepackage[sort]{cite}
\usepackage{comment}
\usepackage{float}


\numberwithin{equation}{section}

\newtheorem{thm}{Theorem}[section]

\newtheorem{lem}[thm]{Lemma}

\theoremstyle{definition}

\newtheorem{rem}[thm]{Remark}
\newtheorem{cor}[thm]{Corollary}

\newtheorem{Ex}[thm]{Example}

\DeclareMathOperator{\Id}{Id}

\DeclareMathOperator{\DIV}{div}

\DeclareMathOperator*{\argmax}{argmax}

\newcommand{\KL}{\mathrm{KL}}

\newcommand{\Lip}{\text{Lip}}

\newcommand{\R}{\mathbb{R}}

\newcommand{\Rd}{\mathbb{R}^{d}}
\newcommand{\Pro}{\mathscr{P}(\Rd)}

\newcommand{\N}{\mathbb{N}}

\newcommand{\eps}{\varepsilon}

\newcommand{\rkhs}{\mathcal{H}}

\newcommand{\supp}{\text{supp}}
\newcommand{\diff}{\mathop{}\!\mathrm{d}}

\newcommand{\ra}{\rightarrow}

\newcommand{\rkc}{\mathcal{D}}
\newcommand{\wstar}{\overset{\ast}{\rightharpoonup}}

\newcommand{\vp}{\varphi}

\newcommand{\inner}[2]{\left\langle #1,#2\right\rangle}

\newcommand{\rF}{\mathcal{F}}

\newcommand{\pot}{V_0}
\newcommand{\DD}{\mathbb{D}}

\newcommand{\id}{\mathrm{id}}
\newcommand{\trace}{\mathrm{trace}}


\newcommand{\weaks}{\overset{\ast}{\rightharpoonup}}

\newcommand{\weak}{\rightharpoonup}
\newcommand{\abs}[1]{\left\vert #1\right\vert}

\newcommand{\prt}{\partial}
\newcommand{\divv}{\mathrm{div}}

\newcommand{\loc}{\mathrm{loc}}

\DeclarePairedDelimiter{\norm}{\lVert}{\rVert}

\newcommand{\pwc}{\tau}
\newcommand{\spk}{\mathcal{K}}
\newcommand{\ngib}{\mathcal{C}}

\makeatletter
\newcommand{\doublewidetilde}[1]{{%
  \mathpalette\double@widetilde{#1}%
}}
\newcommand{\double@widetilde}[2]{%
  \sbox\z@{$\m@th#1\widetilde{#2}$}%
  \ht\z@=.9\ht\z@
  \widetilde{\box\z@}%
}
\makeatother

\textwidth 6 in
\evensidemargin 0.2 in 
\oddsidemargin 0.2 in

\parskip 3pt
\parindent 0pt

\makeatletter
\renewcommand\subsubsection{\@startsection{subsubsection}{3}{\z@}%
  {-3.25ex\@plus -1ex \@minus -.2ex}%
  {1.5ex \@plus .2ex}%
  {\normalfont\bfseries}}
\makeatother


\author{José A. Carrillo}
\address{{\it José A. Carrillo:} Mathematical Institute, University of Oxford, Woodstock Road, Oxford, OX2 6GG, United Kingdom}
\email{carrillo@maths.ox.ac.uk}

\author{Jakub Skrzeczkowski}
\address{{\it Jakub Skrzeczkowski: } Mathematical Institute, University of Oxford, Woodstock Road, Oxford, OX2 6GG, United Kingdom}
\email{jakub.skrzeczkowski@maths.ox.ac.uk}

\author{Jethro Warnett}
\address{{\it Jethro Warnett: } Mathematical Institute, University of Oxford, Woodstock Road, Oxford, OX2 6GG, United Kingdom}
\email{warnett@maths.ox.ac.uk}

\begin{document}

\title[The Stein-log-Sobolev inequality]{The Stein-log-Sobolev inequality and the exponential rate of convergence for the continuous Stein variational gradient descent method}

\begin{abstract}
The Stein Variational Gradient Descent method is a variational inference method in statistics that has recently received a lot of attention. The method provides a deterministic approximation of the target distribution, by introducing a nonlocal interaction with a kernel. Despite the significant interest, the exponential rate of convergence for the continuous method has remained an open problem, due to the difficulty of establishing the related so-called Stein-log-Sobolev inequality. Here, we prove that the inequality is satisfied for each space dimension and every kernel whose Fourier transform has a quadratic decay at infinity and is locally bounded away from zero and infinity. Moreover, we construct weak solutions to the related PDE satisfying exponential rate of decay towards the equilibrium. The main novelty in our approach is to interpret the Stein-Fisher information, also called the squared Stein discrepancy, as a duality pairing between $H^{-1}(\Rd)$ and $H^{1}(\Rd)$, which allows us to employ the Fourier transform. We also provide several examples of kernels for which the Stein-log-Sobolev inequality fails, partially showing the necessity of our assumptions.
\end{abstract}

\keywords{Stein variational gradient descent; Particle system; Mean-field limit; Sampling; Bayesian inference; Stability analysis; log-Sobolev inequality; Stein-log-Sobolev inequality}

\subjclass{35Q62, 35Q68, 35B40, 62-08, 62D05}


\maketitle
\setcounter{tocdepth}{1}

\section{Introduction}
Bayesian inference and statistical physics face an ongoing challenge: how to sample from~a high-dimensional probability distribution with an unknown normalization constant. The target probability distribution $\rho_\infty\in\Pro$ often has the form of a Gibbs measure
\begin{align*}
    \rho_\infty(x)=\frac{1}{Z}e^{-V(x)}
\end{align*}
with a potential $V:\Rd\ra\R$ and unknown normalization constant $Z>0$. Numerous sampling algorithms have been proposed to address this problem. The Markov Chain Monte Carlo (MCMC) methods are a popular class of algorithms that seek to simulate a Markov chain, which are efficient to simulate and have the stationary distribution $\rho_\infty$. Common examples of algorithms in this class include the Metropolis-Hastings algorithm \cite{metropolis1953equation, MR3363437}, Gibbs sampling \cite{geman1984stochastic}, the Metropolis-adjusted Langevin algorithm \cite{roberts1998optimal} and Stochastic gradient Langevin dynamics \cite{welling2011bayesian}. 
The Variational Inference (VI) methods are another prevalent family of algorithms that attempt to approximate the distribution $\rho_\infty$ by a family of simpler distributions (that are usually parameterized) by minimizing their Kullback-Leibler divergence. Widely used methods include Stochastic Variational Inference \cite{MR3081926} and Black Box Variation Inference \cite{ranganath2014black}.

\smallskip

In this paper, we focus on a recent sampling algorithm called Stein Variational Gradient Descent \cite{liu2016stein}. It is a time-discrete deterministic VI method using a particle ensemble approach. The method iteratively updates a set of particles along a transformation within the unit ball of a $d$-dimensional reproducing kernel Hilbert space $(\rkhs_K,\norm{\cdot}_{K})$, with positive definite kernel $K:\Rd\times\Rd\ra \R$ (see \cite{MR3560890} for a comprehensive introduction on reproducing kernel Hilbert spaces), that pushes particles to high probability regions whilst balancing repulsion between particles. More precisely, given a set of $N\geq 1$ particles at step $n\geq 0$ denoted by $X_n^1,\ldots,X_n^N\in\Rd$ and represented by the empirical measure
\begin{align*}
    \rho_n^N:=\frac{1}{N}\sum_{j=1}^N \delta_{X_n^j},
\end{align*}
the positions of particles are updated using the map
\begin{align*}
    \phi^\ast(\bullet;\, \rho) :=\argmax_{\substack{\phi\in \rkhs_K \\ \norm{\phi}_K\leq 1}}\left.\frac{d}{d\varepsilon}\right\vert_{\varepsilon=0}\KL((\Id+\varepsilon\phi)_\#\rho||\rho_\infty),
\end{align*}
where we define the Kullback-Leibler divergence by
\begin{equation}\label{eq:KL_div}
\KL(\rho || \rho_\infty) = \begin{cases}
\displaystyle\int_{\Rd} \ln\left(\frac{\diff \rho}{\diff \rho_\infty}(x) \right) \, \frac{\diff \rho}{\diff \rho_\infty}(x)  \diff \rho_\infty(x)  &\mbox{ if } \rho\ll\rho_\infty \mbox{ and}\\
&\quad\frac{\diff \rho}{\diff \rho_\infty}\ln\left(\frac{\diff \rho}{\diff \rho_\infty}\right)\in L^1(\Rd,\rho_\infty),\\[.6em]
+\infty &\mbox{ otherwise}.
\end{cases}
\end{equation}
The resulting interacting particle system has the form
\begin{align*}
    X_{n+1}^i = X_{n}^i - \varepsilon_n\,\phi^\ast(X_n^i;\, \rho_n^N)
\end{align*}
for adaptive step sizes $\{\varepsilon_n\}_{n\geq 1}\subseteq (0,\infty)$. It was shown in \cite[Equation 3]{liu2016stein} and \cite[Theorem~3.8]{liu2016kernelized} that $\phi^\ast$ is well-defined under mild assumptions of $V$ and $\rho$, and explicitly given by an expectation of the so-called Stein operator applied to the kernel $K$
\begin{align*}
    \phi^\ast(x;\, \rho)
    :=\int_{\Rd}
    K(x,y) \nabla \ln(\rho_\infty(y))\;\rho(y)\,\diff y
    -\int_{\Rd}
    \nabla_x K(x,y)\;\rho(y) \,\diff y,
\end{align*}
where $\nabla_x K$ is the gradient of $K$ in the first variable. This algorithm enjoys several properties such as determinism, ease of implementation and moment matching \cite{liu2018stein}.

\smallskip

The SVGD algorithm has seen several enhancements. Adding stochastic noise \cite{gallego2020stochasticgradientmcmcrepulsive,priser2024long} makes it more robust, and it can be made gradient-free \cite{han2018stein}. Also, the the scalar-valued kernel can be replaced with a matrix-valued one \cite{wang2019stein}. Projecting it \cite{chen2020projected} helps tackle the curse of dimensionality. Message passing \cite{zhuo2018message} prevents particles from collapsing to the target mode. A multilevel approach \cite{MR4768482, alsup2021multilevelsteinvariationalgradient} improves computational speed, while incorporating second-order information through a Newton formulation \cite{NEURIPS2018_fdaa09fc} and a projected Newton method \cite{NEURIPS2019_eea5d933} further accelerates the process. An interpolation between SVGD and the Wasserstein gradient flow \cite{he2024regularizedsteinvariationalgradient} achieves exponential convergence with the chi-squared distance using slightly augmented particle dynamics \cite{NEURIPS2020_16f8e136}. Additionally, the Wasserstein gradient flow with respect to the Stein-Fisher information or squared Stein Discrepancy (see \eqref{eq:stein_fisher_information}) has been explored \cite{pmlr-v139-korba21a}.

\smallskip

Numerical experiments in \cite{MR4512980, MR4113783, MR4768482} suggest that SVGD has an exponential rate of decay in the Kullback-Leibler divergence. Several other papers make the assumption that the method does indeed have an exponential rate of convergence \cite{MR4768482, alsup2021multilevelsteinvariationalgradient}. A sufficient condition for an exponential rate of convergence is the Stein-log-Sobolev inequality \cite{MR4582478}. Several properties of the mean-field algorithm have been studied under various assumptions, including the Stein-log-Sobolev inequality, in \cite{korba2020nonasymptotic}. 

\smallskip

This paper provides for the first time kernels for which the Stein-log-Sobolev inequality holds closing this gap in the literature. As a consequence we rigorously achieve the sought after exponential rate of convergence for the continuous formulation of the algorithm.

\smallskip

Several results about the finite-particle system of SVGD, such as convergence rates, are discussed in \cite{balasubramanian2024improved,shi2024finite,carrillo2023convergence,liu2024towards}. In \cite{liu2017stein} it was shown that the mean-field ensemble weakly converges to the target distribution $\rho_\infty$ and exhibits a time-discrete gradient flow structure.

It is well-known that there exists a Riemannian structure on the Wasserstein space to describe continuous gradient flows (see \cite{MR1617171, MR1842429, MR2192294, MR2401600}, \cite[Chapter 9]{MR1964483}, \cite[Chapter 15]{MR2459454}) and the log-Sobolev inequality \cite{MR420249, To99} is a sufficient condition for exponential convergence \cite{MR1760620}.

\smallskip

It is hence reasonable to expect that a similar approach will work for SVGD. The first step in this direction was to formulate a continuous version of the SVGD algorithm \cite{MR3919409, liu2017stein}. This is described by the following nonlinear and nonlocal continuity equation, which is referred to as the mean field limit of the Stein Variational Gradient Descent,
\begin{align}
    \label{eq:stein_gradient_descent_kullback_leibler}
    \prt_t \rho(x)=\divv\left(\rho(x) \int_{\Rd} K(x,y)\;\nabla_y\left( \frac{\diff \rho}{\diff \rho_\infty}(y)\right)\diff \rho_\infty(y)\right),
    \tag{MF SVGD}
\end{align}
where the dissipation (or the Stein-Fisher information or the squared Stein Discrepancy) $\DD^2(\rho_t||\rho_\infty)$ is defined as
\begin{align}
    \label{eq:stein_fisher_information}
    \DD^2(\rho_n||\rho_\infty)=\int_{\Rd}\int_{\Rd} 
    \nabla_x\left(\frac{\diff \rho_n}{\diff \rho_\infty}(x)\right)\cdot K(x,y)\nabla_y\left(\frac{\diff \rho_n}{\diff \rho_\infty}(y)\right)\diff \rho_\infty(y)\diff\rho_\infty(x).
\end{align}
It should be noted that \cite{pmlr-v139-korba21a} uses a different formulation of the dissipation, which is equivalent to \eqref{eq:stein_fisher_information} under differentiability assumptions on $K$. It was shown in \cite[Theorem 3.4]{liu2017stein}, that the evolution of densities $\{\rho_t\}_{t\geq 0}$ is equipped with the following energy dissipation identity
\begin{align*}
    \partial_t \KL(\rho_t || \rho_{\infty}) + \DD^2(\rho_t||\rho_\infty)
    =0.
\end{align*}

A formal geometry, called the Stein geometry, was established and discussed in \cite{MR4582478, MR4622922} to interpret the equation \eqref{eq:stein_gradient_descent_kullback_leibler} as a gradient flow. Additionally in \cite{MR4582478}, the authors study the curvature of the Kullback-Leibler divergence around the equilibrium. In particular, they prove in one space dimension for the "weighted Matérn kernel" $K(x,y)=e^{\frac{V(x)}{2}} e^{-|x-y|}e^{\frac{V(y)}{2}}$ and for quadratic potentials $V(x)$ that the Hessian of the Kullback-Leibler divergence at $\rho_\infty$ is bounded from below by a multiple of the identity in the Stein metric. This kernel was the inspiration of our method. A recent publication \cite{zhu2024kernel} has analyzed the effects of different geometries on the gradient flow of a functional related to the SVGD.

\smallskip

Formally by \cite{MR4582478}, the sufficient condition of proving the exponential rate of convergence is showing the so-called Stein-log-Sobolev inequality: there exists a constant $\lambda>0$ depending on $\rho_\infty$ such that
\begin{equation}
    \label{eq:stein_log_sobolev_inequality_general}
    \lambda\,\KL(\rho_t || \rho_{\infty}) \leq \DD^2(\rho_t||\rho_\infty). 
\end{equation}
We could alternatively prove algebraic rate of convergence, if there exists $\lambda>0$ and $\alpha> 1$ depending on $\rho_\infty$ such that
\begin{equation}
    \label{eq:stein_log_sobolev_inequality_general_alpha}
    \lambda\,\left[\KL(\rho_t || \rho_{\infty})\right]^\alpha \leq \DD^2(\rho_t||\rho_\infty). 
\end{equation}

\subsection{Main results}

Our main result states that there exist kernels such that the inequality \eqref{eq:stein_log_sobolev_inequality_general} holds for perturbed log-convex targets $\rho_\infty\in\Pro$ with 
\begin{equation}\label{eq:general_prop_target_potentialV_f}
\rho_\infty=\frac{1}{Z} e^{-V}, \qquad V(x)\geq \ngib+\frac{1}{2}(x-\mu)\cdot\Sigma^{-1}(x-\mu),
\end{equation} 
where $V\in L_\loc^\infty(\Rd)$ such that $V\geq 0$ a.e., $\ngib\in\R$ is a constant, $\mu\in\Rd$ is a mean and $\Sigma\in \R^{d\times d}$ is a strictly positive definite covariance matrix. We define $0<\sigma_1\leq\cdots\leq \sigma_d$ to be the eigenvalues of $\Sigma$, while $Z := \int_{\Rd} e^{-V} \diff x$ is the normalization constant. Note carefully that \eqref{eq:general_prop_target_potentialV_f} includes anisotropic Gaussian distributions and polynomial functions $V(x) = |x|^p$ with exponent $p \geq 2$. To simplify notation, we introduce the potential
$$
\pot(x):=\frac{1}{2}(x-\mu)\cdot \Sigma^{-1}(x-\mu).
$$
Inspired by \cite{MR4582478}, we assume that the kernel $K(x,y)$ is of the ansatz form
    \begin{align}
        \label{eq:ansatz_K}
        K(x,y):=e^{V(x)-\frac{\pot(x)}{2}} \, k(x-y) \, e^{V(y)-\frac{\pot(y)}{2}},
    \end{align}
    for a function $k$ to be determined. Then, the desired inequality \eqref{eq:stein_log_sobolev_inequality_general} reads
\begin{align}\label{eq:log_Sobolev_convolution_form}
    \begin{split}
        \lambda\,\KL(\rho || \rho_{\infty}) 
        &\leq  \int_{\Rd}\int_{\Rd}  \nabla\left(\rho(x)\,e^{V(x)}\right)\, e^{-\frac{\pot(x)}{2}}
        \cdot k(x-y)
        \nabla\left(\rho(y)\,e^{V(y)}\right)\,
        e^{-\frac{\pot(y)}{2}}\diff y\diff x \\
        & = \int_{\Rd} \nabla \left(\rho \,e^{V}\right)\, e^{-\frac{\pot}{2}} \,  k \ast \left(\nabla \left(\rho \,e^{V}\right)\, e^{-\frac{\pot}{2}}\right) \diff x,
   \end{split}
\end{align}
where $\ast$ is the convolution operator, see \eqref{eq:convolution_def}. Throughout this paper we will exclusively work with functions $\rho$ such that the RHS of \eqref{eq:log_Sobolev_convolution_form} is finite. Our strategy is to prove a sharper inequality that holds for all functions $\rho\in L^1(\Rd)$
\begin{align}\label{eq:l2_lower_bound_convolution_form}
	\lambda \int_{\Rd}\vert \rho e^{V-\frac{\pot}{2}}-\tau e^{-\frac{\pot}{2}}\vert^2\diff x
	\leq \int_{\Rd} \nabla \left(\rho \,e^{V}\right)\, e^{-\frac{\pot}{2}} \,  k \ast \left(\nabla \left(\rho \,e^{V}\right)\, e^{-\frac{\pot}{2}}\right) \diff x,
\end{align}
where $\tau$ is defined by the quotient
\begin{align}
    \label{eq:def_tau}
    \tau:={\displaystyle \int_{B_\varepsilon
    } \rF\big(\rho \, e^{V-\frac{\pot}{2}}\big)\diff \xi}\Bigg/{\displaystyle \int_{B_\varepsilon
    } \rF\big(e^{-\frac{\pot}{2}}\big)\diff \xi}
\end{align}
for some fixed $\varepsilon>0$ depending on $k$ and $d$, where $\rF(g)$ and $\hat{g}$ denote the Fourier transform of a function $g$. Observe that if $\rho=\rho_\infty=\frac{1}{Z}e^{-V}$, then $\tau=\frac{1}{Z}$, and thus both sides of the inequality \eqref{eq:l2_lower_bound_convolution_form} vanish. We have chosen the weight $e^{V-\frac{\pot}{2}}$ in the ansatz form in \eqref{eq:ansatz_K} so that it allows for a control of the Kullback-Leibler divergence in \eqref{eq:l2_lower_bound_convolution_form}. Notice that inequality \eqref{eq:log_Sobolev_convolution_form} is an immediate consequence of \eqref{eq:l2_lower_bound_convolution_form} and Lemma~\ref{lem:chi_squared_bounded_from_below_by_kullback_leibler}. Additionally, we note that the inequality \eqref{eq:l2_lower_bound_convolution_form} provides a bound on the $L^2 $-norm of $\rho \, e^{V-\frac{\pot}{2}}$ (see \eqref{eq:bound_L2_H-1_norm_by_dissipation}).

\smallskip

The main idea in our proof is to pass to Fourier variables, so that the inequality \eqref{eq:l2_lower_bound_convolution_form} simplifies. One reason for the choice of the weight in \eqref{eq:ansatz_K} is so that we can eliminate the term $e^{-V}$ when passing to Fourier variables in the dissipation. We construct $k$ such that $k\in L^1(\Rd)+L^2(\Rd)$ (i.e. $k = k_1+k_2$ with $k_1 \in L^1(\Rd)$ and $k_2 \in L^2(\Rd)$) and its Fourier transform satisfies
\begin{equation}\label{eq:condition_Fourier_transform_k}
	\frac{1}{\rkc_k} \frac{1}{1+|\xi|^2} \leq \hat{k}(\xi) \leq \rkc_k\,\frac{1}{1+|\xi|^2},
    \qquad \hat{k}\in W^{1,\infty}(\Rd),
\end{equation}
for some constant $\rkc_k\geq 1$ depending on $k$. We note Assumption \eqref{eq:condition_Fourier_transform_k} is equivalent to the Fourier transform of $k$ having quadratic decay (i.e. $\lim_{|\xi|\to \infty} \hat{k}(\xi) \, |\xi|^2 = c \in (0,\infty)$) and being locally bounded away from zero and infinity. Condition \eqref{eq:condition_Fourier_transform_k}  has two consequences for the dissipation:
\begin{enumerate}
	\item The dissipation is equivalent to the $H^{-1}$ norm on $\nabla \left(\rho \,e^{V}\right)\, e^{-\frac{\pot}{2}}$ (see \eqref{eq:definition_H-1} for the definition of $H^{-1}(\Rd)$). Thus, we may assume that $\nabla \left(\rho \,e^{V}\right)\, e^{-\frac{\pot}{2}} \in H^{-1}(\Rd)$.
	\item We get an operator $H^{-1}(\Rd)\ra H^1(\Rd); g\mapsto k\ast g$ (see Lemma \ref{lem:Plancherel_f_H-1}), so the dissipation can be interpreted as an action of the functional $\nabla (\rho\, e^{V}) \, e^{-\frac{\pot}{2}}$ on the function $k\ast~ \big(\nabla (\rho\, e^{V}) \, e^{-\frac{\pot}{2}}\big)$, denoted by the bracket $\langle \cdot, \cdot \rangle_{H^{-1}(\Rd), H^1(\Rd)}$.
\end{enumerate}
Therefore, we will assume that the following regularity conditions hold
\begin{equation}\label{eq:regularity_to_be_satisfied_ineqality_rho}
\rho \in L^1(\Rd), \qquad \rho \, e^{V-\frac{\pot}{2}} \in L^2(\Rd), \qquad  \rho \, e^{V-\frac{\pot}{2}} \, \nabla \pot \in H^{-1}(\Rd).
\tag{H}
\end{equation}
By observing that the following identity holds in the sense of distributions
\begin{align}
    \label{eq:split_stein_gradient}
    \nabla (\rho\, e^{V}) \, e^{-\frac{\pot}{2}} = \nabla (\rho \, e^{V-\frac{\pot}{2} }) + \frac{1}{2}\rho\, e^{V-\frac{\pot}{2}}\, \nabla \pot
\end{align}
we get $
\nabla (\rho\, e^{V}) \, e^{-\frac{\pot}{2}}  \in H^{-1}(\Rd)$. Thus, we may rewrite \eqref{eq:l2_lower_bound_convolution_form} equivalently as
\begin{equation}\label{eq:inequality_l2_lower_bound_formulated_H-1_H1}
	\begin{split}
		\lambda\int_{\Rd}\vert \rho& e^{V-\frac{\pot}{2}}-\tau e^{-\frac{\pot}{2}}\vert^2\diff x \\
		&\leq  \left \langle \nabla (\rho\, e^{V}) \, e^{-\frac{\pot}{2}}, k\ast \left(\nabla (\rho\, e^{V}) \, e^{-\frac{\pot}{2}}\right)  \right \rangle_{H^{-1}(\Rd), H^1(\Rd)} =: \DD_k^2(\rho||\rho_\infty),
	\end{split}
    \tag{B}
\end{equation}
where $\tau\in\R$ is given by \eqref{eq:def_tau}. We omit the subscript $\DD_k^2(\rho||\rho_\infty)=\DD^2(\rho||\rho_\infty)$ in case it is clear with which kernel $k$ we are working with. In case $\rho$ is also in $\Pro$, then we can show the following inequality holds
\begin{align}\label{eq:stein_log_sobolev_inequality}
    \lambda\, \KL(\rho||\rho_\infty) 
    \leq  \DD_k^2(\rho||\rho_\infty).
    \tag{SLSI}
\end{align}

To our knowledge, the validity of Stein-log-Sobolev inequality \eqref{eq:stein_log_sobolev_inequality} was not proven before for any kernel.

\begin{thm}[existence of kernels satisfying \eqref{eq:inequality_l2_lower_bound_formulated_H-1_H1} and \eqref{eq:stein_log_sobolev_inequality}]
    \label{thm:existence_of_maternised_convolution_kernel_that_satisfy_slsi}
    Let $\rho_\infty$, $V$ be as in \eqref{eq:general_prop_target_potentialV_f} with $\ngib\in\R$, $\mu\in\Rd$ and $\Sigma\in \R^{d\times d}$ a strictly positive definite and symmetric matrix. Then for any kernel $k\in L^1(\Rd)+L^2(\Rd)$ that satisfies the condition \eqref{eq:condition_Fourier_transform_k}, there exists a constant $\lambda>0$ dependent on $V$, $\Sigma$, $\ngib$ and $\rkc_k$ in \eqref{eq:condition_Fourier_transform_k}, such that the inequality \eqref{eq:inequality_l2_lower_bound_formulated_H-1_H1} is satisfied for all $\rho$ as in \eqref{eq:regularity_to_be_satisfied_ineqality_rho}. If additionally $\rho \in \Pro$, then the inequality \eqref{eq:stein_log_sobolev_inequality} also holds true. Moreover, there exists a constant $C>0$ depending on $k$ and $\rho_\infty$ such that for all $\rho$ satisfying \eqref{eq:regularity_to_be_satisfied_ineqality_rho} we have the inequality
    \begin{align}
        \label{eq:bound_L2_H-1_norm_by_dissipation}
        \norm{\rho e^{V-\frac{\pot}{2}}}_{L^2}^2+\norm{\rho e^{V-\frac{\pot}{2}}\nabla \pot}_{H^{-1}}^2\leq C\,\big(\norm{\rho}_{L^1}+\DD_k^2(\rho||\rho_\infty)\big).
    \end{align}
\end{thm}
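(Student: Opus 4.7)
The strategy has three components: (i) a change of variables absorbing the exponential weights; (ii) a Fourier-side reformulation using \eqref{eq:condition_Fourier_transform_k}; and (iii) a high/low frequency decomposition exploiting a zero-average property of $\hat v$ enforced by the definition of $\tau$.

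I first set $u := \rho e^{V-\pot/2}$ and $v := u - \tau e^{-\pot/2}$. The identity \eqref{eq:split_stein_gradient} factors the gradient on the right-hand side of \eqref{eq:l2_lower_bound_convolution_form} as
\[
    w := \nabla(\rho e^V)\,e^{-\pot/2} = \nabla v + \tfrac{1}{2}v\,\nabla\pot,
\]
so that \eqref{eq:inequality_l2_lower_bound_formulated_H-1_H1} reduces to proving $\lambda\|v\|_{L^2}^2 \leq \langle w, k*w\rangle_{H^{-1},H^1}$. The definition \eqref{eq:def_tau} of $\tau$ is chosen precisely so that $\int_{B_\varepsilon}\hat v\,\diff\xi = 0$. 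By Plancherel and \eqref{eq:condition_Fourier_transform_k},
\[
    \langle w, k*w\rangle_{H^{-1},H^1} = \int_{\Rd}\hat k(\xi)\,|\hat w(\xi)|^2\,\diff\xi \geq \frac{1}{\rkc_k}\int_{\Rd}\frac{|\hat w(\xi)|^2}{1+|\xi|^2}\,\diff\xi,
\]
while using $\nabla\pot(x) = \Sigma^{-1}(x-\mu)$ together with $\widehat{xf}(\xi) = i\nabla_\xi\hat f(\xi)$ yields
\[
    \hat w(\xi) = i\xi\,\hat v(\xi) + \tfrac{i}{2}\Sigma^{-1}\nabla_\xi\hat v(\xi) - \tfrac{1}{2}\Sigma^{-1}\mu\,\hat v(\xi),
\]
a first-order linear ODE system for $\hat v$ with source $\hat w$.

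The heart of the proof is a high/low frequency decomposition of $\int_{\Rd}|\hat v|^2\,\diff\xi$. On $B_\varepsilon$, solving the above display for $\nabla_\xi\hat v$ yields the pointwise bound $|\nabla_\xi\hat v|\leq C(|\hat w|+(1+|\xi|)|\hat v|)$; combined with Poincaré's inequality on $B_\varepsilon$ (applicable to $\hat v$ by its zero-average property), an absorption step valid for $\varepsilon$ sufficiently small, and the trivial estimate $1+|\xi|^2\leq 1+\varepsilon^2$ on $B_\varepsilon$, this gives $\int_{B_\varepsilon}|\hat v|^2\,\diff\xi \leq C\,\rkc_k\,\DD_k^2(\rho\|\rho_\infty)$. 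On $\{|\xi|>\varepsilon\}$, one rearranges the ODE display as $i\xi\hat v = \hat w - \tfrac{1}{2}\widehat{v\nabla\pot}$ and divides by $|\xi|^2\geq\varepsilon^2$; the auxiliary $H^{-1}$-norm of $v\nabla\pot$ that appears is itself controlled, via the same algebraic identity, by $\DD_k^2$ plus a small multiple of $\|v\|_{L^2}^2$, and a final absorption---which requires $\varepsilon$ to be chosen carefully in terms of $\rkc_k$, $\sigma_1$, $\sigma_d$, and $|\mu|$---completes the proof of \eqref{eq:inequality_l2_lower_bound_formulated_H-1_H1}.

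Applying Lemma~\ref{lem:chi_squared_bounded_from_below_by_kullback_leibler} to \eqref{eq:inequality_l2_lower_bound_formulated_H-1_H1} converts the weighted $\chi^2$-type quantity on its left-hand side into an upper bound on $\KL(\rho\|\rho_\infty)$ whenever $\rho\in\Pro$, yielding \eqref{eq:stein_log_sobolev_inequality}. For \eqref{eq:bound_L2_H-1_norm_by_dissipation}, the triangle inequality gives $\|u\|_{L^2}\leq\|v\|_{L^2}+|\tau|\|e^{-\pot/2}\|_{L^2}$, with $|\tau|$ controlled via \eqref{eq:def_tau} and Cauchy-Schwarz in terms of $\|u\|_{L^2}$ (a small constant arising from a careful choice of $\varepsilon$ permitting absorption), while the $H^{-1}$ bound on $u\nabla\pot$ follows by rewriting \eqref{eq:split_stein_gradient} as $\tfrac{1}{2}u\nabla\pot = w - \nabla u$ and using $\|\nabla u\|_{H^{-1}}\leq\|u\|_{L^2}$. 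The principal technical obstacle throughout is the absorption in the frequency decomposition: because $\nabla\pot$ is unbounded, the term $\tfrac{1}{2}v\nabla\pot$ couples $\hat v$ to $\nabla_\xi\hat v$ in an unavoidable way, and closing the resulting bootstrap---so that the coefficient multiplying $\|v\|_{L^2}^2$ on the right-hand side stays strictly below one---is what ultimately fixes the constant $\lambda$ and its dependence on the data.
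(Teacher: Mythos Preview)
Your high-frequency absorption does not close, and this is the central gap. From the identity $\tfrac{1}{2}v\nabla\pot = w - \nabla v$ you only get $\|v\nabla\pot\|_{H^{-1}}^2 \leq 8\|w\|_{H^{-1}}^2 + 8\|v\|_{L^2}^2$, with a fixed constant $8$ rather than a small one. Worse, to pass from $\int_{|\xi|>\varepsilon}|\widehat{v\nabla\pot}|^2/|\xi|^2$ to the $H^{-1}$ norm you must multiply by $(1+\varepsilon^2)/\varepsilon^2$, so the coefficient in front of $\|v\|_{L^2}^2$ on the right-hand side of your high-frequency estimate is of order $1/\varepsilon^2$, which diverges as $\varepsilon\to0$. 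Thus the two regimes pull in opposite directions: the low-frequency Poincar\'e step requires $\varepsilon$ small, while your high-frequency step requires $\varepsilon$ large, and no choice of $\varepsilon$ makes both absorption constants smaller than one. The same issue contaminates your $\tau$ estimate for \eqref{eq:bound_L2_H-1_norm_by_dissipation}: Cauchy--Schwarz on $B_\varepsilon$ gives $|\tau|\lesssim |B_\varepsilon|^{-1/2}\|u\|_{L^2}$, again large for small $\varepsilon$.

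The paper's argument circumvents this by \emph{expanding} the square $\hat k\,|\hat w|^2$ and integrating the cross term by parts, which converts the dissipation into $\int q\,|\hat g|^2 + \tfrac{1}{16\pi^2}\int \hat k\,|\nabla\hat g|^2$ with an explicit weight $q(\xi)=4\pi^2|\xi|^2\hat k(\xi)-\tfrac12\divv(\xi\hat k(\xi))$. The point is that $q$ is \emph{automatically positive} at high frequency (since $|\xi|^2\hat k(\xi)\to c>0$), so no absorption is needed there at all; only the small ball where $q<0$ must be handled by Poincar\'e--Wirtinger. A second ingredient you are missing is that the paper does not verify this directly for arbitrary $k$ satisfying \eqref{eq:condition_Fourier_transform_k}, but first uses the two-sided bound to replace $\hat k$ by a \emph{specific} reference kernel $\hat k_{0,d}$ (Mat\'ern in 1D, an explicitly constructed kernel in higher dimensions) for which the numerical constraint \eqref{thm:bounded kernel frequency with step function weight:eq1} can be checked. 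The $\tau$ bound in \eqref{eq:bound_L2_H-1_norm_by_dissipation} is then obtained by localizing \eqref{eq:inequality_l2_lower_bound_formulated_H-1_H1} to a fixed bounded set and using $\|\rho\|_{L^1}$ rather than $\|u\|_{L^2}$ to control the cross term.
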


\begin{rem}
    \label{rem:facts_main_thm}
    We emphasize some important facts about this result.
    \begin{itemize}
        \item The Stein-log-Sobolev constant $\lambda$ can be made explicit. In fact, by tracking all the dependencies in Section \ref{section:existence_kernels_satisfying_LSS_inequality}, the constant is given by 
        \begin{align*}
            \lambda=\frac{\lambda_{0,1}}{\rkc_k^{\,3} \,\rkc_{k_{0,d}}}\cdot\frac{e^{\ngib}}{Z}\cdot\frac{\sigma_1\wedge 1}{\sigma_d},
        \end{align*}
        where $\lambda_{0,d}>0$ is a constant defined in \eqref{eq:lambda_0_one_dimension} for $d=1$ and in \eqref{eq:formula_for_lambda_0} for $d\geq 2$, $k_{0,d}$ is the kernel defined in \eqref{eq:def_k01} for $d=1$ or is the kernel constructed in Section \ref{sect:exist_kernel_any_dimensio} for $d\geq 2$, the values $\sigma_1$, $\sigma_d$, $\ngib$, $Z$ are from \eqref{eq:general_prop_target_potentialV_f}, and $\rkc_k$ and $\rkc_{k_{0,d}}$ are the constants from \eqref{eq:condition_Fourier_transform_k} for $k$ and $k_{0,d}$ respectively.
        \item Let $\{V_n\}_{n\in\N}$ be a sequence of potentials satisfying assumptions of Theorem \ref{thm:existence_of_maternised_convolution_kernel_that_satisfy_slsi} and let $C_{V_n}$ be their constants in \eqref{eq:bound_L2_H-1_norm_by_dissipation}. Assume there exists a bounded open set $U\subseteq\Rd$ such that $\|V_n-V\|_{L^{\infty}(U)} \to 0$, then $C_{V_n} \to C_V$. This follows from \eqref{eq:bound_tau_on_bnd_set}. 
        \item We could generalize Theorem \ref{thm:existence_of_maternised_convolution_kernel_that_satisfy_slsi} for every kernel $k\in L^1(\Rd)+L^2(\Rd)$ that satisfies
        \begin{equation}\label{eq:weaker_condition_kernel}
            \frac{1}{\rkc_k}\frac{1}{1+|\xi|^2}\leq \hat{k}(\xi),\qquad \hat{k}\in W^{1,\infty}(\Rd),
        \end{equation}
        for some constant $\rkc_k\geq 1$ depending on $k$ at the price that $\DD_k^2(\rho||\rho_\infty)$ will not be a duality pairing between $H^{-1}(\Rd)$ and $H^1(\Rd)$. Since kernels satisfying \eqref{eq:weaker_condition_kernel} are less regular than \eqref{eq:condition_Fourier_transform_k}, we do not see any gain from using kernels satisfying \eqref{eq:weaker_condition_kernel}.
    \end{itemize}
\end{rem}

The proof of Theorem \ref{thm:existence_of_maternised_convolution_kernel_that_satisfy_slsi} is based on writing the RHS of \eqref{eq:inequality_l2_lower_bound_formulated_H-1_H1} in Fourier variables and a careful analysis of the resulting expression. The strategy is detailed at the beginning of Section \ref{section:existence_kernels_satisfying_LSS_inequality}.

\smallskip

It is not a priori clear that there exist solutions to \eqref{eq:stein_gradient_descent_kullback_leibler} for kernels constructed in Theorem \ref{thm:existence_of_maternised_convolution_kernel_that_satisfy_slsi}. In fact, even for the explicit kernel $k(x) = e^{-|x|}$ in one dimension, its second derivative is singular, preventing the application of classical arguments. The result below provides the existence of a weak solution with exponential decay to $\rho_{\infty}$ as $t \to \infty$.

\begin{thm}[existence of weak solutions to \eqref{eq:stein_gradient_descent_kullback_leibler}]
	\label{thm:existence_of_distributional_solutions_to_mfsvgd}
    Suppose that \eqref{eq:general_prop_target_potentialV_f} holds and $\nabla V$ is continuous, $V \in H^m_{loc}(\Rd)$ for some $m > \frac{d}{2}$. Let $\rho_0\in\Pro$ with $\KL(\rho_0||\rho_\infty)<\infty$ and $k\in L^1(\Rd)+L^2(\Rd)$ satisfying \eqref{eq:condition_Fourier_transform_k}. 
    Then, there exists a curve of probabiliy measures $[0,\infty) \ni t \ra \rho_t\in \mathscr{P}(\Rd)$
    that is a distributional solution to \eqref{eq:stein_gradient_descent_kullback_leibler} with initial value $\rho_0$, i.e. for all $T>0$ and for all test functions $\psi\in C_c^\infty([0,T]\times \Rd)$ we have
    \begin{align}
        \label{eq:weak_sol_mfsvgd}
    	\begin{split}
    	    \int_{\Rd}\psi(T,x)\, \rho_T(x) \diff x =& \int_{\Rd}\psi(0,x)\, \rho_0(x) \diff x + 
            \int_0^T \int_{\Rd}\prt_t\psi\; \rho \diff x\diff t\\
            &-\int_0^T\int_{\Rd} k\ast \left(\nabla(\rho e^{V})\, e^{-\frac{\pot}{2}}\right)\cdot \rho\,  e^{V-\frac{\pot}{2}} \nabla \psi\diff x\diff t.
    	\end{split}
    \end{align}
    The curve is absolutely continuous with respect to the Wasserstein metric $W_1$ and it enjoys the following regularity
    \begin{equation}\label{eq:regularity_weak_sol}
    \rho\, e^{V-\frac{\pot}{2}} \in L^2((0,T)\times\Rd), \quad   \nabla(\rho \, e^{V})e^{-\frac{\pot}{2}} \in L^2(0,T; H^{-1}(\Rd)) \mbox{ for all } T>0.
    \end{equation}
    Additionally, $\rho$ satisfies an energy dissipation type inequality and exponential decay in the Kullback-Leibler distance 
    \begin{equation}\label{eq:KL:dissipation_inequality_solution}
            \KL(\rho_t||\rho_\infty) + \int_0^t \DD^2(\rho_s||\rho_\infty)\diff s \leq \KL(\rho_0||\rho_\infty), \qquad \KL(\rho_t||\rho_\infty)
            \leq e^{-\lambda t}\, \KL(\rho_0||\rho_\infty).
        \end{equation}
\end{thm}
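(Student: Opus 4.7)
\textbf{Proof plan for Theorem~\ref{thm:existence_of_distributional_solutions_to_mfsvgd}.} My plan is to construct solutions by an approximation scheme based on kernel mollification, derive $\varepsilon$-uniform a priori estimates from the energy-dissipation identity combined with Theorem~\ref{thm:existence_of_maternised_convolution_kernel_that_satisfy_slsi}, extract a limit by compactness, pass to the limit in the weak formulation, and finally invoke the Stein-log-Sobolev inequality on the limit to obtain the exponential decay through Grönwall.

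First I would regularize the kernel to $k^\varepsilon := \eta_\varepsilon \ast k$ with $\eta_\varepsilon$ a standard mollifier, so that $\widehat{k^\varepsilon}$ is smooth and rapidly decaying while still satisfying \eqref{eq:condition_Fourier_transform_k} with a constant $\rkc_{k^\varepsilon}$ uniform in $\varepsilon$. I would also mollify the initial datum to a sequence $\rho_0^\varepsilon \in C_c^\infty(\Rd)$ with $\KL(\rho_0^\varepsilon||\rho_\infty) \to \KL(\rho_0||\rho_\infty)$. For each fixed $\varepsilon$, the velocity field in \eqref{eq:stein_gradient_descent_kullback_leibler} with $k$ replaced by $k^\varepsilon$ is smooth whenever $\rho^\varepsilon$ is (by the Schwartz-type decay of $\widehat{k^\varepsilon}$ and the assumption $V\in H^m_{\loc}(\Rd)$, $m>d/2$), so a smooth solution $\rho^\varepsilon$ on $[0,T]\times \Rd$ can be constructed by a Picard/Banach fixed-point argument applied to the continuity equation driven by this velocity field, and extended globally via the a priori energy estimate below.

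By construction, $\rho^\varepsilon$ satisfies the exact dissipation identity $\partial_t\KL(\rho^\varepsilon_t||\rho_\infty)+\DD_{k^\varepsilon}^2(\rho^\varepsilon_t||\rho_\infty)=0$, and integrating gives $\int_0^T \DD_{k^\varepsilon}^2 \diff t \leq \KL(\rho_0||\rho_\infty)$. Applying Theorem~\ref{thm:existence_of_maternised_convolution_kernel_that_satisfy_slsi} together with Remark~\ref{rem:facts_main_thm} (the constant in \eqref{eq:bound_L2_H-1_norm_by_dissipation} depends on $k$ only through $\rkc_k$, which is stable under mollification) then yields $\varepsilon$-uniform bounds on $\rho^\varepsilon e^{V-\frac{\pot}{2}}$ in $L^2((0,T)\times\Rd)$ and on $\nabla(\rho^\varepsilon e^V) e^{-\frac{\pot}{2}}$ in $L^2(0,T;H^{-1}(\Rd))$. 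Testing \eqref{eq:weak_sol_mfsvgd} with a $1$-Lipschitz $\psi$, Cauchy-Schwarz together with the mapping $k^\varepsilon\ast : H^{-1}(\Rd)\to H^1(\Rd)$ give
$$\left|\int_{\Rd}\psi(\rho^\varepsilon_t-\rho^\varepsilon_s)\diff x\right|\leq \|\psi\|_{\Lip}\int_s^t \norm{\rho^\varepsilon e^{V-\frac{\pot}{2}}}_{L^2}\,\norm{k^\varepsilon\ast(\nabla(\rho^\varepsilon e^V)e^{-\frac{\pot}{2}})}_{L^2}\diff t',$$
so by Kantorovich-Rubinstein, $t\mapsto\rho^\varepsilon_t$ is $W_1$-equi-absolutely continuous. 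Tightness from mass conservation and the growth of $V$ together with Arzelà-Ascoli then produce a $W_1$-continuous limit $\rho$.

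The main obstacle will be passing to the limit in the nonlinear flux in \eqref{eq:weak_sol_mfsvgd}, since both factors converge only weakly in $L^2_{t,x}$. I would rewrite the flux using \eqref{eq:split_stein_gradient} in terms of $u^\varepsilon := \rho^\varepsilon e^{V-\frac{\pot}{2}}$, and then establish strong convergence of $u^\varepsilon$ in $L^2(0,T;L^2_{\loc}(\Rd))$ through an Aubin-Lions-type argument: the time regularity of $u^\varepsilon$ comes from the PDE, which bounds $\partial_t u^\varepsilon$ in a suitable negative Sobolev space, while spatial compactness on bounded sets is obtained from the uniform $H^{-1}$ bound on $\nabla u^\varepsilon$ (via \eqref{eq:split_stein_gradient}) combined with the $L^2$ control on $u^\varepsilon$, supplemented by the smoothing effect of the regularized kernel to control oscillations at scale $\varepsilon$. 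The resulting strong$\times$weak convergence lets me pass to the limit and obtain \eqref{eq:weak_sol_mfsvgd}; the inequality in \eqref{eq:KL:dissipation_inequality_solution} then follows from lower semicontinuity of $\KL$ along $W_1$ convergence and of the $H^{-1}$-$H^1$ duality pairing. The exponential decay is finally a direct consequence: applying \eqref{eq:stein_log_sobolev_inequality} to the limit gives $\partial_t \KL(\rho_t||\rho_\infty)\leq -\lambda\KL(\rho_t||\rho_\infty)$ in integral form, and Grönwall's lemma concludes.
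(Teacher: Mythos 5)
Your proposal differs from the paper in three structural choices—mollify the kernel $k$, keep the equation first order, and use an Aubin--Lions argument in $L^2_{\loc}$—and each of them introduces a genuine gap.

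\textbf{Mollifying $k$ destroys \eqref{eq:condition_Fourier_transform_k}.} You set $k^\varepsilon=\eta_\varepsilon\ast k$, so $\widehat{k^\varepsilon}(\xi)=\hat\eta(\varepsilon\xi)\,\hat k(\xi)$, and claim $\widehat{k^\varepsilon}$ remains bounded below by a multiple of $(1+|\xi|^2)^{-1}$ uniformly in $\varepsilon$. This cannot be: $\hat\eta$ decays rapidly (and for a compactly supported $\eta$ may even change sign), so for $|\xi|\gg 1/\varepsilon$ the lower bound in \eqref{eq:condition_Fourier_transform_k} fails for \emph{every} constant, not merely with a worse constant. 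But the lower bound is exactly what turns the dissipation into an $H^{-1}$--$H^1$ duality pairing (Lemma~\ref{lem:Plancherel_f_H-1}, estimate \eqref{eq:equivalence_FI_H-1_proof_estimates}) and what makes Theorem~\ref{thm:existence_of_maternised_convolution_kernel_that_satisfy_slsi} applicable. So the $\varepsilon$-uniform bounds \eqref{eq:regularity_weak_sol} you want to extract from the dissipation identity simply do not follow. The paper avoids this by never touching $k$: it mollifies $V$ and the density, works on $B_R$ with Dirichlet boundary, and adds an artificial diffusion $\frac{1}{R}\divv(\nabla\rho+\rho\nabla V_\eta)$. This keeps the Stein--log--Sobolev structure intact along the approximation (see \eqref{eq:estimate_V_eta_from_below}).

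\textbf{The compactness argument is in the wrong space.} You propose strong $L^2(0,T;L^2_{\loc})$ convergence of $u^\varepsilon=\rho^\varepsilon e^{V-\frac{\pot}{2}}$ by Aubin--Lions, using ``the uniform $H^{-1}$ bound on $\nabla u^\varepsilon$.'' But $u\in L^2$ already implies $\nabla u\in H^{-1}$, so the combination $u^\varepsilon\in L^2$, $\nabla u^\varepsilon\in H^{-1}$ is no more than an $L^2$ bound, and the embedding $L^2(B_L)\hookrightarrow L^2(B_L)$ is not compact; Aubin--Lions gives nothing here. What does work, and what the paper does, is to pass one level down: the localized factor $((\rho^R\nabla\vp)\ast\omega_R)e^{V_R-\frac{\pot}{2}}$ is bounded in $L^2_{t,x}$ with time derivative controlled in a negative space, and $L^2(B_{L+1})\hookrightarrow H^{-1}(B_{L+1})$ \emph{is} compact, yielding strong convergence in $L^2(0,T;H^{-1})$ (Lemma~\ref{lem:weak_convergences_mfsvgd}, item \ref{lem:weak_convergences_mfsvgd:L2}). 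Pairing that against the weak $L^2(0,T;H^1)$ convergence of $k\ast(\cdots)$ gives the limit of the nonlinear term. You also need the unweighted density to converge; the paper obtains $\rho^R\to\rho$ in $C([0,T];H^{-m}(B_L))$ and weak $L^1$ convergence per time slice via the $L^1$-equi-integrability coming from the $\KL$ bound. Your $W_1$-equicontinuity step is fine but by itself does not give the product convergence.

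\textbf{Existence for the first-order approximation is not assured.} Without the added diffusion and without truncation to a bounded domain, your approximating equation is a nonlinear nonlocal transport equation on $\Rd$ whose velocity contains the unbounded weight $e^{V-\frac{\pot}{2}}\gtrsim e^{\ngib+\frac{\pot}{2}}$. A Picard iteration would at best be local in time, and the global extension would have to use the energy identity before the solution is known to exist. The paper sidesteps this by proving classical existence for the strictly parabolic Dirichlet problem \eqref{eq:regularised_weak_pde}--\eqref{ass:mfsvgd:ini} via Leray--Schauder and Schauder estimates (Appendix~\ref{appendix:existence_approximations_former_appendix}); the $\frac{1}{R}$-diffusion is also what gives the Fisher-information term $\frac{1}{R}\int|\nabla\rho+\rho\nabla V_R|^2/\rho$ used in \ref{lem:uniform_estimates:L^2_est_diff} and \ref{lem:weak_convergences_mfsvgd:dissipation}. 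Your plan as written has neither a working approximation scheme nor the right compactness; both would need to be redesigned along the lines of the paper.
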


The formula \eqref{eq:weak_sol_mfsvgd} makes sense because the LHS is well-defined by the narrow continuity in time of $\rho$. The last term on the RHS is well-defined by the Hölder's inequality, regularity \eqref{eq:regularity_weak_sol} and Lemma \ref{lem:Plancherel_f_H-1}, this yields
\begin{align*}
    k\ast\left(\nabla(\rho \, e^{V})e^{-\frac{\pot}{2}}\right) \in L^2(0,T; H^{1}(\Rd)) \subset L^2((0,T)\times\Rd).
\end{align*}

The last result highlights several cases when the inequality \eqref{eq:stein_log_sobolev_inequality_general_alpha} cannot hold true.

\begin{thm}[Conditions for failure of \eqref{eq:stein_log_sobolev_inequality_general_alpha}]
\label{thm:conditions_for_slsi} 
    Assume that $\alpha\geq 1$. The inequality \eqref{eq:stein_log_sobolev_inequality_general_alpha} will fail in the following cases.
    \begin{enumerate}[label=(F\arabic*)]
        \item \label{thm:failure_of_slsi_for_regular_convolution_kernels}
    Let $V=\pot$ (so $\rho_\infty$ is Gaussian) and $k\in L^1(\Rd)+L^2(\Rd)$ be such that there exists $r> 1+\frac{d}{2}$ with
    \begin{align*}
        \hat{k}(\xi)\leq C\frac{1}{(1+|\xi|^2)^r}.
    \end{align*}
    The kernel is defined by $K(x,y):=e^{\frac{\pot(x)}{2}}k(x-y)e^{\frac{\pot(y)}{2}}$.
        \item\label{thm:failure of slsi for integrable convolution kernels}  
        Let $r\in [1,\infty]$, $\beta \in \left[0,\left(2-\frac{1}{2r}\right)d+1\right)$, $f \in L^{\infty}(\R^d)$ and $k \in L^r(\R^d)$. The kernel is defined by $K(x,y):=(1+|x|^\beta)f(x) k(x-y)(1+|y|^\beta)f(y)$.
    \item \label{thm:failure_of_slsi_for_homogeneous_convolution_kernels}
    Let $\gamma>0$, $\beta <0$, $k:\Rd\ra\R$ is a $\beta$-homogeneous positive definite kernel (in the sense of distributions), $V:\Rd\ra[0,\infty)$ is $\gamma$-homogeneous such that it is differentiable in the Sobolev sense, $\nabla V$ is $(\gamma-1)$-homogeneous and
    \begin{align}
        \label{eq:homogoneity_equation}
        \int_{\Rd}\int_{\Rd} \nabla V(x)\cdot k(x-y)\nabla V(y)\diff \rho_\infty(y)\diff\rho_\infty(x)<\infty.
    \end{align}
    The kernel is defined by $K(x,y):=k(x-y)$ and the following condition holds:
    \begin{align*}
        (\alpha-2)\,\gamma > \beta-2.
    \end{align*}
    \end{enumerate}
\end{thm}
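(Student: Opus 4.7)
For each of the three failure modes, the goal is to construct a sequence of probability densities $\{\rho_n\}_{n\in\N}\subset\Pro$ for which $\mathcal{D}^2(\rho_n\|\rho_\infty)/\KL(\rho_n\|\rho_\infty)^\alpha\to 0$; this immediately contradicts \eqref{eq:stein_log_sobolev_inequality_general_alpha} for any $\lambda>0$. The three cases will be handled by three complementary mechanisms: an exact scaling, a high-frequency oscillation, and a Young-type estimate for a convolution.

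\textbf{Case \ref{thm:failure_of_slsi_for_homogeneous_convolution_kernels}.} The plan is to set $\rho_n(x) := \lambda_n^d\rho_\infty(\lambda_n x)$ and let $\lambda_n\to 0^+$. Using the $\gamma$-homogeneity of $V$ one directly computes $\rho_n/\rho_\infty(x) = \lambda_n^d e^{V(x)(1-\lambda_n^\gamma)}$ and $\nabla(\rho_n/\rho_\infty)\,\rho_\infty = (1-\lambda_n^\gamma)\nabla V\,\rho_n$, from which $\KL(\rho_n\|\rho_\infty) = d\ln\lambda_n + (1-\lambda_n^\gamma)\lambda_n^{-\gamma}\int V\,d\rho_\infty\sim C\lambda_n^{-\gamma}$. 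A change of variables $z=\lambda_n x$, $w=\lambda_n y$ in $\mathcal{D}^2$, together with the $(\gamma-1)$-homogeneity of $\nabla V$ and the $\beta$-homogeneity of $k$, produces $\mathcal{D}^2(\rho_n\|\rho_\infty) = (1-\lambda_n^\gamma)^2\,\lambda_n^{2-2\gamma-\beta}\,C'$ with $C'$ the finite integral in \eqref{eq:homogoneity_equation}. The ratio $\mathcal{D}^2/\KL^\alpha$ then scales as $\lambda_n^{\alpha\gamma-2\gamma+2-\beta}$, which tends to $0$ as $\lambda_n\to 0$ precisely when $(\alpha-2)\gamma>\beta-2$.

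\textbf{Case \ref{thm:failure_of_slsi_for_regular_convolution_kernels}.} Since $V=\pot$ is Gaussian, the kernel is of the ansatz form \eqref{eq:ansatz_K} and so \eqref{eq:log_Sobolev_convolution_form} combined with Plancherel yields
\[
\mathcal{D}^2(\rho\|\rho_\infty) = \frac{1}{Z^2}\int_{\Rd}\hat k(\xi)\,\bigl|\mathcal F\bigl(\nabla u\,e^{-\pot/2}\bigr)(\xi)\bigr|^2\,d\xi,\qquad u=\rho/\rho_\infty.
\]
I would pick $u_n = 1 + \varepsilon\bigl(\cos(N_n x_1)\phi(x) - c_n\bigr)$ with $\phi\in C_c^\infty(\Rd)$, $\varepsilon$ small enough to enforce $\rho_n\geq 0$, and $c_n\to 0$ the mean-zero correction needed for $\rho_n\in\Pro$ (which vanishes by Riemann--Lebesgue). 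The Fourier transform of $\nabla u_n\,e^{-\pot/2}$ concentrates near $\xi=\pm N_n e_1$ with amplitude of order $N_n$, so the assumed decay of $\hat k$ gives $\mathcal{D}^2(\rho_n\|\rho_\infty)\lesssim \varepsilon^2 N_n^2\,\hat k(N_n e_1)\lesssim \varepsilon^2 N_n^{2-2r}$, whereas $\KL(\rho_n\|\rho_\infty)\sim \tfrac14 \varepsilon^2\int \phi^2\,d\rho_\infty$ by the same averaging. Hence the ratio is $O(N_n^{2-2r})\to 0$ since $r>1$, the stronger hypothesis $r>1+d/2$ ensuring $k\in L^1(\Rd)+L^2(\Rd)$ and enough regularity of $\hat k$ to justify the Fourier manipulations.

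\textbf{Case \ref{thm:failure of slsi for integrable convolution kernels} and main obstacle.} Now the kernel is not of the ansatz form, so the dissipation is bounded directly via Young's inequality: setting $A(x)=(1+|x|^\beta)f(x)(\nabla\rho+\rho\nabla V)$ and $q=2r/(2r-1)$, one has $\mathcal{D}^2(\rho\|\rho_\infty)\leq \|k\|_{L^r}\|A\|_{L^q}^2$. The plan is to combine two families of test functions. Firstly, the translation $\rho_n(x)=\rho_\infty(x-\tau_n)$ gives $\KL\sim|\tau_n|^2/2$ and, after shifting the integrals, $\mathcal{D}^2\sim|\tau_n|^{2+2\beta}$, so letting $|\tau_n|\to 0$ or $|\tau_n|\to\infty$ handles $\beta$ away from $\alpha-1$. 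Secondly, the broadened Gaussian $\rho_n\propto e^{-(1-\varepsilon_n)|x|^2/2}$ yields $\KL\sim(1-\varepsilon_n)^{-1}$ and, via Young's, $\mathcal{D}^2\lesssim(1-\varepsilon_n)^{d/(2r)-1-\beta}$, producing failure for $\beta<\alpha-1+d/(2r)$. The principal obstacle is that neither family alone saturates the claimed range $\beta<(2-1/(2r))d+1$; one needs to carefully optimise the profile (for instance by combining broadening with a spatial rescaling $\lambda_n$ tuned to $\varepsilon_n$) so that the competing exponents from the polynomial weight $(1+|x|^\beta)$, the Gaussian decay of $\rho_\infty$, and the Young exponent $q$ conspire to match the stated threshold.
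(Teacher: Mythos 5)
Your treatment of the three cases is uneven; let me take them in turn.

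For \ref{thm:failure_of_slsi_for_homogeneous_convolution_kernels} your proposal is essentially the paper's argument. Writing $\rho_n(x)=\lambda_n^d\,\rho_\infty(\lambda_n x)$ with $\lambda_n\to 0^+$ is just the pushforward $(S_\mu)_\#\rho_\infty$ with $\mu=1/\lambda_n\to\infty$ used in the paper; the exponent-matching $(\alpha-2)\gamma>\beta-2$ comes out identically.

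For \ref{thm:failure_of_slsi_for_regular_convolution_kernels} you take a genuinely different and equally valid route. The paper uses shrinking Gaussians $\rho_n=\mathcal{N}(\mu,\tfrac1n\Sigma)$: under the Fourier transform $\hat g_n$ flattens to a constant so the weighted integrals stay uniformly bounded (this is exactly where the hypothesis $r>1+\tfrac{d}{2}$ is used, to guarantee $q\in L^1(\Rd)$ and $(1+|\xi|^2)^{-r}\in L^1(\Rd)$), while $\KL(\rho_n||\rho_\infty)=\tfrac d2(\tfrac1n-1+\ln n)\to\infty$. Your high-frequency perturbation $u_n=1+\varepsilon(\cos(N_n x_1)\phi-c_n)$ keeps $\KL$ bounded away from zero and drives $\DD^2\lesssim\varepsilon^2 N_n^{2-2r}\to 0$. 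Both make the ratio vanish. Your mechanism only needs $r>1$, so if the details are carried out carefully it would actually strengthen the result slightly; the drawback is that you must justify the positivity $\rho_n\geq 0$, the mean-zero adjustment $c_n\to 0$, and the concentration of $\mathcal{F}(\nabla u_n\,e^{-\pot/2})$ near $\pm N_n e_1$, which takes more bookkeeping than the paper's exact Gaussian computation via Lemma~\ref{lem:kl_gaussians}.

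For \ref{thm:failure of slsi for integrable convolution kernels} there is a real gap, as you yourself note: neither translations nor broadened Gaussians reach the stated threshold, and your sketch of "tuning a rescaling" is not an argument. The missing idea is simpler and sharper than a two-parameter optimisation: instead of a sequence that makes a ratio blow up, use a \emph{single} density $\rho$ for which $\KL(\rho||\rho_\infty)=+\infty$ while the Young bound still gives $\DD^2(\rho||\rho_\infty)<\infty$. Concretely the paper takes $V=|x|^2$ and $\rho\propto(1+|x|^{d+2})^{-1}$, so that $\rho\log\rho\in L^1(\Rd)$ but $\int\rho\,V\,dx=+\infty$, while $(1+|x|^\beta)\nabla\rho$ and $(1+|x|^\beta)\rho\nabla V$ are in $L^p(\Rd)$ with $p=\tfrac{2r}{2r-1}$ for the stated $\beta$. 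This has the extra virtue of handling all $\alpha\geq 1$ at once, since $\lambda\cdot\infty^\alpha=\infty\not\leq\DD^2<\infty$; your sequence-based approach would need separate tuning for each $\alpha$. One more caution: the polynomial-tail computation actually yields the constraint $\beta<d+1-\tfrac dp=1+\tfrac{d}{2r}$, so you should verify for yourself whether the threshold $\beta<(2-\tfrac{1}{2r})d+1$ in the theorem is fully covered or whether the paper's own proof establishes a narrower range — but either way, the polynomial-tail construction is the idea you are missing.
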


We explain the implications for each case:
\begin{itemize}
    \item[\ref{thm:failure_of_slsi_for_regular_convolution_kernels}:] The inequality \eqref{eq:stein_log_sobolev_inequality_general_alpha} fails when $k$ in $C^s(\Rd)\cap L^2(\Rd)$ for $s>d+1$. Theorem \ref{thm:existence_of_maternised_convolution_kernel_that_satisfy_slsi} and Remark \ref{rem:facts_main_thm} show that the inequality \eqref{eq:stein_log_sobolev_inequality_general_alpha} may hold for $s\in [0,1]$, see \eqref{eq:condition_Fourier_transform_k}. There is a gap in regularity $C^s(\Rd)$ for $s\in (1,d+1]$, which remains an open problem.
    \item[\ref{thm:failure of slsi for integrable convolution kernels}:] We conjecture that the kernel must have exponential weights. The inequality \eqref{eq:stein_log_sobolev_inequality_general_alpha} fails for polynomial weights of the form $ (1 + |x|^\beta) f(x) $ for $\beta$ sufficiently small. There is an open gap in our results, and we conjecture that the kernel will also fail for any pair $ \beta \in [0, \infty) $ and $ r \in [1, \infty) $.
    \item[\ref{thm:failure_of_slsi_for_homogeneous_convolution_kernels}:] The inequality \eqref{eq:stein_log_sobolev_inequality_general_alpha} holds if the kernel is not too repulsive and if there is not too much mass is in the tails of $\rho_\infty$. 
\end{itemize}

These failure conditions highlight that the assumptions we make in Theorem \ref{thm:existence_of_maternised_convolution_kernel_that_satisfy_slsi}, are partially necessary. We comment that our failure results complement those by \cite{pmlr-v139-korba21a, MR4582478}. We leave it as an open problem to sharpen the conditions for which \eqref{eq:inequality_l2_lower_bound_formulated_H-1_H1} and \eqref{eq:stein_log_sobolev_inequality} hold.

\subsection{Structure of the paper}
In Section \ref{section:notation and preliminaries} we collect the notation used throughout the paper. Then, in Sections \ref{section:existence_kernels_satisfying_LSS_inequality}, \ref{section:weak solutions to the stein variational gradient descent equation} and \ref{section:necessary_properties_of_the_kernel}, we prove Theorems \ref{thm:existence_of_maternised_convolution_kernel_that_satisfy_slsi}, \ref{thm:existence_of_distributional_solutions_to_mfsvgd} and \ref{thm:conditions_for_slsi}, respectively. The Appendix is comprised of four parts. In Appendix \ref{appendix:incomplete_gamma_functions} we present results about the incomplete gamma function, in Appendix \ref{appendix:prop_H-1} we mention properties of the $H^{-s}$ Sobolev space, in Appendix \ref{appendix:existence_approximations_former_appendix} we provide an existence proof necessary for Theorem \ref{thm:existence_of_distributional_solutions_to_mfsvgd}, and in Appendix \ref{appendix:auxiliary} we state several auxiliary results.

\section{Preliminaries and Notation}\label{section:notation and preliminaries}

For the convenience of the reader, we collect here the notation used throughout the paper.

\smallskip

\underline{\it Function spaces.} We will always work in the spaces $\Rd$ and $[0,T]\times\Rd$ and their subsets. For the latter, we call $t$, $x$ the temporal- and spatial variable respectively for $(t,x)\in [0,T]\times\Rd$. For any $R>0$ we define the open ball of radius $R$ by $B_R:= \{x\in\Rd\;\vert\; |x|<R\}$. The Schwarz space and the space of tempered distributions are denoted with $\mathcal{S}(\Rd)$ and $\mathcal{S}'(\Rd)$, respectively. These function spaces are endowed with a duality pairing, which we denote by $\mathcal{S}'(\Rd)\times \mathcal{S}(\Rd)\ni (u,g)\mapsto \inner{u}{g}_{\mathcal{S}',\mathcal{S}}\in\R$. We let $\Lip_L(\Rd)$ denote the space of $L$-Lipshitz function for $L>0$. We denote by $C^\alpha(\R)$ the $\alpha$-Hölder continuous functions for $\alpha\in (0,1)$. 

\smallskip

Let $\mathcal{M}(\Rd)$ denote the space of signed Radon measures on $\Rd$ with finite total variation. The total variation of $\mu\in\mathcal{M}$ is defined by $\norm{\mu}_{\mathrm{TV}}:=\abs{\mu}(\Rd)$, where we have the Hahn-Jordan decomposition $\mu=\mu^+-\mu^-$ for non-negative measures $\mu^+$, $\mu^-$ and $\abs{\mu}:=\mu^++\mu^-$. Given two measures $\mu$, $\nu$ we write $\nu\ll \mu$ if there exists a measurable function $h:\Rd\ra\R$ such that $\diff \nu(x)=h(x)\diff \mu(x)$. In that case we write $\frac{\diff \nu}{\diff \mu}:= h$. We let $\Pro$ denote the space of probability measures and formally endow the space with the Kantorovich-Rubinstein metric
\begin{align*}
    W_1(\mu,\nu):=\sup_{f\in \Lip_1(\Rd)} \left\vert \int_{\Rd} f(x)\diff \mu(x)- \int_{\Rd} f(x)\diff \nu(x)\right\vert
    \qquad\forall \mu,\nu\in \Pro.
\end{align*}
We say a curve of probability measures $[0,T]\ni t\ra \mu_t \in \mathscr{P}(\Rd)$ is absolutely continuous if there exists a function $f\in L^1(0,T)$ with
\begin{align*}
    \sup_{t\in[0,T]} \int_{\Rd} |x|\diff \mu_t<\infty,\qquad
    W_1(\mu_{t_0},\mu_{t_1})\leq \int_{t_0}^{t_1} |f(s)|\diff s
    \qquad\forall t_0,t_1\in [0,T].
\end{align*}
For $p\in [1,\infty]$ the space of measurable functions with finite $L^p$-norm is denoted by $L^p(\Rd)$. In particular, $L^2(\Rd)$ is endowed with the complex inner product $\inner{f}{g}:=\int_{\Rd}f\cdot \overline{g}\diff x$. The Sobolev space $H^{s}(\Rd)$ for $s\in\R$ is the space of tempered distributions
\begin{equation}\label{eq:definition_H-1}
H^{s}(\Rd) = \{ u \in \mathcal{S}'(\Rd): (1+|\xi|^2)^{\frac{s}{2}}\, \hat{u}(\xi) \in L^2(\Rd) \},
\end{equation}
We remark that \eqref{eq:definition_H-1} in particular implies that $\hat{u}$, a priori an element of $\mathcal{S}'(\Rd)$, is in fact a function (see \cite[Proposition 9.16]{MR1681462} for the proof of equivalency of these definitions). For $s\geq 0$, we define a duality pairing between $H^s(\Rd)$ and $H^{-s}(\Rd)$ by
\begin{align*}
    \langle f, g \rangle_{H^{-s}, H^s}
    :=\inner{f}{g}.
\end{align*}
For any $s,L>0$ we define the space of Sobolev functions with Dirichlet boundary as the following completion under $H^s(\Rd)$ 
\begin{align*}
    H_0^s(B_L):=\overline{\{\vp\in C^\infty (\Rd)\;\vert\; \supp(\vp)\subseteq B_L\}}^{H^s(\Rd)}.
\end{align*}
Given a function space $X$ as above, we define $L^p(0,T;X)$ as the space of equivalence classes of measurable functions $f:[0,T]\ra X$ with finite norm 
\begin{align*}
    \norm{f}_{L^p(0,T;X)}
    :=\left(\int_0^T \norm{f(t)}_X^p \diff t\right)^{\frac{1}{p}}.
\end{align*}
In case $X=H^1(\Rd)$ we write the norm $\norm{f}_{L^p(0,T;X)}=\norm{f}_{L_t^p H_x^1}$. Additionally, we endow the space $L(0,T; H^1(\Rd))$ and $L^2(0,T; H^{-1}(\Rd))$ with a duality pairing
\begin{align*}
    \inner{f}{g}_{L_t^2 H_x^{-1}, L_t^2 H_x^1}:=\int_0^T \inner{f_t}{g_t}_{H^{-1},H^1}\diff t
    \qquad \forall f\in L^2(0,T; H^{-1}(\Rd))
    \quad \forall g\in L^2(0,T; H^1(\Rd)).
\end{align*}
Given a function space $X$ we denote its dual space by $X^\ast$, and for any sequence $\{f_n\}_{n\geq 1}\subseteq X$ and element $f\in X$ we use the following symbols
 $f_n\ra f$, $f_n\weak f$ and $f_n\wstar f$ as $n\ra\infty$ for strong, weak and weak$^\ast$ convergence, respectively.
  
\underline{\it Special operators, functions and sets.} For $p,q,r\in [1,\infty]$ with $1+\frac{1}{r}=\frac{1}{p}+\frac{1}{q}$ (with the convention $\frac{1}{\infty}=0$) and two functions $f\in L^p(\Rd)$, $g\in L^q(\Rd)$, the convolution $f\ast g$ is defined as 
\begin{equation}\label{eq:convolution_def}
f \ast g(x)= \int_{\Rd} f(x-y) \, g(y) \diff y\qquad\forall x\in\Rd.
\end{equation}
By Young's convolution inequality, this function is well-defined and belongs to $L^r(\Rd)$ with the estimate $\norm{f\ast g}_{L^r}\leq \norm{f}_{L^p}\norm{g}_{L^q}$. For a function $f\in L^p(0,T; L^q(\Rd))$ and functions $\vp\in C_c^\infty(\R)$, $\omega\in C_c^\infty(\Rd)$ we define mollification in time and space respectively by
\begin{align*}
    \begin{split}
        f\ast \vp(t,x)
        &:=\int_\R f(s,x)\vp(t-s)\diff s,\\
        f\ast \omega(t,x)
        &:=\int_{\Rd} f(t,y)\omega(x-y)\diff y
    \end{split}
    \qquad\forall (t,x)\in [0,T]\times\Rd.
\end{align*}
It will be clear from the context, which mollification type we use. Given a measurable set $E\subseteq \Rd$ we define the indicator function
    \begin{align*}
        \mathds{1}_E(x):=\begin{cases}
            1 & \text{ if }x\in E,\\
            0 & \text{ if }x\notin E
        \end{cases}
        \qquad\forall x\in\Rd.
    \end{align*}

\underline{\it Fourier transform.} Given an integrable function $f\in L^1(\Rd)$, we define the Fourier transform and inverse Fourier transform respectively by
    \begin{align*}
        \begin{split}
            \rF f(x)&:=\hat f(x)=\int_{\Rd} e^{-2\pi i x\cdot \xi} f(\xi)\diff\xi,\\
            \rF^{-1} f(x)&:=\check f(x)=\int_{\Rd} e^{2\pi i x\cdot \xi} f(\xi)\diff\xi
        \end{split}
        \qquad\forall x\in\Rd.
    \end{align*}
    We extend the transform to distributions by
    \begin{align*}
        \inner{\hat{u}}{g}
        :=\inner{\mu}{\hat{g}}
        \quad\text{ and }\quad
        \inner{\check{u}}{g}
        :=\inner{\mu}{\check{g}}
        \qquad\forall u\in \mathcal{S}'(\Rd)\quad\forall g\in \mathcal{S}(\Rd).
    \end{align*}

\underline{\it Radially symmetric functions.} Let  $g:\Rd\ra\R$ be a radially symmetric function, i.e. there exists $f:[0,\infty)\ra \R$ such that $g(\xi)=f(|\xi|)$. By a small abuse of notation, we do not distinguish between $f$ and $g$. It will be always clear from the context which function we are using.

\section{Existence of Kernels that satisfy the Stein-log-Sobolev-Inequality}\label{section:existence_kernels_satisfying_LSS_inequality}

The target of this section is to prove Theorem \ref{thm:existence_of_maternised_convolution_kernel_that_satisfy_slsi}. Let us sketch our arguments. We will assume that $k$ is radially symmetric. The first step is to rewrite the dissipation in \eqref{eq:inequality_l2_lower_bound_formulated_H-1_H1} as 
\begin{align*}
 \DD^2(\rho\,||\,\rho_\infty) &= \left \langle \nabla (\rho\, e^{V}) \, e^{-\frac{\pot}{2}}, k\ast \left(\nabla (\rho\, e^{V}) \, e^{-\frac{\pot}{2}}\right)  \right \rangle_{H^{-1}(\Rd), H^1(\Rd)} \\
 &= \left \langle \nabla ((\rho - \tau\, e^{-V})\, e^{V}) \, e^{-\frac{\pot}{2}}, k\ast \left(\nabla ((\rho - \tau\, e^{-V})\, e^{V}) \, e^{-\frac{\pot}{2}}\right)  \right \rangle_{H^{-1}(\Rd), H^1(\Rd)} 
\end{align*}
for any $\tau \in \R$. Letting $g := (\rho - \tau\, e^{-V})\, e^{V-\frac{\pot}{2}}$, we prove in Section \ref{subsection:proof strategy to construct kernels satisfying sls} for a particular value $\tau$ that
\begin{align*}
    \DD^2(\rho\,||\,\rho_\infty) &\geq C\, \int_{\Rd}\hat{k}(\xi)\, \left|(2\pi i \xi)\, \hat{g}(\xi) + \frac{i}{2\,(2\pi)}  \nabla \hat{g}(\xi)  \right|^2 \diff \xi\\ 
    &=C\,\left( \int_{\Rd} |\hat{g}(\xi)|^2\, q(\xi)\diff\xi+ \frac{1}{16\pi^2}\int_{\Rd} |\nabla \hat{g}(\xi)|^2 \hat{k}(\xi) \diff\xi\right),
\end{align*}
where $C>0$ is a constant that depends on $\Sigma$ and the coefficients in \eqref{eq:condition_Fourier_transform_k}, $q:\Rd\ra\R$ is a certain weight depending explicitly on $k$. We observe crucially that in the first term on the RHS, we can replace $\hat{k}$ with any other kernel that satisfies \eqref{eq:condition_Fourier_transform_k} at the cost of a smaller constant in \eqref{eq:inequality_l2_lower_bound_formulated_H-1_H1}. Hence, our strategy is to find a single kernel $k_{0,d}$ that satisfies the inequality \eqref{eq:inequality_l2_lower_bound_formulated_H-1_H1}. We will work with the second term on the RHS of the above inequality. To illustrate the difficulties, suppose that $q\geq \lambda>0$, then by the Plancherel theorem and assumption~\eqref{eq:general_prop_target_potentialV_f}
\begin{align}
\label{eq:L2_lower_bound_of_stein_fisher_info_intro}
\begin{split}
\DD^2(\rho\,||\,\rho_\infty)
	&\geq \lambda\int_{\Rd} |g(x)|^2 \diff x 
 =\int_{\R} \frac{\left|\rho - \tau\, e^{-V}\,\right|^2}{e^{-V}}\; e^{V-\pot} \diff x \\
        &\geq \lambda e^{\ngib}  \int_{\R}  \frac{\left|(\rho - \tau\, Z \rho_{\infty})\right|^2}{Z\, \rho_{\infty}} \diff x  
        \geq \frac{\lambda e^{\ngib}}{Z}  \int_{\R}\left\vert\frac{\rho}{\rho_\infty}-\pwc\,Z\right\vert^2 \diff \rho_\infty,
\end{split}	
\end{align}
where $Z = \int_{\Rd} e^{-V(x)}\diff x$. The conclusion would follow immediately by Lemma \ref{lem:chi_squared_bounded_from_below_by_kullback_leibler}. 

\smallskip

Unfortunately, a strict positive lower bound on $q$ would violate the positive definiteness of $k$, which can be seen in \eqref{eq:kernel_freq_bounded_at_origin_formula}. Luckily, we can state sufficient conditions on both $k$ and $q$ to satisfy \eqref{eq:L2_lower_bound_of_stein_fisher_info_intro}. We do this by compensating the non-positive values of $q$ with the information on the gradient $\nabla \hat{g}$ via a Poincaré-Wirtinger argument, see Section \ref{subsection:sufficient condition on q}. Our approach is to reconstruct $k$ from a given weight $q$ with sufficient properties as described above. This is performed in Section \ref{subsection:reconstructing kernels from weights}, where we derive a formula for $\hat{k}$ in terms of $q$. In Section \ref{sect:exist_kernel_any_dimensio} we further restrict our analysis to a particular step function $q$ in order to prove Theorem~\ref{thm:existence_of_maternised_convolution_kernel_that_satisfy_slsi} for dimension $d\geq 2$. But before we present our argument, in Section \ref{sect:proof_lSS_ineq_1D} we use the above methods in one dimension for a particular Matérn kernel $k(x)=e^{-|x|}$, thus proving Theorem \ref{thm:existence_of_maternised_convolution_kernel_that_satisfy_slsi} in 1D. This serves as a proof of concept.  

\subsection{Fisher information in Fourier variables}
\label{subsection:proof strategy to construct kernels satisfying sls}

In the following technical key result we pass to the Fourier variables. We observe that $\DD^2(\rho\,||\,\rho_\infty)$ is an action of $H^{-1}(\Rd)$ functionals on $H^1(\Rd)$ functions. Thus to pass to the Fourier variables, we need to adapt the classical Plancherel formula to this setting. This is done in Lemma \ref{lem:Plancherel_f_H-1}.

\begin{lem}
    \label{lem:passing_to_fourier}
    Let $\rho$ satisfy \eqref{eq:regularity_to_be_satisfied_ineqality_rho}, $\pwc\in\R$ and $k\in L^1(\Rd)+L^2(\Rd)$ satisfy \eqref{eq:condition_Fourier_transform_k}. Then, let $k_\Sigma(x):=\det(\Sigma^{-\frac{1}{2}})\,k(\Sigma^{-\frac{1}{2}}x)$ and let $h$ be either $k$ or $k_\Sigma$. Then, the dissipation $ \DD_h^2(\rho\,||\,\rho_\infty)$, defined in \eqref{eq:inequality_l2_lower_bound_formulated_H-1_H1}, satisfies
    \begin{align}
        \label{eq:dissipation_bnd_below_ft}
        \begin{split}
            \DD_h^2(\rho\,||\,\rho_\infty)
        &\geq C_h\, \int_{\Rd}\hat{k}(\xi)\, \left|(2\pi i \xi)\, \hat{g}(\xi) + \frac{i}{2\,(2\pi)}  \nabla \hat{g}(\xi)  \right|^2 \diff \xi\\
        & = C_h\left(\int_{\Rd} |\hat{g}(\xi)|^2 \, q(\xi) \diff\xi+\frac{1}{16\,\pi^2}\int_{\Rd} |\nabla \hat{g}(\xi)|^2 \,\hat{k}(\xi) \diff\xi\right),
        \end{split}
    \end{align}
     where we define the functions
     \begin{align}
        \label{eq:def_g}
          \begin{split}
              g_0(x)&: = \Big(\rho(x) - \tau\, e^{-V(x)}\Big)\, e^{V(x)-\frac{\pot(x)}{2}}, \qquad 
          z(x):=\Sigma^{\frac{1}{2}}x+\mu,\qquad
          g:=g_0\circ z,\\
        q(\xi)&:=4\pi^2\abs{\xi}^2 \hat{k}(\xi)-\frac{1}{2}\divv(\xi\, \hat{k}(\xi)).
          \end{split}
     \end{align}
     and the constants
     \begin{align}
     \label{eq:constants_of_passing_ft}
         C_k:=\frac{\sqrt{\det(\Sigma)}}{\sigma_d}\frac{(\sigma_1\wedge 1)}{\rkc_k^2},\qquad
         C_{k_\Sigma}:=\frac{\sqrt{\det(\Sigma)}}{\sigma_d},
     \end{align}
with $\rkc_k$ being the constant from \eqref{eq:condition_Fourier_transform_k} and $0<\sigma_1 \leq ... \leq \sigma_d$ being eigenvalues of $\Sigma$.
\end{lem}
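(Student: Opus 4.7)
\textbf{Proof strategy for Lemma~\ref{lem:passing_to_fourier}.}

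The plan is to massage $\DD_h^2(\rho||\rho_\infty)$ into a Fourier integral of a shifted/rescaled square. First, since $(\rho - \tau e^{-V})e^V = \rho e^V - \tau$, we have $\nabla((\rho-\tau e^{-V})e^V) = \nabla(\rho e^V)$ distributionally, so inserting $\tau$ in the definition of the dissipation does not change its value. Combining with identity \eqref{eq:split_stein_gradient} and the definition $g_0 = (\rho - \tau e^{-V})e^{V-\pot/2}$ yields the pointwise a.e.\ distributional identity
\begin{equation*}
\nabla(\rho\, e^{V})\, e^{-\pot/2} \;=\; \nabla g_0 + \tfrac{1}{2}\, g_0\, \nabla \pot.
\end{equation*}
Using the assumption \eqref{eq:regularity_to_be_satisfied_ineqality_rho} together with \eqref{eq:split_stein_gradient}, the right-hand side belongs to $H^{-1}(\Rd)$, so Lemma~\ref{lem:Plancherel_f_H-1} (the $H^{-1}/H^{1}$ Plancherel identity) applies and gives
\begin{equation*}
\DD_h^2(\rho||\rho_\infty) \;=\; \int_{\Rd}\hat h(\xi)\,\bigl|\mathcal{F}[\nabla g_0 + \tfrac{1}{2} g_0 \nabla\pot](\xi)\bigr|^2 \diff \xi.
\end{equation*}

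Next, I unfold the quadratic form on the right. Since $\nabla \pot(x) = \Sigma^{-1}(x-\mu)$, the affine change of variables $x=z(y)=\Sigma^{1/2}y+\mu$ converts this to $y$, and one checks that $(\nabla g_0 + \tfrac12 g_0\nabla\pot)(z(y)) = \Sigma^{-1/2}\bigl(\nabla g(y)+\tfrac12\, y\, g(y)\bigr)$. Setting $F(y) := \nabla g(y)+\tfrac12 y\, g(y)$ and using the standard Fourier rules $\mathcal{F}[\nabla g] = 2\pi i\xi\,\hat g$ and $\mathcal{F}[y g] = \frac{i}{2\pi}\nabla \hat g$, one obtains $\hat F(\eta)=2\pi i\eta\,\hat g(\eta)+\frac{i}{2(2\pi)}\nabla\hat g(\eta)$. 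Performing the change of variables in the Fourier integral (using $\widehat{f\circ z^{-1}}(\xi) = \sqrt{\det\Sigma}\, e^{-2\pi i\mu\cdot\xi}\hat f(\Sigma^{1/2}\xi)$) and then substituting $\eta=\Sigma^{1/2}\xi$ yields, for any kernel $h \in \{k,k_\Sigma\}$,
\begin{equation*}
\DD_h^2(\rho||\rho_\infty) \;=\; \sqrt{\det\Sigma}\,\int_{\Rd}\widehat{h}\bigl(\Sigma^{-1/2}\eta\bigr)\,\bigl|\Sigma^{-1/2}\hat F(\eta)\bigr|^2\diff\eta,
\end{equation*}
where for $h=k_\Sigma$ I also use the identity $\widehat{k_\Sigma}(\xi)=\hat k(\Sigma^{1/2}\xi)$ obtained by the scaling rule. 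The bound $|\Sigma^{-1/2}v|^2\geq \sigma_d^{-1}|v|^2$ replaces the vector-valued weight by a scalar one. For $h=k_\Sigma$ this already produces $\hat k(\eta)$ in the integrand and yields the constant $C_{k_\Sigma}=\sqrt{\det\Sigma}/\sigma_d$. For $h=k$ I still need to compare $\hat k(\Sigma^{-1/2}\eta)$ with $\hat k(\eta)$: using the upper and lower bounds in \eqref{eq:condition_Fourier_transform_k} together with the elementary estimate $1+|\Sigma^{-1/2}\eta|^2 \leq (\sigma_1\wedge 1)^{-1}(1+|\eta|^2)$ one gets $\hat k(\Sigma^{-1/2}\eta)\geq \frac{\sigma_1\wedge 1}{\rkc_k^{\,2}}\hat k(\eta)$, producing the constant $C_k$ in \eqref{eq:constants_of_passing_ft}.

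Finally I establish the equality in the second line of \eqref{eq:dissipation_bnd_below_ft} by expanding $|2\pi i\xi\, \hat g+\tfrac{i}{4\pi}\nabla\hat g|^2$. The cross term equals $\tfrac12\xi\cdot\nabla|\hat g|^2$, and integrating by parts against $\hat k$ (legitimate since $\hat k\in W^{1,\infty}(\Rd)$ by \eqref{eq:condition_Fourier_transform_k} and $|\hat g|^2\in W^{1,1}(\Rd)$ by \eqref{eq:regularity_to_be_satisfied_ineqality_rho}) transforms the cross term into $-\tfrac12 \int|\hat g|^2\,\divv(\xi\,\hat k(\xi))\diff\xi$, which combined with the squared terms exactly produces the weight $q(\xi)=4\pi^2|\xi|^2\hat k(\xi)-\tfrac12\divv(\xi\, \hat k(\xi))$.

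The only delicate point is justifying the Plancherel identity and the integration by parts at the regularity level of \eqref{eq:regularity_to_be_satisfied_ineqality_rho}; both are covered by Lemma~\ref{lem:Plancherel_f_H-1} and a standard approximation argument (smoothing $\hat k$ by mollification and passing to the limit using $\hat k\in W^{1,\infty}$), so the argument above should go through without further obstacles.
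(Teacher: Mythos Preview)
Your proposal is correct and follows essentially the same approach as the paper: insert the constant $\tau$, rewrite the argument of the dissipation as $\nabla g_0+\tfrac12 g_0\nabla\pot$, apply the $H^{-1}/H^1$ Plancherel identity from Lemma~\ref{lem:Plancherel_f_H-1}, perform the affine change of variables to pass from $g_0$ to $g$, use $|\Sigma^{-1/2}v|^2\ge\sigma_d^{-1}|v|^2$, and then compare $\hat h(\Sigma^{-1/2}\eta)$ with $\hat k(\eta)$ via either $\widehat{k_\Sigma}(\xi)=\hat k(\Sigma^{1/2}\xi)$ or the two-sided bound \eqref{eq:condition_Fourier_transform_k}. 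The final integration by parts to produce $q$ is also handled the same way; the paper writes it as $\langle\hat k\,\xi\hat g,\nabla\hat g\rangle+\langle\hat k\,\nabla\hat g,\xi\hat g\rangle=-\langle\divv(\hat k\,\xi)\hat g,\hat g\rangle$, which is exactly your cross-term computation.
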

\begin{proof}
It is straightforward to check
\begin{align*}
 \DD_h^2(\rho\,||\,\rho_\infty) &= \left \langle \nabla (\rho\, e^{V}) \, e^{-\frac{\pot}{2}}, h\ast \left(\nabla (\rho\, e^{V}) \, e^{-\frac{\pot}{2}}\right)  \right \rangle_{H^{-1}(\Rd), H^1(\Rd)} \\
 &= \left \langle \nabla ((\rho - \tau\, e^{-V})\, e^{V}) \, e^{-\frac{\pot}{2}}, h\ast \left(\nabla ((\rho - \tau\, e^{-V})\, e^{V}) \, e^{-\frac{\pot}{2}}\right)  \right \rangle_{H^{-1}(\Rd), H^1(\Rd)}. 
\end{align*}
We can represent the dissipation in terms of $g_0$ as
\begin{align*}
    \DD_h^2(\rho\,||\,\rho_\infty) &= \left \langle \nabla g_0+\frac{1}{2}\Sigma^{-1}(x-\mu)\; g_0, h\ast \Big(\nabla g_0+\frac{1}{2}\Sigma^{-1}(x-\mu)\; g_0\Big)  \right \rangle_{H^{-1}(\Rd), H^1(\Rd)}.
\end{align*}
We aim at applying the Plancherel formula in the $H^{-1}$ setting as in \ref{lem:Plancherel_f_H-1:plancherel} from Lemma \ref{lem:Plancherel_f_H-1}. Therefore, we have to compute the Fourier transforms of the terms above. We have
\begin{equation}
\label{eq:fourier_two_expressions}
\mathcal{F}(\nabla g_0) = (2\pi i\xi)\, \hat{g_0}, \quad  \mathcal{F}\left(\frac{1}{2}\Sigma^{-1}(x-\mu)\; g_0\right) = e^{-2\pi i \xi \mu} \,\frac{i}{2\,(2\pi)} \Sigma^{-1} \nabla\left( e^{2\pi i \xi \mu} \, \hat{g_0}\right).
\end{equation}
These formulas hold in the sense of distributions. But as a consequence of Lemma \ref{lem:representation_derivative_f_x_fourier}, we know $\nabla \hat{g_0}$ exists in the Sobolev sense. Let us write the formulas above in terms of $g$. Recall that $g_0(x) = g(\Sigma^{-\frac{1}{2}}(x-\mu))$. Hence, a direct computation gives
\begin{equation}
\label{eq:fourier_formula_g_0}
\hat{g_0}(\xi) = \det(\Sigma^{\frac{1}{2}}) \, e^{-2\pi i \xi \mu}\,  \hat{g}(\Sigma^{\frac{1}{2}} \xi).
\end{equation}
It follows that we can express 
\begin{equation}
\label{eq:fourier_second_expression_gradient_in_terms_of_g}
\begin{split}
\mathcal{F}\left(\frac{1}{2}\Sigma^{-1}(x-\mu)\; g_0\right) &= \det(\Sigma^{\frac{1}{2}}) \, e^{-2\pi i \xi \mu} \,\frac{i}{2\,(2\pi)} \Sigma^{-1} \nabla\left( \hat{g}(\Sigma^{\frac{1}{2}} \xi)\right) \\
&= \det(\Sigma^{\frac{1}{2}}) \, e^{-2\pi i \xi \mu} \,\frac{i}{2\,(2\pi)} \Sigma^{-\frac{1}{2}} \nabla \hat{g}(\Sigma^{\frac{1}{2}} \xi). 
\end{split}
\end{equation}
Next, we use \ref{lem:Plancherel_f_H-1:plancherel} in Lemma \ref{lem:Plancherel_f_H-1} and \eqref{eq:fourier_two_expressions}--\eqref{eq:fourier_second_expression_gradient_in_terms_of_g} to obtain
$$
\DD_h^2(\rho\,||\,\rho_\infty) = \det(\Sigma) \, \int_{\Rd}\hat{h}(\xi)\, \left|(2\pi i\xi)\, \hat{g}(\Sigma^{\frac{1}{2}} \xi) + \frac{i}{2\,(2\pi)} \Sigma^{-\frac{1}{2}} \nabla \hat{g}(\Sigma^{\frac{1}{2}} \xi) \right|^2 \diff \xi,
$$
where we used that $|e^{-2\pi i \xi \mu}| = 1$. We apply the change of variables $\eta = \Sigma^{\frac{1}{2}} \xi$ to get
\begin{align}
    \label{eq:first_lower_bnd_dis}
    \begin{split}
        \DD_h^2(\rho\,||\,\rho_\infty)  &= \sqrt{\det(\Sigma)} \, \int_{\Rd}\hat{h}(\Sigma^{-\frac{1}{2}}\eta)\, \left|\Sigma^{-\frac{1}{2}} \left((2\pi i \eta)\, \hat{g}(\eta) + \frac{i}{2\,(2\pi)}  \nabla \hat{g}(\eta) \right) \right|^2 \diff \eta\\
&\geq \frac{\sqrt{\det(\Sigma)}}{\sigma_d} \int_{\Rd}\hat{h}(\Sigma^{-\frac{1}{2}}\eta)\, \left|(2\pi i \eta)\, \hat{g}(\eta) + \frac{i}{2\,(2\pi)}  \nabla \hat{g}(\eta)  \right|^2 \diff \eta.
    \end{split}
\end{align}
We now transform the term $\hat{h}(\Sigma^{-\frac{1}{2}}\eta)$ and prove
\begin{equation}
\label{eq:dissipation_local_target_splitting_two_k}
\DD_h^2(\rho\,||\,\rho_\infty) \geq C_h\, \int_{\Rd}\hat{k}(\eta)\, \left|(2\pi i \eta)\, \hat{g}(\eta) + \frac{i}{2\,(2\pi)}  \nabla \hat{g}(\eta)  \right|^2 \diff \eta. 
\end{equation} 
Indeed, if $h(x) = k_{\Sigma}(x) = \det(\Sigma^{-\frac{1}{2}})k(\Sigma^{-\frac{1}{2}}x)$, we have $\hat{h}(\eta) = \hat{k}(\Sigma^{\frac{1}{2}}\eta)$ so $\hat{h}(\Sigma^{-\frac{1}{2}}\eta) = \hat{k}(\eta)$ and we arrive at \eqref{eq:dissipation_local_target_splitting_two_k}. On the other hand, if $h(x) = k(x)$, we estimate
$$
1+ |\Sigma^{-\frac{1}{2}}\eta|^2 \leq 1 + \frac{1}{\sigma_{1}}\, |\eta|^2 \leq \frac{1}{(\sigma_1\wedge 1)} \, (1+|\eta|^2),
$$
so using \eqref{eq:condition_Fourier_transform_k}, we obtain
$$
\hat{k}(\Sigma^{-\frac{1}{2}}\eta) \geq \frac{1}{\rkc_k} \frac{1}{1+ |\Sigma^{-\frac{1}{2}}\eta|^2 }  \geq  \frac{(\sigma_1\wedge 1)}{\rkc_k} \frac{1}{1+|\eta|^2} \geq \frac{(\sigma_1\wedge 1)}{\rkc_k^2} \hat{k}(\eta).
$$
and hence, we again deduce \eqref{eq:dissipation_local_target_splitting_two_k}.

\smallskip

Using \eqref{eq:dissipation_local_target_splitting_two_k} we can easily obtain the assertion of the lemma. First, we expand the square to get
\begin{align*}
\DD_h^2(\rho\,||\,\rho_\infty)
    \geq   C_h\Big(\inner{4\pi^2\hat{k}\, \abs{\eta}\hat{g}}{\abs{\eta}\hat{g}}
    &+\frac{1}{2}\inner{\hat{k}\, \eta\hat{g}}{\nabla\hat{g}}\\
    &+\frac{1}{2}\inner{\hat{k}\, \nabla\hat{g}}{\eta\hat{g}}
    +\frac{1}{16\pi^2}\inner{\hat{k}\,\nabla \hat{g}}{\nabla \hat{g}}\Big),
\end{align*}
where we recall that $\inner{\cdot}{\cdot}$ denotes the complex $L^2(\Rd)$ inner product. Finally, we write
$$
    \inner{\hat{k}\, \eta\hat{g}}{\nabla\hat{g}}
    +\inner{\hat{k}\, \nabla\hat{g}}{\eta\hat{g}}
    =-\inner{\divv(\hat{k}\, \eta)\hat{g}}{\hat{g}}
    -\inner{\hat{k}\, \eta\cdot \nabla\hat{g}}{\hat{g}}
    +\inner{\hat{k}\, \nabla\hat{g}}{\eta\hat{g}} =-\inner{\divv(\hat{k}\, \eta)\hat{g}}{\hat{g}},
$$
so that we have proven the inequality \eqref{eq:dissipation_bnd_below_ft}.
\end{proof}

\begin{rem}
    Note that we scale $k_\Sigma$ with $\det(\Sigma^{-\frac{1}{2}})$ to formally preserve the mass of $k$ and to ensure better stability of the constant $\lambda$ in \eqref{eq:inequality_l2_lower_bound_formulated_H-1_H1}.
\end{rem}

\subsection{Proof of Theorem \ref{thm:existence_of_maternised_convolution_kernel_that_satisfy_slsi} in 1D}\label{sect:proof_lSS_ineq_1D}

In this section, we prove the Stein-log-Sobolev inequality in 1D. As explained in the beginning of the next section, this reasoning works only in one space dimension.

\begin{proof}[Proof of Theorem \ref{thm:existence_of_maternised_convolution_kernel_that_satisfy_slsi} in 1D] We split the argument into four steps.

    \underline{\it Passing to a standard kernel} The first step is to derive the inequality
    \begin{align}
        \label{eq:dissipation_bounded_by_standard_kernel_1d}
        \DD_k^2(\rho\,||\,\rho_\infty)
        \geq \frac{C_k}{\rkc_k \rkc_{k_{0,1}}}\, \int_{\Rd}\hat{k}_{0,1}(\xi)\, \left|(2\pi i \eta)\, \hat{g}(\xi) + \frac{i}{2\,(2\pi)}  \nabla \hat{g}(\xi)  \right|^2 \diff \xi,
    \end{align}
    where $k_{0,1}\in L^1(\Rd)+L^2(\Rd)$ is a fixed kernel that satisfies \eqref{eq:condition_Fourier_transform_k}, $C_k$ is the constant in \eqref{eq:constants_of_passing_ft} and $\rkc_k$, $\rkc_{k_{0,1}}$ are the constants of $k$ and $k_{0,1}$ respectively in \eqref{eq:condition_Fourier_transform_k}. By assumption the given kernel $k$ satisfies all assumptions of Lemma \ref{lem:passing_to_fourier}. This means that the following lower estimate holds for all $\rho$ satisfying \eqref{eq:regularity_to_be_satisfied_ineqality_rho}, 
    \begin{align}
        \label{eq:application_dissipation_bound_1d}
        \DD_k^2(\rho\,||\,\rho_\infty)
        \geq C_k\, \int_{\Rd}\hat{k}(\xi)\, \left|(2\pi i \eta)\, \hat{g}(\xi) + \frac{i}{2\,(2\pi)}  \nabla \hat{g}(\xi)  \right|^2 \diff \xi,
    \end{align}
    where $C_k$ is the constant in \eqref{eq:constants_of_passing_ft}. Inspired by \cite{MR4582478}, we choose a particular Matérn kernel
    \begin{align}
        \label{eq:def_k01}
        k_{0,1}(x):=e^{-|x|}.
    \end{align}
    One can show that this kernel has the Fourier transform $\hat{k}_{0,1}(\xi)=\frac{1}{1+4\pi^2 \xi^2}$. Thus as both kernels $k$ and $k_{0,1}$ satisfy \eqref{eq:condition_Fourier_transform_k}, we can show $\rkc_k \rkc_{k_{0,1}}\,\hat{k}\geq \hat{k}_{0,1}$. Then combining this inequality with \eqref{eq:application_dissipation_bound_1d}, we immediately see that \eqref{eq:dissipation_bounded_by_standard_kernel_1d} holds true.
    
    \underline{\it Dissipation dominates $L^2$-norm} As in \eqref{eq:dissipation_bnd_below_ft}, we can rewrite the inequality \eqref{eq:dissipation_bounded_by_standard_kernel_1d} as
    \begin{align*}
        \DD_k^2(\rho||\rho_\infty)
        \geq \frac{C_k}{\rkc_k \rkc_{k_{0,1}}}\left(\int_{\R} |\hat{g}|^2 q\diff\xi+\frac{1}{16\pi^2}\int_{\R} |\nabla \hat{g}|^2 \hat{k}_{0,1}\diff\xi\right)
    \end{align*}
    where $g$ and $q$ are defined in \eqref{eq:def_g} using $k_{0,1}$. The next step is to establish the following lower bound 
    \begin{align}
        \label{thm:existence_of_maternised_convolution_kernel_that_satisfy_slsi:eq1}
        \int_{\R} |\hat{g}|^2 q\diff\xi+\frac{1}{16\pi^2}\int_{\R} |\nabla \hat{g}|^2 \hat{k}_{0,1}\diff\xi
        \geq \lambda_{0,1} \int_{\R} |g|^2\diff\xi
    \end{align}
    for some $\lambda_{0,1}>0$. To this end, we explicitly compute the weight $q$. This yields
    \begin{align*}
        q(\xi)
        &=4\pi^2 |\xi|^2 \hat{k}_{0,1}(\xi)
        -\frac{1}{2}\divv(\xi \hat{k}_{0,1}(\xi))
        =\frac{4\pi^2 \xi^2}{1+4\pi^2 \xi^2}
        +\frac{4\pi^2\xi^2}{(1+4\pi^2\xi^2)^2}
        -\frac{1}{2}\frac{1}{1+4\pi^2\xi^2} = \\
        &= \frac{(4\pi^2 \xi^2-\frac{1}{2})(1+4\pi^2 \xi^2)+4\pi^2 \xi^2}{(1+4\pi^2 \xi^2)^2}
        =\frac{(w-\frac{1}{2})(1+w)+w}{(1+w)^2},
    \end{align*}
    where we write $w=4\pi^2 \xi^2$. The numerator is a convex quadratic function with critical point at  $w_c=-\frac{3}{4}$ and only one positive root at $w_\ast=\frac{\sqrt{17}-3}{4}$. Then $q$ has negative values only in the ball $B_\varepsilon$ with radius $\varepsilon:=\frac{\sqrt{\sqrt{17}-3}}{2\pi}$. Given $\delta>0$ we define $\beta:=q(\varepsilon+\delta)$ and $\alpha:=-q(0)=\frac{1}{2}$. By a direct computation $q'(\xi) = \frac{w+5}{(1+w)^3}\,4 \pi^2 \xi \geq 0$ so that $q$ is increasing for $\xi \geq 0$. Then, we have the lower bound
    \begin{align*}
        q(\xi)
        \geq \begin{cases}
            -\alpha & \text{ if }|\xi|<\varepsilon+\delta,\\
            \phantom{-}\beta & \text{ if }|\xi|\geq \varepsilon+\delta.
        \end{cases}
    \end{align*}
    We use this to estimate the first term in the LHS of \eqref{thm:existence_of_maternised_convolution_kernel_that_satisfy_slsi:eq1} as follows
	\begin{multline*}
		\int_{\R \setminus B_{\varepsilon+\delta}}|\hat g(\xi)|^2 q(\xi)\diff \xi
		+\int_{B_{\varepsilon+\delta}}|\hat g(\xi)|^2 q(\xi)\diff \xi
		\geq \, \beta\int_{\R \setminus B_{\varepsilon+\delta}}|\hat g(\xi)|^2\diff \xi
		-\alpha\int_{B_{\varepsilon+\delta}}|\hat g(\xi)|^2 \diff \xi.
    \end{multline*}
    We will choose $\tau\in\R$, in the definition of $g$, such that
    $\int_{B_{\varepsilon+\delta}}\hat g(\xi)\diff \xi=0$ (note that this is analogue to \eqref{eq:def_tau}). This allows us to use the Poincaré-Wirtinger inequality and get
    \begin{align*}
        \int_{B_{\varepsilon+\delta}}|\hat g|^2\diff \xi
        \leq \left(\frac{\varepsilon+\delta}{p_{1/2,1}}\right)^2 \,\int_{B_{\varepsilon+\delta}}|\nabla \hat g|^2\diff \xi
        \leq \frac{1}{\hat{k}_{0,1}(\varepsilon+\delta)}\left(\frac{\varepsilon+\delta}{p_{1/2,1}}\right)^2 \,\int_{B_{\varepsilon+\delta}}|\nabla \hat g|^2\, \hat{k}_{0,1}\diff \xi.
    \end{align*}
    Above, we used the monotonicity of $\hat{k}_{0,1}$ and the optimal constant in the Poincaré-Wirtinger inequality from Lemma \ref{lem:poincare-wirtinger-optimal}. In order for \eqref{thm:existence_of_maternised_convolution_kernel_that_satisfy_slsi:eq1} to hold, it now suffices to find $\delta$ such that
    \begin{align*}
        \frac{\alpha}{\hat{k}_{0,1}(\varepsilon+\delta)}\left(\frac{\varepsilon+\delta}{p_{1/2,1}}\right)^2
        <\frac{1}{16\pi^2}.
    \end{align*}
    By continuity, such $\delta$ exists if
    \begin{align}
    	\label{thm:existence of maternised convolution kernel that satisfy slsi:eq2}
        \frac{\alpha}{\hat{k}_{0,1}(\varepsilon)}\left(\frac{\varepsilon}{p_{1/2,1}}\right)^2
        <\frac{1}{16\pi^2},
    \end{align}
which is true because $\frac{\alpha}{\hat{k}_{0,1}(\varepsilon)}\left(\frac{\varepsilon}{p_{1/2,1}}\right)^2 \approx 0.00335$ and $\frac{1}{16\pi^2} \approx 0.00633$. The optimal value for $\lambda_{0,1}$ is given by
    \begin{align}\label{eq:lambda_0_one_dimension}
    \lambda_{0,1} =    \sup_{\delta>0}\min\left\{q(\varepsilon+\delta), \;\frac{\hat{k}_{0,1}(\varepsilon+\delta)}{16\pi^2}\left(\frac{p_{1/2,1}}{\varepsilon+\delta}\right)^2-\alpha \right\}.
    \end{align}

\underline{\it $L^2$-norm dominates Kullback-Leibler} We use the definition of function~$g$ in \eqref{eq:def_g} and  the change of variables $y = z(x) = \Sigma^{\frac{1}{2}}x + \mu$ 
    \begin{align*}
        \DD_k^2(\rho\,||\,\rho_\infty)
        \geq \frac{C_k \lambda_{0,1}}{\rkc_k \rkc_{k_{0,1}}} \int_{\R}|g_0(z(x))|^2\diff x
        = \frac{C_k \lambda_{0,1}}{\rkc_k \rkc_{k_{0,1}}\sqrt{\det(\Sigma)}} \int_{\R}|g_0(y)|^2\diff y.
        \end{align*}
Finally, we exploit the definition of $g_0$ and use property \eqref{eq:general_prop_target_potentialV_f} to obtain
    \begin{align*}
    &\DD_k^2(\rho\,||\,\rho_\infty)
        \geq \frac{C_k\, \lambda_{0,1}}{\rkc_k \rkc_{k_{0,1}}\sqrt{\det(\Sigma)}}  \int_{\R} \left|(\rho - \tau\, e^{-V})\, e^{V-\frac{\pot}{2}}\right|^2 \diff x \\
        &\quad\geq \frac{C_k\, \lambda_{0,1}\, e^\ngib}{\rkc_k \rkc_{k_{0,1}}\sqrt{\det(\Sigma)}}  \int_{\R}  \frac{\left|(\rho - \tau\, Z \rho_{\infty})\right|^2}{Z\, \rho_{\infty}} \diff x 
        \geq \frac{C_k\, \lambda_{0,1}\, e^\ngib}{\rkc_k \rkc_{k_{0,1}}\sqrt{\det(\Sigma)} Z}  \int_{\R}\left\vert\frac{\diff\rho}{\diff\rho_\infty}-\pwc\,Z\right\vert^2 \diff \rho_\infty,
    \end{align*}
    where $Z = \int_{\R} e^{-V} \diff x$. Using Lemma \ref{lem:chi_squared_bounded_from_below_by_kullback_leibler}, we can bound the RHS by the Kullback-Leibler distance to deduce
    \begin{align*}
        \DD_k^2(\rho\,||\,\rho_\infty)
        \geq \lambda\int_{\R}\left\vert\frac{\diff\rho}{\diff\rho_\infty}-\pwc\,Z\right\vert^2 \diff \rho_\infty
        \geq \lambda\,\KL(\rho||\rho_\infty),
    \end{align*}
 where the Stein-log-Sobolev constant is given by
 \begin{align*}
     \lambda = \frac{C_k\, \lambda_{0,1}\, e^\ngib}{\rkc_k \rkc_{k_{0,1}}\sqrt{\det(\Sigma)} Z}=\frac{\lambda_{0,1}}{\rkc_k^{\,3} \,\rkc_{k_{0,1}}}\cdot\frac{e^{\ngib}}{Z}\cdot\frac{\sigma_1\wedge 1}{\sigma_d}.
 \end{align*}
\underline{\it Proof of $L^2$ and $H^{-1}$ estimate} In order to derive the $L^2$ estimate in \eqref{eq:bound_L2_H-1_norm_by_dissipation} we use the inequality \eqref{eq:inequality_l2_lower_bound_formulated_H-1_H1} that we proved above. We must somehow estimate the value $\tau$ (given by \eqref{eq:def_tau}) in terms of $\DD^2(\rho||\rho_\infty)$ and $\norm{\rho}_{L^1}$. An estimate on $\norm{\rho e^{V-\frac{\pot}{2}}}_{L^2}$ will immediately follow from this. Take any bounded open interval $I\subseteq \R$ and note that
\begin{align*}
        \lambda\int_{I} \left|\rho e^{V-\frac{\pot}{2}} - \tau\, e^{-\frac{\pot}{2}} \right|^2 \diff x
        \leq \lambda\int_{\R} \left|\rho e^{V-\frac{\pot}{2}} - \tau\, e^{-\frac{\pot}{2}} \right|^2 \diff x
        \leq \DD_k^2(\rho\,||\,\rho_\infty).
\end{align*}
We expand the square, use Cauchy-Schwarz and estimate
\begin{align}
    \label{eq:bound_tau_on_bnd_set}
    \begin{split}
        \lambda\int_{I} &\left|\rho e^{V-\frac{\pot}{2}}\right|^2 \diff x
    +\lambda N \tau^2 
    \leq \DD_k^2(\rho\,||\,\rho_\infty)+2\lambda \tau \int_I \rho\, e^{V-\pot}\diff x\\
    &\leq \DD_k^2(\rho\,||\,\rho_\infty)+2\lambda e^m \tau\norm{\rho}_{L^1}
    \leq \DD_k^2(\rho\,||\,\rho_\infty)+\frac{\lambda N}{2}\tau^2+\frac{2\lambda e^{2m}}{N}\norm{\rho}_{L^1}^2,
    \end{split}
\end{align}
where we define $m:=\norm{V-\pot}_{L^\infty(I)}$, $N:=\int_{I}e^{-\pot} \diff x$ and we used that $\tau\in \R$. We thus derive a bound on $\tau$ by
\begin{align*}
    \tau^2 \leq \frac{2}{\lambda N}\DD_k^2(\rho\,||\,\rho_\infty)+\frac{4e^{2m}}{N^2}\norm{\rho}_{L^1}^2.
\end{align*}
Now by the reverse triangle inequality we bound the $L^2$ norm of $\rho e^{V-\frac{\pot}{2}}$ by
\begin{align*}
    \norm{\rho e^{V-\frac{\pot}{2}}}_{L^2}
    \leq \sqrt{\frac{1}{\lambda}\DD_k^2(\rho||\rho_\infty)}
    +\tau\norm{e^{-\pot}}_{L^1}^{\frac{1}{2}}
    \leq C\big(\norm{\rho}_{L^1}+\sqrt{\DD_k^2(\rho||\rho_\infty)}\;\big),
\end{align*}
where $C>0$ is a constant independent of $\rho$. Using \ref{lem:Plancherel_f_H-1:plancherel} from Lemma \ref{lem:Plancherel_f_H-1} and \eqref{eq:condition_Fourier_transform_k} we observe that
\begin{align*}
    \frac{1}{\rkc_k}\norm{\nabla (\rho e^V)e^{-\frac{\pot}{2}}}_{H^{-1}}^2
    \leq \DD^2(\rho||\rho_\infty)
    \leq \rkc_k \norm{\nabla (\rho e^V)e^{-\frac{\pot}{2}}}_{H^{-1}}^2.
\end{align*}
As a consequence of this inequality and the identity \eqref{eq:split_stein_gradient}, we find the additional bound
\begin{align*}
    \norm{\rho e^{V-\frac{\pot}{2}}\nabla V_0}_{H^{-1}}
    &\leq \norm{\nabla (\rho e^{V-\frac{\pot}{2}})}_{H^{-1}}+\norm{\nabla(\rho e^{V})e^{-\frac{\pot}{2}}}_{H^{-1}}\\
    &\leq C\big(\norm{\rho e^{V-\frac{\pot}{2}}}_{L^2}+\sqrt{\DD^2(\rho||\rho_\infty)}\big)
    \leq C\big(\norm{\rho}_{L^1}+\sqrt{\DD^2(\rho||\rho_\infty)}\big),
\end{align*}
where $C>0$ is a varying constant independent of $\rho$. This concludes the proof.
\end{proof}

\begin{rem}
    The proof above works also for the kernel $k_{\Sigma}(x) = k(\Sigma^{-\frac{1}{2}}x)$ because it also satisfies Lemma \ref{lem:passing_to_fourier}. Its advantage is that it yields a better constant $\lambda = \frac{\lambda_{0,1}}{\rkc_k^3 \rkc_{k_{0,1}}}\cdot\frac{e^{\ngib}}{Z}\cdot~\frac{1}{\sigma_d}$. However, the kernel depends on $\Sigma$ and involves multiplication $\Sigma^{-\frac{1}{2}}x$, which increases the computational cost. We elected to concentrate the analysis on kernels $k$ that are independent of the distribution $\rho_{\infty}$. 
\end{rem}

\subsection{Sufficient condition on $\hat{k}$ and $q$}
\label{subsection:sufficient condition on q}
The proof in Subsection \ref{sect:proof_lSS_ineq_1D} will fail in higher dimensions, since the condition \eqref{thm:existence of maternised convolution kernel that satisfy slsi:eq2} will not hold for $d \geq 2$, see Figure \ref{fig:failure_of_matern_kernel_in_higher_dim}. This impels us to construct new kernels $k$ that overcome the failure of this condition. In this section, for a general kernel $k$, we formulate sufficient conditions on $\hat{k}$ and $q$ to obtain the lower bound \eqref{thm:existence_of_maternised_convolution_kernel_that_satisfy_slsi:eq1} as in the proof of Theorem~\ref{thm:existence_of_maternised_convolution_kernel_that_satisfy_slsi} in 1D. 

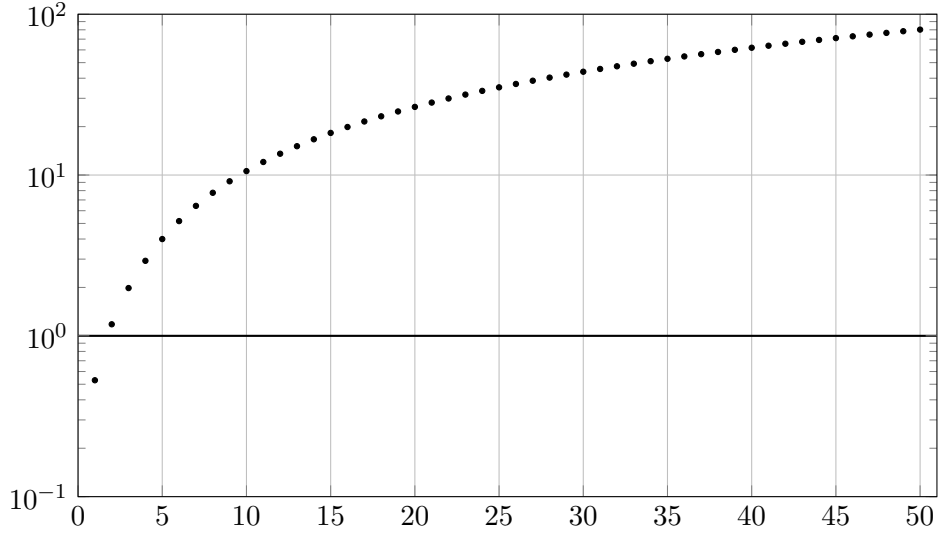
\begin{figure}[H]
    \centering
    \begin{tikzpicture}
    \begin{semilogyaxis}[
        width=13cm,
        height=8cm,
        xlabel={},
        ylabel={},
        grid=both,
        minor grid style={gray!25},
        major grid style={gray!50},
        ymajorgrids=true,
        yminorgrids=false,
        ymin=0.1, ymax=100,
        xmin=0, xmax=51,
        extra y ticks={1},
        extra y tick labels={},
        extra y tick style={grid=major, major grid style={black, thick}}
    ]
        \addplot[
            only marks,
            mark=*,
            mark options={scale=0.5, fill=black}
        ] coordinates {
            (1, 0.529412) 
            (2, 1.17996)
            (3, 1.98208)
            (4, 2.92803)
            (5, 3.99794)
            (6, 5.17092)
            (7, 6.42876)
            (8, 7.7567)
            (9, 9.14299)
            (10,10.5784)
            (11, 12.0554)
            (12, 13.5683)
            (13, 15.1121)
            (14, 16.6831)
            (15, 18.2779)
            (16, 19.8939)
            (17, 21.5287)
            (18, 23.1805)
            (19, 24.8477)
            (20, 26.5287)
            (21, 28.2224)
            (22, 29.9277)
            (23, 31.6437)
            (24, 33.3694)
            (25, 35.1043)
            (26, 36.8476)
            (27, 38.5987)
            (28, 40.3572)
            (29, 42.1224)
            (30, 43.8941)
            (31, 45.6719)
            (32, 47.4552)
            (33, 49.244)
            (34, 51.0377)
            (35, 52.8363)
            (36, 54.6394)
            (37,56.4468)
            (38, 58.2582)
            (39, 60.0736)
            (40, 61.8926)
            (41, 63.7152)
            (42, 65.5412)
            (43, 67.3704)
            (44, 69.2027)
            (45, 71.038)
            (46,72.8761)
            (47, 74.7169)
            (48, 76.5604)
            (49, 78.4064)
            (50, 80.2548)
        };
    \end{semilogyaxis}
\end{tikzpicture}
    \caption{The value  $\frac{\alpha}{\hat{k}_{0,1}(\varepsilon)}\left(\frac{4 \pi \varepsilon}{p_{d/2,1}}\right)^2$ ($y$-axis, log-scale) vs the dimension $d$ ($x$-axis). The constraint \eqref{thm:existence of maternised convolution kernel that satisfy slsi:eq2} is only satisfied for $d=1$.}
    \label{fig:failure_of_matern_kernel_in_higher_dim}
\end{figure}

\begin{lem}[sufficient conditions on $\hat{k}$ and $q$]
	\label{thm:stein_log_sobolev_inequality_via_poincaré_wirtinger}
    Let $k$ and $q$ be as in Lemma \ref{lem:passing_to_fourier}. Furthermore, assume that $k$ and $q$ are radially symmetric and that there exists constants $\alpha,\beta,\varepsilon,\theta>0$ such that
	\begin{align*}
		q(r)\geq\begin{cases}
			-\alpha& \text{ if }r<\varepsilon,\\
			\phantom{-}\beta & \text{ if }r\geq \varepsilon
		\end{cases}
            \quad\mbox{for } r\in[0,\infty)
             \qquad\text{and}\qquad
            \hat{k}(r)\geq \theta \quad
		  \mbox{for } r\in[0,\varepsilon].
	\end{align*}
	If the following strict inequality holds
    \begin{align}
        \label{thm:bounded kernel frequency with step function weight:eq1}
		\frac{\alpha}{\theta}\left(\frac{\varepsilon}{p_{d/2,1}}\right)^2 < \frac{1}{16\, \pi^2},
    \end{align}
    where $p_{d/2,1}$ is defined in Lemma \ref{lem:poincare-wirtinger-optimal}, then it holds
    \begin{align}
        \label{thm:bounded kernel frequency with step function weight:eq3}
        \int_{\Rd} |\hat{g}|^2 q\diff\xi+\frac{1}{16\pi^2}\int_{\Rd} |\nabla \hat{g}|^2 \hat{k}\diff\xi
        \geq \nu \int_{\Rd} |g|^2\diff\xi,
    \end{align}
    where $g$ is defined in \eqref{eq:def_g}, $\pwc\in\R$ is defined in \eqref{eq:def_tau}, $\rho$ satisfies \eqref{eq:regularity_to_be_satisfied_ineqality_rho} and $V$ satisfies \eqref{eq:general_prop_target_potentialV_f}. Moreover, the parameter $\nu$ is given by
    	\begin{align}
		\label{thm:bounded kernel frequency with step function weight:eq2}
		\nu=\min\left\{\beta,\; \frac{\theta}{16\, \pi^2}\left(\frac{p_{d/2,1}}{\varepsilon}\right)^2-\alpha\right\}.
	\end{align}
\end{lem}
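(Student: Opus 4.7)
The plan is to parallel the one-dimensional argument used to prove Theorem~\ref{thm:existence_of_maternised_convolution_kernel_that_satisfy_slsi}: split both integrals at the frequency cutoff $\varepsilon$, absorb the negative contribution of $q$ on $B_\varepsilon$ via the weighted gradient term through a Poincar\'e--Wirtinger inequality on the ball, and conclude using Plancherel's identity. Radial symmetry of $k$ and $q$ is used to reduce the relevant inequalities to a scalar radial profile, and the existence of the optimal Poincar\'e--Wirtinger constant $p_{d/2,1}$ on balls in $\Rd$ from Lemma~\ref{lem:poincare-wirtinger-optimal} supplies the sharp constant on the ball $B_\varepsilon$.

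First, I would use the pointwise lower bounds on $q$ and discard the non-negative gradient contribution outside $B_\varepsilon$ to get
\begin{align*}
\int_{\Rd} |\hat{g}|^2\, q \diff\xi + \frac{1}{16\pi^2} \int_{\Rd} |\nabla \hat{g}|^2\, \hat{k} \diff\xi
\geq \beta \int_{\Rd \setminus B_\varepsilon} |\hat{g}|^2 \diff\xi - \alpha \int_{B_\varepsilon} |\hat{g}|^2 \diff\xi + \frac{1}{16\pi^2} \int_{B_\varepsilon} |\nabla \hat{g}|^2\, \hat{k} \diff\xi.
\end{align*}
The crucial step is then the observation that the choice of $\tau$ from \eqref{eq:def_tau} is precisely the one that renders the mean of $\hat{g}$ on $B_\varepsilon$ zero. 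With this zero-mean property, the optimal Poincar\'e--Wirtinger inequality on balls (Lemma~\ref{lem:poincare-wirtinger-optimal}) gives
\begin{align*}
\int_{B_\varepsilon} |\hat{g}|^2 \diff\xi
\leq \Big(\frac{\varepsilon}{p_{d/2,1}}\Big)^2 \int_{B_\varepsilon} |\nabla \hat{g}|^2 \diff\xi
\leq \frac{1}{\theta}\Big(\frac{\varepsilon}{p_{d/2,1}}\Big)^2 \int_{B_\varepsilon} |\nabla \hat{g}|^2\, \hat{k} \diff\xi,
\end{align*}
where the second inequality uses the pointwise lower bound $\hat{k} \geq \theta$ on $B_\varepsilon$.

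Combining the two displays, the weighted gradient term on $B_\varepsilon$ absorbs the $-\alpha$ contribution exactly under the strict inequality \eqref{thm:bounded kernel frequency with step function weight:eq1}, leaving a positive net coefficient $\frac{\theta}{16\pi^2}(p_{d/2,1}/\varepsilon)^2 - \alpha$ in front of $\int_{B_\varepsilon} |\hat{g}|^2 \diff\xi$. Taking the minimum of this constant with $\beta$ yields the value of $\nu$ recorded in \eqref{thm:bounded kernel frequency with step function weight:eq2}, and Plancherel's identity then converts $\int_{\Rd} |\hat{g}|^2 \diff\xi = \int_{\Rd} |g|^2 \diff\xi$ to close the estimate. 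I do not anticipate a serious technical obstacle; all the inequalities are explicit and scale-tracked. The only item that really needs care is the bookkeeping that ensures the zero-mean property of $\hat g$ on $B_\varepsilon$ in the $g$-variables (produced by the affine change of coordinates $z$ in Lemma~\ref{lem:passing_to_fourier}), rather than only for $\hat g_0$, so that $\tau$ given by \eqref{eq:def_tau} is effective on the correct ball.
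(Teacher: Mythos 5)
Your proposal is correct and follows essentially the same route as the paper: split the frequency domain at $B_\varepsilon$, use the lower bounds on $q$, absorb the negative contribution on $B_\varepsilon$ via the weighted Poincar\'e--Wirtinger inequality (after choosing $\tau$ to make $\hat g$ mean-zero on $B_\varepsilon$), and close with Plancherel. The caveat you flag at the end — that the $\tau$ of \eqref{eq:def_tau} zeroes the mean of $\hat g_0$ rather than of $\hat g$ after the affine change of variables $z$ — is a real but minor discrepancy that the paper also passes over (it writes ``choose $\tau$ such that $\int_{B_\varepsilon}\hat g\,\diff\xi = 0$'' and parenthetically calls this an ``analogue'' of \eqref{eq:def_tau}); the argument is unaffected since one simply defines $\tau$ by the corresponding ratio in the $g$-variables.
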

\begin{proof}
	Consider the ball $B_{\varepsilon} = \{\xi: |\xi| \leq \varepsilon\}$. We can estimate
$$
	\int_{\R^d \setminus B_{\varepsilon}}|\hat g(\xi)|^2 q(\xi)\diff \xi
		+\int_{B_{\varepsilon}}|\hat g(\xi)|^2 q(\xi)\diff \xi
		\geq \, \beta\int_{\R^d \setminus B_{\varepsilon}}|\hat g(\xi)|^2\diff \xi
		-\alpha\int_{B_{\varepsilon}}|\hat g(\xi)|^2 \diff \xi.
$$	
As in the proof of Theorem \ref{thm:existence_of_maternised_convolution_kernel_that_satisfy_slsi} in 1D, the negative value can be compensated by the information on the gradient. We choose $\pwc\in\R$ such that $\int_{B_{\varepsilon}} \hat{g} \diff \xi = 0$ (see \eqref{eq:def_tau}) so that by the Poincaré-Wirtinger inequality in Lemma \ref{lem:poincare-wirtinger-optimal} we get
	\begin{align*}
		\int_{B_{\varepsilon}}|\hat g(\xi)|^2\diff \xi
		\leq \left(\frac{\varepsilon}{p_{d/2,1}}\right)^2 \,\int_{B_{\varepsilon}}|\nabla \hat g(\xi)|^2\diff \xi \leq  \frac{1}{\theta }\left(\frac{\varepsilon}{p_{d/2,1}}\right)^2 \, \int_{\Rd}|\nabla \hat g(\xi)|^2 \hat{k}(\xi)\diff \xi.
	\end{align*}
    Hence, to conclude the lower bound \eqref{thm:bounded kernel frequency with step function weight:eq3} we see that \eqref{thm:bounded kernel frequency with step function weight:eq1} needs to hold. From this, we deduce that the optimal value $\nu$ can be expressed as the minimum \eqref{thm:bounded kernel frequency with step function weight:eq2}.
\end{proof}

\subsection{Reconstructing kernels from weights}
\label{subsection:reconstructing kernels from weights}
For any given radially symmetric weight $q$, we explicitly reconstruct the kernel frequency $\hat{k}$ from the PDE stated in \eqref{eq:def_g}. We also study the differentiability of $
\hat{k}$.

\begin{lem}[formula for the kernel frequency $\hat{k}$]
    \label{Lemma:bounded kernel frequency near origin}
    Let $q\in L_{\mathrm{loc}}^1(0,\infty)$ have a representative, denoted by the same symbol, that is differentiable at $r=0$. Then, there exists the unique radially symmetric kernel frequency $\hat{k}\in W_{\mathrm{loc}}^{1,1}\cap L_{\loc}^\infty([0,\infty))$ that solves the ODE
    \begin{align*}
        \left(4\pi^2r^2-\frac{d}{2}\right) \hat{k}(r)-\frac{r}{2}\,\hat{k}'(r)
        =q(r).
    \end{align*}
    It is explicitly given by
    \begin{equation} 
\label{eq:kernel_freq_bounded_at_origin_formula}
\begin{split}
        \hat{k}(r)
        &=-\frac{2}{r^d}e^{4\pi^2 r^2}\int_0^r q(s)s^{d-1}e^{-4\pi^2 s^2}\diff s =-2 \int_0^1 q(u r) u^{d-1}e^{4\pi^2 r^2 (1-u^2)}\diff u\\
        &= -\frac{1}{(2\pi r)^d}e^{4\pi^2 r^2} \int_0^{4\pi^2 r^2} q\left( \frac{\sqrt{u}}{2\pi}\right)\, u^{\frac{d}{2}-1}e^{-u}\diff u.
   \end{split}
    \end{equation}
	Furthermore, its derivative reads
	\begin{align}
            \label{Lemma:bounded kernel frequency near origin:eq1}
		\hat{k}'(r)
		=-2\left(8\pi^2 r-\frac{d}{r}\right)\frac{e^{4\pi^2 r^2}}{r^d}\int_0^r q(s)s^{d-1}e^{-4\pi^2 s^2}\diff s
        -\frac{2}{r}q(r)
	\end{align}
    and it is bounded at the origin.
\end{lem}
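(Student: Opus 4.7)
The plan is to treat the statement as a linear first-order ODE in $r$ and solve it by the integrating-factor method, choosing the constant of integration so that the solution remains bounded at the origin. This will give both existence and the explicit formulas simultaneously, while uniqueness will fall out from the fact that the homogeneous solution is unbounded at $r=0$.

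I would first rewrite the ODE in the standard form
\begin{equation*}
\hat{k}'(r) - \left(8\pi^2 r - \frac{d}{r}\right) \hat{k}(r) = -\frac{2\,q(r)}{r},
\end{equation*}
identify the integrating factor $\mu(r) = r^d e^{-4\pi^2 r^2}$ (antiderivative of $-8\pi^2 r + d/r$ exponentiated), and notice that $\frac{d}{dr}\bigl(\mu\,\hat{k}\bigr) = -2\,r^{d-1} e^{-4\pi^2 r^2}\,q(r)$, which is locally integrable up to $r=0$ since $q \in L^1_{\loc}(0,\infty)$ and $d\geq 1$. Integrating from $0$ to $r$, the constant of integration is forced to vanish: $\mu(0)=0$, and any nonzero constant would produce a $r^{-d} e^{4\pi^2 r^2}$ summand in $\hat{k}$, contradicting $\hat{k} \in L^\infty_{\loc}([0,\infty))$. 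This yields the first representation in \eqref{eq:kernel_freq_bounded_at_origin_formula}; the two other forms follow at once from the substitutions $s = u r$ and $u = 4\pi^2 s^2$, respectively.

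To verify the regularity claims, I would note that local absolute continuity on $(0,\infty)$ is immediate from $q \in L^1_{\loc}$ and the smoothness of $r^{-d} e^{4\pi^2 r^2}$ away from the origin; the only delicate point is the behaviour at $r=0$. Using the assumed differentiability of $q$ at $0$, I Taylor expand
\begin{equation*}
\int_0^r q(s)\, s^{d-1} e^{-4\pi^2 s^2} \diff s = \frac{q(0)}{d}\,r^d + \frac{q'(0)}{d+1}\,r^{d+1} + o(r^{d+1}),
\end{equation*}
and combine with $e^{4\pi^2 r^2} = 1 + O(r^2)$ to deduce $\hat{k}(r) = -\tfrac{2\,q(0)}{d} - \tfrac{2\,q'(0)}{d+1}\,r + o(r)$ as $r\downarrow 0$. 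In particular $\hat{k}$ extends continuously to $0$, confirming $L^\infty_{\loc}$.

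Formula \eqref{Lemma:bounded kernel frequency near origin:eq1} then comes from differentiating the first representation using the product rule and the fundamental theorem of calculus. The apparently singular term $-2\,q(r)/r$ is the main subtlety: to see that $\hat{k}'$ is in fact bounded at $0$, I rewrite the ODE as $r\,\hat{k}'(r) = (8\pi^2 r^2 - d)\,\hat{k}(r) - 2\,q(r)$ and plug in the expansion above; the $r^0$ terms cancel exactly ($-d\cdot(-2q(0)/d) - 2q(0) = 0$) and the next order is $O(r)$, hence $\hat{k}'(r) = O(1)$. Finally, uniqueness is immediate: two solutions in the stated class differ by a solution of the homogeneous equation $\bigl(4\pi^2 r^2 - d/2\bigr)\hat k - \tfrac{r}{2}\hat k' = 0$, namely $C\,r^{-d} e^{4\pi^2 r^2}$, and $L^\infty_{\loc}$ forces $C=0$. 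The only genuine obstacle in the whole argument is the boundedness of $\hat{k}'$ at the origin, and this is resolved by the cancellation exhibited above.
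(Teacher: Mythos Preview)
Your proposal is correct and follows the same integrating-factor route as the paper for deriving the explicit formula and for uniqueness (the paper phrases it as solving from an arbitrary initial point $\hat{k}(\delta)=\hat{k}_\delta$ and then forcing the constant by boundedness, which is the same thing). The one noteworthy difference is in how you handle the boundedness of $\hat{k}'$ at the origin: the paper expands $q(r)=q(0)+q'(0)r+h(r)$, rewrites $\hat{k}'$ explicitly in terms of the lower incomplete gamma function, and bounds each resulting group of terms separately using the elementary inequality $\tfrac{1}{a}z^a e^{-z}\le \gamma(a,z)\le \tfrac{1}{a}z^a$. You instead read the boundedness directly off the ODE in the form $r\hat{k}'(r)=(8\pi^2 r^2-d)\hat{k}(r)-2q(r)$, combined with the first-order expansion $\hat{k}(r)=-\tfrac{2q(0)}{d}-\tfrac{2q'(0)}{d+1}r+o(r)$, and observe the $r^0$ cancellation. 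Your argument is shorter and avoids the special-function apparatus; the paper's version has the advantage of setting up the incomplete-gamma formalism that is reused in the explicit construction of $\hat{k}_{0,d}$ in the following subsection.
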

\begin{proof}
    Suppose we know the solution $\hat{k}$ satisfies  $\hat{k}(\delta)=\hat{k}_\delta$ for some $\delta$, $\hat{k}_\delta>0$. Then by standard linear ODE methods we can explicitly solve for $\hat{k}$ by 
    \begin{align*}
        \hat{k}(r)
        =\frac{e^{4\pi^2 r^2}}{r^d}\left(\delta^d e^{-4\pi^2\delta^2}\hat{k}_\delta-2\int_\delta^r q(s)s^{d-1}e^{-4\pi^2 s^2}\diff s\right).
    \end{align*}
    In order for $\hat{k}$ to be bounded near the origin, the term inside the brackets must converge to zero as $r\ra 0$. In particular, taking the limit inside the brackets yields
    \begin{align*}
        \delta^d e^{-4\pi^2\delta^2}\hat{k}_\delta+2\int_0^\delta q(s)s^{d-1}e^{-4\pi^2 s^2}\diff s=0.
    \end{align*}
    As $\delta$ is arbitrary, we derive a formula for $\hat{k}$ by solving for $\hat{k}_\delta$ in the previous equation and substituting $\delta$ with any $r>0$. This yields the unique solution
    \begin{align*}
        \hat{k}(r)
        =-\frac{2}{r^d}e^{4\pi^2 r^2}\int_0^r q(s)s^{d-1}e^{-4\pi^2 s^2}\diff s.
    \end{align*}
    Then, the second formula in \eqref{eq:kernel_freq_bounded_at_origin_formula} follows by the change of variables $u=s/r$. The last expression in \eqref{eq:kernel_freq_bounded_at_origin_formula} follows by the change of variables $u = 4\pi^2 s^2$ so that $\diff u = 8 \pi^2 s \diff s$ and
    \begin{align*}
    \int_0^r q(s)s^{d-1}e^{-4\pi^2 s^2}\diff s &= \frac{1}{8\pi^2} \int_0^{4\pi^2 r^2} q\left( \frac{\sqrt{u}}{2\pi}\right)\, \left( \frac{\sqrt{u}}{2\pi}\right)^{d-2}e^{-u}\diff u \\
    &= \frac{1}{2\, (2\pi)^{d}}\int_0^{4\pi^2 r^2} q\left( \frac{\sqrt{u}}{2\pi}\right)\, u^{\frac{d}{2}-1}e^{-u}\diff u.
    \end{align*}
    We proceed to further properties of $\hat{k}$. Since $q$ is continuous at the origin, $\hat{k}$ is bounded at the origin by
    \begin{align*}
        \limsup_{r\ra 0}|\hat{k}(r)|
        \leq\limsup_{r\ra 0}\frac{2}{r}\int_0^r |q(s)|\left(\frac{s}{r}\right)^{d-1}\diff s
        \leq\limsup_{r\ra 0}\frac{2}{r}\int_0^r |q(s)|\diff s
        <\infty.
    \end{align*}
    Let us now show the differentiability of $\hat{k}$. From the formula \eqref{Lemma:bounded kernel frequency near origin:eq1}, we see that the only difficulty is at $r = 0$. 
    Since $q$ is differentiable at the origin, there exists a function $h:~[0,\infty)\ra\R$ such that
    \begin{align*}
        q(r)
        =q(0)+r\, q'(0)+h(r),\qquad
        h(r)=o(r)
        \text{ whenever }r\ra 0.
    \end{align*}
    Directly differentiating and using the above Taylor expansion we obtain
    \begin{align}
            \hat{k}'&(r)= -2\left(8\pi^2 r-\frac{d}{r}\right)\frac{e^{4\pi^2  r^2}}{r^d}\int_0^r q(s) s^{d-1}e^{-4\pi^2 s^2}\diff s
            -\frac{2}{r}q(r)\notag\\
            &=\frac{q(0)}{r}\left(d\frac{e^{4 \pi ^2 r^2}}{(2\pi r)^d}\,\gamma \left(\frac{d}{2},4 \pi ^2 r^2\right)-2\right)-2 q'(0)\label{Lemma:bounded_kernel_frequency_near_origin:eq2}\\
            &\qquad-2\left(8\pi^2 r-\frac{d}{r}\right)\frac{e^{4\pi^2 r^2}}{r^d}\int_0^r h(s) s^{d-1}e^{-4\pi^2 s^2}\diff s-\frac{2}{r}h(r)\label{Lemma:bounded_kernel_frequency_near_origin:eq3}\\
            &\qquad-8 \pi ^2 r\frac{e^{4 \pi ^2 r^2}}{(2\pi r)^d}q(0)\,\gamma \left(\frac{d}{2},4 \pi ^2 r^2\right)
            -\left(8 \pi ^2 r-\frac{d}{r}\right)\frac{e^{4 \pi ^2 r^2}}{(2\pi r)^d}\frac{q'(0)}{2\pi }\gamma \left(\frac{d+1}{2},4 \pi ^2 r^2\right),\label{Lemma:bounded_kernel_frequency_near_origin:eq4}
    \end{align}
where $\gamma$ is the incomplete gamma function defined in \eqref{eq:def_incomplete_gamma_lower}. To control the singularities and the unknown terms, we notice for all $a,z>0$ that
\begin{equation}\label{eq:lower_and_upper_bound_lower_gamma_incomplete}     
\frac{1}{a}z^a e^{-z} = e^{-z}\int_0^z s^{a-1}\diff s \leq \gamma(a,z) \leq \int_0^z s^{a-1}\diff s= \frac{1}{a}z^a.
\end{equation}
Hence, we can estimate the singular terms in \eqref{Lemma:bounded_kernel_frequency_near_origin:eq2} as follows, keeping in mind $q(0)<0$,
$$
    0
    =\frac{q(0)}{r}\left(2-2\right)
    \geq 
    \frac{q(0)}{r}\left(d\,\frac{e^{4 \pi ^2 r^2}}{(2\pi  r)^d} \gamma \left(\frac{d}{2},4 \pi ^2 r^2\right)-2\right)
    \geq 2q(0)\,\frac{e^{4 \pi ^2 r^2}-1}{r}.
$$
The other terms in \eqref{Lemma:bounded_kernel_frequency_near_origin:eq3} are easy to control because \eqref{eq:lower_and_upper_bound_lower_gamma_incomplete} implies that the following functions
\begin{equation}\label{eq:proof_functions_which_are_bounded}
r\mapsto\frac{1}{(2\pi r)^d} \, \gamma \left(\frac{d}{2},4 \pi ^2 r^2\right) \qquad \mbox{and} \qquad r\mapsto\frac{1}{(2\pi r)^{d+1}} \,\gamma \left(\frac{d+1}{2},4 \pi ^2 r^2\right)
\end{equation}
converge when $r \to 0$. Finally, the terms with function $h$ in \eqref{Lemma:bounded_kernel_frequency_near_origin:eq4} can be controlled by using the fact $h(r)=o(r)$~whenever $r\to 0$. More precisely, for all $\eta>0$, there exists $\varepsilon>0$ such that $|h(r)|<\eta\, r$ for all $r\in [0,\varepsilon]$. Then we estimate
    \begin{align*}
        \left\vert \frac{1}{r^{d+1}} \int_0^r h(s) s^{d-1}e^{-4\pi^2 s^2}\diff s\right\vert
        \leq \frac{\eta}{2\,(2\pi r)^{d+1}}\, \gamma\left(\frac{d+1}{2},4\pi^2 r^2\right),
    \end{align*}
and thus we conclude using \eqref{eq:proof_functions_which_are_bounded} again and the fact that $\eta>0$ is arbitrarily small.
\end{proof}

\subsection{Proof of the existence of kernel in higher dimensions}\label{sect:exist_kernel_any_dimensio}

In what follows we assume the simplest form of the weight $q$ being piecewise constant and we prove that it leads to a kernel satisfying the Stein-log-Sobolev inequality. The construction will extensively use the notions of the incomplete gamma functions $\gamma(s,r)$ and $\Gamma(s,r)$, summarized in Appendix~\ref{appendix:incomplete_gamma_functions}. It is worth comparing the Fourier transform of the kernel constructed in this section with the one from Section~\ref{sect:proof_lSS_ineq_1D}, see Figure~\ref{fig:development_of_knicks}. 

\begin{proof}[Proof of Theorem \ref{thm:existence_of_maternised_convolution_kernel_that_satisfy_slsi} (Case 2)]
    We split the proof into six steps.

    \smallskip
    
    \underline{\it A candidate for function $\hat{k}_{0,d}$.} Let $\alpha,\varepsilon>0$, $r\geq 0$ and define the weight
\begin{equation} \label{eq:step_function_q}
q(r)=\begin{cases}
            -\alpha & \text{ if } r\leq\varepsilon,\\
            \alpha\beta & \text{ if }r>\varepsilon,
        \end{cases}
        \qquad\beta:=\frac{\gamma \left(\frac{d}{2},4 \pi ^2 \varepsilon ^2\right)}{\Gamma \left(\frac{d}{2},4 \pi ^2 \varepsilon ^2\right)}.
\end{equation}
    We have chosen $\beta$ such that
    \begin{equation}\label{eq:final_proof_0_integral_condition_two_weights}
    \int_0^{\infty} q\left( \frac{\sqrt{u}}{2\pi}\right)\, u^{\frac{d}{2}-1}e^{-u}\diff u = 0.
    \end{equation}
    This condition, together with the form of $q$ in \eqref{eq:step_function_q}, guarantees that $\hat{k}_{0,d}$ given by \eqref{eq:kernel_freq_bounded_at_origin_formula} satisfies $\hat{k}_{0,d} > 0$ (so that $k$ is formally positive definite by Bochner's Theorem \cite{MR152834}). Using \eqref{eq:final_proof_0_integral_condition_two_weights} and the third formula in \eqref{eq:kernel_freq_bounded_at_origin_formula} we can compute $\hat{k}_{0,d}$ explicitly as
    \begin{equation}\label{eq:explicit_formula_k_two_weights}
        \hat{k}_{0,d}(r)=\begin{cases}
            \displaystyle\alpha\frac{\exp(4\pi^2 r^2)}{(2\pi r)^d}\gamma\left(\frac{d}{2},4\pi^2 r^2\right) & \text{ if }r\leq \varepsilon,\\[.5cm]
            \displaystyle\alpha\beta\frac{\exp(4\pi^2 r^2)}{(2\pi r)^d}\Gamma\left(\frac{d}{2},4\pi^2 r^2\right) & \text{ if }r> \varepsilon.
        \end{cases}
    \end{equation}
    \underline{\it Monotonicity of $\hat{k}_{0,d}$.} We claim that the function $\hat{k}_{0,d}$ is increasing on $[0,\eps]$ and decreasing on $[\eps, \infty)$ with $\hat{k}_{0,d}(0) = \frac{2\alpha}{d}$ and $\lim_{r \to \infty} \hat{k}_{0,d}(r) = 0$. Indeed, using the second formula in \eqref{eq:kernel_freq_bounded_at_origin_formula} we have
    \begin{align*}
        \lim_{r\to 0} \hat{k}_{0,d}(r)
         = 2\alpha   \int_0^1 u^{d-1} \diff u
        =\frac{2\alpha }{d}.
    \end{align*} 
    All other assertions follow directly from Lemma \ref{Lemma:kernel frequency of constant is monotone increasing} and formula \eqref{eq:explicit_formula_k_two_weights}.

    \smallskip
    
\underline{\it The Fourier inversion and the regularity of $k_{0,d}$.} We now prove that there exists function $k_{0,d} \in L^1(\Rd)+L^2(\Rd)$ with Fourier transform $\hat{k}_{0,d}$. 

\smallskip

Fix $m \in \N$ and write $\hat{k}_{0,d} = \hat{k}_{0,d}\, \mathds{1}_{4\pi^2 r^2 \leq 1} + \hat{k}_{0,d}\, \mathds{1}_{4\pi^2 r^2 > 1}$. Concerning the term $\hat{k}_{0,d}\, \mathds{1}_{4\pi^2 r^2 > 1}$, using Lemma \ref{lem:expansion_into_matern_kernels} and the explicit formula \eqref{eq:explicit_formula_k_two_weights}, we can write for $n \in \N$ to be chosen later, constants $C_0$, ..., $C_n\in\R$ and an error function $\varepsilon_n:[0,\infty)\ra\R$  
\begin{align*}
\hat{k}_{0,d}(r) &= \sum_{i=1}^n \frac{C_i}{(1+ 4\pi^2 r^2)^i} \, \mathds{1}_{4\pi^2 r^2 > 1} + \varepsilon_n(r)\, \mathds{1}_{4\pi^2 r^2 > 1} + \hat{k}_{0,d}\, \mathds{1}_{4\pi^2 r^2 \leq 1}\\
&= \sum_{i=1}^n \frac{C_i}{(1+ 4\pi^2 r^2)^i} - \sum_{i=1}^n \frac{C_i}{(1+ 4\pi^2 r^2)^i} \, \mathds{1}_{4\pi^2 r^2 \leq 1} + \varepsilon_n(r)\, \mathds{1}_{4\pi^2 r^2 > 1} + \hat{k}_{0,d}\, \mathds{1}_{4\pi^2 r^2 \leq 1},
\end{align*}
where $|\varepsilon_n(r)|\leq C_{0} r^{-(n+1)}$. We let
$$
\widehat{k}_1(r) := \sum_{i=1}^n \frac{C_i}{(1+ 4\pi^2 r^2)^i}, \quad \widehat{k}_2(r):= \left(\hat{k}_{0,d}-\sum_{i=1}^n \frac{C_i}{(1+ 4\pi^2 r^2)^i}\right) \, \mathds{1}_{4\pi^2 r^2 \leq 1} + \varepsilon_n(r)\, \mathds{1}_{4\pi^2 r^2 > 1}.
$$
There exists a function $k_1$ with Fourier transform $\widehat{k}_1$, namely
\begin{equation}\label{eq:representation_k_1_main_proof}
k_1 = \sum_{i=1}^n C_i\, \underbrace{\spk \ast \ldots \ast \spk}_{i \text{ times}},
\end{equation}
where $\spk$ is the solution of \eqref{eq:screened_poisson_equation} in Lemma \ref{lem:fundamental_solution_to_screened_poisson_equation}. Since the Fourier transform is a bijection on the space of tempered distributions, the function $k_1$ is uniquely determined. By the Young convolutional inequality each summand in \eqref{eq:representation_k_1_main_proof}, and consequently also $k_1$, shares the same regularity as $\spk$. In particular, we have proven $k_1\in L^1(\Rd)$.

\smallskip

Concerning $\widehat{k}_2$, we choose $n$ sufficiently large so that 
\begin{equation}\label{eq:choice_m_construction_k2}
\int_{4\pi^2 |\xi|^2>1} |\varepsilon_n(|\xi|)|^2 \,(1+|\xi|^2)^m \diff \xi < \infty.
\end{equation}  
First, since the Fourier transform is an isometric isomorphism on $L^2(\Rd)$ and $\widehat{k}_2 \in L^2(\Rd)$, there exists a unique function $k_2$ with Fourier transform $\widehat{k}_2$. Moreover, by \eqref{eq:choice_m_construction_k2},
$$
\int_{\Rd} |\widehat{k}_2(\xi)|^2 \, (1+|\xi|^2)^m \diff \xi < \infty,
$$
so that $k_2 \in H^m(\Rd)$, and hence $k_2\in L^2(\Rd)$, as desired. 

\underline{\it Kernel satisfies upper- and lower bounds in  \eqref{eq:condition_Fourier_transform_k}.} By using Corollary \ref{cor:the_asymptotic_beh_Gamma_fcn} we have
$$
\lim_{r\to \infty} \alpha\beta\frac{\exp(4\pi^2 r^2)}{(2\pi r)^{d-2}}\Gamma\left(\frac{d}{2},4\pi^2 r^2\right) = \alpha \beta >0.
$$
Hence, there exists large $R > \max(1,\eps)$ such that for $r \in [R, \infty)$ we have
$$
\frac{\alpha \beta}{2} \leq \alpha\beta\frac{\exp(4\pi^2 r^2)}{(2\pi r)^{d-2}}\Gamma\left(\frac{d}{2},4\pi^2 r^2\right) \leq 2 \alpha \beta. 
$$
In particular, 
\begin{equation}\label{eq:lower_and_upper_bound_k_large_interval}
\frac{\alpha \beta}{2\,(1+r^2)} \leq \frac{\alpha \beta}{2\,r^2} \leq \alpha\beta\frac{\exp(4\pi^2 r^2)}{(2\pi r)^{d-2}}\Gamma\left(\frac{d}{2},4\pi^2 r^2\right)\frac{1}{r^2} \leq \frac{2 \alpha \beta}{r^2} \leq \frac{4 \alpha \beta}{(1+r^2)}, 
\end{equation}
which proves the bounds in \eqref{eq:condition_Fourier_transform_k} for $r \in [R, \infty)$. To conclude, it suffices to notice that since functions $\frac{1}{1+r^2}$ and $\hat{k}_{0,d}(r)$ are bounded away from 0 on $[0,R]$ (by Step 2), there is always a constant $C>0$ depending on $R$ such that
\begin{equation}\label{eq:lower_and_upper_bound_k_small_interval}
\frac{1}{C}\, \frac{1}{1+r^2} \leq \hat{k}_{0,d}(r) \leq C\, \frac{1}{1+r^2} \mbox{ for } r\in [0,R].
\end{equation}
Combining \eqref{eq:lower_and_upper_bound_k_large_interval} and \eqref{eq:lower_and_upper_bound_k_small_interval}, the proof of the upper- and lower bound in \eqref{eq:condition_Fourier_transform_k} is concluded.

\smallskip

\underline{\it Boundedness of $\hat{k}_{0,d}'$.} By Lemma \ref{Lemma:bounded kernel frequency near origin}, we know that $\hat{k}_{0,d}'(r)$ is locally bounded and it remains to prove that it remains bounded when $r \to \infty$. In fact, we will prove $\lim_{r\to \infty} \hat{k}_{0,d}'(r) = 0$. Indeed, by a direct computation for $r>\varepsilon$ and the chain rule
\begin{equation}\label{eq:derivative_hat_k_radial_particular_case}
\hat{k}_{0,d}'(r) = \alpha \, \beta\, \partial_s \left[e^s s^{-\frac{d}{2}} \Gamma\left(\frac{d}{2}, s\right)\right] \Bigg|_{s = 4\pi r^2} \, (8 \pi r). 
\end{equation}
Lemma \ref{lem:Gamma_asymptotics_derivative} implies that the RHS of \eqref{eq:derivative_hat_k_radial_particular_case} converges to 0 when $r \to \infty$. 

\smallskip

\underline{\it Proof of the inequality \eqref{eq:inequality_l2_lower_bound_formulated_H-1_H1}.}
    We know that $k_{0,d} \in L^1(\Rd)+L^2(\Rd)$, $\hat{k}_{0,d}$ satisfies \eqref{eq:condition_Fourier_transform_k}. Hence, all assumptions of Lemmas \ref{lem:passing_to_fourier} and \ref{thm:stein_log_sobolev_inequality_via_poincaré_wirtinger} are satisfied. By choosing $\varepsilon$ small enough, we see that \eqref{thm:bounded kernel frequency with step function weight:eq1} in Lemma~\ref{thm:stein_log_sobolev_inequality_via_poincaré_wirtinger} holds. Hence, there exists $\lambda_{0,d}$ such that 
     \begin{equation}\label{eq:l^2_lower_bound_final_proof}
        \int_{\Rd} |\hat{g}|^2 q\diff\xi+\frac{1}{16\pi^2}\int_{\Rd} |\nabla \hat{g}|^2 \hat{k}_{0,d}\diff\xi
        \geq \lambda_{0,d} \int_{\Rd} |g|^2\diff\xi
    \end{equation}
    for $g$ as in Lemma~\ref{thm:stein_log_sobolev_inequality_via_poincaré_wirtinger}. In this case the value $\lambda_{0,d}$ is given by
    \begin{align}\label{eq:formula_for_lambda_0}
        \lambda_{0,d}= \alpha\min\left\{\frac{\gamma \left(\frac{d}{2},4 \pi ^2 \epsilon ^2\right)}{\Gamma \left(\frac{d}{2},4 \pi ^2 \epsilon ^2\right)},\,\frac{1}{8\pi^2 d}\left(\frac{p_{d/2,1}}{\varepsilon}\right)^2-1\right\}.
    \end{align}
    Thus, we can optimize the coefficient $\lambda_{0,d}$, by choosing $\varepsilon$ such that
    \begin{align*}
    	\varepsilon:=\argmax_{\delta\in\left(0,\sqrt{\frac{2}{d}}\,\frac{p_{d/2,1}}{2\pi}\right)}\;\min\left\{\frac{\gamma \left(\frac{d}{2},4 \pi ^2 \delta ^2\right)}{\Gamma \left(\frac{d}{2},4 \pi ^2 \delta ^2\right)},\,\frac{1}{8\pi^2 d}\left(\frac{p_{d/2,1}}{\delta}\right)^2-1\right\}
    \end{align*}
    
    We conclude by following the same steps as in the proof of Theorem \ref{thm:existence_of_maternised_convolution_kernel_that_satisfy_slsi} in 1D, in Section \ref{sect:proof_lSS_ineq_1D}.
\end{proof}

\begin{rem}
It may be worth commenting optimality of the choice of the weight $q$ in \eqref{eq:step_function_q}. It is an interesting open problem to understand if introducing more complicated shapes of $q$ (for example, oscillations) leads to better regularity properties of the kernel $k$. We leave this as an open question.
\end{rem}

\begin{figure}
    \centering
    \begin{tikzpicture}
    \begin{axis}[
        title={$\alpha=1$ and $\varepsilon=0.1$},
        legend pos=north east,
        width=0.49\linewidth,
        height=5cm,
        legend style={font=\tiny},
        scaled x ticks=false,
        xticklabel style={
            /pgf/number format/fixed,
            /pgf/number format/precision=3
        },
        xmin=0, xmax=0.2
    ]
    \addplot[black, solid] coordinates {(0, 0.500494) (0.01, 0.500494)  (0.02, 0.501979)  (0.03, 0.504468)  (0.04, 0.507979)  (0.05, 0.512542)  (0.06, 0.518194)  (0.07, 0.524979)  (0.08, 0.532956)  (0.09, 0.54219)  (0.1, 0.552762)  (0.11, 0.456828)  (0.12, 0.383862)  (0.13, 0.327078)  (0.14, 0.282021)  (0.15, 0.245672)  (0.16, 0.215923)  (0.17, 0.191267)  (0.18, 0.170605)  (0.19, 0.15312)  (0.2, 0.13819)};
    \addlegendentry{$d=2$}
    \addplot[black, dashed] coordinates {(0, 0.200113) (0.01, 0.200113) (0.02, 0.200452) (0.03, 0.201019) (0.04, 0.201817) (0.05, 0.202851) (0.06, 0.204126) (0.07, 0.205648) (0.08, 0.207426) (0.09, 0.209471) (0.1, 0.211793) (0.11, 0.136721) (0.12, 0.0922714) (0.13, 0.0646591) (0.14, 0.0467927) (0.15, 0.0348202) (0.16,0.0265495) (0.17, 0.0206811) (0.18, 0.0164179) (0.19, 0.0132548) (0.2, 0.0108634)};
    \addlegendentry{$d=5$}
    \addplot[black, dash dot] coordinates {(0, 0.125064) (0.01, 0.125064) (0.02, 0.125198) (0.03, 0.125445) (0.04, 0.125794) (0.05, 0.126244) (0.06, 0.126798) (0.07, 0.127458) (0.08, 0.128226) (0.09, 0.129106) (0.1, 0.130102) (0.11, 0.0632586) (0.12, 0.0329976) (0.13, 0.01827) (0.14, 0.0106483) (0.15, 0.00648992) (0.16, 0.00411423) (0.17, 0.00270095) (0.18, 0.00182947) (0.19, 0.00127455) (0.2, 0.000910872)};
    \addlegendentry{$d=8$}
    \end{axis}
\end{tikzpicture}
\begin{tikzpicture}
    \begin{axis}[
        title={$\alpha=1$ and $\varepsilon=0.05$},
        legend pos=north east,
        width=0.49\linewidth,
        height=5cm,
        legend style={font=\tiny},
        scaled x ticks=false,
        xticklabel style={
            /pgf/number format/fixed,
            /pgf/number format/precision=3
        },
        xmin=0, xmax=0.2
    ]
    \addplot[black, solid] coordinates {(0, 1.00198) (0.01, 1.00198) (0.02, 1.00794) (0.03, 1.01798) (0.04, 1.03226) (0.05, 1.05101) (0.06, 0.72987) (0.07, 0.536231) (0.08, 0.410552) (0.09, 0.324387) (0.1, 0.262753) (0.11, 0.217151) (0.12, 0.182467) (0.13, 0.155475) (0.14, 0.134058) (0.15, 0.116779) (0.16, 0.102638) (0.17, 0.090918) (0.18, 0.0810966) (0.19, 0.0727848) (0.2, 0.0656883)};
    \addlegendentry{$d=2$}
    \addplot[black, dashed] coordinates {(0, 0.400452) (0.02, 0.401811) (0.03, 0.404093) (0.04, 0.407321) (0.05, 0.411531) (0.06, 0.172516) (0.07, 0.0838312) (0.08, 0.0454488) (0.09, 0.0268137) (0.1, 0.0169207) (0.11, 0.011279) (0.12, 0.00786734) (0.13, 0.00570055) (0.14, 0.00426604) (0.15, 0.00328189) (0.16, 0.00258553) (0.17, 0.00207934) (0.18, 0.00170256) (0.19, 0.00141613) (0.2, 0.0011943)};
    \addlegendentry{$d=5$}
    \addplot[black, dash dot] coordinates {(0, 0.250196) (0.01, 0.250196) (0.02, 0.250792) (0.03, 0.251787) (0.04, 0.253192) (0.05, 0.255017) (0.06, 0.0619404) (0.07, 0.0189965) (0.08, 0.00692496) (0.09, 0.00288574) (0.1, 0.00133844) (0.11, 0.000677858) (0.12, 0.000369576) (0.13, 0.000214563) (0.14, 0.000131509) (0.15, 0.0000845032) (0.16, 0.000056599) (0.17, 0.0000393234) (0.18, 0.0000282225) (0.19, 0.0000208486) (0.2, 0.0000158025)};
    \addlegendentry{$d=8$}
    \end{axis}
\end{tikzpicture}
    \caption{The shape of the kernel frequency $\hat{k}_{0,d}$ with $\alpha=1$ and $\varepsilon=0.05, 0.1$ for different dimensions $d$. On the $x$ axis we have the radius $r$, and on the $y$ axis we have the value $\hat{k}_{0,d}(r)$. Contrary to the kernel which satisfies \eqref{eq:inequality_l2_lower_bound_formulated_H-1_H1} in one dimension $\hat{k}_{0,d}(\xi) = \frac{1}{1+C\,|\xi|^2}$, the ones constructed for higher dimensions first increase and then decrease to 0 at infinity.}
    \label{fig:development_of_knicks}
\end{figure}
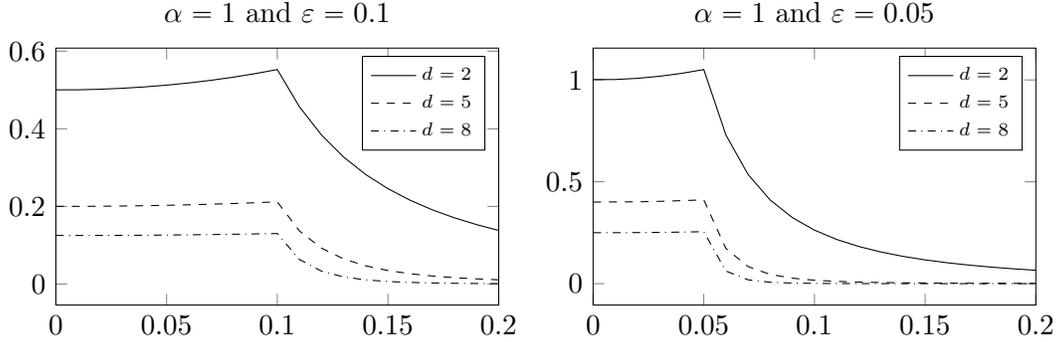

\section{Weak Solutions to the Stein Gradient Flow}
\label{section:weak solutions to the stein variational gradient descent equation}

\noindent In this section, we prove Theorem \ref{thm:existence_of_distributional_solutions_to_mfsvgd}, i.e. the existence of distributional solutions $\rho = \rho(t,x)$ to the PDE \eqref{eq:stein_gradient_descent_kullback_leibler} with initial value $\rho_0\in\Pro$, which we recall for convenience
\begin{align}\label{eq:wellposed_pde_mf_svgd}
     \begin{split}
     \prt_t \rho&=\divv\Big(\rho\; e^{V-\frac{\pot}{2}}\; k\ast \Big(\nabla \left( \rho\, e^{V} \right) e^{-\frac{\pot}{2}}\Big)\Big)\\
     \rho(0,x) &= \rho_0(x)
     \end{split}
     \begin{split}
        & \mbox{ on } (0,\infty) \times \Rd, \\
        &\mbox{ on }\Rd. 
 \end{split}
\end{align}
Let us explain the strategy of our proof. The starting point is to consider an approximation of the PDE \eqref{eq:wellposed_pde_mf_svgd}
\begin{align}
    \begin{split}
         \prt_t \rho=& \,\divv\Big(\rho\; \omega_\eta\ast\Big[e^{V_\eta-\frac{\pot}{2}}\; k\ast \Big(
\nabla ((\rho \ast \omega_\eta) e^{V_\eta}) \, e^{-\frac{\pot}{2}}
\Big)\Big]\Big) \\
&\,+\frac{1}{R}\,\divv\left(\nabla \rho+\rho\nabla V_\eta\right)
     \end{split}
\qquad &&\mbox{ on } (0,\infty) \times B_R, \label{eq:regularised_weak_pde} \\
 \rho=&\,0 &&\text{ on }(0,\infty)\times \prt B_R,\label{eq:Dirichlet_approximation_sys}\\
\rho(0,\cdot) = & \,\rho_0^{R,\eta}:=(\rho\, \mathds{1}_{B_{R-1}}) \ast\omega_\eta &&\text{ on } B_R,  \label{ass:mfsvgd:ini} 
\end{align}
where we regularise the potential and the target probability measure  
\begin{equation}\label{ass:mfsvgd:f}
V_\eta:= V\ast \omega_\eta, \qquad \rho_\infty^\eta=e^{-V_\eta}.
\end{equation}
The approximating problem uses the mollification kernel $\{\omega_\eta\}_{\eta\in (0,1)}$ with parameter $\eta$ defined by $\omega_\eta(x)=\frac{1}{\eta^d}\tilde\omega\left(\frac{x}{\eta}\right)$ for a smooth function $\tilde\omega$ such that 
        $$
        \tilde\omega>0\text{ on }B_1,
        \qquad 
        \tilde\omega=0\text{ on }\Rd\setminus B_1,
        \qquad 
        \int_{\Rd}\tilde\omega\diff x=1,
        \qquad 
        \tilde\omega(x)=\tilde\omega(|x|).
        $$
To make the convolutions in space well-defined, we always extend $\rho(t,x)=0$ for $x \notin B_R$. The existence of classical solutions $\rho=: \rho^{R,\eta}$ to \eqref{eq:regularised_weak_pde}--\eqref{ass:mfsvgd:ini} follows from the standard theory of second-order parabolic PDEs. For the sake of completeness, we present the proof in Appendix \ref{appendix:existence_approximations_former_appendix}.

\smallskip

In Section \ref{sect:uniform_estimates} we obtain uniform bounds with respect to $R$ and $\eta$ for on the following terms
\begin{align*}
    (\rho^{R,\eta}\ast\omega_\eta) e^{V_\eta-\frac{\pot}{2}},\qquad
    \nabla( (\rho^{R,\eta}\ast\omega_\eta) e^{V_\eta}) e^{-\frac{\pot}{2}},\qquad
    \prt_t \rho^{R,\eta}.
\end{align*}
These estimates are a consequence of a selection of $\eta$ in terms of $R$, the fact that the kernel $k$ satisfies Theorem \ref{thm:existence_of_maternised_convolution_kernel_that_satisfy_slsi}, as well as the careful construction of the approximating problem \eqref{eq:regularised_weak_pde} so that the inequality \eqref{eq:inequality_l2_lower_bound_formulated_H-1_H1} can be applied. 

\smallskip

In Section \ref{sect:passage_limit_proof_main} we use these bounds to pass to the limit $R\to\infty$ and $\eta \to 0$ in \eqref{eq:regularised_weak_pde}--\eqref{ass:mfsvgd:ini}, hence proving the existence of weak solution to 
\eqref{eq:wellposed_pde_mf_svgd}.

\smallskip

We point out that the approximating scheme leading to the existence of a solution to \eqref{eq:wellposed_pde_mf_svgd} does not require us to normalize the initial condition in \eqref{ass:mfsvgd:ini}.  Furthermore, we observe that by~\eqref{eq:general_prop_target_potentialV_f} 
    \begin{equation}\label{eq:estimate_V_eta_from_below}
    \begin{split}
        \ngib\leq \int_{\Rd} \Big(V(x-y)-&\frac{1}{2}(x-y-\mu)\cdot \Sigma^{-1}(x-y-\mu)\Big)\;\omega_\eta(y)\diff y\\
        &=V_\eta(x)-\pot(x)-\frac{1}{2}\int_{\Rd} y\cdot \Sigma^{-1}y\;\omega_\eta(y)\diff y
        \leq V_\eta(x)-\pot(x),
    \end{split}
    \end{equation}
so that $V_{\eta}$ satisfies \eqref{eq:general_prop_target_potentialV_f} with the same constant $\ngib$ as $V$. Thus, we can employ the bound \eqref{eq:inequality_l2_lower_bound_formulated_H-1_H1} with the same constant $\lambda$ for all $R$ and $\eta$.

\subsection{Uniform estimates for \eqref{thm:existence_of_distributional_solutions_to_mfsvgd}}\label{sect:uniform_estimates}

In this section, we prove uniform estimates that are required for the proof of Theorem \ref{thm:existence_of_distributional_solutions_to_mfsvgd}. We start by selecting $\eta$ in terms of $R$. As $\nabla V$ is continuous, there exists $\eta=\eta(R)\in (0,\frac{1}{R})$ sufficiently small such that
\begin{equation}\label{eq:choice_eta_R}
\mbox{ for all } x,y\in B_{R+2} \mbox{ with }|x-y|<\eta \mbox{ it holds } |\nabla V(x)-\nabla V(y)|<\frac{1}{R}.
\end{equation}
We write
\begin{align*}
    \rho^R : = \rho^{R,\eta(R)},\qquad
    \rho_0^R := \rho_0^{R,\eta(R)},\qquad
    \rho_\infty^R := \rho_\infty^{\eta(R)}\qquad
    V_R:=V_{\eta(R)},\qquad
    \omega_R:=\omega_{\eta(R)}.
\end{align*}

\begin{lem}\label{lem:uniform_estimates}
    There exists $R_0\geq 2$ such that following sequences are uniformly bounded:
    \begin{enumerate}[label=(E\arabic*),itemsep=.1em]
        \item \label{lem:uniform_estimates:KL}
        $\{\KL(\rho_0^R|| \rho_\infty^R))\}_{R\geq R_0}$ in $\R$, 
        \item \label{lem:uniform_estimates:ini_H-1_est}
        $\{\rho_0^R\}_{R\geq R_0}$ in $H^{-m}(\Rd)$ for any fixed $m>\frac{d}{2}$, 
        \item \label{lem:uniform_estimates:L2_est}
        $\{(\rho^R\ast\omega_R) e^{V_R-\frac{\pot}{2}}\}_{R\geq R_0}$ in $L^2((0,T)\times \Rd)$ for all fixed $T>0$,
        \item \label{lem:uniform_estimates:H-1_est}
        $\{\nabla( (\rho^R\ast\omega_R) e^{V_R}) e^{-\frac{\pot}{2}}\}_{R\geq R_0}$ in $L^2(0,T; H^{-1}(\Rd))$ for all fixed $T>0$,
        \item \label{lem:uniform_estimates:KL_all_times} $\{\KL(\rho_T^R||\rho_\infty^R)\}_{R\geq R_0}$ for all fixed $T>0$,
        \item\label{lem:uniform_estimates:H1_est} $\left\{k\ast \left(\nabla ( (\rho^R\ast\omega_R)e^{V_R})e^{-\frac{\pot}{2}}\right)\right\}_{R\geq R_0}$ in $L^2(0,T; H^{1}(\Rd))$ for all fixed $T>0$,
        \item\label{lem:uniform_estimates:L^2_est_diff} $\left\{\frac{1}{\sqrt{R}}  \frac{|\nabla \rho^R+\rho^R\nabla V_R|}{\sqrt{\rho^R}}\right\}_{R\geq R_0}$ in $L^2((0,T)\times\Rd)$ for all fixed $T>0$,
        \item \label{lem:uniform_estimates:der_est}
        $\{\prt_t \rho^R\}_{R\geq R_0}$ in $(L^{2d}(0,T; H^m_0(B_L))\raisebox{3pt}{$\ast$}$ for any fixed $L>0$, $T>0$, $q= 3\vee d$ and $m>\frac{d}{2}$. 
    \end{enumerate}
Moreover,
\begin{equation}\label{eq:conv_KL_time_0}
\lim_{R\to \infty} \KL(\rho_0^R|| \rho_\infty^R) = \KL(\rho_0 || \rho_{\infty}).
\end{equation}
\end{lem}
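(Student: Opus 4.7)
The overall strategy is to derive a dissipation identity for $\KL(\rho^R\|\rho_\infty^R)$ along the approximating flow \eqref{eq:regularised_weak_pde}, reducing estimates (E3)-(E7) to integrated consequences of this identity combined with the $L^2$-bound \eqref{eq:bound_L2_H-1_norm_by_dissipation} from Theorem~\ref{thm:existence_of_maternised_convolution_kernel_that_satisfy_slsi}. The initial estimates (E1), (E2) and the convergence \eqref{eq:conv_KL_time_0} are handled separately by convex-analysis arguments on $\rho_0^R$, while (E8) follows from testing the PDE and invoking the other estimates. The central technical point is that the dissipation identity involves the mollified density $u^R := \rho^R*\omega_R$ rather than $\rho^R$ itself, so the careful choice \eqref{eq:choice_eta_R} of $\eta$ in terms of $R$ is crucial for absorbing commutator errors into the dissipation.

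For (E1), split $\KL(\rho_0^R\|\rho_\infty^R) = \int\rho_0^R\ln\rho_0^R \diff x + \int \rho_0^R V_R\diff x$. The first term is uniformly bounded by applying Jensen's inequality with the convex function $x\mapsto x\ln x$ to the convolution $\rho_0^R = (\rho_0\mathbf 1_{B_{R-1}})*\omega_{\eta(R)}$, since $H(\rho_0)$ is finite as a consequence of $\KL(\rho_0\|\rho_\infty) <\infty$ and \eqref{eq:general_prop_target_potentialV_f}. The second term is handled by Fubini, rewriting it as $\int\rho_0\mathbf 1_{B_{R-1}}\cdot (V*\omega_{\eta(R)}*\omega_{\eta(R)})\diff x$ and using the continuity of $V$ combined with the growth assumption \eqref{eq:general_prop_target_potentialV_f}. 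The matching lower bound coming from lower semicontinuity of $\KL$ yields \eqref{eq:conv_KL_time_0}. For (E2), $\|\rho_0^R\|_{\mathrm{TV}}\leq 1$ and the dual of the Sobolev embedding $H^m(\Rd)\hookrightarrow C_0(\Rd)$ (valid for $m>d/2$) place $\rho_0^R$ in a bounded subset of $H^{-m}(\Rd)$.

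The key computation is the dissipation identity. Since $\rho^R$ vanishes on $\partial B_R$, the drift flux has no boundary contribution, and testing \eqref{eq:regularised_weak_pde} against $\Psi^R := \ln(\rho^R/\rho_\infty^R)=\ln\rho^R + V_R$, using $\rho^R\nabla\Psi^R = \nabla\rho^R + \rho^R\nabla V_R$ and the identity $\nabla(u^R e^{V_R})e^{-V_R} = \nabla u^R + u^R\nabla V_R$, produces
\begin{equation*}
\frac{d}{dt}\KL(\rho^R\|\rho_\infty^R) + \DD_k^2(u^R\|\rho_\infty^R) + \frac{1}{R}\int\frac{|\nabla\rho^R+\rho^R\nabla V_R|^2}{\rho^R}\diff x = \mathcal{E}^R(t),
\end{equation*}
where the error $\mathcal E^R$ comes from the commutator $\omega_R * (\rho^R\nabla V_R) - u^R\,\nabla V_R$. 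By the choice \eqref{eq:choice_eta_R}, this commutator is bounded pointwise by $u^R/R$, so $\mathcal E^R(t)$ can be absorbed into the dissipation via Cauchy-Schwarz together with the mapping property $k*\colon H^{-1}\to H^1$ from Lemma~\ref{lem:Plancherel_f_H-1}. Integrating in time and using (E1) yields uniform bounds on $\KL(\rho_T^R\|\rho_\infty^R)$, on $\int_0^T\DD_k^2(u_t^R\|\rho_\infty^R)\diff t$, and on the $R^{-1}$-Fisher term (that is, (E5), (E4) and (E7)); (E4) uses the equivalence of $\DD_k^2$ with the $H^{-1}$-norm of $\nabla(u^R e^{V_R})e^{-\pot/2}$ from \eqref{eq:condition_Fourier_transform_k}, and (E6) follows immediately via Lemma~\ref{lem:Plancherel_f_H-1}. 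Estimate (E3) comes from \eqref{eq:bound_L2_H-1_norm_by_dissipation} applied with $V_R$ in place of $V$: by \eqref{eq:estimate_V_eta_from_below} the potential $V_R$ satisfies \eqref{eq:general_prop_target_potentialV_f} with the same $\ngib$, and by the stability statement in Remark~\ref{rem:facts_main_thm} the constant is uniform in $R$.

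For (E8), we test \eqref{eq:regularised_weak_pde} against $\psi$ in the space stated in the lemma. The drift term
$$
\int_0^T\!\!\int \rho^R\,\big(\omega_R*\phi^R\big)\cdot\nabla\psi\diff x\diff t,\qquad
\phi^R := e^{V_R-\frac{\pot}{2}}\,k*\bigl(\nabla(u^R e^{V_R})e^{-\frac{\pot}{2}}\bigr),
$$
is controlled via H\"older in time, the Sobolev embedding $H_0^m(B_L)\hookrightarrow W^{1,\infty}(B_L)$, the uniform boundedness of $e^{V_R-\pot/2}$ on $B_L$, and (E6). The diffusive contribution $R^{-1}\int_0^T\!\!\int(\nabla\rho^R+\rho^R\nabla V_R)\cdot\nabla\psi$ is bounded using (E7) and $\|\rho^R\|_{L^1}\leq 1$. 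The main obstacle throughout is the commutator term in the dissipation identity: estimating it crudely would require global bounds on $\nabla V_R$, which blow up with $R$ under \eqref{eq:general_prop_target_potentialV_f}, so the localization \eqref{eq:choice_eta_R} is essential to convert it into an $R^{-1}$-small error compatible with the vanishing regularization; a secondary subtlety is that $\rho_\infty^R = e^{-V_R}$ is unnormalized, which must be tracked carefully when invoking Theorem~\ref{thm:existence_of_maternised_convolution_kernel_that_satisfy_slsi} at the level of the approximation.
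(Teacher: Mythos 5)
Your proposal follows essentially the same architecture as the paper's proof: mollified Jensen-type arguments for the initial data estimates, an energy-dissipation identity obtained by testing \eqref{eq:regularised_weak_pde} against $\ln(\rho^R e^{V_R})$, a commutator estimate controlled through the choice \eqref{eq:choice_eta_R} of $\eta(R)$, absorption of the error into the dissipation, the Stein-log-Sobolev $L^2$-bound \eqref{eq:bound_L2_H-1_norm_by_dissipation} to obtain (E3), and duality testing for (E8). You correctly flag the two places where the argument really hinges: the localization \eqref{eq:choice_eta_R} making the commutator $O(1/R)$, and the fact that $\rho^R_\infty = e^{-V_R}$ is unnormalized but still satisfies \eqref{eq:general_prop_target_potentialV_f} with uniform $\ngib$ by \eqref{eq:estimate_V_eta_from_below}.

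There are two deviations worth noting. First, for the convergence \eqref{eq:conv_KL_time_0}, you propose combining the Jensen upper bound with lower semicontinuity of the Kullback--Leibler divergence, whereas the paper dominates $|\Phi(\rho_0^R)|$ and applies a Vitali-type convergence theorem to get $\Phi(\rho_0^R)\to\Phi(\rho_0)$ in $L^1(\Rd)$. Your semicontinuity route is plausible, but it requires joint lower semicontinuity with respect to the \emph{varying and unnormalized} reference measure $\rho_\infty^R = e^{-V_R}$; this needs an explicit justification (e.g.\ rewriting $\KL(\rho_0^R\|\rho^R_\infty)$ as a sum of two terms, each of which converges) that the Vitali argument sidesteps. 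Second, for (E8) you invoke only (E6) plus H\"older and Sobolev embeddings, but the drift term also needs the estimate on the factor $(\rho^R\ast\omega_R) e^{V_R-\pot/2}$, which the paper obtains by $L^p$-interpolation between the $L^\infty_t L^1_x$ and $L^2_{t,x}$ bounds; without this step the time-integrability exponent in your duality pairing does not close. Finally, for (E1) you split $\KL(\rho_0^R\|\rho_\infty^R)$ into entropy and potential parts and apply Jensen only to the entropy term; this is legitimate, but you should be explicit that the bound on the potential term $\int\rho_0\mathds{1}_{B_{R-1}}(V\ast\omega\ast\omega)$ also requires the oscillation control on $\nabla V$ at scale $\eta(R)$ from \eqref{eq:choice_eta_R} (or an equivalent second-order Taylor argument) to ensure $V\ast\omega\ast\omega - V$ stays bounded on $B_{R-1}$ uniformly in $R$.
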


\begin{proof}
We split the proof into several steps. For simplicity, in the proof of \ref{lem:uniform_estimates:L2_est}--\ref{lem:uniform_estimates:der_est}, we write $\rho$ for $\rho^R$.

\underline{\it Estimate \ref{lem:uniform_estimates:KL}.} Let $\Phi(\rho) := \rho \mapsto \rho \log(\frac{\rho}{\rho_\infty^R}) = \rho \log \rho + \rho \,V_R$. First, by \eqref{eq:pointwise_lower_bound} and \eqref{eq:estimate_V_eta_from_below}, 
    \begin{equation}\label{eq:estimate_negative_part_mollifier_Phi}
  - \frac{2}{e} \, e^{-\ngib/2} \, e^{-\pot(x)/2}  \leq - \frac{2}{e} \, e^{-V_R(x)/2} \leq \Phi(\rho_0^R),
    \end{equation}
    so the negative part of $\Phi(\rho_0^R)$ is bounded in $L^1(\Rd)$. Second, $\Phi$ is convex as a sum of convex and linear functions. Applying Jensen's inequality
    \begin{equation}\label{eq:estimate_positive_part_mollifier_Phi}
    \Phi(\rho_0^R)  \leq \Phi(\rho_0\mathds{1}_{B_{R-1}}) \ast \omega_{R} = (\Phi(\rho_0) \mathds{1}_{B_{R-1}}) \ast \omega_{R}.
    \end{equation}
    Combining the two facts we can integrate to get
    $$
    \int_{\Rd} \Phi(\rho_0^R) \diff x \leq 
    \int_{\Rd} \int_{B_{R-1}} \Phi(\rho_0(y)) \, \omega_{R}(x-y) \diff y \leq \int_{B_{R-1}} \Phi(\rho_0(y)) \diff y \leq \int_{\Rd} |\Phi(\rho_0(y))| \diff y.
    $$
    The Fubini theorem can be applied because, by $\KL(\rho_0|| \rho_{\infty}) < \infty$ and from Lemma~\ref{lem:control_neg_log} we deduce $\Phi(\rho_0) \in L^1(\Rd)$. This also allows to obtain \ref{lem:uniform_estimates:KL}.
    
    \underline{\it Convergence \eqref{eq:conv_KL_time_0}.} By \eqref{eq:estimate_negative_part_mollifier_Phi} and \eqref{eq:estimate_positive_part_mollifier_Phi} we know that  
    $$
    | \Phi(\rho_0^R)| \leq  \frac{2}{e} \, e^{-\ngib/2} \, e^{-\pot(x)/2} + | \Phi(\rho_0) | \ast \omega_{R},
    $$
    where the RHS is convergent in $L^1(\Rd)$ since $\Phi(\rho_0) \in L^1(\Rd)$. By a variant of Vitali convergence theorem \cite[Corollary A.1]{2024arXiv240802345C} we deduce that $\Phi(\rho_0^R) \to \Phi(\rho_0)$ in $L^1(\Rd)$, which implies in particular the claim.
    
    \underline{\it Estimate \ref{lem:uniform_estimates:ini_H-1_est}.}
    For all $\psi\in  W^{1,\infty}(\Rd)$ we compute 
    \begin{align*}
        \abs{\int_{\Rd} \psi\diff \rho_0^R}
        =\abs{\int_{B_R} \psi\ast\omega_R\diff \rho_0}
        \leq \norm{\psi\ast\omega_R}_{W^{1,\infty}}
        \leq \norm{\psi}_{W^{1,\infty}} \leq C\,\norm{\psi}_{H^m}
    \end{align*}
    for some constant $C>0$, where in the last step we used the Sobolev embedding theorem $H^m(\Rd)\hookrightarrow W^{1,\infty}(\Rd)$ and Young convolution inequality. Taking the supremum over $\norm{\psi}_{H^m}\leq 1$ proves the claim.

    \underline{\it Energy dissipation type inequality to prove \ref{lem:uniform_estimates:L2_est}--\ref{lem:uniform_estimates:der_est}.}
    First, we show that there exists a constant $C>~0$ not depending on $R$ such that the following estimate holds
    \begin{equation}
        \label{lem:uniform_estimates:step1:eq1}
        \begin{multlined}
            \int_0^T \DD^2(\rho_t\ast\omega_R||\rho_\infty^R)\diff t
            -\frac{C}{R}\Big(\norm{(\rho\ast \omega_R)e^{V_R-\frac{\pot}{2}}}_{L_{t,x}^2}^2
            +\norm{\nabla\big((\rho\ast \omega_R)e^{V_R}\big)e^{-\frac{\pot}{2}}}_{L_t^2 H_x^{-1}}^2\Big)\\
            +\frac{1}{R} \int_0^T \int_{B_R} \frac{|\nabla \rho_t+\rho_t\nabla V_R|^2}{\rho_t}\diff x\diff t
            \leq \KL(\rho_0^R||\rho_\infty^R)-\KL(\rho_T||\rho_\infty^R).
        \end{multlined}
    \end{equation}
    We multiply \eqref{eq:regularised_weak_pde} with $\ln(\rho_t \, e^{V_R})$, integrate over $\Rd$ and apply integration by parts with Lemma \ref{lem:chng_var} (passing $\kappa\ra 0$) to derive
	\begin{equation}
        \label{lem:uniform_estimates:step1:eq2}
		\begin{split}
			&\prt_t \KL(\rho_t||\rho_\infty^{\eta})+\frac{1}{R} \int_{\Rd} \frac{|\nabla \rho_t+\rho_t \nabla V_R|^2}{\rho_t}\diff x\\
			&+\int_{\Rd}  k\ast \left( \nabla \Big( (\rho_t\ast\omega_R)e^{V_R}\Big)e^{-\frac{\pot}{2}}\right)\cdot 
			\Big(\nabla (\rho_t\ast\omega_R)+\omega_R\ast \Big[\rho_t \nabla V_R\Big]\Big) e^{V_R-\frac{\pot}{2}}\diff x
			\leq 0.
		\end{split}
	\end{equation}
	To get the claim, we need to replace $\omega_R\ast \Big[\rho_t \nabla V_R\Big]$ with $(\omega_R\ast\rho_t) \nabla V_R$. The corresponding error equals
	\begin{align*}
		&\left\vert\int_{\Rd} k\ast\left( \nabla \Big( (\rho_t\ast\omega_R)e^{V_R}\Big)e^{-\frac{\pot}{2}}\right)\cdot \Big(\omega_R\ast\Big[\rho_t\nabla V_R\Big]-(\rho_t\ast\omega_R)\nabla V_R\Big)e^{V_R-\frac{\pot}{2}}\diff x\right\vert\\
		&\leq \int_{\Rd} \left\vert k\ast \left(\nabla \Big( (\rho_t\ast\omega_R)e^{V_R}\Big)e^{-\frac{\pot}{2}}\right)\right\vert
		 \left\vert\int_{\Rd} \rho_t(y) \, \omega_R(x-y)\Big( \nabla V_R(y)-\nabla V_R(x)\Big)\diff y \right\vert\,e^{V_R-\frac{\pot}{2}}\diff x.
    \end{align*}
    We estimate the difference $|\nabla V_R(y)-\nabla V_R(x)|$. We can restrict to $x, y\in B_{R+1}$. Indeed, if $y \notin B_{R}$, then the integrand is zero by the support of $\rho_t$. Consequently, if $x \notin B_{R+1}$, then $\omega_{R}(x-y)=0$. For $x, y \in B_{R+1}$, using the choice \eqref{eq:choice_eta_R}, we compute
    $$
            |\nabla V_R(y)-\nabla V_R(x)|
        \leq \int_{\Rd}\omega_R(z)|\nabla V(y-z)-\nabla V(x-z)|\diff z \leq \frac{1}{R}\int_{\Rd}\omega_R(z)\diff z
        =\frac{1}{R}.
    $$
    We can thus bound the term above by
    \begin{align*}
        \int_{\Rd} &\left\vert k\ast\left( \nabla \Big( (\rho_t\ast\omega_R)e^{V_R}\Big)e^{-\frac{\pot}{2}}\right)\right\vert
		\cdot \left\vert\int_{\Rd} \rho_t(y) \, \omega_R(x-y)\Big( \nabla V_R(y)-\nabla V_R(x)\Big)\diff y \right\vert\,e^{V_R-\frac{\pot}{2}}\diff x\\
		&\,\leq \frac{1}{R} \int_{\Rd} \left\vert k\ast \left(\nabla \Big( (\rho_t\ast\omega_R)e^{V_R}\Big)e^{-\frac{\pot}{2}}\right)\right\vert
		\cdot (\rho_t\ast\omega_R) \, e^{V_R-\frac{\pot}{2}} \diff x.
    \end{align*}
    We use the Hölder inequality, the Cauchy-Schwarz inequality and \ref{lem:Plancherel_f_H-1:wd} from Lemma \ref{lem:Plancherel_f_H-1} to estimate
	\begin{align*}
		\int_{\Rd} \left\vert k\ast \left(\nabla \Big( (\rho_t\ast\omega_R)e^{V_R}\Big)e^{-\frac{\pot}{2}}\right)\right\vert
		&\cdot (\rho_t\ast\omega_R) \, e^{V_R-\frac{\pot}{2}} \diff x\\
		&\leq 
		\left\|k\ast \left(\nabla ( (\rho_t\ast\omega_R)e^{V_R})e^{-\frac{\pot}{2}}\right)\right\|_{H_x^1}
		\left\|(\rho_t\ast\omega_R)e^{V_R-\frac{\pot}{2}}\right\|_{L_x^2}\\
		&\leq C\, \left(\left\|\nabla ( (\rho_t\ast\omega_R)e^{V_R})e^{-\frac{\pot}{2}}\right\|_{H_x^{-1}}^2+\left\|(\rho_t\ast\omega_R)e^{V_R-\frac{\pot}{2}}\right\|_{L_x^2}^2\right)
	\end{align*}
    for some constant $C\geq 1$ independent of $R$. Thus, we can transform \eqref{lem:uniform_estimates:step1:eq2} to get
	\begin{multline*}
		\prt_t \KL(\rho_t||\rho^R_\infty)+\frac{1}{R} \int_{B_R} \frac{|\nabla \rho_t+\rho_t \nabla V_R|^2}{\rho_t}\diff x
		+\DD^2(\rho_t\ast\omega_R||\rho_\infty^R)\\
        -\frac{C}{R}\Big(\norm{(\rho_t\ast \omega_R)e^{V_R-\frac{\pot}{2}}}_{L_x^2}^2
        +\norm{\nabla\big((\rho_t\ast \omega_R)e^{V_R}\big)e^{-\frac{\pot}{2}}}_{ H_x^{-1}}^2\Big)
		\leq 0,
	\end{multline*}
	where $C>0$ is some constant independent of $R$. Integrating in time yields the result.
    
    \underline{\it Estimate \ref{lem:uniform_estimates:L2_est}.}
    Using \eqref{eq:condition_Fourier_transform_k} and \ref{lem:Plancherel_f_H-1:plancherel} from Lemma \ref{lem:Plancherel_f_H-1} we have
    \begin{equation}\label{eq:equivalence_FI_H-1_proof_estimates}
\frac{1}{\rkc_k}\, \norm{(\rho_t\ast\omega_R)e^{V_R})e^{-\frac{\pot}{2}}}_{H_x^{-1}}^2\leq \DD^2(\rho_t\ast\omega_R||\rho_\infty^R)\leq \rkc_k\, \norm{(\rho_t\ast\omega_R)e^{V_R})e^{-\frac{\pot}{2}}}_{H_x^{-1}}^2.
    \end{equation}
    Using \eqref{eq:pointwise_lower_bound} from Lemma \ref{lem:control_neg_log} and \eqref{eq:estimate_V_eta_from_below} we can bound from below 
    \begin{align}
    \label{eq:bound_kl_from_below_unormalized}
    \KL(\rho_T||\rho_\infty^R)
        \geq -\frac{2}{e} \int_{B_R} e^{-\frac{V_R}{2}}\diff x
        \geq -\frac{2}{e} e^{-\frac {\ngib}{2}}\int_{\Rd} e^{-\frac{\pot}{2}}\diff x
        >-\infty.
    \end{align}
    Then, using \eqref{lem:uniform_estimates:step1:eq1} and \eqref{eq:equivalence_FI_H-1_proof_estimates} we get
    \begin{align}
        \label{eq:bound_dissipation_l2_intermediate_step}
            \left(1-\frac{C}{R}\right)\int_0^T \DD^2(\rho_t\ast\omega_R||\rho_\infty^R)\diff t
            -\frac{C}{R} \norm{(\rho\ast \omega_R)e^{V_R-\frac{\pot}{2}}}_{L_{t,x}^2}^2
            \leq \KL(\rho_0^R||\rho_\infty^R)
            +C,
    \end{align}
    for some constant $C>0$ not depending on $R$. Additionally, using \eqref{eq:inequality_l2_lower_bound_formulated_H-1_H1} and \eqref{eq:bound_L2_H-1_norm_by_dissipation} we derive an estimate of the $L^2$-norm by
    \begin{align*}
        \left[\frac{\lambda}{C_0} \left(1-\frac{C}{R}\right)-\frac{C}{R}\right]&\norm{(\rho\ast \omega_R)e^{V_R-\frac{\pot}{2}}}_{L_{t,x}^2}^2\\
        &\leq \frac{\lambda}{C_0} \left(1-\frac{C}{R}\right)\int_0^T C_0(1+\DD^2(\rho_t\ast\omega_R||\rho_\infty^R))\diff t-\frac{C}{R} \norm{(\rho\ast \omega_R)e^{V_R-\frac{\pot}{2}}}_{L_{t,x}^2}^2\\
            &\leq \KL(\rho_0^R||\rho_\infty^R)
            +C
            +\lambda T \left(1-\frac{C}{R}\right),
    \end{align*}
    where $C_0$ is the constant in \eqref{eq:bound_L2_H-1_norm_by_dissipation}, which we can bound independently from $R$ by Remark \ref{rem:facts_main_thm} and by the fact $V_R\ra V$ in $L_\loc^\infty$. We thus see that \ref{lem:uniform_estimates:L2_est} holds by \ref{lem:uniform_estimates:KL} and by choosing $R$ sufficiently large.

    \underline{\it Estimates \ref{lem:uniform_estimates:H-1_est}-- \ref{lem:uniform_estimates:L^2_est_diff}.}
    The estimate \ref{lem:uniform_estimates:H-1_est} is an immediate consequence of \ref{lem:uniform_estimates:KL}, \ref{lem:uniform_estimates:L2_est}, the estimate \eqref{eq:bound_dissipation_l2_intermediate_step} and the equivalence \eqref{eq:equivalence_FI_H-1_proof_estimates}. Consequently, the estimate \ref{lem:uniform_estimates:KL_all_times} follows from \eqref{lem:uniform_estimates:step1:eq1}, \eqref{eq:bound_kl_from_below_unormalized}, the equivalence \eqref{eq:equivalence_FI_H-1_proof_estimates} and \ref{lem:uniform_estimates:KL}, \ref{lem:uniform_estimates:L2_est}, \ref{lem:uniform_estimates:H-1_est}. Then, \ref{lem:uniform_estimates:H1_est} is derived from \ref{lem:uniform_estimates:H-1_est} and \ref{lem:Plancherel_f_H-1:wd} in Lemma~\ref{lem:Plancherel_f_H-1}, while \ref{lem:uniform_estimates:L^2_est_diff} follows directly from \eqref{lem:uniform_estimates:step1:eq1} and the previous bounds.

    \underline{\it Estimate \ref{lem:uniform_estimates:der_est}.}
    Choose any $q= 3\vee d$. Let $\psi \in C_c^{\infty}((0,T)\times B_L)$ and extend it with 0 to the whole of $\R^d$. We use \eqref{eq:regularised_weak_pde}--\eqref{ass:mfsvgd:ini} and compute
    \begin{align}
        \label{thm:existence_of_distributional_solutions_to_mfsvgd:step4:eq2}
        \begin{split}
        &\Big\vert\int_0^T \int_{B_L} \rho \cdot \prt_t \psi \diff x \diff t\Big\vert
        \leq \frac{1}{R} \left\vert\int_0^T \int_{B_L} (\nabla \rho
			+\rho\nabla V_R)\cdot \nabla\psi\diff x\diff t\right\vert\\
        &\qquad+\left\vert\int_0^T \int_{B_{L+\eta}} k\ast \nabla\big((\rho\ast\omega_R) e^{V_R}\big)e^{-\frac{\pot}{2}}\cdot \big((\rho \nabla\psi)\ast\omega_R\big) e^{V_R-\frac{\pot}{2}}\diff x\diff t\right\vert.
        \end{split}
    \end{align}
	To bound the first term in \eqref{thm:existence_of_distributional_solutions_to_mfsvgd:step4:eq2} we use the Hölder inequality twice, \ref{lem:uniform_estimates:L^2_est_diff} and conservation of mass as follows 
	\begin{align*}
		\frac{1}{R}\left\vert\int_0^T \int_{B_L} (\nabla \rho+\rho\nabla V_R)\cdot \nabla\psi\diff x\diff t\right\vert &\leq \frac{1}{\sqrt{R}} \left\|\frac{1}{\sqrt{R}}\,\frac{\nabla\rho+\rho\nabla V_R}{\sqrt{\rho}}\right\|_{L^2_{t,x}} \, \left\| \rho |\nabla\psi|^2 \right \|_{L^1_{t,x}}^{\frac{1}{2}} \\
			&\leq C\, \norm{\psi}_{L_t^2 W_x^{1,\infty}} \leq C\, T^{\frac{1}{2}-\frac{1}{2q}} \norm{\psi}_{L_t^{2q} W_x^{1,\infty}},
	\end{align*}
    where $C>0$ is a constant independent of $R$ and $L$. Next, we bound the second term in \eqref{thm:existence_of_distributional_solutions_to_mfsvgd:step4:eq2}. For this we use the Hölder inequality and estimate
    \begin{align}
        \label{thm:existence_of_distributional_solutions_to_mfsvgd:step4:eq3}
        \begin{split}
        &\left\vert\int_0^T \int_{B_{L+1}} k\ast \nabla\big((\rho\ast\omega_R) e^{V_R}\big)e^{-\frac{\pot}{2}}\cdot \big((\rho \nabla\psi)\ast\omega_R\big) e^{V_R-\frac{\pot}{2}}\diff x\diff t\right\vert\\
        &\qquad\leq \norm{k\ast \nabla\big((\rho\ast\omega_R) e^{V_R}\big)e^{-\frac{\pot}{2}}}_{L_t^2 L_x^{\frac{2q}{q-2}}(B_{L+1})} \norm{\big((\rho \nabla\psi)\ast\omega_R\big) e^{V_R-\frac{\pot}{2}}}_{L_t^2 L_x^{\frac{2q}{q+2}}(B_{L+1})}.
        \end{split}
    \end{align}
   By the Sobolev embedding theorem we have the continuous embedding $H^1(B_{L+1})\hookrightarrow L^{\frac{2q}{q-2}}(B_{L+1})$ (to see this for $d=1$, note that the Sobolev embedding theorem implies $H^1(B_{L+1})\hookrightarrow C^{\frac{1}{2}}(B_{L+1})\hookrightarrow L^{\frac{2q}{q-2}}(B_{L+1})$; to see this for $d=2$, note the embedding implies $H^1(B_{L+1})\hookrightarrow W^{1,\frac{3}{2}}(B_{L+1})\hookrightarrow L^{\frac{2q}{q-2}}(B_{L+1})$). Thus, we bound the first term in \eqref{thm:existence_of_distributional_solutions_to_mfsvgd:step4:eq3} using these embeddings and \ref{lem:uniform_estimates:H1_est}. The left-over term in \eqref{thm:existence_of_distributional_solutions_to_mfsvgd:step4:eq3} can be estimated using the Hölder inequality twice
   \begin{align*}
       \norm{\big((\rho \nabla\psi)\ast\omega_R\big) e^{V_R-\frac{\pot}{2}}}&_{L_t^2 L_x^{\frac{2q}{q+2}}(B_{L+1})} 
        \leq \norm{\big(\rho\ast\omega_R\big) e^{V_R-\frac{\pot}{2}}\cdot \norm{\nabla\psi}_{W_x^{1,\infty}}}_{L_t^2 L_x^{\frac{2q}{q+2}}(B_{L+1})}\\
        &\leq |B_{L+1}|^{\frac{1}{2d}}\norm{\big(\rho\ast\omega_R\big) e^{V_R-\frac{\pot}{2}}}_{L_t^{\frac{2q}{q-1}} L_x^{\frac{2q}{q+1}}(B_{L+1})}\norm{\psi}_{L_t^{2q}W_x^{1,\infty}(B_{L})}.
   \end{align*}
    Using \ref{lem:uniform_estimates:L2_est} and the conservation of mass, we observe that we have the uniform estimate on $\big(\rho\ast\omega_R\big) e^{V_R-\frac{\pot}{2}}\in L_t^\infty L_x^1(B_{L+1})\cap L^2_{t} L^2_{x}(B_{L+1})$. Thus, by interpolation of $L^p$-norms, we know that
	\begin{align*}
		\rho \in L_t^{p_t}L_x^{p_x},
		\qquad \frac{1}{p_t}=\frac{1-\theta}{2},
		\qquad \frac{1}{p_x}=\theta+\frac{1-\theta}{2}
		\qquad\forall \theta\in (0,1).
	\end{align*}
	If we choose $\theta=\frac{1}{q}$, then we get $p_x=\frac{2q}{q+1}$ and $p_t=\frac{2q}{q-1}$. In particular, this means that we can uniformly bound the second term in \eqref{thm:existence_of_distributional_solutions_to_mfsvgd:step4:eq3}. Finally, by the Sobolev embedding $H^m_0(B_{L})\hookrightarrow W^{1,\infty}_0(B_{L})$, there exists a constant $C>0$, that does not depend on $R$, such that
	\begin{align*}
		\left\vert\int_0^T \int_{\Rd}  \rho\cdot \prt_t\psi\diff x\diff t\right\vert
		\leq C \norm{\psi}_{L_t^{2q} H_x^m(B_L)}.
	\end{align*}
    Taking the supremum in $\norm{\psi}_{L_t^{2q} H_x^m(B_L)}\leq 1$ proves \ref{lem:uniform_estimates:der_est}, thus concluding the proof. 
\end{proof}

\subsection{Passing to the limit $R\ra\infty$}\label{sect:passage_limit_proof_main}

We first establish compactness of the sequence $\{\rho^R\}_{R\geq R_0}$, where $R_0$ is the value from Lemma \ref{lem:uniform_estimates}.

\begin{lem}\label{lem:weak_convergences_mfsvgd}
    Let $m>\frac{d}{2}$ be fixed. There exists a subsequence (not relabelled) $\{\rho^R\}_{R\geq R_0}$ and function $\rho:[0,\infty) \times \R^d\to \R^+$ with $\rho_t \in \Pro$ for all $t\in[0,\infty)$ such that  
\begin{equation}\label{eq:compactness_uniform_in_time_neg_Sobolev}
    \rho^R_t \to \rho_t \mbox{ in } C([0,T]; H^{-m}(B_L)) \, \mbox{ for all } T, L >0,  
    \end{equation}
    \begin{equation}\label{eq:weak_L1_compactness_each_time}
    \rho^R_t \rightharpoonup \rho_t \mbox{ weakly in } L^1(\Rd) \mbox{ for all } t\in [0,\infty). 
    \end{equation}
    Moreover, for all $L>0$, $T>0$ and $\vp\in C_c^\infty([0,T]\times B_L)$ we have
    \begin{enumerate}[label=(L\arabic*)]
        \item \label{lem:weak_convergences_mfsvgd:dissipation}
        $\frac{1}{R}(\nabla\rho^R+\rho^R\nabla V_R)\to 0$ strongly in $L^1(\Rd)$,
        \item \label{lem:weak_convergences_mfsvgd:L2} $((\rho^R \nabla \vp)\ast \omega_R) e^{V_R-\frac{\pot}{2}}\weak (\rho\nabla \vp) e^{V_R-\frac{\pot}{2}}$ weakly in $L^2((0,T)\times \Rd)$ and strongly in $L^2(0,T; H^{-1}(\Rd))$, 
        \item \label{lem:weak_convergences_mfsvgd:H-1}
        $\nabla((\rho^R\ast\omega_R) e^{V_R})e^{-\frac{\pot}{2}}\weak \nabla(\rho e^{V})e^{-\frac{\pot}{2}}$ weakly in $L^2(0,T;H^{-1}(\Rd))$,
        \item \label{lem:weak_convergences_mfsvgd:H1}
        $k\ast \nabla((\rho^R\ast\omega_R) e^{V_R})e^{-\frac{\pot}{2}}\weak k\ast \nabla(\rho e^{V})e^{-\frac{\pot}{2}}$ weakly in $L^2(0,T;H^1(\Rd))$.
    \end{enumerate}
\end{lem}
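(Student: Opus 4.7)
The strategy combines the uniform bounds from Lemma \ref{lem:uniform_estimates} with an Aubin--Lions-type compactness argument, the Dunford--Pettis criterion, and the identification of limits of weakly convergent subsequences. For \eqref{eq:compactness_uniform_in_time_neg_Sobolev}, I would observe that estimate \ref{lem:uniform_estimates:der_est} provides equi-H\"older-continuity in time,
\begin{align*}
\|\rho^R_{t_2}-\rho^R_{t_1}\|_{(H^m_0(B_L))^*}
\leq C\,|t_2-t_1|^{1-\frac{1}{2d}}
\qquad \forall\, 0\leq t_1 < t_2 \leq T,
\end{align*}
uniformly in $R$, while the mass conservation and the compact embedding $L^1(B_L)\hookrightarrow\hookrightarrow H^{-m}(B_L)$ for $m>d/2$ (obtained by duality from $H^m_0(B_L)\hookrightarrow\hookrightarrow C(\overline{B_L})$) supply spatial compactness. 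An Ascoli--Arzel\`a type argument then yields a subsequential limit $\rho\in C([0,T];H^{-m}(B_L))$ for every $T,L>0$ via a standard diagonal extraction. For the weak $L^1$ compactness \eqref{eq:weak_L1_compactness_each_time} at each fixed $t$, I would apply the Dunford--Pettis criterion: the $L^1$ bound is mass conservation, equi-integrability comes from the control of $\int \rho^R_t \ln \rho^R_t\diff x$ via estimate \ref{lem:uniform_estimates:KL_all_times}, and tightness follows from the coercivity $V_R \geq \pot+\ngib$ in \eqref{eq:estimate_V_eta_from_below} combined with the standard entropy--moment inequality against the Gaussian weight $e^{-\pot/2}$. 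Uniqueness of distributional limits ensures the weak $L^1$ limit coincides with the $\rho_t$ already obtained, so in particular $\rho_t\in\Pro$ for every $t$.

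Claim \ref{lem:weak_convergences_mfsvgd:dissipation} follows from Cauchy--Schwarz and estimate \ref{lem:uniform_estimates:L^2_est_diff}:
\begin{align*}
\frac{1}{R}\,\|\nabla \rho^R + \rho^R\nabla V_R\|_{L^1_{t,x}}
\leq \frac{1}{\sqrt R}\, \left\|\frac{1}{\sqrt R}\,\frac{|\nabla \rho^R + \rho^R\nabla V_R|}{\sqrt{\rho^R}}\right\|_{L^2_{t,x}} \|\sqrt{\rho^R}\|_{L^2_{t,x}}
\leq \frac{C\sqrt T}{\sqrt R}\to 0.
\end{align*}
For the remaining statements, the uniform bounds \ref{lem:uniform_estimates:L2_est}--\ref{lem:uniform_estimates:H1_est} give weak convergence along a subsequence in the stated spaces; the task is to identify the limits. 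To this end I would use the strong convergence from \eqref{eq:compactness_uniform_in_time_neg_Sobolev}, the locally uniform convergence $V_R\to V$ (valid because $V\in H^m_{\loc}(\Rd)\hookrightarrow C_{\loc}(\Rd)$ for $m>d/2$), and the fact that $\omega_R\to \delta_0$ on test functions. For \ref{lem:weak_convergences_mfsvgd:H-1}, I would exploit the splitting \eqref{eq:split_stein_gradient}:
\begin{align*}
\nabla\big((\rho^R\ast\omega_R)\, e^{V_R}\big)\,e^{-\frac{\pot}{2}} = \nabla\big((\rho^R\ast\omega_R)\, e^{V_R-\frac{\pot}{2}}\big) + \tfrac{1}{2}(\rho^R\ast\omega_R)\, e^{V_R-\frac{\pot}{2}}\, \nabla\pot,
\end{align*}
which reduces the identification to that of the weak $L^2$ limit of $(\rho^R\ast\omega_R)\, e^{V_R-\pot/2}$ granted by \ref{lem:uniform_estimates:L2_est}. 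Item \ref{lem:weak_convergences_mfsvgd:L2} follows by a similar analysis, where the commutator $((\rho^R\nabla\vp)\ast\omega_R) - \rho^R\nabla\vp$ is absorbed using the smoothness of $\vp$; the strong $L^2(0,T;H^{-1}(\Rd))$ part of \ref{lem:weak_convergences_mfsvgd:L2} is obtained by interpolating between the weak $L^2((0,T)\times\Rd)$ bound and the strong convergence from \eqref{eq:compactness_uniform_in_time_neg_Sobolev}. Finally, \ref{lem:weak_convergences_mfsvgd:H1} is an immediate consequence of \ref{lem:weak_convergences_mfsvgd:H-1} since the map $g\mapsto k\ast g: H^{-1}(\Rd)\to H^1(\Rd)$ is bounded linear (Lemma \ref{lem:Plancherel_f_H-1}) and hence weakly continuous.

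The main obstacle is the limit identification step. The nonlinear products $(\rho^R\ast\omega_R)\, e^{V_R-\pot/2}$ and $\nabla((\rho^R\ast\omega_R)\,e^{V_R})\,e^{-\pot/2}$ simultaneously involve the mollification $\omega_R$, the regularized potential $V_R$ and the sequence $\rho^R$ itself. The strong convergence of $\rho^R$ is only available in negative Sobolev spaces on bounded balls, whereas the accompanying factors $e^{V_R-\pot/2}$ and $\nabla\pot$ carry polynomial or exponential growth. Controlling the products therefore requires testing against compactly supported functions and carefully combining the strong negative Sobolev convergence with the $L^2$ and $H^{-1}$ bounds, exploiting the decay of the weight $e^{-\pot/2}$ to handle the tails at infinity.
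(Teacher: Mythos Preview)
Your overall strategy matches the paper closely: Arzel\`a--Ascoli for \eqref{eq:compactness_uniform_in_time_neg_Sobolev}, Dunford--Pettis for \eqref{eq:weak_L1_compactness_each_time}, Cauchy--Schwarz for \ref{lem:weak_convergences_mfsvgd:dissipation}, and weak continuity of $g\mapsto k\ast g$ for \ref{lem:weak_convergences_mfsvgd:H1}. For \ref{lem:weak_convergences_mfsvgd:H-1} the paper identifies the limit by testing directly against $\psi\in C_c^\infty$ (moving all factors onto the test function and using $(\nabla(\psi e^{-\pot/2})e^{V_R})\ast\omega_R\to\nabla(\psi e^{-\pot/2})e^{V}$ in $L^\infty$) rather than via the splitting \eqref{eq:split_stein_gradient}; both routes are valid. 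The H\"older exponent you wrote is wrong---the paper obtains $|t_2-t_1|^{1/(2q)}$ with $q=3\vee d$---but this is immaterial for the argument.

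The one genuine gap is the \emph{strong} $L^2(0,T;H^{-1}(\Rd))$ convergence in \ref{lem:weak_convergences_mfsvgd:L2}. Your proposal to ``interpolate between the weak $L^2$ bound and the strong convergence from \eqref{eq:compactness_uniform_in_time_neg_Sobolev}'' does not suffice as stated: interpolation of the form $\|\cdot\|_{H^{-1}}\leq C\|\cdot\|_{L^2}^\theta\|\cdot\|_{H^{-m}}^{1-\theta}$ requires strong convergence of the full expression $((\rho^R\nabla\vp)\ast\omega_R)e^{V_R-\pot/2}$ in some $L^2(0,T;H^{-m'})$, whereas \eqref{eq:compactness_uniform_in_time_neg_Sobolev} only gives this for $\rho^R$ itself. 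Transferring it to the product involves the $R$-dependent factors $\omega_R$ and $e^{V_R}$, and in particular requires that multiplication by $e^{V_R-\pot/2}$ be a uniformly bounded operator on $H^{-m}(B_{L+1})$; since $V$ is only assumed to lie in $H^m_{\loc}$ with $m>d/2$, this multiplier property is not immediate. The paper sidesteps this entirely by an Aubin--Lions argument: it bounds the time derivative $\partial_t\big(((\rho^R\nabla\vp)\ast\omega_R)e^{V_R-\pot/2}\big)$ in $(L^{2q}(0,T;H^m_0(B_{L+1})))^*$ via estimate \ref{lem:uniform_estimates:der_est} and the product rule, and then invokes the compact embedding $L^2(B_{L+1})\hookrightarrow H^{-1}(B_{L+1})$ directly.
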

\begin{rem}
    In particular, from \eqref{eq:compactness_uniform_in_time_neg_Sobolev} we have that the map $[0,\infty) \ni t \mapsto \int_{\Rd} \psi(x) \, \rho_t(x) \diff x$ is continuous for all $\psi \in H^m_0(B_L)$.
\end{rem}
\begin{proof}[Proof of Lemma \ref{lem:weak_convergences_mfsvgd}]
   \underline{\it Proof of \eqref{eq:compactness_uniform_in_time_neg_Sobolev}.} Let $q= d\vee 3$ and let $T>0$, $L>0$ be fixed. We argue by an Arzéla-Ascoli argument, i.e. we show that the map $t\mapsto \rho_t^R$ is uniformly bounded and equicontinuous in $H^{-m}(B_L)$ as well as $\{\rho^R\}_{R\geq R_0}$ is compact in $H^{-m}(B_L)$ for each $t$.
   
   Using \ref{lem:uniform_estimates:der_est}, there exists a constant $C>0$ independent of $R$ such that for all $\psi\in H^m_0(B_L)$ and $t_1,t_2 \in [0,T]$
    \begin{align*}
        \left\vert \int_{\Rd}\psi\, \rho_{t_1}^R \diff x -\int_{\Rd}\psi \, \rho_{t_2}^R \diff x \right\vert
        \leq \norm{\prt_t \rho^R}_{(L_t^{2q} H_x^{m})\raisebox{3pt}{$\ast$}}\norm{\chi_{[t_1,t_2]}\psi}_{L_t^{2q} H_x^m}
        \leq C|t_1-t_2|^{\frac{1}{2q}}\norm{\psi}_{H^m_x(B_L)}.
    \end{align*}
    Taking the supremum over $\norm{\psi}_{H^m_x(B_L)} \leq 1$ we obtain
    \begin{equation}\label{eq:uniform_continuity_H^{-m}}
    \| \rho_{t_1}^R - \rho_{t_2}^R \|_{H^{-m}(B_L)} \leq C|t_1-t_2|^{\frac{1}{2q}}. 
    \end{equation}
    Using \ref{lem:uniform_estimates:ini_H-1_est} we see the curves are uniformly bounded in $C^{\frac{1}{2q}}([0,T];\, H^{-m}(B_L))$ with the upper bound $\norm{\rho_t^R}_{H^{-m}}\leq \norm{\rho_0^R}_{H^{-m}}+C T^{\frac{1}{2q}}$. Moreover, $L^1(B_L)$ is compactly embedded in $H^{-m}(B_L)$ (Lemma \ref{lem:lem_compactness_measures_H-m}) so that for each fixed $t\in[0,T]$, $\{\rho_t^R\}_{R \geq R_0}$ has a subsequence converging strongly in $H^{-m}(B_L)$. Let $\Lambda\subseteq [0,T]$ be a countable dense subset. By a diagonal argument we may assume $\rho_t^R\to \rho_t$ in $H^{-m}(B_L)$ as $R\ra\infty$ for all $t\in \Lambda$. Finally, using the Hölder continuity in \eqref{eq:uniform_continuity_H^{-m}} and completeness of $H^{-m}(B_L)$, there is a unique way of extending $\rho_t$ from $\Lambda$ to $[0,T]$ such that the convergence $\rho_t^R \to \rho_t$ is true for all $t \in [0,T]$ and $\rho\in C^{\frac{1}{2q}}([0,T];\, H^{-m}(B_L))$. Finally, we use a diagonal argument once again to extend the result for each ball $B_L$ and each interval of time $[0,T]$.

    \underline{\it Proof of \eqref{eq:weak_L1_compactness_each_time}.}
    Fix $t \in [0,\infty)$. There exists $R_0\geq 2$ such that by estimate \ref{lem:uniform_estimates:KL_all_times}, condition \eqref{eq:general_prop_target_potentialV_f} and estimate \eqref{eq:lower_bound_negative_log} in Lemma \ref{lem:control_neg_log} we have the uniform bounds
    \begin{align*}
        \sup_{R\geq R_0}
        \int_{\Rd} \rho_t^R |\log(\rho_t^R)| \diff x
        +\sup_{R\geq R_0}
        \int_{\Rd} |x|^2\,\rho_t^R\diff x
        <\infty.
    \end{align*}
    In particular, the set $\{\rho_t^R\}_{R \geq R_0}$ is equi-integrable and tight so by the Dunfard-Pettis theorem, we have that $\{\rho_t^R\}_{R\geq R_0}$ is weakly precompact in $L^1(\Rd)$. Up to passing to a subsequence, we have $\rho_t^R\weak \mu$ in $L^1(\Rd)$ as $R\ra\infty$ for some $\mu\in L^1(\Rd)$. Let $\vp\in C_c^\infty(\Rd)$ with $\supp(\vp)\subseteq B_L$ for some $L>0$. By \eqref{eq:compactness_uniform_in_time_neg_Sobolev} we have
    \begin{align*}
        \int_{\Rd}\vp\, \mu\diff x
        =\lim_{R\ra\infty}\int_{\Rd}\vp\, \rho_t^R\diff x
        =\langle \rho_t, \vp \rangle_{H^{-m}(B_L), H^{m}_0(B_L)}.
    \end{align*}
    It follows that in the sense of distributions $\rho_t=\mu$ so that $\rho_t$, a priori an element of $H_{\loc}^{-m}(\Rd)$, is in fact an $L^1(\Rd)$ function. Finally, the reasoning above shows that every subsequence of $\{\rho_t^R\}_{R\geq R_0}$ has a further subsequence converging to $\rho_t$, so that we deduce \eqref{eq:weak_L1_compactness_each_time}. 

    \underline{\it Convergence \ref{lem:weak_convergences_mfsvgd:dissipation}.}    
    Using \ref{lem:uniform_estimates:L^2_est_diff}, the fact that $\|\rho^R\|_{L^{\infty}_t L^1_x} \leq 1$ by conservation of mass, and by multiplying and dividing $\rho_t^R$, we obtain
	\begin{align*}
		 \frac{1}{R} \int_0^T \int_{B_R} |\nabla\rho^R+\rho^R\nabla V_R| \diff x\diff t
		\leq \frac{1}{\sqrt{R}} \left\| \frac{1}{\sqrt{R}} \frac{\nabla\rho^R+\rho^R\nabla V_R}{ \sqrt{\rho^{R}}}\right\|_{L^2_{t,x}}\, \sqrt{T} \to 0.
	\end{align*}

    \underline{\it Convergence \ref{lem:weak_convergences_mfsvgd:L2}.} First we show that the sequence $\{((\rho^R \nabla \vp)\ast \omega_R) e^{V_R-\frac{\pot}{2}}\}_{R\geq R_0}$ is bounded in $L^2((0,T)\times \Rd)$. This follows immediately from \ref{lem:uniform_estimates:L2_est} and the estimate
    \begin{align}\label{eq:L2_bound_sequence_that_conv_strongly_in_H-1_F2}
        \norm{((\rho^R \nabla \vp)\ast \omega_R) e^{V_R-\frac{\pot}{2}} }_{L_{t,x}^2}
        \leq \norm{\nabla\vp}_\infty\norm{(\rho^R\ast \omega_R) e^{V_R-\frac{\pot}{2}} }_{L_{t,x}^2}.
    \end{align} 
    By the Banach-Alaoglu theorem, up to passing to a subsequence, $((\rho^R \nabla \vp)\ast \omega_R) e^{V_R-\frac{\pot}{2}} \weak \xi$ weakly in $L^2((0,T)\times\Rd)$ for some $\xi$. To identify $\xi$, let $\psi\in C_c^\infty([0,T]\times \Rd)$. We have
    $$
    \int_0^T \int_{\Rd} ((\rho^R \nabla \vp)\ast \omega_R) e^{V_R-\frac{\pot}{2}} \, \psi \diff x \diff t = \int_0^T \int_{\Rd} ((\psi e^{V_R-\frac{\pot}{2}})\ast \omega_R)\nabla \vp \, \rho^R \diff x \diff t.
    $$
    Using convergence $((\psi e^{V_R-\frac{\pot}{2}})\ast \omega_R)\nabla \vp\ra \psi e^{V-\frac{\pot}{2}} \nabla \vp$ in $L^\infty((0,T)\times\Rd)$ and \eqref{eq:weak_L1_compactness_each_time} we obtain
    $$
    \int_0^T \int_{\Rd} \xi \, \psi \diff x \diff t  = \int_0^T \int_{\Rd} \psi e^{V-\frac{\pot}{2}} \nabla \vp \, \rho \diff x \diff t
    $$
    which implies $\xi =  e^{V-\frac{\pot}{2}} \nabla \vp \, \rho$. Standard subsequence argument as in the proof of \eqref{eq:weak_L1_compactness_each_time} above concludes the proof of weak convergence in $L^2((0,T)\times\Rd)$.

    \smallskip

    We proceed to the proof of strong convergence for the sequence $\{((\rho^R \nabla \vp)\ast \omega_R) e^{V_R-\frac{\pot}{2}}\}_{R \geq R_0}$ in $L^2(0,T; H^{-1}(\Rd))$ and we recall that $\vp\in C_c^\infty([0,T]\times B_L)$. Clearly, by \eqref{eq:L2_bound_sequence_that_conv_strongly_in_H-1_F2}, the sequence is bounded in $L^2(0,T; H^{-1}(\Rd))$. Moreover, the sequence is supported only on $(0,T)\times B_{L+1}$, so it is sufficient to prove strong convergence in $L^2(0,T; H^{-1}(B_{L+1}))$. We will argue by the Aubin-Lions lemma and for this we only need to prove that the sequence of time derivatives $\{ \partial_t ((\rho^R \nabla \vp)\ast \omega_R) e^{V_R-\frac{\pot}{2}}\}_{R\geq R_0}$ is uniformly bounded in a certain negative Sobolev space. Let $\psi \in C_c^{\infty}((0,T)\times B_{L+1})$. We compute
    \begin{align*}
    \int_0^T &\int_{\Rd} ((\rho^R \nabla \vp)\ast \omega_R) e^{V_R-\frac{\pot}{2}} \, \partial_t \psi \diff x \diff t = \int_0^T \int_{\Rd} \rho^R \, \nabla \vp \,  \partial_t \left[ \omega_R \ast (e^{V_R-\frac{\pot}{2}} \,  \psi) \right]
    \diff x \diff t \\
    &= \int_0^T \int_{\Rd} \rho^R \,   \partial_t \left[ \nabla \vp \, \omega_R \ast (e^{V_R-\frac{\pot}{2}} \,  \psi) \right]
    \diff x \diff t  - \int_0^T \int_{\Rd} \rho^R \,   \partial_t\nabla \vp \,    \, \omega_R \ast (e^{V_R-\frac{\pot}{2}} \,  \psi) 
    \diff x \diff t.
    \end{align*}
    Let $q = 3\vee d$, then the first term can be bounded by the estimate \ref{lem:uniform_estimates:der_est} by the upper bound $ C\, \|\psi\|_{L_t^{2q} H_x^m(B_{L+1})}$ for some constant $C>0$ depending on $\vp$, $T$, $L$ and $\norm{V}_{H^m(B_{L+1})}$. The second term is bounded by $C\, \|\psi\|_{L_t^{2q} L_x^\infty(B_{L+1})}$, which by the Sobolev embedding can be estimated by $ C\, \|\psi\|_{L_t^{2q} H_x^m(B_{L+1})}$ for constants $C>0$ depending on $\varphi$, $T$ and $L$. Combining both estimates we conclude
    $$
    \left| \int_0^T \int_{\Rd} ((\rho^R \nabla \vp)\ast \omega_R) e^{V_R-\frac{\pot}{2}} \, \partial_t \psi \diff x \diff t \right| \leq C\, \| \psi \|_{L^{2q}_t H^m_x(B_{L+1})},
    $$
    for some constant $C>0$ depending on $\vp$, $T$, $L$ and $\norm{V}_{H^m(B_{L+1})}$. Taking a supremum over all $\psi \in C_c^{\infty}((0,T)\times B_{L+1})$ with $ \| \psi \|_{L^{2q}_t H^m_x(B_{L+1})} \leq 1$, we obtain that the squence $\{ \partial_t ((\rho^R \nabla \vp)\ast \omega_R) e^{V_R-\frac{\pot}{2}}\}_{R\geq R_0}$ is bounded in $L^{2q}(0,T; H^m_0(B_{L+1}))^*$. The strong convergence follows by the Aubin-Lions lemma.

    \underline{\it Convergences \ref{lem:weak_convergences_mfsvgd:H-1} and \ref{lem:weak_convergences_mfsvgd:H1}.} We first prove \ref{lem:weak_convergences_mfsvgd:H-1} arguing as above. By \ref{lem:uniform_estimates:H-1_est}, the sequence $\{\nabla((\rho^R\ast\omega_R) e^{V_R})e^{-\frac{\pot}{2}}\}_{R\geq R_0}$ is uniformly bounded in $L^2(0,T;H^{-1}(\Rd))$ so it has a subsequence converging weakly to some $\xi \in L^2(0,T;H^{-1}(\Rd))$. Since for $\psi \in C_c^\infty([0,T]\times \Rd)$, we have $(\nabla(\psi e^{-\frac{\pot}{2}})e^{V_R})\ast\omega_R\ra \nabla(\psi e^{-\frac{\pot}{2}})e^{V}$ in $L^\infty((0,T)\times\Rd)$ it is easy to identify the limit $\xi = \nabla(\rho e^{V})e^{-\frac{\pot}{2}}$ and deduce \ref{lem:weak_convergences_mfsvgd:H-1} by the subsequence argument. Finally, \ref{lem:weak_convergences_mfsvgd:H1} follows by \ref{lem:Plancherel_f_H-1:wd} in Lemma \ref{lem:Plancherel_f_H-1} and the fact that linear operators preserve weak convergence.
\end{proof}

\begin{proof}[Proof of Theorem \ref{thm:existence_of_distributional_solutions_to_mfsvgd}]   
   \underline{\it Passing to the limit $R\ra\infty$.} Fix $L>0, T>0$, $\vp\in C_c^\infty([0,T]\times~B_L)$. For each $R\geq R_0$ we have
    \begin{align*}    	    \int_{\Rd}\psi(T,x)\, \rho^R_T(x) \diff x =& \int_{\Rd}\psi(0,x)\, \rho^R_0(x) \diff x + 
            \int_0^T \int_{\Rd}\prt_t\psi\; \rho^R \diff x\diff t\\
            &-\int_0^T\int_{\Rd}  ((\rho^R \nabla \psi )\ast \omega_R) e^{V_R-\frac{\pot}{2}} \, k\ast \nabla((\rho^R\ast\omega_R) e^{V_R})e^{
     -\frac{\pot}{2}}\diff x\diff t\\
     & - \frac{1}{R}\, \int_0^T \int_{\Rd}(\nabla\rho^R+\rho^R\nabla V_R) \cdot \nabla \psi \diff x\diff t 
    \end{align*}
    By \eqref{eq:weak_L1_compactness_each_time}, we can easily pass to the limit in the first three terms. Also, the last term converges to 0 by \ref{lem:weak_convergences_mfsvgd:dissipation}. It remains to establish the limit of the penultimate term. Let
    $$
    f_R := ((\rho^R \nabla \psi )\ast \omega_R) e^{V_R-\frac{\pot}{2}}, \qquad  g_R : =k\ast \nabla((\rho^R\ast\omega_R) e^{V_R})e^{
     -\frac{\pot}{2}}.
    $$
    From \ref{lem:weak_convergences_mfsvgd:L2} and \ref{lem:weak_convergences_mfsvgd:H1} in Lemma \ref{lem:weak_convergences_mfsvgd} we know that the sequences $\{f_R\}_{R\geq R_0}$ and $\{g_R\}_{R\geq R_0}$ converge strongly to $f$ in $L^2(0,T; H^{-1}(\Rd))$ and weakly to $g$ in $L^2(0,T; H^{1}(\Rd))$, respectively. We have
    $$
    \lim_{R \to \infty} \int_0^T \int_{\Rd} f_n\, g_n \diff x \diff t = \lim_{R \to \infty} \langle f_n, g_n \rangle_{L^2_t H^{-1}_x, L^2_t H^1_x} = \langle f, g \rangle_{L^2_t H^{-1}_x, L^2_t H^1_x} = \int_0^T \int_{\Rd} f\, g \diff x \diff t,
    $$
    where the last step follows from the fact that $f$ and $g$ are in fact functions in $L^2((0,T)\times\Rd)$. This concludes the proof of the weak formulation \eqref{eq:weak_sol_mfsvgd}.

    \underline{\it Energy dissipation inequality \eqref{eq:KL:dissipation_inequality_solution}.} We want to send $R \to \infty$ in \eqref{lem:uniform_estimates:step1:eq1}. Using \cite[Theorem 2.34]{MR1857292} and \eqref{eq:weak_L1_compactness_each_time}, we obtain 
    \begin{align*}
        \KL(\rho_t || \rho_\infty)
        \leq \liminf_{R\ra \infty}\KL(\rho_t^R || \rho_\infty^R).
    \end{align*}
    From \ref{lem:weak_convergences_mfsvgd:H-1} in Lemma \ref{lem:weak_convergences_mfsvgd} we have weak convergence $\nabla((\rho^R\ast\omega_R) e^{V_R})e^{-\frac{\pot}{2}}\weak \nabla(\rho e^{V})e^{-\frac{\pot}{2}} $ in $L_t^2 H_x^{-1}(\Rd)$. Thanks to \eqref{eq:condition_Fourier_transform_k} and \ref{lem:Plancherel_f_H-1:plancherel} from Lemma \ref{lem:Plancherel_f_H-1}, we know that the dissipation term $\DD^2(\rho_s^R \ast \omega_R ||\rho_\infty^R)$ is equivalent to the $H^{-1}(\Rd)$ norm of $\nabla((\rho_s^R\ast\omega_R) e^{V_R})e^{-\frac{\pot}{2}}$. Thus by weak lower semicontinuity of this equivalent norm, we have the inequality
    \begin{align*}
        \int_0^t \DD^2(\rho_s||\rho_\infty)\diff s
        \leq \liminf_{R\ra \infty} \int_0^t \DD^2(\rho_s^R\ast \omega_R||\rho_\infty^R)\diff s.
    \end{align*}
Finally, by \eqref{eq:conv_KL_time_0}, $\KL(\rho_0^R ||\rho_\infty^R)\ra \KL(\rho_0 ||\rho_\infty)$. Thus, we can pass to the limit $R \to \infty$ in \eqref{lem:uniform_estimates:step1:eq1} and deduce the first statement in \eqref{eq:KL:dissipation_inequality_solution}, which together with the Grönwall inequality and the \eqref{eq:stein_log_sobolev_inequality} implies the second statement in \eqref{eq:KL:dissipation_inequality_solution}.

\underline{\it Absolute continuity of solution with respect to $W_1$.}
Take any test function $\psi\in C_c^\infty(\Rd)$ and observe we have the integrability
\begin{multline*}
    \int_0^T \int_{\Rd} \big\vert k\ast \Big(\nabla \left( \rho\, e^{V} \right) e^{-\frac{\pot}{2}}\Big) \, \rho \, e^{V-\frac{\pot}{2}} \, \nabla \psi\big\vert \diff x \diff t
    \\ \leq \norm{\rho e^{V-\frac{\pot}{2}}}_{L_t^2 H_x^{-1}}
    \norm{k\ast \big(\nabla \left( \rho\, e^{V} \right) e^{-\frac{\pot}{2}}\big)}_{L_t^2 H_x^1} \norm{\nabla \psi}_{L^{\infty}_x}
\end{multline*}
Let $0\leq t_1 \leq t_2$, then from \eqref{eq:weak_sol_mfsvgd} we easily deduce
\begin{equation}\label{eq:weak_form_simplified_only_space_all_times}
    	    \int_{\Rd}\psi(x) \rho_{t_2}(x) \diff x - \int_{\Rd}\psi(x) \rho_{t_1}(x) \diff x=  
             -\int_{t_1}^{t_2} \int_{\Rd} k\ast \left(\nabla(\rho e^{V})\, e^{-\frac{\pot}{2}}\right)\cdot \rho\,  e^{V-\frac{\pot}{2}} \nabla \psi\diff x\diff s.
\end{equation}
Indeed, \eqref{eq:weak_form_simplified_only_space_all_times} can be proved by considering in \eqref{eq:weak_sol_mfsvgd} a test function $\psi(x) \, \vp_n(t) $, where $\vp_n\in C_c^\infty(\R)$ is a function with $0\leq \vp_n\leq 1$, $\supp(\vp_n)\subseteq [t_1-\frac{1}{n},t_2+\frac{1}{n}]$ and $\vp_n=1$ on $[t_1,t_2]$ and passing to the limit $n\ra\infty$. The identity can be easily extended to $\psi$ being 1-Lipschitz by considering a sequence $\{\psi_n\}_{n\geq 1} \subset C_c^{\infty}(\Rd)$ such that $\psi_n\ra\psi$ pointwisely. In particular, we may assume $|\psi_n(x)| \leq 2\, |\psi(0)| + 2|x|$ so the limit $\lim_{n\to \infty} \int_{\Rd}\psi_n(x)\, \rho_{t}(x) \diff x = \int_{\Rd}\psi(x)\, \rho_{t}(x) \diff x$ can be justified by the dominated convergence.

\smallskip

We can now take the absolute value in \eqref{eq:weak_form_simplified_only_space_all_times} and pass to the supremum over all $\psi\in \Lip(\Rd)$ with $\Lip(\psi)\leq 1$ to obtain the absolute contiuity of the curve
\begin{align*}
    W_1(\rho_{t_1},\rho_{t_2})
    \leq \int_{t_1}^{t_2} \int_{\Rd} \big\vert k\ast \Big(\nabla \left( \rho\, e^{V} \right) e^{-\frac{\pot}{2}}\Big) \, \rho \, e^{V-\frac{\pot}{2}} \big\vert \diff x \diff t \qquad \mbox{ for all } t_1, t_2 \in [0,T].
\end{align*}
With this we conclude the proof.
\end{proof}

\section{Necessary Properties of the Kernel}\label{section:necessary_properties_of_the_kernel}

\subsection{Proof of Theorem \ref{thm:conditions_for_slsi}, Case 
\ref{thm:failure_of_slsi_for_regular_convolution_kernels}}

\begin{lem}
    \label{lem:upper_bnd_dis}
     Let $r>1$ and assume $\rho$ satisfies
     \begin{align}
        \label{eq:condition_rho_general_r}
         \rho\in L^1(\Rd),\qquad
         \rho e^{V-\frac{\pot}{2}}\in L^2(\Rd),\qquad
         \rho e^{V-\frac{\pot}{2}}\nabla\pot\in H^{-r}(\Rd),
         \tag{$\text{H}_r$}
     \end{align}
     and $k\in L^1(\Rd)+L^2(\Rd)$ is such that
    \begin{equation}\label{eq:upper_bound_failure_case}
        |\hat{k}(\xi)|\leq \rkc_k\frac{1}{(1+|\xi|^2)^r}
        \qquad \forall \xi\in\Rd.
    \end{equation}
    Then, the Fisher information $ \DD^2(\rho\,||\,\rho_\infty)$, which like in \eqref{eq:inequality_l2_lower_bound_formulated_H-1_H1} can be defined by $H^{-r}$ and $H^r$ duality, satisfies
    \begin{align*}
        \DD^2(\rho\,||\,\rho_\infty)
        \leq \rkc_k\sqrt{\det(\Sigma)}\frac{(\sigma_d\vee 1)^r}{\sigma_1\; }\left(\int_{\Rd} |\hat{g}(\xi)|^2 \, q(\xi) \diff\xi+\frac{1}{16\,\pi^2}\int_{\Rd}  \frac{|\nabla \hat{g}(\xi)|^2}{(1+|\xi|^2)^r} \diff\xi\right),
    \end{align*}
     where $\rkc_k$ is the constant form \eqref{eq:condition_Fourier_transform_k}, $0<\sigma_1 \leq ... \leq \sigma_d$ are the eigenvalues of $\Sigma$ and we define the functions
     \begin{equation}\label{eq:def_g_0_failure_case}
        \begin{split}
              g_0(x)&: = \rho(x)e^{V(x)-\frac{\pot(x)}{2}}, \qquad 
          z(x):=\Sigma^{\frac{1}{2}}x+\mu,\qquad
          g:=g_0\circ z,\\
        q(\xi)&:=\left(4\pi^2\abs{\xi}^2-\frac{d}{2}+r\frac{|\xi|^2}{1+|\xi|^2}\right)\frac{1}{(1+|\xi|^2)^r}.
          \end{split}
     \end{equation}
\end{lem}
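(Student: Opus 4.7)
This is the upper-bound counterpart of Lemma \ref{lem:passing_to_fourier}, and the strategy is identical: pass to Fourier variables, diagonalize by the change of variables $\eta = \Sigma^{1/2}\xi$, and compute. The only structural differences are that (i) here $g_0 = \rho e^{V-\frac{\pot}{2}}$ contains no $\tau$-correction, because we seek a plain upper bound rather than a Poincaré-Wirtinger type estimate, and (ii) the bound \eqref{eq:upper_bound_failure_case} on $\hat{k}$ is used as an upper bound rather than combined with a lower bound. The duality definition of $\DD^2(\rho||\rho_\infty)$ as a pairing between $H^{-r}(\Rd)$ and $H^r(\Rd)$ makes sense by a direct adaptation of Lemma \ref{lem:Plancherel_f_H-1}, since the operator $g \mapsto k \ast g$ now maps $H^{-r}(\Rd)$ into $H^r(\Rd)$.

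Using $\nabla\pot(x) = \Sigma^{-1}(x-\mu)$ and the identity \eqref{eq:split_stein_gradient}, I would first write
\begin{equation*}
\nabla(\rho e^V)\, e^{-\frac{\pot}{2}} = \nabla g_0 + \tfrac{1}{2}\Sigma^{-1}(x-\mu)\, g_0.
\end{equation*}
The Fourier transforms of the two pieces are as in \eqref{eq:fourier_two_expressions}, and exploiting \eqref{eq:fourier_formula_g_0} and \eqref{eq:fourier_second_expression_gradient_in_terms_of_g} I would pass to $g = g_0 \circ z$. Following \eqref{eq:first_lower_bnd_dis} but now estimating from above via $|\Sigma^{-1/2} v|^2 \leq \sigma_1^{-1}|v|^2$, I obtain
\begin{equation*}
\DD^2(\rho||\rho_\infty)
\leq \frac{\sqrt{\det \Sigma}}{\sigma_1}\int_{\Rd} \hat{k}(\Sigma^{-1/2}\eta)\,\Bigl|(2\pi i \eta)\,\hat{g}(\eta) + \tfrac{i}{2(2\pi)}\nabla\hat{g}(\eta)\Bigr|^2 d\eta.
\end{equation*}
The kernel weight is then controlled by $1+|\Sigma^{-1/2}\eta|^2 \geq (\sigma_d \vee 1)^{-1}(1+|\eta|^2)$ together with \eqref{eq:upper_bound_failure_case}, giving $\hat{k}(\Sigma^{-1/2}\eta) \leq \rkc_k (\sigma_d \vee 1)^r (1+|\eta|^2)^{-r}$, which accounts for the factor $\rkc_k \sqrt{\det \Sigma}\,(\sigma_d \vee 1)^r/\sigma_1$ in the claimed inequality.

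The last step is to expand the square, exactly as in the last display of the proof of Lemma \ref{lem:passing_to_fourier}. Writing $\hat{K}(\eta) := (1+|\eta|^2)^{-r}$, the cross terms combine via integration by parts into $-\divv(\eta\,\hat{K})|\hat{g}|^2$, and a direct computation gives
\begin{equation*}
\divv\!\left(\frac{\eta}{(1+|\eta|^2)^r}\right) = \frac{d}{(1+|\eta|^2)^r} - \frac{2r|\eta|^2}{(1+|\eta|^2)^{r+1}}.
\end{equation*}
Collecting the diagonal term $4\pi^2|\eta|^2 \hat{K}$ with $-\tfrac{1}{2}\divv(\eta\,\hat{K})$ recovers the weight $q(\eta)$ in \eqref{eq:def_g_0_failure_case}, while the gradient contribution is the $\tfrac{1}{16\pi^2}\int |\nabla\hat{g}|^2\,(1+|\eta|^2)^{-r}$ term.

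The main technical points to verify are that $\nabla \hat{g}$ exists in the Sobolev sense and that the integration-by-parts step on the Fourier side is valid under the hypothesis \eqref{eq:condition_rho_general_r}: both should follow from an $r$-analog of Lemma \ref{lem:representation_derivative_f_x_fourier}, with the assumption $\rho e^{V-\frac{\pot}{2}} \nabla \pot \in H^{-r}(\Rd)$ providing exactly the right control on the boundary/decay terms so that no surface contribution is produced. I do not foresee a genuine obstacle here; the computation is the mirror image of Lemma \ref{lem:passing_to_fourier}.
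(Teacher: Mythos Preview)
Your proposal is correct and follows essentially the same approach as the paper: mirror the computation of Lemma \ref{lem:passing_to_fourier} with $\tau=0$, use the largest eigenvalue of $\Sigma^{-1/2}$ to estimate $|\Sigma^{-1/2}v|^2$ from above, bound $\hat{k}(\Sigma^{-1/2}\eta)$ via \eqref{eq:upper_bound_failure_case} and $(1+|\Sigma^{-1/2}\eta|^2)^r \geq (\sigma_d\vee 1)^{-r}(1+|\eta|^2)^r$, then expand the square exactly as before with $(1+|\eta|^2)^{-r}$ in place of $\hat{k}(\eta)$. The paper likewise notes that Lemma \ref{lem:Plancherel_f_H-1} is invoked in its $H^{-r}/H^r$ form.
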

\begin{proof}
    We follow the proof of Lemma \ref{lem:passing_to_fourier}, making use of Lemma \ref{lem:Plancherel_f_H-1} for the case of duality between $H^{-r}$ and $H^r$, the exact same way up to \eqref{eq:first_lower_bnd_dis} and choosing $\tau=0$. Now, we control the length of the vector by the maximal eigenvalue of $\Sigma^{-\frac{1}{2}}$ which is $\frac{1}{\sqrt{\sigma_1}}$ to get the upper bound
    \begin{align*}
        \DD^2(\rho\,||\,\rho_\infty) \leq \frac{\sqrt{\det(\Sigma)}}{\sigma_1} \int_{\Rd}\hat{k}(\Sigma^{-\frac{1}{2}}\eta)\, \left|(2\pi i \eta)\, \hat{g}(\eta) + \frac{i}{2\,(2\pi)}  \nabla \hat{g}(\eta)  \right|^2 \diff \eta. 
    \end{align*}
    We estimate from below
    $$
    (1+ |\Sigma^{-\frac{1}{2}}\eta|^2)^r \geq (1 + \sigma_d^{-1}\, |\eta|^2)^r \geq (\sigma_d\vee 1)^{-r} \, (1+|\eta|^2)^r
    $$
    and use the upper bound in \eqref{eq:upper_bound_failure_case} to get
    \begin{align*}
        \DD^2(\rho\,||\,\rho_\infty) \leq 
        \rkc_k\sqrt{\det(\Sigma)}\frac{(\sigma_d\vee 1)^r}{\sigma_1\; }
        \int_{\Rd}\frac{1}{(1+|\eta|^2)^r}\, \left|(2\pi i \eta)\, \hat{g}(\eta) + \frac{i}{2\,(2\pi)}  \nabla \hat{g}(\eta)  \right|^2 \diff \eta. 
    \end{align*}
    At this stage, the proof is exactly the same as in Lemma \ref{lem:passing_to_fourier}, except that $\hat{k}(\eta)$ is substituted with the function $(1+|\eta|^2)^{-r}$.
\end{proof}

The idea of the proof is to use Gaussians with vanishing variance. Under the Fourier transform the functions will converge to the constant one function. By the choice of $r$, this will show that the dissipation is bounded. However, the Kullback-Leibler divergence will blow up, since we are using a sequence of mollifiers that weakly converge to the Dirac measure.

\begin{proof}[Proof of Theorem \ref{thm:conditions_for_slsi}, Case \ref{thm:failure_of_slsi_for_regular_convolution_kernels}]
    Let $\{\rho_n\}_{n\geq 1}$ be a sequence such that each element satisfies \eqref{eq:condition_rho_general_r}. From Lemma \ref{lem:upper_bnd_dis} we know there exists a constant $C>0$ independent of $n$ such that
    \begin{align}
        \label{eq:bnd_dis_above_for_ce}
        \DD^2(\rho_n\,||\,\rho_\infty)\leq C \left(\int_{\Rd} |\hat{g}_n(\xi)|^2 \, q(\xi) \diff\xi+\frac{1}{16\,\pi^2}\int_{\Rd}  \frac{|\nabla \hat{g}_n(\xi)|^2}{(1+|\xi|^2)^r} \diff\xi\right), 
    \end{align}
    where we define $g_n$ and $q$ as in \eqref{eq:def_g_0_failure_case} with $\rho=\rho_n$. Thus it suffices to find a sequence that satisfies
    \begin{align}
        \label{eq:cond_fail_slsi_kreg}
        \sup_{n\geq 1}\int_{\Rd} |\hat{g}_n(\xi)|^2 \, q(\xi) \diff\xi+\frac{1}{16\,\pi^2}\int_{\Rd}  \frac{|\nabla \hat{g}_n(\xi)|^2}{(1+|\xi|^2)^r} \diff\xi<\infty,
        \quad \lim_{n\ra\infty} \KL(\rho_n||\rho_\infty)=\infty.
    \end{align}
    We choose the probability distribution $\rho_n=\mathcal{N}(\mu,\frac{1}{n}\Sigma)$, this means 
    \begin{align*}
        \rho_n(x)=\left(\frac{n}{2\pi}\right)^{\frac{d}{2}}\frac{1}{\sqrt{\det(\Sigma)}}\exp\left(-\frac{n}{2} (x-\mu)\cdot \Sigma^{-1}(x-\mu) \right),
    \end{align*}
    and compute the function $g_n$ explicitly as
    \begin{align*}
        g_n(x)
        =\rho_n(\Sigma^{\frac{1}{2}}x+\mu)e^{\frac{|x|^2}{4}}
        =\left(\frac{n}{2\pi}\right)^{\frac{d}{2}}\frac{1}{\sqrt{\det(\Sigma)}}e^{-|x|^2\left(\frac{2n-1}{4}\right)}.
    \end{align*}
    It is not difficult to verify that the Fourier transform satisfies
    \begin{align*}
        \hat{g}_n(x)
        =\frac{1}{\sqrt{\det(\Sigma)}}
        \left(\frac{1}{1-\frac{1}{2n}}\right)^{\frac{d}{2}}
        e^{-|\pi \xi|^2\left(\frac{4}{2n-1}\right)},
    \end{align*}
    so that $\hat{g}_n$, $\nabla \hat{g}_n$ are uniformly bounded in $L^{\infty}(\Rd)$. Moreover, we have chosen $r$ sufficiently large such that $q\in L^1(\Rd)$, $\int_{\Rd} \frac{1}{(1+|\xi|^2)^r}\diff\xi < \infty$ and the first condition in \eqref{eq:cond_fail_slsi_kreg} is satisfied.
    Next, we use Lemma~\ref{lem:kl_gaussians} to compute the Kullback-Leibler divergence
    \begin{align*}
        \KL(\rho_n||\rho_\infty)
        =\frac{d}{2}\left(\frac{1}{n}-1+\ln(n)\right)
    \end{align*}
    so the second condition \eqref{eq:cond_fail_slsi_kreg} is satisfied and the proof is concluded.
\end{proof}

\subsection{Proof of Theorem \ref{thm:conditions_for_slsi}, Case \ref{thm:failure of slsi for integrable convolution kernels}}

Let us first explain the proof strategy. First we bound the dissipation in terms of $L^p$ norms of $(1+|x|^\beta)\nabla\rho(x)$ and $(1+|x|^\beta)\rho(x)\nabla V(x)$. We choose a sequence such that the Kullback-Leibler divergence blows up, but the aforementioned estimates remain bounded. It turns we can do this by choosing $\rho_\infty$ as a Gaussian and $\rho$ to be an approximation of a polynomial tail.

\smallskip

Let $p:=\frac{2r}{2r-1}$ and $q:=2r$ so that
\begin{align*}
    \frac{1}{p}+\frac{1}{q}=1,\qquad
    1+\frac{1}{q}=\frac{1}{p}+\frac{1}{r}.
\end{align*}
Then by using the Hölder inequality and the Young convolution inequality we can estimate the Fisher information, defined by 
\begin{align*}
    \DD^2(\rho||\rho_\infty)=\int_{\Rd} (F+G)\cdot \;  k \ast (F+G) \diff x,
    \quad
    \begin{split}
        F(x)&:=(1+|x|^\beta) f(x)\nabla \rho(x),\\
    G(x)&:=(1+|x|^\beta) f(x)\rho(x)\nabla V(x),
    \end{split}
\end{align*}
from above by the following term
\begin{align}
    \label{thm:failure of slsi for integrable convolution kernels:eq1}
    2\,\|f\|_{L^{\infty}(\R^d)}^2 \, \|k\|_{L^r(\R^d)}\,   \left(\|(1+|x|^\beta)\nabla \rho\|^2_{L^p(\R^d)} + \|(1+|x|^\beta)\rho\, \nabla V\|^2_{L^p(\R^d)} \right). 
\end{align}
Define $V(x)=|x|^2$ and the probability measure $\rho\in \Pro$ by
\begin{align*}
    \rho\,\propto\, \frac{1}{1+|x|^{d+2}}.
\end{align*}
Furthermore, we note that the Kullback-Leibler divergence can be expanded to
\begin{align*}
    \KL(\rho||\rho_\infty)
    =\int_{\Rd}\rho\, \log(\rho)\diff x+\int_{\Rd} \rho\,V\diff x-Z,
\end{align*}
for the normalization constant $Z:=\int_{\Rd}e^{-V}\diff x$. One can show that there exists a constant $C>0$ such that
\begin{align*}
    |(1+|x|^\beta)\nabla \rho(x)|^p 
    &\leq C \left(\frac{1}{1+|x|}\right)^{p(d+3-\beta)},\\
    |(1+|x|^\beta)\rho(x)\, \nabla V(x)|^p 
    &\leq C \left(\frac{1}{1+|x|}\right)^{p(d+1-\beta)}.
\end{align*}
One can show that if $\beta <d+1-\frac{d}{p}$ then $(1+|x|^\beta)\nabla \rho\in L^p(\Rd)$ and $(1+|x|^\beta)\rho\, \nabla V\in L^p(\Rd)$. It is immediate to see that $\rho \, \log \rho \in L^1(\R^d)$. However,
$$
\int_{\R^d} \rho\, V \diff x = \int_{\R^d} \frac{|x|^2}{1+|x|^{d+2}} \diff x \geq \frac{1}{2} \int_{|x|>1} |x|^{-d} = \infty.
$$
Hence, this function acts as a counterexample to the inequality \eqref{eq:stein_log_sobolev_inequality_general_alpha} for all $\alpha\geq 1$. This concludes the proof.

\subsection{Proof of Theorem \ref{thm:conditions_for_slsi}, Case \ref{thm:failure_of_slsi_for_homogeneous_convolution_kernels}}
    For any scalar $\lambda>0$, define the dilation function $S_\lambda(x):=\lambda x$ for $x\in\Rd$. We have $\diff(S_\lambda)_\#\rho_\infty(x)=h(x)\diff\rho_\infty(x)$ where
    \begin{equation*}
        h(x)
        =\frac{e^{-V(x/\lambda)}}{e^{-V(x)} \lambda^d}.
    \end{equation*}
    We compute the Kullback-Leibler divergence of $(S_\lambda)_\#\rho_\infty$
    \begin{align*}
        \KL((S_\lambda)_\#\rho_\infty || \rho_\infty)
        =\int_{\Rd} V(\lambda x)-V(x)\diff \rho_\infty(x)
        -d\ln(\lambda).
    \end{align*}
    We also evaluate the dissipation in \eqref{eq:stein_fisher_information} as
    \begin{align*}
        &\frac{\lambda^{-2d}}{Z^2}\int_{\Rd}\int_{\Rd} \nabla\big(e^{V(x)-V(x/\lambda)}\big)\cdot k(x-y)\nabla\big(e^{V(y)-V(y/\lambda)}\big)\;e^{-V(x)}e^{-V(y)}\diff y \diff x \\
        &\qquad=\frac{\lambda^{-2d}}{Z^2}\int_{\Rd}\int_{\Rd} \left(\nabla V(x)-\frac{1}{\lambda}\nabla V\left(\frac{x}{\lambda}\right)\right)\\
        &\qquad\qquad\qquad\qquad\qquad\qquad\cdot k(x-y)\left(\nabla V(y)-\frac{1}{\lambda}\nabla V\left(\frac{y}{\lambda}\right)\right) e^{-V(x/\lambda)}e^{-V(y/\lambda)}\diff y\diff x.
    \end{align*}
    First, we use the substitution $\tilde{x}=x/\lambda$ and $\tilde{y}=y/\lambda$, and then the fact that $\nabla V$ is $(\gamma-1)$-homogeneous and that $k$ is $\beta$-homogeneous to rewrite the integral as
    \begin{align*}
        &\frac{1}{Z^2}\int_{\Rd}\int_{\Rd} \left(\nabla V(\lambda x)-\frac{1}{\lambda}\nabla V\left(x\right)\right)\cdot k(\lambda(x-y))\left(\nabla V(\lambda y)-\frac{1}{\lambda}\nabla V\left(y\right)\right) e^{-V(x)}e^{-V(y)}\diff y\diff x\\
        &\qquad=(\lambda^\gamma-1)^2\lambda^{\beta-2}\int_{\Rd}\int_{\Rd} \nabla V(x)\cdot k(x-y)\nabla V(y)\diff \rho_\infty(y)\diff \rho_\infty(x).
    \end{align*}
    Suppose for a contradiction there exists a constant $C >0$, for which \eqref{eq:stein_log_sobolev_inequality_general_alpha} holds. Combining both representations and using homogeneity, we derive the bound
    \begin{align*}
        &\left[(\lambda^\gamma-1)\int_{\Rd} V(x)\diff\rho_\infty(x)-d\ln(\lambda)\right]^\alpha\\
        &\qquad\leq C (\lambda^\gamma-1)^2\lambda^{\beta-2}\int_{\Rd}\int_{\Rd} \nabla V(x)\cdot k(x-y)\nabla V(y)\diff \rho_\infty(y)\diff\rho_\infty(x).
    \end{align*}
    When $\lambda \to \infty$, the LHS has growth $\alpha\gamma $, while RHS has growth $\beta+2(\gamma-1)$ so we deduce the inequality $\alpha\gamma \leq \beta+2 (\gamma-1)$. We see we have failure if $\alpha\geq 2$, and otherwise we have the admissible set
    \begin{align*}
        \left\{\left.(\beta,\gamma)\in (-\infty,0)\times \left(\frac{2}{2-\alpha},\,\infty\right)\;\right\vert\; 2+(\alpha-2)\gamma\leq \beta<0\right\}.
    \end{align*}

\begin{Ex}
    To illustrate the applicability of Theorem \ref{thm:conditions_for_slsi}, Case \ref{thm:failure_of_slsi_for_homogeneous_convolution_kernels}, we take $V=|x|^\alpha$ and $k(z)=|z|^{-s}$ for any $\alpha>0$ and $s\in (0,d)$. Then to show that the RHS of \eqref{eq:homogoneity_equation} is finite, we use the Hölder inequality and the Hardy-Littlewood-Sobolev theorem \cite[Theorem 6.1.3]{grafakos_modern_2009} to estimate
    \begin{align*}
        \int_{\Rd}\int_{\Rd} \nabla V(x)\cdot k(x-y)\nabla V(y)\diff \rho_\infty(y)\diff\rho_\infty(x)
        &\leq \norm{\nabla V e^{-V}}_{L^p(\Rd)}\norm{k\ast \nabla V e^{-V}}_{L^q(\Rd)}\\
        &\leq C \norm{\nabla V e^{-V}}_{L^p(\Rd)}^2
        <\infty,
    \end{align*}
    where $p\in (1,2)$ and $q\in (2,\infty)$ with $\frac{1}{p}+\frac{1}{q}=1$ and $\frac{1}{p}-\frac{1}{q}=\frac{s}{d}$ and $C>0$ is some constant depending on $p$, $s$ and $d$.
\end{Ex}

\appendix 

\section{The incomplete gamma functions}\label{appendix:incomplete_gamma_functions}

We collect here several properties of the incomplete gamma functions which are used in Section \ref{section:existence_kernels_satisfying_LSS_inequality}. Given $s > 0$ and $r \geq 0$, we define the upper incomplete gamma function as 
$$
\Gamma(s,r) = \int_r^{\infty} z^{s-1} \, e^{-z} \diff z 
$$
and the lower one as
\begin{equation}\label{eq:def_incomplete_gamma_lower}
\gamma(s,r) = \int_0^r z^{s-1}\, e^{-z} \diff z.  
\end{equation}

\begin{lem}
    \label{Lemma:kernel frequency of constant is monotone increasing}
    Let $f(r) = r^{-s}\, e^{r}\, \gamma(s, r)$ and $g(r) = r^{-s}\, e^{r}\, \Gamma(s,r)$. Then, $f$ is increasing while $g$ is decreasing. Moreover, $\lim_{r\to \infty} g(r) = 0$.
\end{lem}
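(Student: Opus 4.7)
The plan is to compute the derivatives of $f$ and $g$ directly and reduce monotonicity to the sign of a single auxiliary function in each case. Using $\tfrac{d}{dr}\gamma(s,r) = r^{s-1}e^{-r}$ and $\tfrac{d}{dr}\Gamma(s,r) = -r^{s-1}e^{-r}$, a direct computation gives
\begin{equation*}
f'(r) = r^{-s-1}e^{r}\bigl[(r-s)\gamma(s,r) + r^{s}e^{-r}\bigr], \qquad g'(r) = r^{-s-1}e^{r}\bigl[(r-s)\Gamma(s,r) - r^{s}e^{-r}\bigr].
\end{equation*}
Since the prefactor $r^{-s-1}e^{r}$ is positive, the signs of $f'$ and $g'$ are controlled by the bracketed expressions, which I will denote $h(r)$ and $H(r)$ respectively.

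For $f$, I would observe that $h(0) = 0$ and differentiate:
\begin{equation*}
h'(r) = \gamma(s,r) + (r-s)\, r^{s-1}e^{-r} + s r^{s-1}e^{-r} - r^{s}e^{-r} = \gamma(s,r) \geq 0,
\end{equation*}
so that $h(r) \geq h(0) = 0$ for all $r \geq 0$, proving $f' \geq 0$. For $g$, the analogous computation yields $H'(r) = \Gamma(s,r) \geq 0$, i.e.\ $H$ is increasing; combined with the endpoint value $\lim_{r\to\infty} H(r) = 0$ (both $\Gamma(s,r)$ and $r^{s}e^{-r}$ vanish at infinity), one concludes $H(r) \leq 0$ on $[0,\infty)$, so $g' \leq 0$.

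For the asymptotic statement $\lim_{r\to\infty} g(r) = 0$, the simplest route is integration by parts in the defining integral:
\begin{equation*}
\Gamma(s,r) = \int_r^\infty z^{s-1} e^{-z}\diff z = r^{s-1}e^{-r} + (s-1)\int_r^\infty z^{s-2}e^{-z}\diff z = r^{s-1}e^{-r}\bigl(1 + O(1/r)\bigr),
\end{equation*}
which yields $g(r) = r^{-s}e^{r}\Gamma(s,r) = r^{-1}(1+O(1/r)) \to 0$. I do not anticipate any serious obstacle in this argument; the only mildly delicate point is justifying the sign of $H$, which requires the endpoint analysis at infinity rather than at $r=0$, in contrast to the case of $h$.
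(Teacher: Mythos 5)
Your proof is correct. For the $g$ part, your argument (define $H(r)=(r-s)\Gamma(s,r)-r^s e^{-r}$, show $H'=\Gamma\ge 0$, and conclude $H\le 0$ from $H(\infty)=0$) is essentially identical to the paper's (the paper works with $\Phi=-H$). Your argument for $f$, however, is genuinely cleaner: you define $h(r)=(r-s)\gamma(s,r)+r^s e^{-r}$, observe the telescoping $h'=\gamma(s,r)\ge 0$ and $h(0)=0$, and you are done. The paper instead reduces to proving the pointwise inequality $\gamma(s,r)<\frac{r^s}{s-r}e^{-r}$ on $(0,s)$, which it then establishes by another derivative comparison involving a somewhat messy algebraic rearrangement and an implicit restriction to $r\in(0,s]$. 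Your route makes the $f$ and $g$ cases symmetric, avoids the case split near $r=s$, and removes the intermediate inequality entirely — this is what the uniform decomposition into $h$, $H$ with $h'=\gamma$, $H'=\Gamma$ buys you. For the tail, both you and the paper reduce to $\Gamma(s,r)\sim r^{s-1}e^{-r}$; the paper isolates this in an appendix (Lemma~\ref{lem:expansion_series_Gamma} and Corollary~\ref{cor:the_asymptotic_beh_Gamma_fcn}) while you re-derive it by a one-step integration by parts.

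One minor gap worth filling: your parenthetical justification of $\lim_{r\to\infty}H(r)=0$, namely "both $\Gamma(s,r)$ and $r^s e^{-r}$ vanish at infinity," does not by itself control the term $(r-s)\Gamma(s,r)$ — you need $r\,\Gamma(s,r)\to 0$, i.e.\ that $\Gamma(s,r)$ decays strictly faster than $1/r$. This is of course true, either from the asymptotic $\Gamma(s,r)=O(r^{s-1}e^{-r})$ you establish in the last paragraph, or more cheaply from the crude bound $\Gamma(s,r)\le e^{-r/2}\int_0^\infty z^{s-1}e^{-z/2}\,\diff z$, which is what the paper uses at the corresponding point. Also, your integration-by-parts expansion $\Gamma(s,r)=r^{s-1}e^{-r}(1+O(1/r))$ implicitly requires $\Gamma(s-1,r)=O(r^{s-2}e^{-r})$; for $s\le 2$ this is immediate (the integrand $z^{s-2}$ is nonincreasing), and for larger $s$ one iterates, so the claim stands, but a word on this would make it airtight.
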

\begin{proof}
We first focus on the function $f$. We compute the derivative and find
    \begin{align*}
        f'(r)
        =\left(1-\frac{s}{r}\right)r^{-s}e^r \,\gamma\left(s, r\right)
        +r^{-1}.
    \end{align*}
    We show the derivative is positive. Clearly, we only need to study the case $r \in (0,s]$. Hence, we must show for $r \in (0,s]$
    \begin{align*}
        0 < \left(r-s\right)e^r\gamma\left(s, r\right)
        +r^{s} \iff \gamma\left(s, r\right)
        < \frac{r^{s}}{s-r}e^{-r}.
    \end{align*}
    Since we have equality at $r=0$, it suffices to compare derivatives. A direct computation leads to the inequality
    \begin{align*}
        r^{s-1}e^{-r}
        <  r^{s-1} e^{-r} \frac{(s-r)^2 + r}{(s-r)^2}
        = e^{-r} \frac{s(s-r)r^{s-1} + r^s }{(s-r)^2} - e^{-r} \frac{r^s}{s-r},
    \end{align*}
    which is clearly satisfied and so, the proof is concluded.\\

    Concerning function $g$, we again compute the derivative
    $$
    g'(r) = \left(1-\frac{s}{r}\right)r^{-s}e^r \, \Gamma\left(s, r\right)
        -r^{-1}
    $$
    and we have to show that $g'(r) < 0$. The inequality $g'(r) <  0$ is equivalent with
    $$
   0 < r^{s} e^{-r} -  \left(r-s\right) \Gamma\left(s, r\right) =:\Phi(r).
    $$
    By a direct computation
    $$
    \Phi'(r) = s\, r^{s-1}e^{-r} - r^{s}e^{-r} - \Gamma\left(s, r\right) + \left(r-s\right) r^{s-1}e^{-r}
    =-\Gamma(s,r)< 0.
    $$
    Furthermore, $\lim_{r \to \infty} \Phi(r) = 0$ (this can be proved by estimating $\Gamma\left(s, r\right) \leq C\,e^{-r/2}$, where $C= \int_0^{\infty} z^{s-1}e^{-z/2}\diff z$) so that $\Phi(r) > 0$ and the proof of monotonicity of $g$ is concluded. Finally, the limit $\lim_{r\to \infty} g(r) = 0$ follows from Corollary \ref{cor:the_asymptotic_beh_Gamma_fcn} below.
\end{proof}

We also recall a well-known asymptotic expansion of the upper incomplete gamma function.
\begin{lem}\label{lem:expansion_series_Gamma}
Let $n \in \N$ be such that $n \geq s-2$. Then 
\begin{multline*}
\Gamma(s,r) = e^{-r}\,r^{s-1}\Big(1 + {(s-1)}\,{r^{-1}} + {(s-1)(s-2)}\,{r^{-2}} \, + \\ + 
\ldots + {(s-1) \ldots (s-n)}\,{r^{-n}} \Big) + \epsilon_n(r),
\end{multline*}
where
$$
|\epsilon_n(r)| \leq |(s-1) \ldots (s-n-1)| \, e^{-r}\,{r^{-n+s-2}}. 
$$
\end{lem}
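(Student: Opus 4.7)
The plan is a standard iterated integration by parts on the defining integral $\Gamma(s,r) = \int_r^{\infty} z^{s-1}e^{-z}\diff z$. Taking $u = z^{s-1}$ and $\diff v = e^{-z}\diff z$ in a single integration by parts gives the recursion
\begin{equation*}
\Gamma(s,r) = r^{s-1}e^{-r} + (s-1)\,\Gamma(s-1,r),
\end{equation*}
since the boundary term at infinity vanishes. First I would apply this recursion $n+1$ times, which by a routine induction produces exactly
\begin{equation*}
\Gamma(s,r) = e^{-r} r^{s-1} \sum_{k=0}^{n} (s-1)(s-2)\cdots(s-k)\, r^{-k} + (s-1)(s-2)\cdots(s-n-1)\,\Gamma(s-n-1,r),
\end{equation*}
with the convention that the $k=0$ term equals $1$. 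This already matches the stated expansion and identifies $\epsilon_n(r) = (s-1)(s-2)\cdots(s-n-1)\,\Gamma(s-n-1,r)$.

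It then remains only to bound $\Gamma(s-n-1,r) = \int_r^{\infty} z^{s-n-2}\,e^{-z}\diff z$. This is the one place where the hypothesis $n \geq s-2$ is used: it forces $s-n-2 \leq 0$, so $z \mapsto z^{s-n-2}$ is nonincreasing on $[r,\infty)$ and therefore bounded above by its value at $z=r$. Pulling this out of the integral yields
\begin{equation*}
\Gamma(s-n-1,r) \leq r^{s-n-2}\int_r^{\infty} e^{-z}\diff z = r^{s-n-2}\,e^{-r},
\end{equation*}
which, multiplied by $|(s-1)\cdots(s-n-1)|$, is the stated bound on $|\epsilon_n(r)|$.

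Both steps are elementary, so I do not anticipate any real obstacle; the only point requiring mild care is index bookkeeping, namely ensuring that $n+1$ integrations by parts produce the sum running from $k=0$ up to $k=n$ (rather than to $k=n-1$ or $k=n+1$), and checking that the residual gamma function is $\Gamma(s-n-1,r)$ rather than $\Gamma(s-n,r)$, so that the exponent $s-n-2$ in the bound on $\epsilon_n$ comes out correctly.
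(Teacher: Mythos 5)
Your proof is correct and follows exactly the approach the paper indicates: the paper states the recursion $\Gamma(s,r) = e^{-r}r^{s-1} + (s-1)\Gamma(s-1,r)$ and then simply cites \cite[Chapter 3]{MR1429619} for the iterated argument, which is precisely what you have written out. Your index bookkeeping (the sum running over $k = 0, \dots, n$ after $n+1$ iterations, with remainder $(s-1)\cdots(s-n-1)\,\Gamma(s-n-1,r)$) and the monotonicity bound on $\Gamma(s-n-1,r)$ via $s-n-2 \leq 0$ are both correct.
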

For a simple proof, based on iterating the identity 
$$
\Gamma(s,r) = e^{-r} r^{s-1} + (s-1)\,\Gamma(s-1,r),
$$
we refer to \cite[Chapter 3]{MR1429619}. The expansion implies in particular the following.
\begin{cor}\label{cor:the_asymptotic_beh_Gamma_fcn}
It holds $\lim_{r\to \infty} e^r r^{1-s} \Gamma(s,r)=1$.    
\end{cor}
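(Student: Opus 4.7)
The plan is to apply Lemma \ref{lem:expansion_series_Gamma} directly and read off the leading order term. Fix any integer $n \in \N$ with $n \geq s - 2$ (for instance, one may take $n = \lceil s \rceil$). By Lemma \ref{lem:expansion_series_Gamma}, we can write
$$
\Gamma(s,r) = e^{-r}\, r^{s-1}\,\Bigl(1 + (s-1)\,r^{-1} + (s-1)(s-2)\,r^{-2} + \ldots + (s-1)\cdots(s-n)\,r^{-n}\Bigr) + \epsilon_n(r),
$$
with the remainder estimate $|\epsilon_n(r)| \leq |(s-1)\cdots(s-n-1)|\, e^{-r}\, r^{s-n-2}$.

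Multiplying both sides by $e^r\, r^{1-s}$ yields
$$
e^r\, r^{1-s}\, \Gamma(s,r) = 1 + (s-1)\,r^{-1} + (s-1)(s-2)\,r^{-2} + \ldots + (s-1)\cdots(s-n)\,r^{-n} + e^r\, r^{1-s}\, \epsilon_n(r).
$$
Each polynomial term $(s-1)\cdots(s-k)\,r^{-k}$ with $k \geq 1$ tends to $0$ as $r \to \infty$. The remainder term satisfies
$$
|e^r\, r^{1-s}\, \epsilon_n(r)| \leq |(s-1)\cdots(s-n-1)|\, r^{-n-1} \xrightarrow[r\to\infty]{} 0,
$$
since $n \geq 0$. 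Passing to the limit $r \to \infty$ gives $\lim_{r\to\infty} e^r\, r^{1-s}\, \Gamma(s,r) = 1$, as claimed.

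I do not anticipate any real obstacle here: the corollary is essentially a restatement of the leading-order term in the asymptotic expansion already provided by Lemma \ref{lem:expansion_series_Gamma}, and the only care needed is to choose $n$ large enough so that the error term indeed decays at infinity after multiplication by $e^r\, r^{1-s}$.
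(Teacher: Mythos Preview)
Your proof is correct and follows exactly the approach the paper intends: the corollary is stated immediately after Lemma \ref{lem:expansion_series_Gamma} with the remark that ``the expansion implies in particular the following,'' and your argument simply makes that implication explicit by multiplying through by $e^r r^{1-s}$ and checking that all lower-order terms and the remainder vanish.
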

In Section \ref{section:existence_kernels_satisfying_LSS_inequality}, we will need another expansion formulated below.

\begin{lem}\label{lem:expansion_into_matern_kernels}
Let $k \geq s-1$ and $r \geq 1$. Then, there exists constants $C_1$, ..., $C_{k+1}\in\R$ and a function $\varepsilon_k:[1,\infty)\ra \R$, both depending on $k$ and $s$, such that
$$
e^r r^{-s} \Gamma(s,r) = {C_1}\,{(1+r)^{-1}} + {C_2}\,{(1+r)^{-2}} + \ldots + {C_k}\,{(1+r)^{-k}} + \varepsilon_k(r),
$$
where $|\varepsilon_k(r)| \leq C_{k+1}\, r^{-(k+1)}$.
\end{lem}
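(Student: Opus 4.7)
The idea is to first invoke Lemma~\ref{lem:expansion_series_Gamma} to obtain an asymptotic expansion in powers of $r^{-1}$, and then rewrite each $r^{-m}$ as a finite linear combination of $(1+r)^{-j}$ plus a controlled remainder. Applying Lemma~\ref{lem:expansion_series_Gamma} with the choice $n = k-1$ (allowed since $k \geq s-1$ forces $n \geq s-2$) and multiplying through by $e^r r^{-s}$ yields
\begin{equation*}
e^r r^{-s}\,\Gamma(s,r) = \sum_{m=1}^{k} a_m\, r^{-m} + \widetilde{\varepsilon}(r), \qquad |\widetilde{\varepsilon}(r)| \leq A\, r^{-(k+1)},
\end{equation*}
with $a_1 = 1$, $a_m = (s-1)(s-2)\cdots(s-m+1)$ for $m \geq 2$, and a constant $A = A(s,k)$.

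For the change of basis, I plan to use the standard binomial series $(1-u)^{-m} = \sum_{i=0}^{\infty} \binom{m+i-1}{i}\, u^i$ valid for $|u|<1$, applied at $u = (1+r)^{-1}$, so that $(1-u)^{-m} = (1+r)^m r^{-m}$. Dividing by $(1+r)^m$ and re-indexing gives, for every $r > 0$,
\begin{equation*}
r^{-m} = \sum_{j=m}^{k} \binom{j-1}{m-1}\, (1+r)^{-j} + R_{m,k}(r), \qquad R_{m,k}(r) = \sum_{j=k+1}^{\infty} \binom{j-1}{m-1}\, (1+r)^{-j}.
\end{equation*}
For $r \geq 1$ one has $1+r \geq 2$, so factoring $(1+r)^{-(k+1)}$ out of the tail and bounding $\sum_{j \geq k+1}\binom{j-1}{m-1}\,2^{-(j-k-1)} =: B_{m,k}<\infty$ (a convergent series, since the binomial coefficients are polynomial in $j$) gives $|R_{m,k}(r)| \leq B_{m,k}\, (1+r)^{-(k+1)} \leq B_{m,k}\, r^{-(k+1)}$.

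Substituting each of these expansions into the first display and reorganising by powers of $(1+r)^{-j}$ yields the announced identity with
\begin{equation*}
C_j := \sum_{m=1}^{j} a_m \binom{j-1}{m-1}, \qquad \varepsilon_k(r) := \widetilde{\varepsilon}(r) + \sum_{m=1}^{k} a_m\, R_{m,k}(r),
\end{equation*}
and $C_{k+1}$ may be taken as $A + \sum_{m=1}^{k} |a_m|\, B_{m,k}$. The only subtlety in this scheme is to ensure that every remainder genuinely decays as $r^{-(k+1)}$ rather than as $(1+r)^{-(k+1)}$, but this is automatic on $[1,\infty)$ where the two scales are comparable; no genuine obstacle is expected beyond routine bookkeeping of the constants.
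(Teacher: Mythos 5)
Your proof is correct, and it takes a genuinely different route from the paper's for the change-of-basis step. After applying Lemma~\ref{lem:expansion_series_Gamma}, the paper's proof converts each $r^{-m}$ to a $(1+r)^{-m}$ term plus higher-order corrections in $r^{-1}$ via the algebraic identity
\begin{equation*}
r^{-m} = (r+1)^{-m} + \big(r^m - (r+1)^m\big)\, r^{-m}\,(r+1)^{-m} = (r+1)^{-m} + (r+1)^{-m}\sum_{l=1}^m C^m_l\, r^{-l},
\end{equation*}
and then iterates finitely many times, pushing the leftover $r^{-l}$ terms to orders $>k$. You instead do the conversion in one shot via the exact (generalised) binomial series $(1-u)^{-m} = \sum_{i\ge 0}\binom{m+i-1}{i}u^i$ at $u=(1+r)^{-1}$, which gives the clean identity $r^{-m} = \sum_{j\ge m}\binom{j-1}{m-1}(1+r)^{-j}$ valid for all $r>0$; truncating at $j=k$ leaves a tail that is controlled by $(1+r)^{-(k+1)}$ on $[1,\infty)$ because $(1+r)^{-1}\le \tfrac12$ there. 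Your version avoids the iteration and delivers the coefficients $C_j = \sum_{m=1}^j a_m\binom{j-1}{m-1}$ and the remainder constant explicitly, at the modest cost of invoking convergence of an infinite series; the paper's version is purely finitary algebra but leaves the constants implicit. Either way the conclusion is the same, and all steps in your argument (the choice $n=k-1$ legitimised by $k\ge s-1$, the re-indexing $j=m+i$ with $\binom{j-1}{j-m}=\binom{j-1}{m-1}$, and the geometric domination of the tail) check out.
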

\begin{proof}
Using Lemma \ref{lem:expansion_series_Gamma} with $n = k-1$ to be chosen later 
\begin{multline*}
e^r r^{-s} \Gamma(s,r) =r^{-1} + {(s-1)}\,{r^{-2}} + {(s-1)(s-2)}\,{r^{-3}} \, + \\ +
\ldots + {(s-1) \ldots (s-n)}\,{r^{-n-1}} + \epsilon_n(r)\, e^r r^{-s},
\end{multline*}
where $\epsilon_n(r)\, e^r r^{-s} \leq C\,r^{-n-2} = C\,r^{-(k+1)}$ for some constant $C>0$. Now, for each exponent $m$, we write
$$
r^{-m} = (r+1)^{-m} + \Big(r^m- (r+1)^{m}\Big)\,r^{-m}\, (r+1)^{-m} =  (r+1)^{-m} + (r+1)^{-m} \sum_{l=1}^m C^{m}_l r^{-l},
$$
for some constants $C_1^m,\ldots,C_m^m\in\R$. This formula allows to transform coefficient $r^{-m}$ into two terms: one of which is of the correct form and the another is one with order strictly larger than $m$ (with respect to $r$). Hence, after finitely many iterations we arrive at the claim.
\end{proof}

\begin{lem}\label{lem:Gamma_asymptotics_derivative}
We have 
$$
\partial_r (e^r \, r^{-s}\, \Gamma(s,r))= e^r \, r^{-s}\, \Gamma(s,r) + (-s)\, e^r \, r^{-s-1}\, \Gamma(s,r) - r^{-1}.
$$
Moreover, $\lim_{r \to \infty}  r^{\frac{1}{2}}\, |\partial_r (e^r \, r^{-s}\, \Gamma(s,r))|=0$.
\end{lem}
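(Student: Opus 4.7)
The first identity is a direct application of the product rule. I would begin by noting that by the fundamental theorem of calculus applied to the definition $\Gamma(s,r) = \int_r^\infty z^{s-1} e^{-z}\diff z$, one has $\partial_r \Gamma(s,r) = -r^{s-1} e^{-r}$. Differentiating the product $e^r \cdot r^{-s} \cdot \Gamma(s,r)$ and using this, the terms coming from differentiating $e^r$ and $r^{-s}$ produce the first two terms of the claimed identity, while the contribution from differentiating $\Gamma(s,r)$ collapses via the cancellation $e^r \cdot r^{-s} \cdot r^{s-1} e^{-r} = r^{-1}$ to the final term $-r^{-1}$.

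For the asymptotic statement, the key idea is that the three terms on the right-hand side must exhibit cancellation at order $r^{-1}$ in order for the bound $o(r^{-1/2})$ to even be plausible; this is exactly what Corollary \ref{cor:the_asymptotic_beh_Gamma_fcn} suggests since $e^r r^{-s}\Gamma(s,r) \sim r^{-1}$ as $r \to \infty$. To make this quantitative, I would invoke Lemma \ref{lem:expansion_series_Gamma} with any fixed integer $n \geq \max(s-2, 1)$ (so that the expansion is valid and contains at least two terms). This yields
\begin{equation*}
    e^r r^{-s} \Gamma(s,r) = r^{-1} + (s-1)\, r^{-2} + O(r^{-3}),
\end{equation*}
and therefore $e^r r^{-s} \Gamma(s,r) - r^{-1} = O(r^{-2})$ as $r \to \infty$. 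Similarly, applying the same expansion to $\Gamma(s,r)$ at order $r^{-s-1}$ instead of $r^{-s}$ gives $e^r r^{-s-1} \Gamma(s,r) = O(r^{-2})$.

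Combining the identity from the first part of the lemma with these two asymptotic bounds, the derivative $\partial_r(e^r r^{-s} \Gamma(s,r))$ decays like $O(r^{-2})$, which is much faster than $r^{-1/2}$, giving the claimed limit. I do not expect any serious obstacle; the entire argument is bookkeeping around the cancellation of the leading $r^{-1}$ terms, and the only technical point is to choose $n$ large enough in Lemma \ref{lem:expansion_series_Gamma} so that the error term is genuinely of order $r^{-3}$.
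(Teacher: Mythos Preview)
Your argument is correct, but your diagnosis of what is needed is off, and the paper's proof exploits this. You write that ``the three terms on the right-hand side must exhibit cancellation at order $r^{-1}$ in order for the bound $o(r^{-1/2})$ to even be plausible.'' In fact no cancellation is required: each of the three terms is individually $O(r^{-1})$, so each one multiplied by $r^{1/2}$ already tends to zero. The paper's proof does exactly this --- it treats the three terms separately, invoking only Corollary~\ref{cor:the_asymptotic_beh_Gamma_fcn} (which gives $e^r r^{1-s}\Gamma(s,r)\to 1$) to handle the first term as $r^{1/2}\cdot e^r r^{-s}\Gamma(s,r) = r^{-1/2}\cdot e^r r^{1-s}\Gamma(s,r)\to 0$, notes the second term decays even faster, and the third term $r^{1/2}\cdot r^{-1}=r^{-1/2}$ is trivial.

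Your route through Lemma~\ref{lem:expansion_series_Gamma} is perfectly valid and yields the sharper conclusion $\partial_r(e^r r^{-s}\Gamma(s,r)) = O(r^{-2})$, but it is more work than the statement asks for. The paper's approach is shorter precisely because it avoids tracking the cancellation you identified.
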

\begin{proof}
The formula follows by a direct computation. To prove the decay, we will only need to study the first term $ e^r \, r^{-s+\frac{1}{2}}\, \Gamma(s,r)$ as the second has even better decay and the third is clear. By Corollary \ref{cor:the_asymptotic_beh_Gamma_fcn}
$$
\lim_{r\to \infty} e^r \, r^{-s+\frac{1}{2}}\, \Gamma(s,r) = \lim_{r\to \infty} r^{-1/2}\, e^r \, r^{-s+1}\, \Gamma(s,r) = 0.
$$ 
\end{proof}

\section{Properties of the $H^{-s}(\Rd)$ space}
\label{appendix:prop_H-1}
We collect here several facts related to the space $H^{-s}(\Rd)$ defined in \eqref{eq:definition_H-1}.

\begin{lem}\label{lem:Plancherel_f_H-1}
    Let $k \in L^1(\Rd) + L^2(\Rd)$ be a function such that $|\hat{k}(\xi)| \leq \frac{C}{(1+|\xi|^2)^s}$ for $s\geq 1$. 
    \begin{enumerate}[label=(P\arabic*)]
        \item \label{lem:Plancherel_f_H-1:wd} The map $f \mapsto k \ast f$, defined on $\mathcal{S}(\Rd)$, extends to a unique bounded linear map from $H^{-s}(\Rd)$ to $H^s(\Rd)$ with the estimate $\|k\ast f\|_{H^s(\Rd)} \leq C\, \|f\|_{H^{-s}(\Rd)}$.
        \item \label{lem:Plancherel_f_H-1:plancherel} We have the following variant of the Plancherel theorem: for all $f, g \in H^{-s}(\Rd)$
        $$
        \langle g, k\ast f \rangle_{H^{-s}, H^s} = \langle \hat{g},\,\hat{k}\,  \hat{f}\rangle,
        $$
        where $\inner{\cdot}{\cdot}$ is the complex $L^2(\Rd)$ inner product.
    \end{enumerate}
\end{lem}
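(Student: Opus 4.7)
The plan is to work directly on the Fourier side, exploiting the fundamental characterization $H^{s}(\Rd) = \{u \in \mathcal{S}'(\Rd) : (1+|\xi|^2)^{s/2}\hat{u} \in L^2(\Rd)\}$ (in particular, $\hat{u}$ is a genuine function) recalled in the excerpt after \eqref{eq:definition_H-1}. For $f \in H^{-s}(\Rd)$, the Fourier transform $\hat{f}$ is a measurable function with $(1+|\xi|^2)^{-s/2}\hat{f}\in L^2(\Rd)$, and since $\hat{k}$ is bounded by the assumption $|\hat{k}|\leq C(1+|\xi|^2)^{-s}$, the product $\hat{k}\hat{f}$ is again a measurable function. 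I would therefore \emph{define} $k\ast f$ as the unique tempered distribution satisfying $\widehat{k\ast f}=\hat{k}\hat{f}$, and then verify that this agrees with ordinary convolution on $\mathcal{S}(\Rd)$.

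For \ref{lem:Plancherel_f_H-1:wd}, the bound drops out of the Plancherel identity:
\begin{equation*}
\|k\ast f\|_{H^s}^2 = \int_{\Rd}(1+|\xi|^2)^s |\hat{k}(\xi)|^2|\hat{f}(\xi)|^2 \diff\xi \leq C^2 \int_{\Rd}(1+|\xi|^2)^{-s} |\hat{f}(\xi)|^2 \diff\xi = C^2\|f\|_{H^{-s}}^2.
\end{equation*}
To see that this Fourier-theoretic definition coincides with the usual pointwise convolution when $f \in \mathcal{S}(\Rd)$, I would split $k = k_1+k_2$ with $k_1 \in L^1(\Rd)$ and $k_2 \in L^2(\Rd)$. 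For $k_1$ this is the classical formula $\widehat{k_1\ast f} = \hat{k}_1 \hat{f}$; for $k_2$, the convolution $k_2 \ast f$ is defined pointwise via Cauchy--Schwarz since $f \in L^2(\Rd)$, and the identity $\widehat{k_2\ast f}=\hat{k}_2\hat{f}$ follows by approximating $k_2$ in $L^2(\Rd)$ by Schwartz functions and passing to the limit. Uniqueness of the extension is then automatic because $\mathcal{S}(\Rd)$ is dense in $H^{-s}(\Rd)$ (approximate $\hat{f}$ in the weighted $L^2$-norm and pull back).

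For \ref{lem:Plancherel_f_H-1:plancherel}, I would unravel the definition of the duality pairing given in the excerpt, which is the extension of the complex $L^2(\Rd)$ inner product. By Plancherel, for $g\in H^{-s}(\Rd)$ and $u\in H^s(\Rd)$,
\begin{equation*}
\langle g, u\rangle_{H^{-s},H^s} = \int_{\Rd} \hat{g}(\xi)\,\overline{\hat{u}(\xi)}\diff\xi,
\end{equation*}
and the integrability is ensured by Cauchy--Schwarz together with the weight decomposition $(1+|\xi|^2)^{-s/2}\cdot(1+|\xi|^2)^{s/2}$. Applying this with $u=k\ast f\in H^s(\Rd)$ (from \ref{lem:Plancherel_f_H-1:wd}) and substituting $\widehat{k\ast f}=\hat{k}\hat{f}$ yields $\langle g, k\ast f\rangle_{H^{-s},H^s} = \langle \hat{g}, \hat{k}\hat{f}\rangle$ as claimed.

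I do not anticipate any serious obstacle here: the whole argument is essentially a bookkeeping exercise in Fourier variables once the correct definition of $k\ast f$ on $H^{-s}(\Rd)$ is adopted. The only mildly delicate point is verifying the compatibility of the Fourier-based definition with ordinary convolution for Schwartz functions when $k$ has an $L^2$-component, which is handled by the splitting $k=k_1+k_2$ and a density argument.
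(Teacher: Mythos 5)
Your proof is correct, and although the underlying content is the same Fourier-side bookkeeping, the organization is genuinely different from the paper's. The paper defines $f \mapsto k\ast f$ on $\mathcal{S}(\Rd)$, proves the operator bound there, and \emph{extends} to $H^{-s}(\Rd)$ by density and completeness; for \ref{lem:Plancherel_f_H-1:plancherel} it approximates $f, g \in H^{-s}(\Rd)$ by Schwartz sequences $\{f_n\}$, $\{g_n\}$, applies the classical Plancherel theorem to each pair, and passes to the limit using the $L^2$-convergence of $(1+|\xi|^2)^{-s/2}\hat{f}_n$ and $(1+|\xi|^2)^{-s/2}\hat{g}_n$ together with the boundedness of $(1+|\xi|^2)^s\hat{k}$ and the $H^s$-convergence of $k\ast f_n$ from part~\ref{lem:Plancherel_f_H-1:wd}. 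You instead \emph{define} $k\ast f$ directly on the Fourier side for all $f \in H^{-s}(\Rd)$ via $\widehat{k\ast f} := \hat{k}\,\hat{f}$, which makes the $H^s$ bound hold for every $f$ without any density argument, and then verify compatibility with ordinary convolution on $\mathcal{S}(\Rd)$ via the splitting $k = k_1 + k_2$. What your route buys is a cleaner \ref{lem:Plancherel_f_H-1:plancherel}: once the Fourier-side formula for the duality pairing, $\langle g, u\rangle_{H^{-s},H^s} = \int_{\Rd}\hat{g}\,\overline{\hat{u}}\,\diff\xi$, is in hand, the identity follows by pure substitution. But be aware that this formula \emph{is} essentially the content the paper proves by the sequential approximation (the paper's notation $\langle f,g\rangle_{H^{-s},H^s} := \langle f,g\rangle$ does not make pointwise sense for a distribution $f$, and must itself be understood as the Fourier-side extension), so you have relocated rather than eliminated the limiting step; a fully self-contained version of your argument should record why the pairing and the Fourier integral coincide. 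With that caveat both approaches are correct and of comparable weight.
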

\begin{proof}
\underline{\it Property \ref{lem:Plancherel_f_H-1:wd}.}
Let $f \in \mathcal{S}(\Rd)$. Then, 
\begin{align*}
    \|k\ast f\|_{H^s(\Rd)}^2 &= \int_{\Rd} (1+|\xi|^2)^s\, |\hat{k}(\xi)|^2 \, |\hat{f}(\xi)|^2 \diff \xi\\ 
    &\leq C^2\, \int_{\Rd} \frac{(1+|\xi|^2)^s}{(1+|\xi|^2)^{2s}}\cdot |\hat{f}|^2 \diff \xi = C^2\, \|f\|_{H^{-s}(\Rd)}^2
\end{align*}
for all $f \in \mathcal{S}(\Rd)$. By completeness of $H^{-s}(\Rd)$ and a density argument, there exists a unique extension as desired. 

\underline{\it Property \ref{lem:Plancherel_f_H-1:plancherel}.} Let $\{f_n\}_{n\geq 1}, \{g_n\}_{n\geq 1} \subset \mathcal{S}(\Rd)$ be two sequences such that $g_n \to g$, $f_n \to f$ in $H^{-s}(\Rd)$. Then, by the Plancherel theorem
\begin{multline*}
\langle g_n, k\ast f_n \rangle_{H^{-s}, H^s} = \int_{\Rd} g_n\cdot \, k \ast f_n \diff x = \langle\widehat{g}_n,\,\widehat{f}_n \, \hat{k}\rangle= \\ =
\inner{(1+|\xi|^2)^{-\frac{s}{2}} \, \widehat{g}_n} { (1+|\xi|^2)^{-\frac{s}{2}} \, \widehat{f}_n\, (1+|\xi|^2)^s\, \hat{k}}.
\end{multline*}
By the definition of the $H^{-s}$ norm
$$
(1+|\xi|^2)^{-\frac{s}{2}} \, \widehat{g}_n(\xi)  \to (1+|\xi|^2)^{-\frac{s}{2}} \, \widehat{g}(\xi) \mbox{ strongly in } L^2(\Rd)
$$
and similarly for the sequence $\{(1+|\xi|^2)^{-\frac{s}{2}} \, \widehat{f}_n(\xi)\}_{n\geq 1}$. As $(1+|\xi|^2)^s\, \hat{k}(\xi)$ is a bounded function, we obtain
$$
\lim_{n\to \infty} \langle g_n, k\ast f_n \rangle_{H^{-s}, H^s} = \inner{\hat{g}}{\hat{k}\, \hat{f}}.
$$
On the other hand, by the first part $k \ast f_n \to k \ast f$ strongly in $H^s(\Rd)$ so that 
$$
\lim_{n\to \infty} \langle g_n, k\ast f_n \rangle_{H^{-s}, H^s} =  \langle g, k\ast f \rangle_{H^{-s}, H^s}
$$
and the proof is concluded. 
\end{proof}

\begin{lem}\label{lem:representation_derivative_f_x_fourier} 
Let $f \in L^2(\Rd)$ and let $g(x)= A(x-\mu)\, f(x) \in H^{-1}(\Rd)$ with $\mu \in \Rd$ and $A \in \R^{d\times d}$ an invertible matrix. Then, $\nabla \hat{f}$ exists in the Sobolev sense and $\nabla \hat{f} \in L^2_{\text{loc}}(\Rd)$ with $\int_{\Rd} \frac{|\nabla \hat{f}(\xi)|^2}{1+|\xi|^2} \diff \xi < \infty$.     
\end{lem}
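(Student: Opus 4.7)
The plan is to exploit the classical Fourier identity $\mathcal{F}(x_j f) = \frac{i}{2\pi}\,\partial_{\xi_j}\hat{f}$, interpreted in the sense of tempered distributions, in order to convert the hypothesis $g \in H^{-1}(\Rd)$ into an integrability statement about $\nabla \hat{f}$. Concretely, since $f \in L^2(\Rd) \subset \mathcal{S}'(\Rd)$ and $A(x-\mu)$ has polynomial growth, $g = A(x-\mu)f$ belongs to $\mathcal{S}'(\Rd)$, and a direct computation on $\mathcal{S}(\Rd)$ test functions gives the distributional identity
\begin{equation}\label{eq:fourier_identity_plan}
\hat{g}(\xi) \;=\; \frac{i}{2\pi}\, A\, \nabla \hat{f}(\xi) \;-\; A\mu\, \hat{f}(\xi)\qquad \text{in }\mathcal{S}'(\Rd).
\end{equation}

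Next, I would use that $g \in H^{-1}(\Rd)$ is, by the definition \eqref{eq:definition_H-1}, equivalent to $\hat{g}$ being a measurable function with $(1+|\xi|^2)^{-1/2}\hat{g}\in L^2(\Rd)$. Since $f\in L^2(\Rd)$, by the Plancherel theorem $\hat{f}\in L^2(\Rd)$, so $A\mu\,\hat{f}$ is also an $L^2$ function. Rearranging \eqref{eq:fourier_identity_plan} and using that $A$ is invertible gives
\[
\nabla \hat{f}(\xi) \;=\; \frac{2\pi}{i}\, A^{-1}\bigl(\hat{g}(\xi) + A\mu\, \hat{f}(\xi)\bigr)
\qquad \text{in }\mathcal{S}'(\Rd),
\]
so the distributional gradient $\nabla \hat{f}$ is in fact identified with a measurable function, i.e. $\hat{f}$ is a Sobolev function locally. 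From this identity we immediately get
\[
\int_{\Rd}\frac{|\nabla \hat{f}(\xi)|^2}{1+|\xi|^2}\diff \xi
\;\leq\; C_A\left(\int_{\Rd}\frac{|\hat{g}(\xi)|^2}{1+|\xi|^2}\diff \xi + |A\mu|^2\int_{\Rd}\frac{|\hat{f}(\xi)|^2}{1+|\xi|^2}\diff \xi\right) <\infty,
\]
where the first term on the right is $\|g\|_{H^{-1}}^2$ and the second is bounded by $|A\mu|^2\|\hat{f}\|_{L^2}^2 = |A\mu|^2\|f\|_{L^2}^2$. The local $L^2$ bound $\nabla\hat{f}\in L^2_{\mathrm{loc}}(\Rd)$ is then immediate, because on any ball $B_R$ one has $\int_{B_R}|\nabla\hat{f}|^2\diff \xi \leq (1+R^2)\int_{B_R}\frac{|\nabla\hat{f}|^2}{1+|\xi|^2}\diff \xi$.

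The only mildly delicate point is justifying \eqref{eq:fourier_identity_plan} rigorously at the level of tempered distributions: this is a standard computation by testing against $\varphi \in \mathcal{S}(\Rd)$, using that $x\mapsto A(x-\mu)\varphi(x)$ remains in $\mathcal{S}(\Rd)$ and that the Fourier transform maps $\mathcal{S}(\Rd)$ to itself, so the pairings $\langle g,\hat{\varphi}\rangle$ and $\langle \hat{f},\ldots\rangle$ are all well-defined; no further subtlety arises because $f$ is only assumed in $L^2$, which is embedded in $\mathcal{S}'$. This is really the full content of the argument — there is no significant obstacle beyond carefully tracking the identity \eqref{eq:fourier_identity_plan} and invoking invertibility of $A$.
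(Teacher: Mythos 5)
Your proof is correct and follows essentially the same path as the paper's: both hinge on the distributional identity $\mathcal{F}(x_j f) = \frac{i}{2\pi}\partial_{\xi_j}\hat f$ applied to convert the $H^{-1}$ hypothesis on $g$ into an integrability statement for $\nabla\hat f$. The only presentational difference is that the paper first reduces to the case $\mu = 0$, $A = \mathrm{id}$ (arguing $x_j f \in H^{-1}(\Rd)$ directly, since subtracting the $L^2$ part $A\mu f$ and composing with $A^{-1}$ preserves $H^{-1}$), and then verifies the Fourier identity by testing against $C_c^\infty$ functions, whereas you keep $A$ and $\mu$ throughout and solve the vector identity $\hat{g} = \frac{i}{2\pi}A\nabla\hat f - A\mu\hat f$ for $\nabla\hat f$ directly. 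Both reductions are sound, and the final estimate is the same.
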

\begin{proof}
Let us reduce to the case $\mu = 0$ and $A = \id$. First, since $A \mu f(x) \in L^2(\Rd)$ and $g\in H^{-1}(\Rd)$, we get $Ax f(x) \in H^{-1}(\Rd)$. Second, the composition $A^{-1}$ with $Ax f(x)$ stays in $H^{-1}(\Rd)$ so we deduce $x f(x) \in H^{-1}(\Rd)$. Now, let $h_j = f x_j$. We know from \eqref{eq:definition_H-1} that $\hat{h}_j$ exists as a function and not only as a distribution. We compute for $\varphi \in C_c^{\infty}(\Rd)$
\begin{equation}\label{eq:computation_id_fourier_of_g}
\int_{\Rd} \widehat{h}_j(\xi)\, \varphi(\xi) \diff \xi = \int_{\Rd} h_j(\xi)\, \check{\varphi}(\xi) \diff \xi = \int_{\Rd} \xi_j\, f(\xi)\, \check{\varphi}(\xi) \diff \xi.
\end{equation}
Furthermore,
$$
\xi_j \, \check{\varphi}(\xi) = \frac{1}{2\pi i} \int_{\Rd} \varphi(x) \, \partial_{x_j} e^{2\pi i \xi \cdot x} \diff x = -\frac{1}{2\pi i} \widecheck{\partial_j \varphi}(\xi).
$$
Therefore, continuing the computation in \eqref{eq:computation_id_fourier_of_g},
$$
-2\pi i \int_{\Rd} \widehat{h}_j(\xi)\, \varphi(\xi) \diff \xi= \int_{\Rd} f(\xi)\,\widecheck{\partial_j \varphi}(\xi) \diff \xi = \int_{\Rd} \hat{f}(\xi)\, \partial_j \varphi(\xi) \diff \xi.
$$
Therefore, $2\pi i \, \hat{h}_j$ is a Sobolev derivative $\partial_j \widehat{f}$. The desired integrability of $\partial_j \widehat{f}$ follows from the integrability of $\hat{h}_j$.
\end{proof}

An example of function $k$ satisfying Lemma \ref{lem:Plancherel_f_H-1} is a fundamental solution to the PDE \eqref{eq:screened_poisson_equation} whose regularity properties are listed below. 
\begin{lem}
\label{lem:fundamental_solution_to_screened_poisson_equation}
    There exists function $\spk \geq 0$ solving (in the sense of distributions)
    \begin{align} \label{eq:screened_poisson_equation}
        -\Delta \spk+\spk=\delta_0.
    \end{align}
    Moreover, $\spk$ enjoys the following regularity:
    \begin{itemize}
        \item if $d=1$ then $\spk, \nabla \spk \in L^p(\Rd)$ for all $p \in [1,\infty]$,
        \item if $d=2$ then $\spk \in L^p(\Rd)$ for all $p \in [1,\infty)$ and $\nabla \spk \in L^q(\Rd)$ for all $q \in  [1, 2)$,
        \item if $d > 2$ then $\spk \in L^p(\Rd)$ for $p \in [1, \frac{d}{d-2})$ and $\nabla \spk \in L^q(\Rd)$ for $q \in [1, \frac{d}{d-1})$. 
    \end{itemize}. 
\end{lem}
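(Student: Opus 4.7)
My plan is to construct $\spk$ explicitly via a subordination identity for the inverse Fourier transform of $(1+4\pi^2|\xi|^2)^{-1}$, and then read off the $L^p$ bounds from pointwise asymptotics. Passing the equation to Fourier variables in the convention of Section \ref{section:notation and preliminaries}, I need $\hat{\spk}(\xi) = (1+4\pi^2|\xi|^2)^{-1}$. Writing $\frac{1}{1+4\pi^2|\xi|^2} = \int_0^\infty e^{-t(1+4\pi^2|\xi|^2)}\,dt$, inverting the Gaussian via $\mathcal{F}^{-1}(e^{-4\pi^2 t|\xi|^2})(x) = (4\pi t)^{-d/2} e^{-|x|^2/(4t)}$, and exchanging the two integrals by Tonelli (the integrand is nonnegative), the only candidate is
\begin{equation*}
\spk(x) = \frac{1}{(4\pi)^{d/2}} \int_0^\infty t^{-d/2}\, e^{-t - |x|^2/(4t)}\, dt.
\end{equation*}
This formula makes $\spk \geq 0$ manifest, gives $\spk \in L^1(\Rd)$ with $\|\spk\|_{L^1} = \hat{\spk}(0) = 1$, and solves $-\Delta\spk + \spk = \delta_0$ in $\mathcal{S}'(\Rd)$ by construction.

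Next I extract pointwise bounds on $\spk$ and $\nabla\spk$. Near infinity, Laplace's method applied to the exponent $-t - |x|^2/(4t)$ (whose saddle at $t_* = |x|/2$ attains the value $-|x|$) yields $\spk(x) \leq C|x|^{(1-d)/2} e^{-|x|}$; differentiating under the integral, justified by the uniform bound $|\nabla_x e^{-|x|^2/(4t)}| \leq C t^{-1/2} e^{-|x|^2/(8t)}$ together with the factor $e^{-t}$, gives the same type of bound for $\nabla\spk$. Near the origin, the substitution $s = |x|^2/(4t)$ recasts the integral, up to a constant, as $|x|^{2-d}\int_0^\infty s^{d/2-2}\, e^{-s - |x|^2/(4s)}\,ds$, which as $|x|\to 0$ produces the classical Bessel-kernel asymptotics: $\spk(x) \sim c_d |x|^{2-d}$ when $d\geq 3$, $\spk(x) \sim -\frac{1}{2\pi}\log|x|$ when $d=2$, and $\spk(x) = \frac{1}{2} e^{-|x|}$ (by direct computation) when $d=1$. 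Applying the same substitution to $\nabla\spk$ gives $|\nabla\spk(x)| \leq C|x|^{1-d}$ near the origin when $d\geq 2$, while for $d=1$ the weak derivative $-\frac{1}{2}\operatorname{sgn}(x)\, e^{-|x|}$ is bounded.

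Given these asymptotics, the claimed $L^p$ ranges are a routine accounting: the exponential tail contributes an $L^p$-finite piece for every $p \in [1,\infty]$, so only the singularity at the origin matters. The local bound $|x|^{2-d}$ is in $L^p_{\loc}$ iff $p(d-2)<d$, yielding $p < \frac{d}{d-2}$ for $d\geq 3$; a logarithmic singularity is in every $L^p_{\loc}$ with $p<\infty$ for $d=2$; and $\spk$ is bounded when $d=1$. The corresponding count for $|\nabla\spk|^q \sim |x|^{q(1-d)}$ gives $q < \frac{d}{d-1}$ for $d\geq 2$ and all $q\in[1,\infty]$ for $d=1$. The main technical obstacle is not the asymptotic shape, which is classical for the Bessel kernel, but making the analysis of the differentiated integrand uniform down to the origin; I would handle this with the same $s=|x|^2/(4t)$ substitution combined with dominated convergence on compact dyadic shells in $|x|$, which pushes the singular behaviour into explicit powers of $|x|$ and an integral depending continuously on $|x|$.
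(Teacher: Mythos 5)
Your proposal is correct, and it takes a genuinely different route from the paper's. The paper treats this lemma as a pointer to the literature: it cites the classical asymptotics of the Bessel kernel from Aronszajn--Smith (analyticity away from $0$, the $|x|^{2-d}$ / $\log|x|$ / bounded behaviour near the origin, the $|x|^{(1-d)/2}e^{-|x|}$ decay at infinity, and the recursion $|\nabla\spk_d| = C|x|\,\spk_{d+2}$), and then performs exactly the $L^p$ accounting you describe. You instead derive those asymptotics from scratch via the subordination (heat-kernel) representation $\spk(x) = (4\pi)^{-d/2}\int_0^\infty t^{-d/2}e^{-t - |x|^2/(4t)}\,dt$, getting nonnegativity, $\|\spk\|_{L^1} = 1$, Laplace asymptotics at infinity, the origin asymptotics by the $s = |x|^2/(4t)$ substitution, and the gradient relation by differentiating the integral (which indeed reproduces $|\nabla\spk(x)| = C|x|\,\spk_{d+2}(x)$). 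The trade-off: the paper's approach is shorter because it leans on the reference; yours is self-contained, constructive, and makes manifest the positivity and $L^1$-normalization that the paper otherwise takes for granted. Two small points to tighten if you were to write this out fully. First, the phrase ``exchanging the two integrals by Tonelli (the integrand is nonnegative)'' is imprecise as stated, since the $\xi$-integrand carries the complex factor $e^{2\pi ix\cdot\xi}$; the clean version is to define $\spk$ by the subordination integral directly, check absolute integrability of $(t,x)\mapsto (4\pi t)^{-d/2}e^{-|x|^2/(4t)-t}$ (its double integral is $1$), and then compute $\hat{\spk}$ by Fubini. Second, for the gradient, differentiating under the integral is only directly justified uniformly for $|x|$ bounded away from $0$ (your uniform bound $Ct^{-1/2}e^{-t}$ is not integrable near $t=0$ without using $|x|\geq\delta$); one then identifies the distributional gradient with this pointwise derivative by noting $\spk$ is smooth off the origin and the resulting $|x|^{1-d}$ is in $L^1_{\loc}$. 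You already flag this issue at the end, and your proposed remedy (the $s$-substitution plus dominated convergence on dyadic shells) is a sensible way to handle it.
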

\begin{proof}
This result is well-known in the literature (see, e.g. \cite[Section 2(a)]{Perthame-incompressible-visco}) but for convenience we provide a sketch using \cite{MR143935}. The results could be also formulated in a sharper form by using the setting of Marcinkiewicz spaces as in \cite[Appendix A]{MR390473}. \\

We write $\spk_d$ for the solution to \eqref{eq:screened_poisson_equation} to stress the dependence on the dimension $d$ as it will be useful in the sequel. It is known from \cite[Ch. 2, eq. (4.1)]{MR143935} that $\spk_d$ is an analytic function of $|x|$ except at $x = 0$. From \cite[Ch. 2, eq. (4.2)]{MR143935} there exists a constant $C_0>0$ depending on $d$ such that
\begin{equation}\label{eq:asymptotics_around_0_spkd}
\spk_d(x) \sim \begin{cases} 
C_0 &\mbox{ if } d= 1,\\
-C_0\, \log(|x|)  &\mbox{ if } d= 2,\\
C_0\, |x|^{2-d} &\mbox{ if } d> 2,
\end{cases} 
\mbox{ as } x \to 0
\end{equation}
where the notation $f(x) \sim g(x)$ as $x \to 0$ means $\lim_{x\to0} \frac{f(x)}{g(x)} = 1$. Similarly, by \cite[Ch. 2, eq. (4.5)]{MR143935}, there exists another constant $C_{\infty}>0$ depending on $d$ such that
\begin{align}
    \label{eq:behaviour_spk_infinity}
    \spk_d(x) \sim C_{\infty}\, |x|^{\frac{1-d}{2}}\, e^{-|x|} \mbox{ as } |x| \to \infty.
\end{align}
Hence, to check the integrability of $\spk_d(x)$ it is sufficient to control the $L^p$ norm of $\spk$ on $B_1$. The result is clear for $d=1, 2$ and for $d >2$ we note that
$$
\int_{B_1} |x|^{(2-d)\,p}\diff x <\infty \iff  \int_0^1 r^{(2-d) p} \, r^{d-1} \diff r < \infty \iff  p < \frac{d}{d-2}.
$$
Concerning the estimates on $\nabla \spk_d$, we observe from \cite[Ch. 2, eq. (4.4)]{MR143935} that there exists a constant $C>0$ depending on $d$ such that
$$
|\nabla \spk_d(x)| = C\,|x| \, \spk_{d+2}(x) \sim C\, C_0(d+2) \, |x|^{1-d} \mbox{ as } x\to 0,
$$
where in the second step we used \eqref{eq:asymptotics_around_0_spkd}. 
By \eqref{eq:behaviour_spk_infinity} it is again sufficient to control $L^p$ norm of $\nabla \spk$ on $B_1$. Hence, the assertion is again clear for $d=1$ while for $d\geq 2$ we compute
$$
\int_{B_1} |x|^{(1-d)\,q} \diff x <\infty \iff \int_0^1 r^{(1-d)\,q} \, r^{d-1} \diff r \iff q < \frac{d}{d-1}.
$$
\end{proof}

\begin{lem}\label{lem:lem_compactness_measures_H-m} 
    Let $m> \frac{d}{2}$. Then, $\mathcal{M}(B_R)$ with the total variation norm $\|\cdot\|_{TV}$ is compactly embedded in $H^{-m}(B_R)$. In particular, $L^1(B_R)$ is compactly embedded in $H^{-m}(B_R)$.
\end{lem}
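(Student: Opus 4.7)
The plan is to deduce the compact embedding from compactness of a dual Sobolev embedding together with Schauder's theorem on adjoints of compact operators.

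First, I would recall that since $m > \frac{d}{2}$, the Sobolev embedding theorem yields a continuous inclusion $H^m_0(B_R) \hookrightarrow C(\overline{B_R})$; indeed, one can in fact embed into $C^{0,\alpha}(\overline{B_R})$ for some $\alpha>0$, and then Arzelà--Ascoli shows that the inclusion $H^m_0(B_R) \hookrightarrow C(\overline{B_R})$ is \emph{compact}. Alternatively this is a standard Rellich--Kondrachov statement.

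Next, I would invoke Schauder's theorem: the Banach-space adjoint of a compact operator between Banach spaces is compact. The adjoint of the inclusion $\iota : H^m_0(B_R) \hookrightarrow C(\overline{B_R})$ is the map
\[
    \iota^\ast : C(\overline{B_R})^\ast \longrightarrow H^m_0(B_R)^\ast, \qquad \mu \longmapsto \Big( \varphi \mapsto \int_{\overline{B_R}} \varphi \diff \mu\Big).
\]
By the Riesz representation theorem $C(\overline{B_R})^\ast = \mathcal{M}(\overline{B_R})$ equipped with the total variation norm, while $H^m_0(B_R)^\ast = H^{-m}(B_R)$. Thus $\iota^\ast$ is precisely the natural inclusion $\mathcal{M}(\overline{B_R}) \hookrightarrow H^{-m}(B_R)$, and by Schauder it is compact. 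Viewing $\mathcal{M}(B_R)$ as a (closed) subspace of $\mathcal{M}(\overline{B_R})$ by extension by zero, the restriction of $\iota^\ast$ gives the desired compact embedding $\mathcal{M}(B_R) \hookrightarrow H^{-m}(B_R)$.

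Finally, the statement for $L^1(B_R)$ is immediate from the continuous isometric inclusion $L^1(B_R) \hookrightarrow \mathcal{M}(B_R)$ given by $f \mapsto f\,\diff x$ with $\|f\|_{L^1} = \|f\diff x\|_{TV}$, so composing with the previous compact embedding yields the conclusion. The only step requiring care is verifying the compactness of $H^m_0(B_R) \hookrightarrow C(\overline{B_R})$ when $m>\frac{d}{2}$; I would not expect this to be an obstacle since it is classical, but it is the one nontrivial analytic input.
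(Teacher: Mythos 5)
Your proof is correct. Both your argument and the paper's rest on the same analytic input, namely the compactness of the embedding $H^m_0(B_R)\hookrightarrow C(\overline{B_R})$ for $m>\tfrac d2$, but they diverge in how they exploit it. The paper runs a hands-on argument: from a TV-bounded sequence of measures it first extracts a weak-$\ast$ convergent subsequence via Banach--Alaoglu, and then upgrades weak-$\ast$ to $H^{-m}$-norm convergence using a finite $\varepsilon$-net for the unit ball of $H^m_0(B_R)$ in the sup norm --- in effect re-deriving Schauder's theorem for this particular pair of spaces. You simply invoke Schauder's theorem (the adjoint of a compact operator is compact) together with the Riesz--Markov identification $C(\overline{B_R})^\ast=\mathcal M(\overline{B_R})$, which is shorter and makes the duality that is really at work completely transparent; the paper's version is more self-contained. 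Two small points worth making explicit in your write-up: first, $H^{-m}(B_R)$ in the lemma is used in the sense of the dual space $H^m_0(B_R)^\ast$ with the dual norm (this is how the paper uses it in the proof), which is precisely what makes $\iota^\ast$ land in the right space; second, for the final restriction step you do not actually need $\mathcal M(B_R)$ to be closed in $\mathcal M(\overline{B_R})$ --- a bounded set in the subspace is bounded in the ambient space and is therefore mapped to a relatively compact set, so restricting a compact operator to any subspace stays compact.
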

\begin{proof}
    The proof is a simple adaptation of \cite[Theorem 6, Chapter 1]{MR1034481}. We start with a general observation. Let $\mathcal{B}:= \{ f \in H^m_0(B_R): \|f\|_{H^m_0(B_R)} \leq 1\}$ and note that $\mathcal{B}$ is compactly embedded in $C^0(B_R)$ with the supremum norm. Hence, for each $\varepsilon>0$, there exists $N\in\N$ and functions $f_1, ..., f_N \in C^0(B_R)$ such that for each $f \in \mathcal{B}$ there exists $i\in \{1,\ldots,N\}$ such that $\|f-f_i\|_{\infty}\leq \varepsilon$.

    \smallskip

    Let $\varepsilon>0$ and let $\{\mu_n\}_{n\geq 1}$ be a bounded sequence of measures. We have to prove that $\{\mu_n\}_{n\geq 1}$ has a subsequence converging strongly in $H^{-m}(B_R)$. Clearly, it has a subsequence (not relabelled) converging weakly* to $\mu$ in $\mathcal{M}(B_R)$. For this subsequence and $f \in \mathcal{B}$ we have
    \begin{align*}
    \left|\int_{B_R} f(x) \diff (\mu_n - \mu)(x) \right| &\leq  \left|\int_{B_R} f_i(x) \diff (\mu_n - \mu)(x) \right| + \left|\int_{B_R} (f-f_i(x)) \diff (\mu_n - \mu)(x) \right| \\
    &\leq \sup_{j=1,..., N} \left|\int_{B_R} f_i(x) \diff (\mu_n - \mu)(x) \right| + 2\,C\,\varepsilon,
    \end{align*}
    where $C = \sup_{n\geq 1} \|\mu_n\|_{TV}$ so that taking supremum over $f \in \mathcal{B}$ and taking $\limsup_{n \to \infty}$ we get
    $$
    \limsup_{n \to \infty} \| \mu_n - \mu\|_{H^{-m}(B_R)} \leq 2\,C\,\eps. 
    $$
    As $\varepsilon$ is arbitrary, the proof is concluded. For the case of $L^1(B_R)$, this follows easily because $L^1(B_R)$ embedds continuously into $(\mathcal{M}(B_R), \|\cdot \|_{TV})$.
\end{proof}

\section{Proof of existence of solutions to the approximate problem} \label{appendix:existence_approximations_former_appendix}
This Appendix is devoted to proving the following: 

\begin{lem}   \label{lemma:existence_of_classical_solutions_to_step_2}
    Under the assumptions of Theorem \ref{thm:existence_of_distributional_solutions_to_mfsvgd}, there exists a unique classical solution to the PDE \eqref{eq:regularised_weak_pde}--\eqref{ass:mfsvgd:ini} on $(0,\infty)\times B_R$. In particular, for all $T>0$, there exists $\alpha\in (0,1)$ such that $\rho\in C^{1+\frac{\alpha}{2}, 2+\alpha}_{t,x}([0,T]\times B_R)$.
\end{lem}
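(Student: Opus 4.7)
\textbf{Proof plan for Lemma \ref{lemma:existence_of_classical_solutions_to_step_2}.}

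The plan is a Banach fixed-point argument on small time intervals combined with a priori estimates to globalize. For a given $\rho \in X_T := C([0,T]; L^\infty(B_R))$ (extended by zero outside $B_R$), define the frozen nonlocal coefficient
\begin{equation*}
    b[\rho](t,x) := \omega_\eta \ast \Big[ e^{V_\eta - \frac{\pot}{2}} \, k \ast \big( \nabla((\rho\ast \omega_\eta) e^{V_\eta}) \, e^{-\frac{\pot}{2}} \big) \Big](t,x).
\end{equation*}
The first key observation is that the double mollification by $\omega_\eta$ renders $b[\rho]$ a function in $C^\infty_b(\Rd)$ in $x$ with bounds on every derivative controlled linearly by $\|\rho(t)\|_{L^1}$ (and hence by $\|\rho\|_{X_T}$), uniformly in $t$. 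Indeed, $\rho \ast \omega_\eta$ is smooth and compactly supported in $B_{R+\eta}$, so $g_\rho := \nabla((\rho\ast \omega_\eta) e^{V_\eta}) e^{-\frac{\pot}{2}} \in L^2(\Rd) \subset H^{-1}(\Rd)$; by Lemma \ref{lem:Plancherel_f_H-1}, $k\ast g_\rho \in H^1(\Rd)$; multiplication by $e^{V_\eta - \frac{\pot}{2}}$ gives a function that is locally integrable with at-most-Gaussian envelope; the outer convolution with $\omega_\eta$ then produces a bounded smooth function whose $x$-derivatives of every order are controlled by $\|\rho\|_{X_T}$ through the derivatives falling on the outer $\omega_\eta$.

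Next, define the solution map $\Phi(\rho) := \tilde{\rho}$ where $\tilde{\rho}$ solves the linear uniformly parabolic Dirichlet problem
\begin{equation*}
    \prt_t \tilde\rho - \tfrac{1}{R} \Delta \tilde\rho - \divv\!\big(\tilde\rho \, b[\rho]\big) - \tfrac{1}{R}\divv\!\big(\tilde\rho \, \nabla V_\eta\big) = 0 \quad \text{on } (0,T)\times B_R,
\end{equation*}
with $\tilde\rho|_{\prt B_R}=0$ and $\tilde\rho(0,\cdot) = \rho_0^{R,\eta}$. Since $V \in H^m_{loc}$ with $m > d/2$ makes $V_\eta$ smooth with smooth gradient, the coefficients are smooth and bounded on $[0,T]\times \overline{B_R}$; the classical Ladyzhenskaya–Solonnikov–Ural'tseva theory yields a unique solution $\tilde\rho \in C^{1+\alpha/2, 2+\alpha}_{t,x}([0,T]\times \overline{B_R})$ for some $\alpha\in(0,1)$, together with the maximum-principle bound $\|\tilde\rho\|_{L^\infty} \leq C\|\rho_0^{R,\eta}\|_{L^\infty}$ and nonnegativity $\tilde\rho\geq 0$. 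To obtain a fixed point, I will check that the map $\rho \mapsto b[\rho]$ is Lipschitz: by linearity of convolutions and multiplications, $b[\rho_1] - b[\rho_2] = b[\rho_1 - \rho_2]$, and the bound from Step 1 yields $\|b[\rho_1]-b[\rho_2]\|_{L^\infty_t C^1_x} \leq C \|\rho_1-\rho_2\|_{X_T}$. Writing the PDE for $w = \Phi(\rho_1) - \Phi(\rho_2)$ with initial data $0$ and source $\divv(\Phi(\rho_2)(b[\rho_1]-b[\rho_2]))$, a standard $L^\infty$ estimate for linear parabolic equations gives $\|w\|_{X_T} \leq C T^{1/2}\|\rho_1-\rho_2\|_{X_T}$, so $\Phi$ is a contraction for $T = T_0$ small enough depending on $R$, $\eta$, $\|V_\eta\|_{C^1}$, and $\|\rho_0^{R,\eta}\|_{L^\infty}$.

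The Banach fixed-point theorem then produces a unique classical solution on $[0,T_0]\times \overline{B_R}$. To extend to $(0,\infty)$, observe that nonnegativity combined with integration of the equation over $B_R$, using the Dirichlet condition, gives $\|\rho(t)\|_{L^1} \leq \|\rho_0^{R,\eta}\|_{L^1}$ (the boundary fluxes from the drift terms vanish since $\rho|_{\prt B_R}=0$, and the $\frac{1}{R}\nabla\rho\cdot n$ flux is non-positive by Hopf's lemma). This $L^1$ bound feeds back into the estimate on $\|b[\rho]\|_{C^k}$ uniformly in time, and a Moser iteration for the linear problem upgrades $L^1$ to $L^\infty$ on each time slab, yielding a uniform $L^\infty$ bound independent of the slab. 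The local existence step can thus be iterated with a uniform time step $T_0$, producing a global classical solution, whose $C^{1+\alpha/2,2+\alpha}_{t,x}$ regularity on each $[0,T]\times\overline{B_R}$ follows from parabolic Schauder estimates applied once all coefficients are known to be smooth.

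The main obstacle is verifying the Lipschitz dependence $\rho \mapsto b[\rho]$ in a norm compatible with the linear solvability estimate, in particular controlling the composition of the convolution with the non-integrable-at-infinity kernel $k$ with the multiplication by the exponentially growing weight $e^{V_\eta - \pot/2}$; this is precisely where the outer mollifier $\omega_\eta$ is essential, as it localizes the product into an $L^1_{loc}$ object and recovers full smoothness by transferring derivatives onto $\omega_\eta$.
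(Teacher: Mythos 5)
You take a genuinely different route from the paper. The paper regularizes further in time with a mollifier $\vp_\tau$, proves existence to the doubly regularized problem \eqref{eq:regularised_classical_pde_eta=rho} via the Leray--Schauder fixed-point theorem (which needs only continuity and compactness of the solution map, not a contraction), passes $\tau \to 0$ using compactness furnished by the entropy estimate of Lemma~\ref{lemma:H1 estimate of MF SVGD step 1}, upgrades regularity via the bootstrap Lemma~\ref{lem:boostrap_aggr_diff_app}, and proves uniqueness separately by an $L^1$-contraction argument. You instead propose a Banach contraction in $X_T=C([0,T];L^\infty(B_R))$ for small $T$, plus an a priori $L^\infty$ bound (via mass conservation and the maximum principle) to iterate globally; this would give existence and uniqueness simultaneously and avoids the $\tau\to 0$ passage, at the cost of sharper Lipschitz estimates and more care about the iteration space.

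One step in your argument is imprecise as written. You claim that the frozen-coefficient linear problem with drift $b[\rho]$ admits, by Ladyzhenskaya--Solonnikov--Ural'tseva theory, a unique solution in $C^{1+\frac{\alpha}{2},2+\alpha}_{t,x}$ for an arbitrary $\rho\in X_T$. For such $\rho$ the coefficient $b[\rho]$ is only continuous in time (not Hölder), so the parabolic Schauder theory does not directly apply; what you get is a unique strong solution from maximal $L^p$ regularity, which lands in $X_T$ but need not be $C^{1+\frac{\alpha}{2},2+\alpha}_{t,x}$. This is not fatal: the iteration only needs to close in $X_T$, and the bootstrap you sketch at the end (maximal $L^p$ regularity gives $\rho\in C^{\frac{\alpha}{2},1+\alpha}_{t,x}$, which makes $b[\rho]$ and $\divv b[\rho]$ Hölder in time, after which Schauder theory upgrades $\rho$ to $C^{1+\frac{\alpha}{2},2+\alpha}_{t,x}$) applies only to the fixed point. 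You should also be more explicit in the globalization: fix the target final time $T$, obtain an a priori $L^\infty$ bound on all of $[0,T]$ from the nonincreasing $L^1$ norm together with a maximum-principle growth rate controlled by $\|\divv b[\rho]\|_{L^\infty}\lesssim\|\rho\|_{L^1}$, and then choose a uniform short time step $T_0$ in terms of this bound, rather than one that shrinks slab by slab.
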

Let us explain the strategy. We assume that the parameters $\eta\in (0,1)$ and $R>0$ are fixed and that any PDE in this section will satisfy the boundary conditions \eqref{eq:Dirichlet_approximation_sys} and \eqref{ass:mfsvgd:ini}. We start with the PDE on $(0,T)\times B_R$
\begin{equation}\label{eq:regularised_classical_pde}
		\prt_t \rho=\divv\Big(\rho\; \vp_\tau \ast\omega_\eta\ast\Big[e^{V_\eta-\frac{\pot}{2}}\; k\ast \Big(\nabla \left( \beta \ast \omega_{\eta}\, e^{V_\eta} \right) e^{-\frac{\pot}{2}}\Big)\Big) \Big] \Big) +\frac{1}{R}\,\divv\left(\nabla \rho+\rho\nabla V_\eta\right).
\end{equation}
Here, $\beta\in L^1((0,T)\times\Rd)$ and $\{\varphi_{\tau}\}_{\tau>0}$ is the usual temporal mollification kernel $\vp_\tau(t):=\frac{1}{\tau}\tilde \vp\left(\frac{t}{\tau}\right)$ where $\tilde \vp$ is a~function such that 
$$
\tilde\vp>0 \text{ on }(-1,0),
\qquad 
\tilde\vp=0\text{ on }\R\setminus(-1,0),
\qquad 
\int_{\R}\tilde\vp\diff t=1,
\qquad 
\tilde\vp\in C_c^\infty(\R).
$$
To make the convolutions in time well-defined, we always extend $\beta(t,x)=\rho(t,x) = \rho_0^{R,\eta}(x)$ for $t < 0$. The existence and uniqueness of solutions $\rho\in C^\infty([0, T] \times \overline{B_R})$ to \eqref{eq:regularised_classical_pde} follows by a standard parabolic PDE theory (see \cite[Chapter~7]{MR2597943}). Moreover, $\rho \geq 0$ by the weak maximum principle \cite[Chapter 7, Problem 7]{MR2597943}, the boundary condition \eqref{eq:Dirichlet_approximation_sys} and the fact that the following function is bounded
    \begin{align*}
        \divv\left(\vp_\tau\ast\omega_\eta\ast\Big[e^{V_\eta-\frac{\pot}{2}}\; k\ast \Big(\nabla(\beta \ast \omega_\eta)+(\beta\ast\omega_\eta)\nabla V_\eta\Big)e^{V_\eta-\frac{\pot}{2}}\Big]\right)
        +\frac{1}{R}\Delta V_\eta.
    \end{align*}

\smallskip 

Exploiting the Leray-Schauder fixed-point theorem, we prove existence of classical solutions to
\begin{equation}
\label{eq:regularised_classical_pde_eta=rho}
		\prt_t \rho=\divv\Big(\rho\; \vp_\tau \ast\omega_\eta\ast\Big[e^{V_\eta-\frac{\pot}{2}}\; k\ast \Big(\nabla \left( \rho \ast \omega_{\eta}\, e^{V_\eta} \right) e^{-\frac{\pot}{2}}\Big)\Big) \Big] \Big) +\frac{1}{R}\,\divv\left(\nabla \rho+\rho\nabla V_\eta\right).
\end{equation}
Next, we obtain a solution to \eqref{eq:regularised_weak_pde} by passing to the limit $\tau \to 0$ in~\eqref{eq:regularised_classical_pde_eta=rho}. This solution is then found to be classical through the usual bootstrapping arguments.

\smallskip

To simplify notation, we define the maps $E_{\tau,\eta}:L^\infty(0,T;L^1(\Rd))\ra C^\infty([0,T]\times \overline{B_R};\,\Rd)$ and $F_{\eta}:L^\infty(0,T; L^1(\Rd))\ra L^1(0,T; C^\infty(\overline{B_R};\,\Rd))$ given by
\begin{align}
    F_\eta(\beta)
    &:= \omega_\eta\ast\Big[e^{V_\eta-\frac{\pot}{2}}\; k\ast \Big(\nabla(\beta \ast \omega_\eta)+(\beta\ast\omega_\eta)\nabla V_\eta\Big)e^{V_\eta-\frac{\pot}{2}}\Big], \label{eq:formula_for_Feta}\\
    E_{\tau,\eta}(\beta)
    &:=\vp_\tau \ast F_\eta(\beta), \notag
\end{align}
The vector fields $E_{\tau,\eta}$ and $F_\eta$ depend continuously on its arguments by the following lemma. We quickly remark that the constants in the rest of this appendix will depend on $R$ and $\eta$, but we will not track them explicitly.

\begin{lem}
    \label{lem:moll_operator_are_cts}
    The operators $E_{\tau,\eta}$ and $F_\eta$ are well-defined. Moreover, there exists a constant $C>0$ independent of $\tau$ such that for all $\beta \in L^1((0,T)\times\Rd)$
    \begin{align*}
        \norm{E_{\tau,\eta}(\beta)}_{L_{t,x}^\infty}
        +\norm{F_\eta(\beta)}_{L_{t,x}^\infty}
        \leq C\norm{\beta}_{L^{\infty}_t L^1_{x}},\qquad
        \norm{E_{\tau,\eta}(\beta)}_{L_{t,x}^\infty}
        \leq \frac{C}{\tau}\norm{\beta}_{L_{t,x}^1}.
    \end{align*}
\end{lem}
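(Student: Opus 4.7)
The plan is to first derive the pointwise inequality $|F_\eta(\beta)(t,x)| \leq C\,\norm{\beta(t,\cdot)}_{L^1_x}$ uniformly in $(t,x)$, and then to deduce both bounds on $E_{\tau,\eta}$ by elementary properties of the temporal mollifier $\vp_\tau$. The bound on $F_\eta$ carries no $\tau$-dependence, and this is where all the real work happens; the two $E_{\tau,\eta}$ estimates then follow from two complementary ways of applying Young's inequality in time.

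For the pointwise bound on $F_\eta$, I would first observe that for each fixed $t$, since $\omega_\eta, \nabla\omega_\eta \in C_c^\infty(B_\eta)$, standard pointwise estimates on convolutions give $\norm{(\beta \ast \omega_\eta)(t,\cdot)}_{L^\infty_x} + \norm{\nabla(\beta \ast \omega_\eta)(t,\cdot)}_{L^\infty_x} \leq C\,\norm{\beta(t,\cdot)}_{L^1_x}$. Since $\nabla V$ is continuous by assumption, $\nabla V_\eta = \nabla V \ast \omega_\eta$ is continuous and locally bounded. Because $\rho$ satisfies Dirichlet conditions on $B_R$ and we extend by zero, the relevant inputs $\beta$ are effectively supported in $B_R$ (as explicitly allowed by the remark preceding the lemma: constants may depend on $R$ and $\eta$), so the function
\begin{equation*}
H(t,w) := \nabla(\beta\ast\omega_\eta)(t,w) + (\beta\ast\omega_\eta)(t,w)\,\nabla V_\eta(w)
\end{equation*}
is effectively supported in $B_{R+\eta}$ and satisfies $\norm{H(t,\cdot)}_{L^\infty_x} + \norm{H(t,\cdot)}_{L^2_x} \leq C\,\norm{\beta(t,\cdot)}_{L^1_x}$. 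Writing $k = k_1 + k_2$ with $k_1 \in L^1(\Rd)$ and $k_2 \in L^2(\Rd)$, Young's convolutional inequality then yields
\begin{equation*}
\norm{k \ast H(t,\cdot)}_{L^\infty_x} \leq \norm{k_1}_{L^1}\,\norm{H(t,\cdot)}_{L^\infty_x} + \norm{k_2}_{L^2}\,\norm{H(t,\cdot)}_{L^2_x} \leq C\,\norm{\beta(t,\cdot)}_{L^1_x}.
\end{equation*}
Finally, the outer spatial convolution with $\omega_\eta \in L^1(\Rd)$ and multiplication by the factors $e^{V_\eta - \pot/2}$ (locally bounded by continuity of $V_\eta$) yield $|F_\eta(\beta)(t,x)| \leq C\,\norm{\beta(t,\cdot)}_{L^1_x}$ uniformly in $(t,x)$, giving the claimed bound on $F_\eta$ after taking the supremum.

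For the first $E_{\tau,\eta}$ estimate, since $E_{\tau,\eta}(\beta) = \vp_\tau \ast_t F_\eta(\beta)$ and $\norm{\vp_\tau}_{L^1_t}=1$, Young's inequality in time gives $\norm{E_{\tau,\eta}(\beta)}_{L^\infty_{t,x}} \leq \norm{\vp_\tau}_{L^1_t}\,\norm{F_\eta(\beta)}_{L^\infty_{t,x}} = \norm{F_\eta(\beta)}_{L^\infty_{t,x}}$, which is bounded by $C\,\norm{\beta}_{L^\infty_t L^1_x}$ independently of $\tau$. For the second estimate I would instead bound pointwise
\begin{equation*}
|E_{\tau,\eta}(\beta)(t,x)| \leq \norm{\vp_\tau}_{L^\infty_t} \int_0^T |F_\eta(\beta)(s,x)| \diff s \leq \frac{C}{\tau}\int_0^T \norm{\beta(s,\cdot)}_{L^1_x} \diff s = \frac{C}{\tau}\,\norm{\beta}_{L^1_{t,x}},
\end{equation*}
using $\norm{\vp_\tau}_{L^\infty_t} \leq C/\tau$ and the pointwise $F_\eta$ estimate from the previous paragraph applied at each $s$. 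Taking the supremum over $(t,x)$ concludes the proof.

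The main obstacle is controlling $k \ast H$ pointwise, because $H$ involves the product $(\beta \ast \omega_\eta)\nabla V_\eta$ and $\nabla V_\eta$ may grow at infinity without any control from the assumptions. This is circumvented by exploiting simultaneously the $L^1 + L^2$ decomposition of $k$ (which allows two different Young inequalities) and the effective support of $H$ in $B_{R+\eta}$ inherited from the Leray-Schauder setup, in accordance with the convention that the constants are allowed to depend on $R$ and $\eta$.
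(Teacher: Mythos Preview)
Your proposal is correct and takes essentially the same approach as the paper, which gives a one-line proof: ``Follows immediately by repeatedly applying the Young convolution inequality.'' You have simply unpacked this, correctly using the $L^1+L^2$ decomposition of $k$ and the effective support in $B_{R+\eta}$ to control the growing factors $\nabla V_\eta$ and $e^{V_\eta-\pot/2}$ (a subtlety the paper leaves implicit in its remark that constants may depend on $R$ and $\eta$).
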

\begin{proof}
    Follows immediately by repeatedly applying the Young convolution inequality.
\end{proof}

To apply the fixed-point argument, we define the map $S:L^1((0,T)\times \Rd )\ra L^1((0,T)\times \Rd )$, such that $\rho=S(\beta)$ is the solution of \eqref{eq:regularised_classical_pde}. First, let us state some preliminary estimates.

\begin{lem}[{Uniform estimates}]
    \label{lemma:H1 estimate of MF SVGD step 1}
    For any value $m>\frac{d}{2}$, there exists a constant $C>0$ independent of $\tau$ such that for any $\beta\in L^1((0,T)\times \Rd)$, the function $\rho=S(\beta)$ satisfies
    \begin{gather}
        \label{lemma:H1 estimate of MF SVGD step 1:eq1}
        \int_0^T \int_{B_R}\frac{|\nabla \rho+\rho\nabla V_\eta|^2}{\rho}\diff x\diff t
        \leq C\left(R\cdot\KL(\rho_0^{R,\eta}||\rho_\infty^{\eta})+T\,R^2\,\norm{\beta}_{\ast}^2\right),\\
        \label{lemma:H1 estimate of MF SVGD step 1:eq3}
        \norm{\sqrt{\rho}}^2_{L_t^2 H_x^1}
        \leq C\left(R\cdot\KL(\rho_0^{R,\eta}||\rho_\infty^{\eta})+T \norm{V}_{L^\infty(B_R)}^2+T\,R^2\,\norm{\beta}_{\ast}\right),\\
        \label{lemma:H1 estimate of MF SVGD step 1:eq2}
        \norm{\prt_t \rho}_{L_t^2 H_x^{-m}}
        \leq C\Big(\KL(\rho_0^{R,\eta}||\rho_\infty^{\eta})+R^{d-1}+T \norm{\beta}_{\ast} \Big).
    \end{gather}
    where we define $\norm{\beta}_{\ast}:= \min\Big(\norm{\beta}_{L_t^\infty L_x^1},  \frac{1}{\tau}\norm{\beta}_{L_{t,x}^1} \Big)$.
\end{lem}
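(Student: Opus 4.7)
The plan is to extract the three bounds by testing \eqref{eq:regularised_classical_pde} against carefully chosen functions and exploiting the drift-diffusion structure of the equation. I would begin with \eqref{lemma:H1 estimate of MF SVGD step 1:eq1}, which is the fundamental estimate from which the others will follow. Testing the equation with $\log(\rho/\rho_\infty^\eta) = \log\rho + V_\eta$ (via a standard regularization $\rho \rightsquigarrow \rho+\varepsilon$ to handle the singularity where $\rho$ vanishes) and using the identity $\nabla\log(\rho/\rho_\infty^\eta) = (\nabla\rho+\rho\nabla V_\eta)/\rho$, I obtain the relative entropy identity
\begin{align*}
\frac{d}{dt} H(\rho_t\,|\,\rho_\infty^\eta) + \frac{1}{R}\int_{B_R} \frac{|\nabla\rho+\rho\nabla V_\eta|^2}{\rho}\diff x = -\int_{B_R} E_{\tau,\eta}(\beta)\cdot (\nabla\rho+\rho\nabla V_\eta)\diff x,
\end{align*}
where $H(\rho|\rho_\infty^\eta) := \int[\rho\log(\rho/\rho_\infty^\eta) - \rho + \rho_\infty^\eta]\diff x \geq 0$. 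A weighted Young inequality absorbs half of the dissipation into the LHS, leaving the source term $\tfrac{R}{2}\int\rho\,|E_{\tau,\eta}(\beta)|^2\diff x$, which is controlled by $CR\,\|\beta\|_{*}^{2}\|\rho\|_{L^{1}_{x}}$ via Lemma~\ref{lem:moll_operator_are_cts}. Because $\rho\geq 0$ in $B_R$ and $\nabla\rho\cdot\nu\leq 0$ on $\partial B_R$, the total mass is non-increasing, so $\|\rho_t\|_{L^1_x}\leq 1$ uniformly in $t$. Comparing $H$ with $\mathrm{KL}$ using Lemma~\ref{lem:control_neg_log} for the negative part of $\rho\log(\rho/\rho_\infty^\eta)$ and the uniform integrability of $\rho_\infty^\eta$, integration in time on $[0,T]$ yields \eqref{lemma:H1 estimate of MF SVGD step 1:eq1}.

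For \eqref{lemma:H1 estimate of MF SVGD step 1:eq3}, I decompose $\|\sqrt\rho\|_{H^{1}_{x}}^{2} = \|\rho\|_{L^{1}_{x}} + \tfrac14\int|\nabla\rho|^2/\rho$ and exploit the pointwise inequality
\begin{align*}
\frac{|\nabla\rho|^2}{\rho}\;\leq\;2\,\frac{|\nabla\rho+\rho\nabla V_\eta|^2}{\rho}+2\rho\,|\nabla V_\eta|^2.
\end{align*}
After integration in $t$, the first term is absorbed by \eqref{lemma:H1 estimate of MF SVGD step 1:eq1} and the second is bounded by $\|\nabla V_\eta\|_{L^\infty(B_R)}^2\,T\,\|\rho\|_{L^\infty_t L^1_x}$, with $\|\nabla V_\eta\|_{L^\infty(B_R)}$ controlled in terms of the local norms of $V$ permitted by the assumption $V\in H^m_{\mathrm{loc}}$ together with the Sobolev embedding $H^m_{\mathrm{loc}}\hookrightarrow W^{1,\infty}_{\mathrm{loc}}$ for $m>d/2+1$; the factor $\|\rho\|_{L^1}\leq 1$ then gives the claimed bound.

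For \eqref{lemma:H1 estimate of MF SVGD step 1:eq2}, fix $\psi\in H^m_0(B_R)$ with $\|\psi\|_{H^m}\leq 1$; the Sobolev embedding gives $\|\nabla\psi\|_{L^\infty}\leq C$. Testing \eqref{eq:regularised_classical_pde} against $\psi$ and integrating by parts,
\begin{align*}
\bigg|\int_{B_R}\partial_t\rho\,\psi\diff x\bigg|\;\leq\;\|\rho\|_{L^1_x}\,\|E_{\tau,\eta}(\beta)\|_{L^\infty_x}\,\|\nabla\psi\|_{L^\infty_x}\;+\;\frac{1}{R}\,\|\sqrt\rho\|_{L^2_x}\,\bigg\|\frac{\nabla\rho+\rho\nabla V_\eta}{\sqrt\rho}\bigg\|_{L^2_x}\|\nabla\psi\|_{L^\infty_x}.
\end{align*}
Taking the supremum over $\|\psi\|_{H^m}\leq 1$, squaring and integrating in time, Lemma~\ref{lem:moll_operator_are_cts} and \eqref{lemma:H1 estimate of MF SVGD step 1:eq1} bound $\|\partial_t\rho\|_{L^2_t H^{-m}_x}$ in the form stated, after collecting the $R$-dependent prefactors produced by the dissipation term.

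The main obstacle is the rigorous justification of the entropy identity, since $\log(\rho/\rho_\infty^\eta)$ is not an admissible test function a priori because it is singular where $\rho$ vanishes, in particular near $\partial B_R$; this requires a truncation/mollification argument that preserves the identity in the limit. A secondary bookkeeping difficulty is propagating the norm $\|\beta\|_{*}$ (which is the minimum of two norms) through Lemma~\ref{lem:moll_operator_are_cts} so that the $R$-, $\eta$-, and $\tau$-dependencies in the final estimates match those stated in the lemma.
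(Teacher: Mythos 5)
Your proposal is correct and follows essentially the same route as the paper: test the regularized PDE against $\ln(\rho e^{V_\eta})$ to produce the entropy identity (the paper handles the singularity of the logarithm through Lemma~\ref{lem:chng_var} with a $\rho+\kappa$ regularization, exactly the $\rho\rightsquigarrow\rho+\varepsilon$ device you anticipate), absorb the drift term by a weighted Young inequality using Lemma~\ref{lem:moll_operator_are_cts}, and derive the remaining two bounds from the dissipation estimate together with the decomposition $|\nabla\rho|^2/\rho\leq 2|\nabla\rho+\rho\nabla V_\eta|^2/\rho+2\rho|\nabla V_\eta|^2$ and the $H^{-m}$ duality pairing (where the paper does the duality directly in space-time rather than pointwise-then-squaring, but the estimate is the same). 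Your caveat about $m>d/2+1$ is unnecessary since the theorem's standing hypothesis that $\nabla V$ is continuous already gives local boundedness of $\nabla V_\eta$.
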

\begin{proof}
    First we prove the estimate \eqref{lemma:H1 estimate of MF SVGD step 1:eq1}. We multiply the PDE in \eqref{eq:regularised_classical_pde} with $\ln\big(\rho e^{V_\eta}\big)$, integrate in space on $B_R$ and apply integration by parts with Lemma \ref{lem:chng_var} (passing $\kappa\ra 0$) to derive
    \begin{align*}
        \prt_t \int_{B_R} \rho \ln(\rho e^{V_{\eta}})\diff x
        +\int_{B_R} E_{\tau,\eta}(\beta)\cdot (\nabla \rho+\rho \nabla V_\eta)\diff x
        +\frac{1}{R}\int_{B_R}\frac{|\nabla \rho+\rho\nabla V_\eta|^2}{\rho}\diff x=0.
    \end{align*}
    Using Cauchy-Schwarz and Lemma \ref{lem:moll_operator_are_cts} we can bound the middle term by
    \begin{align*}
        \left\vert\int_{B_R} E_{\tau,\eta}(\beta)\cdot (\nabla \rho+\rho \nabla V_\eta)\diff x\right\vert &\leq \frac{1}{2R} \int_{B_R} \frac{|\nabla \rho+\rho \nabla V_\eta|^2}{\rho}\diff x + \frac{R}{2} \, \int_{B_R} \rho\, |E_{\tau,\eta}(\beta)|^2\diff x\\
        &\leq \frac{1}{2R} \, \int_{B_R} \frac{|\nabla \rho+\rho \nabla V_\eta|^2}{\rho}\diff x +\frac{R\,C^2}{2} \norm{\beta}_{\ast}^2,
    \end{align*}
    where $C>0$ is the constant from Lemma \ref{lem:moll_operator_are_cts}. After integrating in time, we get the bound \eqref{lemma:H1 estimate of MF SVGD step 1:eq1}. The estimate \eqref{lemma:H1 estimate of MF SVGD step 1:eq3} is an immediate consequence of \eqref{lemma:H1 estimate of MF SVGD step 1:eq1}. Finally, we prove \eqref{lemma:H1 estimate of MF SVGD step 1:eq2}. Let $g\in L^2(0,T; W^{1,\infty}(B_R))$, then using Lemma \ref{lem:moll_operator_are_cts}, Cauchy-Schwarz and the Young inequality we derive 
	\begin{align*}
		\bigg\vert\int_0^T \int_{B_R} \prt_t \rho&\, g\diff x\diff t\bigg\vert\\
            &\leq \frac{1}{2R}\int_0^T \int_{B_R}  \frac{|\nabla \rho+\rho\nabla V_\eta|}{\sqrt{\rho}}\cdot \sqrt{\rho}|\nabla g|\diff x\diff t
		+\frac{1}{2}\int_0^T \int_{B_R} \rho |E_{\tau,\eta}(\beta)|\cdot|\nabla g|\diff x\diff t\\
            &\leq \frac{\sqrt{|B_R|}}{2}\left[
            \frac{1}{R} \left(\int_0^T \int_{B_R} \frac{|\nabla \rho+\rho\nabla V_\eta|^2}{\rho}\diff x\diff t\right)^{\frac{1}{2}}+C\sqrt{T}\norm{\beta}_{\ast}
            \right]\norm{g}_{L_t^2 W_x^{1,\infty}},
	\end{align*}
    where $C>0$ is the constant from Lemma \ref{lem:moll_operator_are_cts}. We use \eqref{lemma:H1 estimate of MF SVGD step 1:eq1} to get a uniform bound. The Sobolev embedding theorem gives us a continuous embedding $H^m(B_R)\hookrightarrow W^{1,\infty}(B_R)$. Hence, there is a constant $C$, such that $\norm{g}_{L_t^2 W_x^{1,\infty}} \leq C\, \norm{g}_{L_t^2 H_x^m}$. Taking the supremum over $g$ such that $\norm{g}_{L_t^2 H_x^m}\leq 1$ will yield the desired estimate \eqref{lemma:H1 estimate of MF SVGD step 1:eq2}.
\end{proof}

The next step is to use the Leray-Schauder fixed-point theorem to construct a solution to \eqref{eq:regularised_classical_pde_eta=rho}.

\begin{lem}[existence to \eqref{eq:regularised_classical_pde_eta=rho}]
    \label{lemma:existence_of_classical_solution_to_reg_pde}
    Suppose that $\rho_0$, $\rho_\infty$ and $V$ satisfy assumptions of Theorem~\ref{thm:existence_of_distributional_solutions_to_mfsvgd}. Then, there exists a classical solution $\rho\in C^\infty([0,T]\times \overline{B_R})$ to the PDE \eqref{eq:regularised_classical_pde_eta=rho} with conditions \eqref{eq:Dirichlet_approximation_sys}, \eqref{ass:mfsvgd:ini}.
\end{lem}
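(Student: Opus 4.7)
The plan is to apply the Leray–Schauder fixed-point theorem to the operator $S$ on $X := L^2((0,T) \times B_R)$. This requires verifying that: (i) $S: X \to X$ is well-defined and continuous; (ii) $S$ is compact; (iii) the family $\{\rho \in X : \rho = \sigma S(\rho),\ \sigma \in [0,1]\}$ is bounded in $X$ uniformly in $\sigma$.

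For (i), given $\beta \in X \subset L^1((0,T)\times B_R)$, the spatial mollifier $\omega_\eta$ and temporal mollifier $\varphi_\tau$ force $E_{\tau,\eta}(\beta) \in C^\infty([0,T]\times \overline{B_R})$, so \eqref{eq:regularised_classical_pde} is a linear parabolic equation with smooth coefficients and, by \cite[Chapter~7]{MR2597943}, admits a unique classical solution $\rho = S(\beta) \in C^\infty([0,T]\times\overline{B_R})$. Continuity $S(\beta_n) \to S(\beta)$ in $X$ whenever $\beta_n \to \beta$ in $X$ follows from Young's convolution inequality (which gives $E_{\tau,\eta}(\beta_n) \to E_{\tau,\eta}(\beta)$ in every $C^k$ norm) together with continuous dependence of solutions of linear parabolic equations on their coefficients. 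For (ii), a standard $L^2$ energy estimate on \eqref{eq:regularised_classical_pde}, obtained by multiplying by $\rho$, integrating by parts, and applying Gronwall's inequality together with the bound $\|E_{\tau,\eta}(\beta)\|_{W^{1,\infty}} \leq C(R,\eta,\tau)\|\beta\|_{\ast}$ from Lemma~\ref{lem:moll_operator_are_cts}, gives $\|\rho\|_{L^\infty_t L^2_x} + \|\nabla \rho\|_{L^2_{t,x}} \leq C(\|\beta\|_X, R, \eta, \tau)$; combined with the bound $\|\partial_t \rho\|_{L^2_t H^{-m}_x} \leq C$ from Lemma~\ref{lemma:H1 estimate of MF SVGD step 1}, the Aubin–Lions lemma yields precompactness of $S$ applied to bounded sets of $X$.

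The main obstacle is (iii), the uniform a priori estimate on fixed points of $\sigma S$. If $\rho = \sigma S(\rho)$ with $\sigma \in (0,1]$, set $w := S(\rho)$, so that $w$ solves \eqref{eq:regularised_classical_pde} (with $\beta := \rho$) with initial datum $\rho_0^{R,\eta}$, and $\rho = \sigma w$. The weak maximum principle gives $w \geq 0$, and the Dirichlet condition on $\partial B_R$ combined with the divergence form of the PDE implies that the total mass of $w$ is nonincreasing in time, so $\|w_t\|_{L^1(B_R)} \leq \|\rho_0^{R,\eta}\|_{L^1} \leq 1$ for all $t \in [0,T]$, whence $\|\rho\|_{L^\infty_t L^1_x} \leq \sigma \leq 1$. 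By Lemma~\ref{lem:moll_operator_are_cts}, this in turn yields $\|E_{\tau,\eta}(\rho)\|_{L^\infty_{t,x}} \leq C$ uniformly in $\sigma$, and repeating the $L^2$ energy estimate on the PDE satisfied by $w$ now gives $\|w\|_{L^\infty_t L^2_x} \leq C$ with $C$ independent of $\sigma$; hence $\|\rho\|_X = \sigma \|w\|_X \leq \sqrt{T}\,C$.

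Leray–Schauder then delivers a fixed point $\rho = S(\rho) \in X$, which is the sought solution of \eqref{eq:regularised_classical_pde_eta=rho}. Since by (i) the map $S$ sends every element of $X$ to an element of $C^\infty([0,T]\times\overline{B_R})$, the identity $\rho = S(\rho)$ immediately upgrades $\rho$ to a classical solution, completing the proof. The $C^{1+\alpha/2,2+\alpha}_{t,x}$ regularity claimed in Lemma~\ref{lemma:existence_of_classical_solutions_to_step_2} will then be a trivial consequence of $\rho \in C^\infty$.
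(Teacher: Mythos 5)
Your argument follows the same overall strategy as the paper: apply the Leray--Schauder fixed-point theorem to the solution operator $S$, verifying well-definedness, continuity, compactness, and a priori boundedness of fixed points of $\sigma S$, with the maximum principle ensuring nonnegativity and the divergence structure with Dirichlet boundary conditions giving the mass bound. The main technical difference is the choice of base space: you work in $X=L^2((0,T)\times B_R)$ and establish compactness via a direct $L^2$ energy estimate (multiply by $\rho$, integrate by parts, Gr\"onwall) combined with the $\partial_t$-bound from Lemma~\ref{lemma:H1 estimate of MF SVGD step 1} and Aubin--Lions. The paper instead works in $L^1((0,T)\times B_R)$ and derives compactness from the entropy-type estimate on $\sqrt{\rho}$ in $L^2_t H^1_x$ (estimate \eqref{lemma:H1 estimate of MF SVGD step 1:eq3}), via the factorization $\nabla\rho = 2\sqrt{\rho}\,\nabla\sqrt{\rho}$, before applying Aubin--Lions in $L^1$. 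Your route is somewhat more standard and self-contained for this linear parabolic setting, since the $L^2$ estimate only needs the $L^\infty$ control on $E_{\tau,\eta}(\beta)$ from Lemma~\ref{lem:moll_operator_are_cts} (not the $W^{1,\infty}$ norm you cite, which that lemma does not state, though it would follow by the same mollification argument); the paper's $L^1$ route is better aligned with the uniform-in-$\tau$ estimates it reuses when letting $\tau\to 0$ in Lemma~\ref{lemma:existence_of_classical_solutions_to_step_2}. Two small points: your continuity step is stated rather tersely (``continuous dependence on coefficients''); the paper spells this out by passing to the limit in the weak formulation and invoking uniqueness, which is safer. And your closing remark about the $C^{1+\alpha/2,\,2+\alpha}$ regularity conflates Lemma~\ref{lemma:existence_of_classical_solution_to_reg_pde} with Lemma~\ref{lemma:existence_of_classical_solutions_to_step_2}: the H\"older regularity in the latter concerns the $\tau\to 0$ limit, where smoothness is lost and must be recovered by bootstrapping (Lemma~\ref{lem:boostrap_aggr_diff_app}), so it is not a ``trivial consequence'' of $C^\infty$ regularity at fixed $\tau$.
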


\begin{proof}
    The main idea of the proof is to apply the Leray-Schauder fixed-point theorem. Thus we divide the proof into four steps.

    \underline{\it $S$ is a compact operator.} Let $\{\beta_n\}_{n\geq 1}$ be a bounded sequence in $L^1((0,T)\times B_R)$. We define $\rho_n=S(\beta_n)$ and observe from Lemma \ref{lemma:H1 estimate of MF SVGD step 1} that $\norm{\sqrt{\rho_n}}_{L_t^2 H_x^1}$ is uniformly bounded. In particular, by writing $\nabla \rho_n = 2\,\nabla{\sqrt{\rho_n}} \, \sqrt{\rho_n}$, we see that $\norm{\nabla \rho_n}_{L_t^2 L_x^1}$ is uniformly bounded. Together with the fact $\norm{\prt_t {\rho_n}}_{L_t^2 H_x^{-m}}$ is uniformly bounded for $m>\frac{d}{2}$ (Lemma \ref{lemma:H1 estimate of MF SVGD step 1}), the Aubin-Lions lemma and the Banach-Alaoglu theorem, up to passing to a subsequence, we have strong convergence $\rho_n\ra \rho$ in $L^1((0,T)\times B_R)$, strong convergence $\sqrt{\rho_n}\ra \sqrt{\rho}$ and weak convergence $\nabla \sqrt{\rho_n}\weak \nabla \sqrt{\rho}$ in $L^2((0,T)\times B_R)$ for some $\rho$. In particular, $S$~is a compact operator.

    \smallskip

    \underline{\it $S$ is continuous.} Take a sequence $\{\beta_n\}_{n\geq 1}$ such that $\beta_n\ra \beta$ in $L^1((0,T)\times B_R)$ and define $\rho_n=S(\beta_n)$. The pair $(\beta_n,\rho_n)$ satisfy the following equation for any test function $\psi\in C_c^\infty((0,T)\times B_R)$
    \begin{multline*}
        \int_0^T \int_{B_R}\rho_n \prt_t \psi\diff x\diff t
        =\int_0^T \int_{B_R}\rho_n\; E_{\tau,\eta}(\beta_n)\cdot \nabla\psi+\frac{1}{R}\,(\nabla \rho_n+\rho_n\nabla V_\eta)\cdot \nabla\psi\diff x\diff t\\
        =\int_0^T \int_{B_R}\rho_n\; E_{\tau,\eta}(\beta_n)\cdot \nabla\psi+\frac{2}{R}\nabla\sqrt{\rho_n}\cdot \sqrt{\rho_n}\nabla\psi +\frac{1}{R} \rho_n\nabla V_\eta\cdot\nabla\psi\diff x\diff t.
    \end{multline*}
    From Lemma \ref{lem:moll_operator_are_cts}, we can choose a subsequence with $E_{\tau,\eta}(\beta_n)\weaks E_{\tau,\eta}(\beta)$ weakly* in $L^\infty((0,T)\times B_R)$. Furthermore, using the same convergent sequences as in Step 1, we can pass to the limit $n\ra \infty$ to deduce that the following PDE holds in the sense of distributions
    \begin{align*}
        \prt_t \rho=\divv(\rho\; E_{\tau,\eta}(\beta))+\frac{1}{R}\,\divv\left(\nabla \rho+\rho\nabla V_\eta\right)
         \text{ on }(0,T)\times B_R.
    \end{align*}
    By uniqueness of the solution to the above PDE, we know that $S(\beta)=\rho$. To conclude, we observe that the argument above shows that every subsequence of $\{S(\beta_n)\}_{n\in \N}$ has a further subsequence that converges strongly in $L^1((0,T)\times B_R)$ to $S(\beta)$ so we have $S(\beta_n)\ra S(\beta)$ in $L^1((0,T)\times B_R)$. This proves the continuity of $S$.

    \smallskip

    \underline{\it Eigenvectors are bounded.} We will prove that the set 
    $$
    \{\beta\in L^1((0,T) \times B_R)\,\vert\, \exists \lambda\in [0,1]\,:\, \beta=\lambda S(\beta)\}
    $$ is bounded in $L^1((0,T) \times B_R)$. Suppose there exists $\beta\in L^1((0,T) \times B_R)$ and $\lambda \in [0,1]$ such that for $\rho=S(\beta)$ we have $\beta=\lambda \rho$. Then by conservation of mass
    \begin{align*}
        \int_0^T \int_{B_R} |\beta|\diff x\diff t
        =\lambda \int_0^T \int_{B_R} |\rho|\diff x\diff t = \lambda T \int_{B_R} \rho_0^{R,\eta} \diff x 
        \leq T.
    \end{align*}

    \smallskip

    \underline{\it Leray-Schauder fixed-point theorem.} 
    Using the previous three steps, we see that all the necessary conditions for the Leray-Schauder fixed-point theorem \cite[Theorem 11.3]{MR1814364} are satisfied. Thus, there exists at least one fixed point $\rho\in L^1((0,T)\times B_R)$. Clearly, $\rho$ is also a solution to \eqref{eq:regularised_classical_pde} with $\beta=\eta$ so it is a classical solution.
\end{proof}

Having proven all the necessary preliminary results, we are now in a position to pass to the limit $\tau\ra 0$ and prove the existence of a classical global solution to the PDE \eqref{eq:regularised_weak_pde}--\eqref{ass:mfsvgd:ini}.

\begin{proof}[Proof of Lemma \ref{lemma:existence_of_classical_solutions_to_step_2}]
    Let $\rho^\tau$ be the solution constructed in Lemma \ref{lemma:existence_of_classical_solution_to_reg_pde} with parameter $\tau~>~0$. Using Lemma \ref{lemma:H1 estimate of MF SVGD step 1} with $\beta=\rho^\tau$ (note that we can use better estimates since we know that $\beta \in L^{\infty}(0,T; L^1(B_R))$) we obtain the uniform bounds
    $$
        \sup_{\tau \in (0,1)} \norm{\sqrt{\rho^\tau}}_{L_t^2 H_x^1} \,+\, \sup_{\tau \in (0,1)}\norm{\prt_t \rho^\tau}_{L_t^2 H_x^{-m}}
        <\infty.
    $$
    Hence, arguing as in the first step of Lemma~\ref{lemma:existence_of_classical_solution_to_reg_pde}, we show that $\rho$ is a distributional solution to the PDE \eqref{eq:regularised_weak_pde}.

    \smallskip
    
    Now we prove that $\rho$ is a classical solution. Using \eqref{eq:estimates_Schauder_1_alpha_in_space} in Lemma~\ref{lem:boostrap_aggr_diff_app}, there exists $\alpha \in (0,1)$ such that the sequence $\{\rho^{\tau}\}_{\tau\in (0,1)}$ is bounded in $C^{\frac{\alpha}{2}, 1+\alpha}_{t,x}$. This implies that the sequences $\{\nabla E_{\tau,\eta}(\rho^{\tau})\}_{\tau\in (0,1)}$, $\{\Delta E_{\tau,\eta}(\rho^{\tau})\}_{\tau\in (0,1)}$ are bounded in $C^{\frac{\alpha}{2},\alpha}_{t,x}$, so \eqref{eq:estimates_Schauder_2_alpha_in_space} implies that the sequence $\{\rho^{\tau}\}_{\tau\in (0,1)}$ is bounded in $C^{1+\frac{\alpha}{2}, 2+\alpha}_{t,x}$. Hence, $\rho \in C^{1+\frac{\alpha}{2}, 2+\alpha}_{t,x}$ and the proof is concluded.

    \smallskip

    We now prove that the solution can be extended from $[0,T]\times B_R$ to $[0,\infty)\times B_R$. Indeed, it can be constructed for all $T>0$ and the only issue is to guarantee that solutions constructed for $T_1 < T_2$ coincide on $[0,T_1]$. Suppose there are two solutions $\rho_1$, $\rho_2$ to \eqref{eq:regularised_weak_pde}--\eqref{ass:mfsvgd:ini}. Then, the difference $\rho:= \rho_1-\rho_2$ satisfies 
    $$
    \prt_t \rho=\divv(\rho\;F_\eta(\rho_1)) + \divv(\rho_2\;(F_\eta(\rho_1)-F_\eta(\rho_2))) +\frac{1}{R}\,\divv\left(\nabla \rho+\rho\nabla V_\eta\right)
    $$
    with initial condition $\rho(0,x)=0$. Let $\mbox{sgn}_+(\rho) = \mathds{1}_{\rho \geq 0}$ and let $f_n:\R \to \R$ be a sequence of smooth functions such that $f_n(x) = 0$ for $x \leq 0$, $f_n'(x) \geq 0$ for $x \geq 0$, $f_n \to \mbox{sgn}_+$. We multiply by $f_n(\rho)$, integrate over $B_R$ and pass to the limit $n \to \infty$. Since $\int_{B_R} \Delta \rho \, f_n(\rho) \diff x = -\int_{B_R} |\nabla \rho|^2\, f_n'(\rho) \diff x \leq 0$, we can ignore the term with Laplacian and deduce
    $$
    \prt_t \int_{B_R} |\rho|_+ \diff x \leq \int_{B_R} \left(\divv\left(\rho\;F_\eta(\rho_1) +  \frac{1}{R}\,\rho\,\nabla V_\eta\right) + \divv(\rho_2\;(F_\eta(\rho_1)-F_\eta(\rho_2))) \right) \mbox{sgn}_+(\rho) \diff x.
    $$
    Furthermore, note that
    $$
    \divv\left(\rho\;F_\eta(\rho_1) +  \frac{1}{R}\,\rho\,\nabla V_\eta\right)\, \mbox{sgn}_+(\rho) =  \divv\left(|\rho|_+\;F_\eta(\rho_1) +  \frac{1}{R}\,|\rho|_+\,\nabla V_\eta\right)
    $$
    so that the estimate simplifies to 
    $$
    \prt_t \int_{B_R} |\rho|_+ \diff x \leq \int_{B_R} \divv(\rho_2\;(F_\eta(\rho_1)-F_\eta(\rho_2))) \,  \mbox{sgn}_+(\rho) \diff x.
    $$
    By the regularity, $\rho_2$ and $\nabla \rho_2$ are bounded. Moreover, directly from the formula \eqref{eq:formula_for_Feta}, there exists a constant $C$ such that
    $$
    \|F_\eta(\rho_1)-F_\eta(\rho_2)\|_{L^1(B_R)}, \|\nabla F_\eta(\rho_1)-\nabla F_\eta(\rho_2)\|_{L^1(B_R)} \leq C\, \|\rho_1 - \rho_2\|_{L^1(B_R)} = C\, \|\rho\|_{L^1(B_R)}.
    $$
    Hence, we get $\prt_t \int_{B_R} |\rho|_+ \diff x \leq  C\, \|\rho\|_{L^1(B_R)}$. Swapping $\rho_1$ and $\rho_2$, we obtain $\prt_t \|\rho\|_{L^1(B_R)} \leq  C\, \|\rho\|_{L^1(B_R)}$ so by the Gronwall inequality $\rho = 0$ and the proof is concluded.
\end{proof}

In the estimates above we used the following result.

\begin{lem}[bootstrapping]\label{lem:boostrap_aggr_diff_app}
Let $\rho$ be a classical nonnegative solution to
\begin{equation}\label{eq:general-aggr-diff-for-bootstrap-only-space}
\partial_t \rho - \Delta \rho - \DIV(\rho \, \mathcal{Q}) = 0 \mbox{ on } (0,T)\times B_R
\end{equation}
with initial condition $\rho(0,x) = \rho_0(x) \in C^{\infty}_c(B_R)$ and boundary condition $\rho(t,x) = 0$ for $x \in \partial B_R$. Suppose that $\mathcal{Q} \in L^{\infty}(0,T; W^{1,\infty}(B_R; \Rd))$. Then, there exists $\alpha \in (0,1)$ and constant $C$ depending only on $\rho_0$ and $\|\mathcal{Q}\|_{L^{\infty}_t W^{1,\infty}_x}$ such that
\begin{equation}\label{eq:estimates_Schauder_1_alpha_in_space}
\|\rho\|_{C^{\frac{\alpha}{2}, 1+\alpha}_{t,x}} \leq C.
\end{equation}
Furthermore, if $\mathcal{Q}, \divv\mathcal{Q} \in C^{\frac{\alpha}{2},\alpha}_{t,x}$, we have
\begin{equation}\label{eq:estimates_Schauder_2_alpha_in_space}
\|\rho\|_{C^{1+\frac{\alpha}{2}, 2+\alpha}_{t,x}} \leq C,
\end{equation}
where $C$ depends additionally on the norm of $\mathcal{Q}$, $\divv \mathcal{Q}$ in $C^{\frac{\alpha}{2},\alpha}_{t,x}$.
\end{lem}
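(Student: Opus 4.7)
The plan is to rewrite \eqref{eq:general-aggr-diff-for-bootstrap-only-space} in non-divergence form as
\begin{equation*}
\partial_t \rho - \Delta \rho - \mathcal{Q} \cdot \nabla \rho - (\divv \mathcal{Q})\, \rho = 0,
\end{equation*}
which is a linear uniformly parabolic equation with bounded measurable lower-order coefficients under the assumption $\mathcal{Q} \in L^{\infty}(0,T; W^{1,\infty}(B_R; \Rd))$. Since $\rho_0 \in C_c^{\infty}(B_R)$ vanishes to infinite order on $\partial B_R$, the compatibility conditions with the homogeneous Dirichlet boundary condition are satisfied at every order.

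For the first estimate \eqref{eq:estimates_Schauder_1_alpha_in_space}, I would apply the standard global parabolic $L^p$ estimate (Ladyzhenskaya-Solonnikov-Ural'tseva) to get, for any $p \in (1,\infty)$,
\begin{equation*}
\|\rho\|_{W^{2,1}_p((0,T)\times B_R)} \leq C\, \|\rho_0\|_{W^{2-2/p,\, p}(B_R)},
\end{equation*}
where $C$ depends on $p$, $R$, $T$ and $\|\mathcal{Q}\|_{L^{\infty}_t W^{1,\infty}_x}$. Choosing $p > d+2$ and applying the parabolic Sobolev embedding $W^{2,1}_p \hookrightarrow C^{\alpha/2,\, 1+\alpha}_{t,x}$ with $\alpha = 1-(d+2)/p$ (see, e.g., Lieberman) then yields \eqref{eq:estimates_Schauder_1_alpha_in_space}.

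For the second estimate \eqref{eq:estimates_Schauder_2_alpha_in_space}, the bound just obtained gives $\rho, \nabla \rho \in C^{\alpha/2,\, \alpha}_{t,x}$. Under the additional assumption $\mathcal{Q}, \divv \mathcal{Q} \in C^{\alpha/2,\,\alpha}_{t,x}$, the source term
\begin{equation*}
F := \mathcal{Q} \cdot \nabla \rho + (\divv \mathcal{Q})\, \rho
\end{equation*}
belongs to $C^{\alpha/2,\,\alpha}_{t,x}$ as a product of parabolic Hölder functions, with norm controlled by the relevant quantities. Viewing the equation as the heat equation $\partial_t \rho - \Delta \rho = F$ with smooth initial and boundary data, the classical parabolic Schauder estimate (Solonnikov) delivers $\rho \in C^{1+\alpha/2,\,2+\alpha}_{t,x}$ together with the required bound. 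The only technical points are (i) verifying the boundary compatibility conditions needed for the up-to-the-boundary parabolic $L^p$ and Schauder estimates, which is automatic since $\rho_0 \in C_c^{\infty}(B_R)$, and (ii) tracking the dependence of constants to ensure they depend only on $\rho_0$ and the prescribed norms of $\mathcal{Q}$, $\divv \mathcal{Q}$; these are both routine once the reduction to standard linear parabolic theory is in place.
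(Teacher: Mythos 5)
Your proposal is correct and reaches the same conclusion, but takes a somewhat different route at the first stage. You pass directly to the global parabolic $L^p$ theory of Ladyzhenskaya--Solonnikov--Ural'tseva for the non-divergence form equation with bounded measurable lower-order coefficients to obtain the $W^{2,1}_p$ bound in one shot, and then use the parabolic Sobolev embedding. The paper instead builds up the required integrability by hand: first an $L^\infty_{t,x}$ bound on $\rho$ by the weak maximum principle (here nonnegativity of $\rho$ is actually used), then an $L^\infty_t L^p_x$ bound on $\nabla \rho$ via a direct energy-type computation with test function $|\nabla\rho|^{p-2}\nabla\rho$, and only then treats $\mathcal{Q}\cdot\nabla\rho + \rho\,\divv\mathcal{Q}$ as a known $L^p_{t,x}$ source term for the heat equation and applies maximal $L^p$ regularity (Hieber--Pr\"uss) together with Sobolev embedding. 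The second estimate is proved the same way in both accounts, by Schauder theory once $\rho$, $\nabla\rho \in C^{\alpha/2,\alpha}_{t,x}$ is known. The trade-off is that your route is shorter, while the paper's hands-on first two steps give a more explicit handle on how the constants depend on the data (which is of some interest in the approximating scheme) and avoids invoking a general $L^p$ theorem whose hypotheses on coefficients and compatibility would otherwise need to be checked in detail. One minor imprecision you should tighten: the compatibility conditions for the up-to-the-boundary $W^{2,1}_p$ and $C^{2+\alpha}$ estimates are not literally ``satisfied at every order'' merely because $\rho_0 \in C_c^{\infty}(B_R)$ — higher-order compatibility would require higher time-regularity of $\mathcal{Q}$ — but the first-order compatibility that these two theorems actually need is indeed automatic, since $\rho_0$ and all its derivatives vanish in a neighbourhood of $\partial B_R$.
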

\begin{proof} We split the reasoning into a few steps. The constant $C$ may change from line to line as long as it satisfies the condition of the lemma. We remark that the reasoning is in the spirit of \cite[Theorem 2.3]{carrillo2024well} but here $\mathcal{Q}$ depends on time so the argument needs small adjustments. 

\underline{\textit{Step 1. An $L^{\infty}_{t,x}$ estimate.}} The following inequality holds
\begin{align*}    \norm{\rho}_{L_{t,x}^\infty}\leq \norm{\rho_0}_{L_x^\infty}e^{T \norm{\divv \mathcal{Q}}_{L_{t,x}^\infty}}.
\end{align*}
Consider function $u(t,x):=\rho_t(x)e^{-\lambda t}$ and note that it solves
\begin{align*}
    \partial_t u - \Delta u - \nabla u\cdot \mathcal{Q}+(\lambda-\divv \mathcal{Q}) u = 0 \mbox{ on } (0,T)\times B_R
\end{align*}
with initial condition $u(0,x) = \rho_0(x) \in C^{\infty}_c(B_R)$ and boundary condition $u(t,x) = 0$ for $x \in \partial B_R$. Let $\lambda:=\norm{\divv \mathcal{Q}}_{L_{t,x}^\infty}$ so that by nonnegativity of $u$
$$
\partial_t u - \Delta u - \nabla u\cdot \mathcal{Q} \geq 0 \mbox{ on } (0,T)\times B_R.
$$
The weak maximum principle \cite[Theorem 8, Section 7.1.4]{MR2597943} asserts that $u$ is controlled by the values at the boundary ($u = 0$) or at $t = 0$ which implies the inequality for $\rho$.

\underline{\textit{Step 2. Gradient estimates in $L^{\infty}_t L^p_x$ for $p<\infty$.}} We compute
$$
\partial_t\, \frac{1}{p} \int_{B_R} |\nabla \rho|^p \diff x = \int_{B_R} |\nabla \rho|^{p-2} \, \nabla \rho \cdot \nabla \prt_t\rho_t \diff x = - \int_{B_R} \DIV(|\nabla \rho|^{p-2} \, \nabla \rho) \, \prt_t\rho_t \diff x,
$$
where the integration by parts is justified by the fact that for smooth solutions $\partial_t \rho(t,x) = 0$ for $x \in \partial B_R$. A direct computation yields $\DIV(|\nabla \rho|^{p-2} \, \nabla \rho) = (p-1) |\nabla \rho|^{p-2} \,\Delta \rho$ so that using PDE \eqref{eq:general-aggr-diff-for-bootstrap-only-space} we get 
\begin{align*}
\partial_t\, &\frac{1}{p\,(p-1)} \int_{B_R} |\nabla \rho|^p \diff x + \int_{B_R} |\nabla \rho|^{p-2}\, |\Delta \rho|^2 \diff x \leq \\ & \leq \int_{B_R}|\nabla \rho|^{p-2} |\Delta \rho| \left( |\nabla \rho|\, |\mathcal{Q}| + |\rho| \,|\divv \mathcal{Q}|\right) \diff x \leq C\, \int_{B_R}  |\nabla \rho|^{p-2} |\Delta \rho| ( |\nabla \rho|\,  +1) \diff x.
\end{align*}
The error on the RHS can be controlled with the Cauchy-Schwartz inequality with a small parameter $\delta>0$ as follows:
\begin{align*}
\int_{B_R}  |\nabla \rho|^{p-2} |\Delta \rho| \left( |\nabla \rho|\,  +1\right) \diff x &\leq \frac{\delta}{2} \int_{B_R} |\nabla \rho|^{p-2} |\Delta \rho|^2 \diff x + \frac{1}{2\delta} \int_{B_R} |\nabla \rho|^{p-2} \, ( |\nabla \rho|\,  +1)^2 \diff x  \\
& \leq \frac{\delta}{2} \int_{B_R} |\nabla \rho|^{p-2} |\Delta \rho|^2 \diff x + \frac{2^p}{\delta} \int_{B_R} |\nabla \rho|^{p} \diff x + \frac{2^p\,|B_R|}{\delta},
\end{align*}
where in the last step we estimated $|\nabla \rho|\leq \max(|\nabla \rho|,1)$ and we used inequality $|x+y|^{p} \leq 2^{p-1}(|x|^p + |y|^p)$. Choosing $\delta = 2^p\, p(p-1)$, we conclude the proof by the Gronwall inequality.\\

\underline{\textit{Step 3. Estimates on $\partial_t \rho$, $\partial_{x_i}\partial_{x_j} \rho$ in $L^p_{t,x}$ for any $p<\infty$ and $\rho$ in $C^{\frac{\alpha}{2}, 1+\alpha}_{t,x}$ \eqref{eq:estimates_Schauder_1_alpha_in_space}.}} We have
\begin{equation}\label{eq:general_aggr_diff_for_bootrstrapping_for_max_regularity}
\partial_t \rho - \Delta \rho = \nabla \rho \, \mathcal{Q} + \rho \, \divv \mathcal{Q}.
\end{equation}
Since the RHS of \eqref{eq:general_aggr_diff_for_bootrstrapping_for_max_regularity} is bounded in $L^{\infty}_t L^p_x$, in particular in $L^{p}_{t,x}$. By the maximal regularity theory \cite[Theorem 3.1]{HieberPruss1997} we obtain $\partial_t \rho$, $\partial_{x_i}\partial_{x_j} \rho$ in $L^p_{t,x}$ for any $p<\infty$. Then, the Sobolev embedding theorem \cite[Theorem 1.4.1]{MR2309679} for $p$ sufficiently large implies the desired Holder regularity. \\

\underline{\textit{Step 4. Estimates on $\rho$ in $C^{1+\frac{\alpha}{2}, 2+\alpha}_{t,x}$ \eqref{eq:estimates_Schauder_2_alpha_in_space}.}} Under the assumption that $\mathcal{Q}, \divv \mathcal{Q} \in C^{\frac{\alpha}{2},\alpha}_{t,x}$, we deduce that the RHS of \eqref{eq:general_aggr_diff_for_bootrstrapping_for_max_regularity} is in $C^{\frac{\alpha}{2},\alpha}_{t,x}$. Hence, Schauder's theory for parabolic equations \cite[Theorem 5.1.8]{MR1329547} gives the claim. 

\end{proof}

\section{Other auxiliary results}
\label{appendix:auxiliary}
\begin{lem}
\label{lem:chi_squared_bounded_from_below_by_kullback_leibler}
	Let $\mu,\nu\in\Pro$, such that $\mu$ is absolutely continuous wrt $\nu$. Let $t\in\R$ and let $h=\frac{\diff\mu}{\diff\nu} \geq 0$. Then 
	\begin{align*}
		\int_{\Rd} |h(x)-t|^2\diff\nu(x)
		\geq \int_{\Rd} h(x)\ln(h(x)) \diff\nu(x).
	\end{align*}
\end{lem}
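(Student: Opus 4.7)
The plan is to reduce the claim to two classical facts: first that $t=1$ is the optimal (minimizing) choice on the left-hand side thanks to $\mu,\nu$ being probability measures, and second that the chi-squared divergence dominates the Kullback--Leibler divergence via the elementary pointwise bound $\ln s \leq s-1$.

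More concretely, I would first exploit $\int_{\Rd} h \diff\nu = \mu(\Rd) = 1$, which gives the simple variance-type identity
\begin{align*}
    \int_{\Rd} |h(x)-t|^2 \diff\nu(x)
    &= \int_{\Rd}\bigl|(h(x)-1) + (1-t)\bigr|^2 \diff\nu(x) \\
    &= \int_{\Rd} |h(x)-1|^2 \diff\nu(x) + 2(1-t)\int_{\Rd}(h(x)-1)\diff\nu(x) + (1-t)^2 \\
    &= \int_{\Rd} |h(x)-1|^2 \diff\nu(x) + (1-t)^2,
\end{align*}
since the cross term vanishes. In particular the left-hand side is bounded below by $\int_{\Rd} |h-1|^2 \diff\nu$, reducing the inequality to the case $t=1$.

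For $t=1$, I would apply the pointwise bound $\ln s \leq s-1$ for $s>0$, multiplied by $h \geq 0$, yielding $h \ln h \leq h(h-1) = h^2 - h$, with the convention $0 \ln 0 = 0$ at $h=0$ (where equality holds). Integrating against $\diff\nu$ and using $\int_{\Rd} h \diff\nu = 1$,
\begin{align*}
    \int_{\Rd} h(x)\ln h(x) \diff\nu(x)
    \leq \int_{\Rd} h(x)^2 \diff\nu(x) - 1
    = \int_{\Rd} |h(x)-1|^2 \diff\nu(x),
\end{align*}
where the last equality again follows from $\int h \diff\nu = 1$. Combining this with the lower bound from the first paragraph yields the claim.

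There is no real obstacle here: both ingredients are standard and the proof is essentially a one-line expansion plus the inequality $\KL \leq \chi^2$. The only small care needed is the treatment of $h=0$ in the pointwise inequality $h \ln h \leq h^2 - h$, which is handled by the usual convention $0\ln 0 = 0$.
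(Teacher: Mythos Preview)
Your proof is correct and essentially identical to the paper's: both use the pointwise bound $h\ln h \leq h(h-1)$ (from $\ln s \leq s-1$) together with $\int h\diff\nu = 1$ and the elementary identity $\int |h-t|^2\diff\nu = \int h^2\diff\nu - 1 + (t-1)^2$. The only difference is cosmetic ordering: you first reduce to $t=1$ and then apply the log inequality, whereas the paper applies the log inequality first and then adds $(t-1)^2\geq 0$.
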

\begin{proof}
	Using the concavity of the logarithm, we see that $\ln(x)\leq x-1$ for all $x>0$. Then
	\begin{multline*}
		\int_{\Rd} h(x)\ln(h(x)) \diff\nu(x)
		\leq \int_{\Rd} h(x)(h(x)-1) \diff\nu(x)
		=\int_{\Rd} h(x)^2\diff\nu(x)-1\\
		\leq \int_{\Rd} h(x)^2\diff\nu(x)-1+(t-1)^2
		=\int_{\Rd} |h(x)-t|^2\diff\nu(x),
	\end{multline*}
	where we have used the fact that $\int_{\Rd} h(x)\diff\nu(x)=1$, $\int_{\Rd} 1\diff\nu(x)
		=1$.  
\end{proof}

\begin{lem}\label{lem:control_neg_log}
 Let $V \geq 0$ be such that $e^{-V} \in L^1(\Rd)$. Then, for every measurable function $\rho: \R^d \to \R^+$ we have 
\begin{equation}\label{eq:pointwise_lower_bound}
\rho\, \log \rho + \rho\,V  \geq - \frac{2}{e} \, e^{-V/2}. 
\end{equation}
Moreover,
\begin{equation}\label{eq:lower_bound_negative_log}
\frac{1}{2}\int_{\R^d} \rho\, |\log \rho| \diff x \leq \frac{1}{2}\int_{\R^d} \rho\, \log \rho \diff x +   \int_{\R^d} \rho\, V(x) \diff x + \frac{2}{e} \int_{\R^d} e^{-V(x)/2} \diff x.
\end{equation}
\end{lem}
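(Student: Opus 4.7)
My plan is to prove both inequalities by elementary calculus: the first is a pointwise optimization in $\rho$, and the second follows by decomposing $|\log \rho|$ into positive and negative parts and invoking the pointwise bound.

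For the pointwise inequality \eqref{eq:pointwise_lower_bound}, I would fix $V \geq 0$ and study $f(t) = t \log t + t\, V$ as a function of $t \geq 0$, with the convention $f(0) = 0$. Computing $f'(t) = \log t + 1 + V$, the unique critical point is $t^{\ast} = e^{-1-V}$, at which
$$
f(t^{\ast}) = e^{-1-V}(-1 - V) + V\, e^{-1-V} = -e^{-1-V} = -\tfrac{1}{e}\, e^{-V}.
$$
Since $f$ is convex and $f(0) = 0$, this is the global minimum on $[0,\infty)$, so $\rho \log \rho + \rho V \geq -\tfrac{1}{e}\, e^{-V}$ for every $\rho \geq 0$. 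To pass from $e^{-V}$ to $e^{-V/2}$, I use $V \geq 0$ to estimate $e^{-V/2} \leq 1$, which gives $e^{-V} = e^{-V/2}\, e^{-V/2} \leq e^{-V/2} \leq 2\, e^{-V/2}$; equivalently $\tfrac{1}{e}\, e^{-V} \leq \tfrac{2}{e}\, e^{-V/2}$. This yields \eqref{eq:pointwise_lower_bound}.

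For \eqref{eq:lower_bound_negative_log}, I rely on the identity $|y| = y + 2\, y_{-}$ with $y_{-} := \max(-y, 0)$, applied to $y = \log \rho$. Multiplying by $\rho \geq 0$ and rearranging gives
$$
\tfrac{1}{2}\, \rho\, |\log \rho| - \tfrac{1}{2}\, \rho \log \rho = \rho\, (\log \rho)_{-} = \max\bigl(-\rho \log \rho,\, 0\bigr).
$$
By \eqref{eq:pointwise_lower_bound}, $-\rho \log \rho \leq \rho\, V + \tfrac{2}{e}\, e^{-V/2}$, and the right-hand side is nonnegative; taking positive parts yields the pointwise bound $\rho\, (\log \rho)_{-} \leq \rho\, V + \tfrac{2}{e}\, e^{-V/2}$. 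Integrating over $\R^{d}$ and adding $\tfrac{1}{2}\int \rho \log \rho \diff x$ to both sides gives exactly \eqref{eq:lower_bound_negative_log}.

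There is no real obstacle here: the only bookkeeping point is verifying $e^{-V} \leq 2 e^{-V/2}$ under $V \geq 0$, which is used to relax the sharper pointwise bound $-\tfrac{1}{e}e^{-V}$ to the stated form $-\tfrac{2}{e}e^{-V/2}$ (the latter is the version needed in the paper's applications, e.g.\ in \eqref{eq:estimate_negative_part_mollifier_Phi} and \eqref{eq:bound_kl_from_below_unormalized}, where integrability against the Gaussian lower bound $V \geq V_0$ is invoked).
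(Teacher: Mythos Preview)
Your proof is correct. Both parts work as stated: the direct minimization of $t\mapsto t\log t + tV$ gives the sharper pointwise bound $\rho\log\rho+\rho V\geq -\tfrac{1}{e}e^{-V}$, which you then relax to \eqref{eq:pointwise_lower_bound} via $e^{-V}\leq e^{-V/2}$; and your decomposition $\tfrac12\rho|\log\rho|-\tfrac12\rho\log\rho=\rho(\log\rho)_-$ together with the pointwise bound yields \eqref{eq:lower_bound_negative_log} exactly as written.

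For comparison, the paper's argument for \eqref{eq:pointwise_lower_bound} is slightly different in flavor: rather than optimizing in $\rho$, it splits $\{\rho<1\}$ into $\{\rho<e^{-V}\}$ and $\{e^{-V}\leq\rho<1\}$, applying $\sqrt{\rho}\log\rho\geq -\tfrac{2}{e}$ on the first set and $\log\rho\geq -V$ on the second. Your route is more direct and even produces the sharper constant $-\tfrac{1}{e}e^{-V}$ as an intermediate step; the paper's splitting avoids calculus but lands directly on the weaker form. The proofs of \eqref{eq:lower_bound_negative_log} are essentially identical.
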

\begin{proof}
We closely follow the proof from \cite[Proposition 4, Step 3]{doumic_multispecies_2024}. To bound $\rho \, \log \rho$ from below, we split the set $\{\rho \leq 1\}$ for $A:=\{\rho<e^{-V}\}$ and $B:=\{e^{-V} \leq \rho < 1\}$ and we want to bound $\rho$ on each of them. On $A$, we use that $\sqrt{\rho} \, \log \rho \geq -\frac{2}{e}$ for $\rho \in [0,1]$ so that
\begin{equation}\label{eq:estim_on_set_A}
\rho \, \log \rho =\sqrt{\rho}\, \sqrt{\rho}\, \log \rho \geq -\frac{2}{e} \, e^{- V/2} \mbox{ for } x\in A.
\end{equation}
On $B$ we estimate $\log \rho \geq - V$ so that
\begin{equation}\label{eq:estim_on_set_B}
\rho \, \log \rho \geq - \rho\, V  \mbox{ for } x\in B.
\end{equation}
Hence, we deduce \eqref{eq:pointwise_lower_bound} from \eqref{eq:estim_on_set_A} and \eqref{eq:estim_on_set_B}.\\

To see the other assertions, we write
$$
\int_{\R^d} \rho\, |\log \rho| \diff x = \int_{\rho \geq 1} \rho\, \log \rho \diff x -  \int_{\rho < 1} \rho\, \log \rho \diff x = 
\int_{\R^d} \rho\, \log \rho \diff x - 2 \int_{\rho < 1} \rho\, \log \rho \diff x.
$$
Now, using \eqref{eq:pointwise_lower_bound} proves \eqref{lem:control_neg_log}, with which we conclude the proof.
\end{proof}

\begin{lem}\label{lem:poincare-wirtinger-optimal}
For all $f \in H^1(B_R)$ such that $\int_{B_R} f \diff x = 0$ we have
$$
\int_{B_R} |f|^2 \diff x \leq C\, \int_{B_R} |\nabla f|^2 \diff x,
$$
where $C = \left(\frac{R}{p_{d/2,1}}\right)^2$ and the numerical coefficient $p_{m,1}$ is the first positive zero of the first derivative of the Bessel function $J_m$.
\end{lem}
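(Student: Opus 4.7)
The plan is to identify the sharp constant as $1/\mu_1$, where $\mu_1$ is the first nonzero eigenvalue of the Neumann Laplacian on $B_R$, and then to compute $\mu_1$ explicitly by separation of variables.

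First I would reduce to the case $R=1$ via the substitution $\tilde f(y) := f(Ry)$, which gives $\norm{f}_{L^2(B_R)}^2 = R^d\,\norm{\tilde f}_{L^2(B_1)}^2$ and $\norm{\nabla f}_{L^2(B_R)}^2 = R^{d-2}\,\norm{\nabla \tilde f}_{L^2(B_1)}^2$, so the optimal constant on $B_R$ is $R^2$ times the one on $B_1$. By Rellich-Kondrachov, the embedding $H^1(B_1)\hookrightarrow L^2(B_1)$ is compact, hence the infimum
\begin{equation*}
\mu_1 := \inf\left\{\int_{B_1}|\nabla f|^2\diff y : f\in H^1(B_1),\; \int_{B_1} f\diff y = 0,\; \norm{f}_{L^2(B_1)}=1\right\}
\end{equation*}
is attained by some $u\in H^1(B_1)$ by weak lower semicontinuity of the Dirichlet energy. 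By Lagrange multipliers, $u$ weakly solves $-\Delta u=\mu_1 u$ in $B_1$ with Neumann boundary condition $\prt_\nu u=0$ on $\prt B_1$, and by elliptic regularity $u\in C^\infty(\overline{B_1})$. The mean-zero constraint rules out the constant eigenfunction, so $\mu_1>0$ and it suffices to show $\mu_1 = p_{d/2,1}^2$.

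Next I would diagonalize the Neumann Laplacian by separation of variables. Expanding $u$ in spherical harmonics $\{Y_\ell^m\}_{\ell\ge 0, m}$, the eigenvalue problem reduces, in each angular mode $\ell$, to the radial Bessel-type ODE
\begin{equation*}
-\phi''(r)-\frac{d-1}{r}\phi'(r)+\frac{\ell(\ell+d-2)}{r^2}\phi(r)=\mu\,\phi(r),\qquad \phi'(1)=0,
\end{equation*}
together with regularity of $u$ at the origin. The unique (up to a multiplicative constant) nonsingular solution is $\phi(r)=r^{1-d/2}J_{\ell+d/2-1}(\sqrt{\mu}\,r)$, and the mean-zero condition excludes the $\ell=0$ constant mode.

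Finally, imposing the boundary condition $\phi'(1)=0$ for $\ell\ge 1$ and using the standard identities $\frac{d}{dx}[x^{-\nu}J_\nu(x)]=-x^{-\nu}J_{\nu+1}(x)$ and $x J'_\nu(x)=\nu J_\nu(x)-x J_{\nu+1}(x)$ produces a transcendental equation for $\sqrt{\mu}$ in each mode. One then verifies that the smallest positive root, taken over all $\ell\ge 1$ and over higher radial modes at $\ell=0$, is attained in the $\ell=1$ angular mode and equals $p_{d/2,1}$, yielding $\mu_1=p_{d/2,1}^2$. Combined with the scaling step this gives $C=(R/p_{d/2,1})^2$.

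The main obstacle is precisely this last identification: checking that the overall minimum of the nonzero Neumann eigenvalues is produced by the $\ell=1$ mode and coincides with the first positive zero of $J'_{d/2}$ in the paper's convention. This hinges on careful Bessel recurrences and on the monotonicity of zeros of $J'_\nu$ in the order $\nu$; in a self-contained proof one would typically appeal to a classical reference on the Neumann spectrum of balls for this step.
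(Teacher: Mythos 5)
Your outline takes essentially the same route as the paper, whose proof simply cites two facts: that the sharp Poincar\'e--Wirtinger constant equals the reciprocal of the first nonzero Neumann eigenvalue of $B_R$, and the explicit value of that eigenvalue for a ball. Your scaling reduction, the variational characterization with attainment, and the reduction via spherical harmonics to the radial Bessel ODE with nonsingular solution $r^{1-d/2}J_{\ell+d/2-1}(\sqrt{\mu}\,r)$ are all correct.

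The final identification, which you flag as the main obstacle, does hide a real gap, and you should resolve it rather than defer it. For $\ell=1$ the radial function is $\phi(r)=r^{1-d/2}J_{d/2}(\sqrt{\mu}\,r)$, and the Neumann condition $\phi'(1)=0$ is \emph{not} the condition $J'_{d/2}(\sqrt{\mu})=0$ once $d\neq2$, precisely because of the prefactor $r^{1-d/2}$: what the boundary condition says is that $\sqrt{\mu}$ is a positive zero of $\frac{d}{dr}\,[\,r^{1-d/2}J_{d/2}(r)\,]$. These numbers genuinely differ from the zeros of $J'_{d/2}$. For $d=1$, $r^{1/2}J_{1/2}(r)$ is a multiple of $\sin r$, so the first zero of its derivative is $\pi/2$, which agrees with the Neumann eigenvalue $(\pi/2)^2$ of $(-1,1)$; the first zero of $J'_{1/2}$ instead solves $\tan x=2x$ and is about $1.17$. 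For $d=3$, the first zero of $\frac{d}{dr}[r^{-1/2}J_{3/2}(r)]$ is about $2.08$, while the first zero of $J'_{3/2}$ is about $2.46$. Consequently, if $p_{d/2,1}$ is taken literally as the first zero of $J'_{d/2}$, your asserted identity $\mu_1=p_{d/2,1}^{2}$ fails for every $d\neq2$, and for $d\geq3$ the claimed constant $C=(R/p_{d/2,1})^{2}$ actually drops below the sharp value $1/\mu_1$, so the stated inequality itself would fail. When you carry out the Bessel recurrences, keep the power-law prefactor and identify $\sqrt{\mu_1}$ with the first positive zero of $\frac{d}{dr}[\,r^{1-d/2}J_{d/2}(r)\,]$; that is the quantity the referenced Neumann eigenvalue formula produces, and the lemma's wording should be read (or corrected) accordingly rather than asserted as the zero of $J'_{d/2}$.
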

\begin{proof}
We refer to \cite{MR3343056} for the characterization of the optimal constant of the Poincaré-Wirtinger inequality as the minimal eigenvalue of the Laplacian with Neumann boundary conditions and \cite{MR0079286} for its explicit value for the ball. 
\end{proof}

\begin{lem}[Kullback-Leibler of Gaussians]
\label{lem:kl_gaussians}
    Let $\rho_0=\mathcal{N}(\mu_0,\Sigma_0)$ and $\rho_\infty=\mathcal{N}(\mu_\infty,\Sigma_\infty)$ be two Gaussian distributions. Then the Kullback-Leibler divergence has the explicit value
    \begin{align*}
        \KL(\rho_0||\rho_\infty)
        =\frac{1}{2}\left(\trace(\Sigma_\infty^{-1}\Sigma_0)+(\mu_0-\mu_\infty)\Sigma_\infty^{-1}(\mu_0-\mu_\infty)-d-\ln\left(\frac{\det(\Sigma_0)}{\det(\Sigma_\infty)}\right)\right).
    \end{align*}
\end{lem}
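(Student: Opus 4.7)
This is a classical computation, so the plan is to write it out systematically. I would first substitute the explicit Gaussian density
$$
\rho(x) = \frac{1}{(2\pi)^{d/2}\sqrt{\det(\Sigma)}}\exp\left(-\frac{1}{2}(x-\mu)\cdot \Sigma^{-1}(x-\mu)\right)
$$
into the definition \eqref{eq:KL_div} of $\KL(\rho_0||\rho_\infty)$. Computing the log-ratio $\ln(\diff\rho_0/\diff\rho_\infty)$ cancels the factors of $(2\pi)^{d/2}$, leaving the identity
$$
\ln\frac{\diff\rho_0}{\diff\rho_\infty}(x) = -\frac{1}{2}\ln\frac{\det(\Sigma_0)}{\det(\Sigma_\infty)} - \frac{1}{2}(x-\mu_0)\cdot \Sigma_0^{-1}(x-\mu_0) + \frac{1}{2}(x-\mu_\infty)\cdot \Sigma_\infty^{-1}(x-\mu_\infty).
$$

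Next, I would integrate each term against $\rho_0$. The constant log-determinant term is immediate. For the first quadratic term, the standard identity $\mathbb{E}_{X\sim \rho_0}[(X-\mu_0)\cdot A(X-\mu_0)] = \trace(A\,\Sigma_0)$ applied to $A = \Sigma_0^{-1}$ gives $d$. For the second quadratic term, I would decompose $x-\mu_\infty = (x-\mu_0) + (\mu_0-\mu_\infty)$, expand the quadratic form into a cross term (which vanishes because $\mathbb{E}_{\rho_0}[X-\mu_0] = 0$), a centred quadratic giving $\trace(\Sigma_\infty^{-1}\Sigma_0)$, and the deterministic contribution $(\mu_0-\mu_\infty)\cdot \Sigma_\infty^{-1}(\mu_0-\mu_\infty)$.

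Assembling these four pieces with the prefactors yields the announced formula. There is no serious obstacle; the only care needed is to justify the trace identity $\mathbb{E}_{X\sim \rho_0}[(X-\mu_0)\cdot A(X-\mu_0)] = \trace(A\,\Sigma_0)$, which follows by diagonalising $\Sigma_0$ via an orthogonal change of variables that reduces the calculation to the one-dimensional second moment of a centred Gaussian. Integrability of all integrands is automatic from the Gaussian tails of $\rho_0$.
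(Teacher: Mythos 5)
Your proposal is correct and follows essentially the same route as the paper: write out the log-density ratio, integrate against $\rho_0$, reduce each quadratic term to a trace of a covariance against the relevant inverse covariance, and handle the shifted term by decomposing $x-\mu_\infty=(x-\mu_0)+(\mu_0-\mu_\infty)$ so the cross term vanishes. The paper phrases the trace step via $\trace((x-\mu_\infty)\otimes(x-\mu_\infty)\,\Sigma_\infty^{-1})$ and the cyclic property of trace, whereas you invoke the standard identity $\mathbb{E}[(X-\mu_0)\cdot A(X-\mu_0)]=\trace(A\Sigma_0)$ and sketch its proof by diagonalisation, but these are the same computation.
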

\begin{proof}
    First we compute
    \begin{align*}
        \KL(\rho_0||\rho_\infty)
        =-\frac{\ln(\det(\Sigma_0))}{\ln(\det(\Sigma_\infty))}
        &-\frac{1}{2}\int_{\Rd} (x-\mu_0)\cdot\Sigma_0^{-1}(x-\mu_0)\diff\rho_0(x)\\
        &\qquad +\frac{1}{2}\int_{\Rd} (x-\mu_\infty)\cdot\Sigma_\infty^{-1}(x-\mu_\infty)\diff\rho_0(x).
    \end{align*}
    We use the well-know fact that $\trace(AB)=\trace(BA)$ for any two matrices $A\in \R^{n\times m}$ and $B\in \R^{m\times n}$ to compute the third term
    \begin{align*}
        &\int_{\Rd} (x-\mu_\infty)\cdot\Sigma_\infty^{-1}(x-\mu_\infty)\diff\rho_0(x)\\
        &\qquad=\int_{\Rd} \trace((x-\mu_0+\mu_0-\mu_\infty)\otimes (x-\mu_0+\mu_0-\mu_\infty)\cdot\Sigma_\infty^{-1})\diff\rho_0(x)\\
        &\qquad=\trace(\Sigma_0\Sigma_\infty^{-1})+(\mu_0-\mu_\infty)\cdot\Sigma_\infty^{-1}(\mu_0-\mu_\infty).
    \end{align*}
    Exactly analogously, we compute the second term as
    \begin{align*}
        \int_{\Rd} (x-\mu_0)\cdot\Sigma_0^{-1}(x-\mu_0)\diff\rho_0(x)
        =\trace(\Sigma_0\Sigma_0^{-1})
        =d.
    \end{align*}
    Combining everything yields the desired result.
\end{proof}

\begin{lem}\label{lem:chng_var}
    Let $R>0$, $\kappa>0$ and $\rho\in C^2(\overline{B_R})$ be a function with $\rho=0$ on $\prt B_R$. Then
    \begin{align*}
        \int_{B_R} \Delta\rho\cdot \ln(\rho+\kappa) \diff x
        =-\int_{B_R}\frac{|\nabla\rho|^2}{\rho+\kappa}\diff x.
    \end{align*}
\end{lem}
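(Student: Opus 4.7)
The plan is to prove the identity by a direct application of Green's first identity (integration by parts), exploiting the fact that the regularization parameter $\kappa>0$ removes any singularity at zeros of $\rho$.

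First, I would observe that since $\kappa>0$ and $\rho\in C^2(\overline{B_R})$, we have $\rho+\kappa\geq \kappa>0$ on $\overline{B_R}$ (implicitly using that in the intended application $\rho$ is nonnegative; if $\rho$ can be negative, the same argument works as long as $\rho+\kappa>0$). Hence the function $u:=\ln(\rho+\kappa)$ is $C^2$ on $\overline{B_R}$ and the chain rule gives $\nabla u = \nabla\rho/(\rho+\kappa)$. This ensures all integrals below are absolutely convergent and all integrations by parts are justified.

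The main step is Green's first identity applied to $u$ and $\rho$:
$$\int_{B_R} \Delta\rho\cdot u\,\diff x \;=\; \int_{\partial B_R} u\,\partial_\nu\rho\,\diff S \;-\; \int_{B_R}\nabla\rho\cdot\nabla u\,\diff x.$$
For the bulk term, the chain rule identity above yields
$$\int_{B_R}\nabla\rho\cdot\nabla u\,\diff x = \int_{B_R}\frac{|\nabla\rho|^2}{\rho+\kappa}\,\diff x,$$
which produces the right-hand side of the claimed identity with the correct sign.

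For the boundary term, I use the hypothesis $\rho=0$ on $\partial B_R$, which implies $u|_{\partial B_R}=\ln\kappa$ is a \emph{constant}. Consequently the boundary integral factors as $\ln\kappa\int_{\partial B_R}\partial_\nu\rho\,\diff S$, which vanishes in the setting of the lemma (the boundary flux contribution is absent under the assumed Dirichlet data, as is used in the way the lemma is invoked in Lemma~\ref{lem:uniform_estimates} and Lemma~\ref{lemma:existence_of_classical_solution_to_reg_pde}). Combining the two pieces produces the claimed identity. The main (minor) obstacle is handling the boundary term; aside from that, the proof is purely routine integration by parts, which is why the lemma can be stated so compactly and invoked simply with the phrase ``passing $\kappa\to 0$'' when it is later applied to a priori singular logarithms.
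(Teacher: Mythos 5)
Your route is a single application of Green's first identity, which is indeed the natural thing to try, but the handling of the boundary term contains a genuine error. Green's identity gives
\begin{align*}
\int_{B_R}\Delta\rho\,\ln(\rho+\kappa)\diff x
=\ln\kappa\int_{\partial B_R}\partial_\nu\rho\diff S-\int_{B_R}\frac{|\nabla\rho|^2}{\rho+\kappa}\diff x,
\end{align*}
and the boundary integral $\int_{\partial B_R}\partial_\nu\rho\diff S=\int_{B_R}\Delta\rho\diff x$ does \emph{not} vanish because of the Dirichlet condition $\rho|_{\partial B_R}=0$; Dirichlet data constrains $\rho$, not $\partial_\nu\rho$. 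For instance, with $\rho(x)=R^2-|x|^2$ one has $\rho=0$ on $\partial B_R$ yet $\int_{B_R}\Delta\rho\diff x=-2d\,|B_R|\neq0$, so for $\kappa\neq1$ your boundary term is nonzero. The sentence ``the boundary flux contribution is absent under the assumed Dirichlet data'' is therefore false as written, and this is exactly the point where the proof has a gap.

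The paper avoids ever writing down the boundary term: it multiplies by an interior cutoff $\varphi_n$, integrates by parts with no boundary contribution, and is left with the commutator error $\int_{B_R}\nabla\rho\cdot\ln(\rho+\kappa)\,\nabla\varphi_n\diff x$. To kill this as $n\to\infty$ it factors $\nabla\rho=2\sqrt{\rho}\,\nabla\sqrt{\rho}$ and invokes the Lions regularity bound $\|\nabla\sqrt{\rho}\|_{L^\infty}<\infty$ together with the uniform smallness of $\sqrt{\rho}\,\ln(\rho+\kappa)$ in the shell supporting $\nabla\varphi_n$. That Lions bound is the crucial extra input: it is not a consequence of $\rho\in C^2(\overline{B_R})$, $\rho\geq0$, $\rho|_{\partial B_R}=0$ alone (the quadratic example above violates it), and indeed $\nabla\sqrt{\rho}\in L^\infty$ forces $\rho$ to decay at least quadratically at its boundary zeros, i.e.\ $\partial_\nu\rho=0$ on $\partial B_R$, which is precisely what makes your boundary term vanish. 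So your argument can be repaired by explicitly assuming $\partial_\nu\rho=0$ on $\partial B_R$ (equivalently, $\nabla\sqrt{\rho}\in L^\infty(\overline{B_R})$, which the paper obtains from Lions in the context where the lemma is applied), or by adopting the cutoff route; but as written the justification you offer does not hold.
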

\begin{proof}
    Let $\vp_n$ be a smooth nondecreasing function such that $\vp_n(x) = 1$ for $|x|< R - \frac{2}{n}$, $\vp_n(x)  = 0$ for $|x| > R -\frac{1}{n}$ and $|\nabla \vp_n| \leq n$. In particular,
    $$
        \norm{\nabla \vp_n}_{L^1}
        =n\, |B_1| \, \left(R^d - \left(R-\frac{2}{n}\right)^d\right) \leq n\,|B_1| \, d\,  R^{d-1} \frac{2}{n} = 2\, d\, |B_1| \, R^{d-1}.
    $$
    Applying integration by parts we get
    \begin{align*}
        \int_{B_R} \Delta\rho\cdot \ln\left(\rho+\kappa\right)\, \vp_n\diff x
        =-\int_{B_R} \frac{|\nabla \rho|^2}{\rho+\kappa} \vp_n\diff x-2\int_{ B_R}\nabla\sqrt{\rho}\cdot \sqrt{\rho}\ln\left(\rho+\kappa\right)\nabla \vp_n\diff x.
    \end{align*}
    From \cite[Theorem 1]{lions1995regularite} we know that $\norm{\nabla\sqrt{\rho}}_{L^\infty}<\infty$. Moreover, for all $\varepsilon>0$ there exists $n\geq 1$ such that we have $|\sqrt{\rho}\ln\left(\rho+\kappa\right)|\leq \varepsilon$ on $B_{R-2/n}$. Thus, we use the Hölder inequality to estimate
    \begin{align*}
        \left\vert \int_{B_R}\nabla\sqrt{\rho}\cdot \sqrt{\rho}\ln\left(\rho+\kappa\right)\nabla \varphi_n\diff x\right\vert
        \leq \varepsilon \norm{\nabla\sqrt{\rho}}_{L^\infty}\,  \norm{\nabla \vp_n}_{L^1} \leq C\varepsilon \norm{\nabla\sqrt{\rho}}_{L^\infty} 
    \end{align*}
    for some constant $C>0$ independent of $n$ and $\varepsilon$. As $\varepsilon$ can be arbitrarily small, we have proven the change of variables.
\end{proof}

\subsection*{Acknowledgements}
JAC and JS were supported by the Advanced Grant Nonlocal-CPD (Nonlocal PDEs for Complex Particle Dynamics: Phase Transitions, Patterns and Synchronization) of the European Research Council Executive Agency (ERC) under the European Union’s Horizon 2020 research and innovation programme (grant agreement No. 883363). JAC was also partially supported by the EPSRC grant numbers EP/T022132/1 and EP/V051121/1. JW was supported by the Engineering and Physical Sciences Research Council (grant number EP/W524311/1)

\bibliographystyle{abbrv}
\bibliography{fastlimit}
\end{document}